\newtheorem{theorem}{Theorem}[section]
\newtheorem{lemma}[theorem]{Lemma}
\newtheorem{proposition}[theorem]{Proposition}
\newtheorem{corollary}[theorem]{Corollary}
\newtheorem{remark}[theorem]{Remark}
\def\R{{\mathbb{R}}}
\def\H{{I\!\!H}}
\def\N{{I\!\!N}}
\def\CC{{\rm \kern.24em \vrule width.02em height1.4ex depth-.05ex
		\kern-.26emC}}
\def\Xint#1{\mathchoice
	{\XXint\displaystyle\textstyle{#1}}%
	{\XXint\textstyle\scriptstyle{#1}}%
	{\XXint\scriptstyle\scriptscriptstyle{#1}}%
	{\XXint\scriptscriptstyle\scriptscriptstyle{#1}}%
	\!\int}
\def\XXint#1#2#3{{\setbox0=\hbox{$#1{#2#3}{\int}$}
		\vcenter{\hbox{$#2#3$}}\kern-.5\wd0}}
\def\dashint{\Xint-}
\newcommand{\pvint}{\mathop{\mathrlap{\pushpv}}\!\int}
\newcommand{\pushpv}{\mathchoice
	{\mkern22.5mu\rule[.6ex]{.5em}{.5pt}}
	{\mkern2.8mu\rule[.5ex]{.35em}{.8pt}}
	{\mkern2.5mu\rule[.50ex]{.7em}{.7pt}}
	{\mkern2mu\rule[.5ex]{.7em}{.5pt}}
}
\newcommand{\ppvint}{\mathop{\mathrlap{\pushppv}}\!\int}
\newcommand{\pushppv}{\mathchoice
	{\mkern27.9mu\rule[.6ex]{.5em}{.5pt}}
	{\mkern2.8mu\rule[.5ex]{.35em}{.8pt}}
	{\mkern2.5mu\rule[.50ex]{.7em}{.7pt}}
	{\mkern2mu\rule[.5ex]{.7em}{.5pt}}
}
\newcommand{\pppvint}{\mathop{\mathrlap{\pushpppv}}\!\int}
\newcommand{\pushpppv}{\mathchoice
	{\mkern31.9mu\rule[.6ex]{.5em}{.5pt}}
	{\mkern2.8mu\rule[.5ex]{.35em}{.8pt}}
	{\mkern2.5mu\rule[.50ex]{.7em}{.7pt}}
	{\mkern2mu\rule[.5ex]{.7em}{.5pt}}
}
\def\TagOnRight
\def\AA{{it I} \hskip-3pt{\tt A}}
\def\QQ{\rlap {\raise 0.4ex \hbox{$\scriptscriptstyle |$}} {\hskip -0.1em Q}}
\def\theequation{\@arabic{\c@section}.\@arabic{\c@equation}}
\renewcommand{\div}{{\mathrm{div}}} % divergence
\newcommand{\DT}{\mathbb{D}} % strain tensor
\newcommand{\D}[1]{\mathcal{D}(#1)}
\newcommand{\vD}[1]{\bm{\mathcal{D}}(#1)}
\newcommand{\vDsol}[1]{\bm{\mathcal{D}}_\sigma(#1)}
\newcommand{\HC}[2]{\mathcal{C}^{{#1},{#2}}} % Holder spaces
\renewcommand{\L}[1]{L^{#1}(\Omega)}
\newcommand{\Lb}[1]{L^{#1}(\Gamma)}
\newcommand{\vL}[1]{\bm{L}^{#1}(\Omega)}
\newcommand{\vLb}[1]{\bm{L}^{#1}(\Gamma)}
\renewcommand{\H}[1]{H^{#1}(\Omega)}
\newcommand{\vH}[1]{\bm{H}^{#1}(\Omega)}
\newcommand{\vHfracb}[2]{\bm{H}^{\frac{#1}{#2}}(\Gamma)}
\newcommand{\vHfracbd}[2]{\bm{H}^{-\frac{#1}{#2}}(\Gamma)}
\newcommand{\W}[2]{W^{#1,#2}(\Omega)}
\newcommand{\vW}[2]{\bm{W}^{#1,#2}(\Omega)}
\newcommand{\vWz}[2]{\bm{W}^{#1,#2}_{0}(\Omega)}
\newcommand{\Wfracb}[2]{W^{#1,#2}(\Gamma)}
\newcommand{\vWfracb}[2]{\bm{W}^{#1,#2}(\Gamma)}
\newcommand{\vVsolT}[1]{\bm{V}^{#1}_{\sigma,\tau}(\Omega)}
\newcommand{\vE}[1]{\bm{E}^{#1}(\Omega)}
\newcommand{\vn}{\bm{n}} % unit normal vector
\newcommand{\vt}{\bm{\tau}} % unit tangent vector
\newcommand{\vu}{\bm{u}} % velocity vector field
\newcommand{\KEYWORDS}[1]{\paragraph{Keywords:}#1\par}
\begin{document}
	
\title{Stokes and Navier-Stokes equations with Navier boundary condition}

%\author{Paul Acevedo \and Ch\'erif Amrouche \and Carlos Conca \and Amrita Ghosh}

\author[1]{P.~Acevedo \thanks{paul.acevedo@epn.edu.ec}}
%\ead{paul.acevedo@epn.edu.ec}
\author[2]{C.~Amrouche \thanks{cherif.amrouche@univ-pau.fr}}
%\ead{cherif.amrouche@univ-pau.fr}
\author[3]{C.~Conca \thanks{cconca@dim.uchile.cl}}
%\ead{cconca@dim.uchile.cl}
\author[2,4]{A.~Ghosh \thanks{amrita.ghosh@univ-pau.fr}}
%\ead{amrita.ghosh@univ-pau.fr}

%\cortext[cor1]{Principal corresponding author}
%\cortext[cor2]{Corresponding author}
\affil[1]{Escuela Polit\'{e}cnica Nacional, Departamento de Matem\'{a}tica,	Facultad de Ciencias, Ladr\'{o}n de Guevara E11-253, P.O.Box 17-01-2759, Quito, Ecuador.}
\affil[2]{LMAP, UMR CNRS 5142, B\^atiment IPRA, Avenue de l'Universit\'e - BP 1155, 64013 Pau Cedex, France.}
\affil[3]{Departamento de Ingenier\'{i}a Matem\'{a}tica, Facultad de Ciencias F\'{i}sicas y Matem\'{a}ticas, Universidad de Chile, Santiago, Chile.}
\affil[4]{Departamento de Matem\'{a}ticas Facultad de Ciencias y Tecnolog\'{i}a, Universidad del Pa\'{i}s Vasco, Barrio Sarriena s/n, 48940 Lejona, Vizcaya, Spain.}
\maketitle

\begin{abstract}
We study the stationary Stokes and Navier-Stokes equations with non-homogeneous Navier boundary condition in a bounded domain $\Omega\subset\R^{3}$ of class $\HC{1}{1}$. We prove the existence, uniqueness of weak and strong solutions in $\vW{1}{p}$ and $\vW{2}{p}$ for all $1<p<\infty$ considering minimal regularity on the friction coefficient $\alpha$. Moreover, we deduce uniform estimates on the solution with respect to $\alpha$ which enables us to analyze the behavior of the solution when $\alpha \rightarrow \infty$.
\end{abstract}

\KEYWORDS{Stokes equations, non-homogeneous Navier boundary condition, weak solution, $L^p$-regularity, Navier-Stokes equations, inf-sup condition}

\tableofcontents

\section{Introduction}

Let $\Omega$ be a bounded domain in $\R^{3}$ with boundary $\Gamma$, possibly not connected, of class $\HC{1}{1}$ (additional smoothness of the boundary will be precised whenever needed). Consider the stationary Stokes equations
\begin{equation}
\label{S'}
-\Delta\vu +\nabla \pi=\bm{f},\quad \div\;\vu=\chi \ \text{ in $\Omega$}
\end{equation}
and the stationary Navier-Stokes equations
\begin{equation}
\label{NS'}
-\Delta\vu +(\vu \cdot\nabla) \vu + \nabla \pi=\bm{f},\quad \div\;\vu=\chi \ \text{ in $\Omega$}
\end{equation}
where $\vu$ and $\pi$ are the velocity field and the pressure of the fluid respectively, $\bm{f}$ is the external force acting on the fluid and $\chi$ stands for the compressibility condition.

Concerning these equations, the first thought goes to the classical no-slip Dirichlet boundary condition
\begin{equation}\label{Dbc}
\vu = \bm{0} \ \text{ on } \Gamma .
\end{equation}
This condition was formulated by G. Stokes in 1845. An alternative was suggested by C.L. Navier \cite{Navier} even before, in 1823. Along with the usual impermeability condition
\begin{equation}
\label{impbc}
\vu\cdot\vn=0 \text{ on } \Gamma
\end{equation}
Navier proposed a slip-with-friction boundary condition and claimed that the component of the fluid velocity tangent to the surface, instead of being zero, should be proportional to the rate of strain at the surface i.e.
\begin{equation}
\label{Nbc}
2\left[(\DT\vu)\vn\right]_{\vt}+\alpha\vu_{\vt}=\bm{0} \ \text{ on $\Gamma$}
\end{equation}
where $\vn$ and $\vt$ are the unit outward normal and tangent vectors on $\Gamma$ respectively and $\DT\vu = \frac{1}{2}(\nabla \vu + \nabla\vu^T)$ is the rate of strain tensor. Here, $\alpha$ is the coefficient which measures the tendency of the fluid to slip on the boundary, called \textit{friction coefficient}.

Although the no-slip hypothesis seems to be in good agreement with experiments, it leads to certain rather surprising conclusions, the most striking one being the absence of collisions of rigid bodies immersed in a linearly viscous fluid (see \cite{H}). In contrast with the no-slip condition, Navier's boundary conditions offer more freedom and are likely to provide a physically acceptable solution at least to some of the paradoxical phenomenons, resulting from the no-slip condition (see for instance, \cite{R}). There have been several attempts in the literature to provide a rigorous justification of the no-slip boundary condition, based on the idea that the physical boundary is never smooth but contains small asperities that drive the fluid to rest under the mere impermeability hypothesis! These kind of results have been shown by Casado-Diaz et al. \cite{CD} in the case of periodically distributed asperities and the references therein. Conversely, \cite{M} identified the Navier' slip boundary condition as a suitable approximation of the behavior of a viscous fluid out of a boundary layer created by the no-slip condition, imposed on a rough boundary. It is worth noting that these results are by far not contradictory but reflect two conceptually different approaches in mathematical modelling of viscous fluids. Some interesting remarks on the use of this boundary condition can be found in Serrin \cite{S}. Recently this boundary condition has also been identified as appropriate for some large Eddy simulation models for turbulent flows, see for instance Galdi and Layton \cite{GL}. Note that, in \cite{Coron}, F. Coron has derived rigorously the slip boundary condition \eqref{Nbc} from the boundary condition at the kinetic level (Boltzmann equation) for compressible fluids. The Navier slip with friction conditions are also used for simulations of flows near rough boundaries, such as in aerodynamics, in weather forecast, in haemodynamics etc.

In this work, we study the existence, uniqueness and regularity of solutions of the system \eqref{S'} and \eqref{NS'} with the boundary condition (\ref{impbc})-\eqref{Nbc}. Skipping the extremely extensive literature of the well-known no-slip boundary condition, we give a brief overview of some of the available results on the Navier/Navier-type boundary conditions. Concerning the non-stationary Navier-Stokes equation with Navier boundary condition, there are considerably many works, among other reasons, for studying the limiting viscosity case. For example, in 2D, the existence of solutions are studied assuming $\alpha$ in $C^2(\Gamma)$ (see Mikeli\'{c} et al \cite{CMR}) and for $\alpha$ in $\Lb{\infty}$ (see Kelliher \cite{K}); whereas in 3D, Beir\~{a}o Da Veiga \cite{BDV7} considered the system with $\alpha$ a positive constant and Iftimie and Sueur \cite{IS} have analysed the case with $\alpha$ in $C^2(\Gamma)$. Also we refer to the work of D. Bucur et al \cite{BFNW} in the periodic boundary case (see the references therein also). We mention here the paper of Monniaux \cite{Monniaux} where they studied the similar boundary condition in Lipschitz domain but with $\alpha$ depending on both time and space variables. On the contrary, for the stationary problem, comparatively less works are known. The first paper concerning basic existence and regularity result is by Solonnikov and Scadilov \cite{SS} where they treated the problem for $\alpha = 0$. They considered the stationary Stokes system with Dirichlet boundary condition on some part and Navier boundary conditions \eqref{Nbc} (with $\alpha=0$) on the other part of the boundary and showed existence of weak solution in $\vH{1}$ which is regular (belongs to $\bm{H}^2_{loc}(\Omega)$) upto some part of the boundary (except in the neighbourhood of the intersection of the two part). Also, it is worth mentioning the work of Beir\~{a}o Da Veiga \cite{BDV4} where he proved existence results of weak and strong solution of the Stokes problem in the $L^2$-settings, but for more generalized system and again with positive constant $\alpha$. Also he did not precise in the estimate, the dependence of the constant on $\alpha$. Recently, Berselli \cite{B} gave some result concerning very weak solution in the special case of a flat domain in $\R^{3}$, in general $\vL{p}$ settings and considering $\alpha = 0$ which is based on the regularity theory of Poisson equation. In the paper of Amrouche and Rejaiba \cite{AR}, they proved the existence and regularity of weak, strong and very weak solutions in a bounded domain in $\R^{3}$ for all $1<p<\infty$ for non-smooth data, but for $\alpha = 0$. In the work of Medkov\'{a} \cite{Medkova}, we can find various other forms of Navier problems and the references therein. Furthermore, the numerical study has been done in, e.g. Verf\"{u}rth \cite{V} (though again for $\alpha = 0$) and John \cite{John}.

Note that, as shown in \eqref{26}, the two boundary conditions
\begin{equation}\label{Navier type}
\mathbf{curl} \ \vu \times \vn = \bm{0}
\end{equation}
and
$$
2\left[(\DT\vu)\vn\right]_{\vt}+\alpha\vu_{\vt}=\bm{0}
$$
are very much similar and in the case of flat boundary and $\alpha = 0$, they are actually equal. Hence, there are several studies concerning this Navier type boundary condition \eqref{Navier type} as well. We refer to \cite{AS1} where Amrouche et al. studied the weak, strong and very weak solutions and the references therein.  

In both cases of stationary and evolution problem, all the available works have considered $\alpha$ as either a constant or a smooth function, as mentioned before. In this work, we analyse the possible minimal regularity of $\alpha$ for the existence of weak and strong solutions in $\L{p}$ for all $1<p<\infty$ [see \eqref{def_exponent_tp_alpha}]. Also, it is worth mentioning that the restriction that $\alpha$ is non-negative is usual, in order to ensure the conservation of energy. But mathematically, we can take into account the negative values of $\alpha$ as well. Some authors have studied the evolution system with $\alpha$ negative where there was no mathematical difficulty due to the access of Gronwall inequality. But in the stationary problem, it is not the case.

Another interesting question we try to answer here is the precise dependence of the solution of (\ref{S}) or (\ref{NS}) on $\alpha$ and if we let the function $\alpha$ tends to $\infty$ or $0$ in \eqref{Nbc}, how does the solution behave? As per the authors' knowledge, there is no previous work on that even if $\alpha$ is smooth function or a constant. We prove the estimates in Theorem \ref{L^2estimate} and Theorem \ref{T2} which show that the solution is uniformly bounded with respect to $\alpha$. The proof of Theorem \ref{T2} is interesting in the sense that it exploits the uniform $\bm{L}^2$-estimate and the observation by Z. Shen that for any $p>2$, $\bm{W}^{1,p}$-estimate for (certain) elliptic equations is equivalent to the \textit{weak reverse h\"{o}lder inequality} (\ref{rhi}). Moreover, in Section \ref{the limiting case}, we show that as $\alpha$ converges to $0$, the solution of the Stokes equation with Navier boundary conditions converges strongly to the solution of the Stokes equation corresponding to $\alpha =0$ and if $\alpha$ goes to $\infty$, the solution converges strongly to the Stokes equation with Dirichlet boundary condition. This type of convergence is, in a sense, an inverse of the derivation of the Navier boundary conditions from no-slip boundary condition for rough boundaries as we have explained previously also. In \cite{Conca}, Conca studied similar system in a bounded domain in $\R^{2}$ with smooth viscosity. There he assumed the well-posedness of the problem \eqref{S'} (or, \eqref{NS'}) with (\ref{impbc})-\eqref{Nbc} and proved some convergence results as $\varepsilon$ goes to $0$, based on homogenization theory, where he considered the domain depends on $\varepsilon$ as well. In some sense, our work generalizes the work in \cite{Conca}. We want to mention that the main purpose of this work is to develop a complete $L^{p}$-theory for all $1<p<\infty$ to deal with the well-posedness for the stationary Stokes and Navier-Stokes equations with full Navier boundary condition, considering general non-regular $\alpha$ and study the limiting cases. Hence, our work can be useful to study the evolution problem and the other related issues; for example, the coupled fluid-structure interaction problems are another interesting open problem. Also the estimates obtained in Section \ref{sec:6sub:2}, precisely in Theorem \ref{P3} can be of independent interest. The corresponding well-posedness and limiting behavior for non-linear system is studied in Section \ref{Navier-Stokes equation}. The main results of our work are mentioned below.

\section{Main results}\label{main results}
\setcounter{equation}{0}

Let us briefly discuss here the main results of our paper, referring to the next sections for precise definitions and complete proofs. Since the case $\alpha \equiv 0$ in (\ref{Nbc}) has already been studied in \cite{AR}, here onwards we consider that $\alpha \not \equiv 0$ on $\Gamma$. If we do not precise otherwise, we will always assume
$$
\alpha \ge 0 \quad \text{ on } \Gamma \quad \text{ and }
\alpha> 0 \quad \text{ on some } \Gamma_0\subset \Gamma \quad \text{ with } |\Gamma_0|>0.
$$
Also we denote as 
\begin{equation}
\label{beta}
\bm{\beta}(x) = \bm{b}\times \bm{x}
\end{equation}
when $\Omega$ is axisymmetric with respect to a constant vector $\bm{b} \in\R^{3}$.

Note that we can always reduce the non vanishing divergence problem
\begin{equation*}
\begin{cases}
-\Delta\vu +\nabla \pi=\bm{f} + \div \ \mathbb{F},\quad \div\;\vu=\chi \ &\text{ in $\Omega$}\\
\vu\cdot\vn=g, \quad \left[(2\DT\vu + \mathbb{F})\vn \right]_{\vt}+\alpha\vu_{\vt}=\bm{h} \ &\text{ on $\Gamma$}
\end{cases}
\end{equation*}
to the case where $\div\ \vu = 0$ in $\Omega$ and $\vu\cdot \vn = 0$ on $\Gamma$, by solving the following Neumann problem
\begin{equation*}
%\left\{
\begin{array}{lr}
\Delta \theta = \chi \quad \text{ in }\Omega, \quad \frac{\partial \theta}{\partial \vn} = g \quad \text{ on }\Gamma
\end{array}
%\right.
\end{equation*}
and hence using the change of unknowns $\bm{w} = \vu - \nabla \theta$ and $\Pi = \pi - \chi$. Therefore, it is sufficient to study the following Stokes problem:
\begin{equation}
\label{S}
\tag{S}
\begin{cases}
-\Delta\vu +\nabla \pi=\bm{f} + \div \ \mathbb{F},\quad \div\;\vu=0 \ &\text{ in $\Omega$}\\
\vu\cdot\vn=0, \quad \left[(2\DT\vu + \mathbb{F})\vn \right]_{\vt}+\alpha\vu_{\vt}=\bm{h} \ &\text{ on $\Gamma$}.
\end{cases}
\end{equation}
The first main result is the existence and uniqueness of weak and strong solution of the Stokes problem (\ref{S}) which is given in Theorem \ref{thm_weak_sol_Stokes_Nbc} in the Hilbert case and in Corollary \ref{thm_W1p_regularity_Stokes_Nbc} and Theorem \ref{thm_W2p_regularity_Stokes_Nbc} for $p\ne 2$. Note that we proved more general existence result in Theorem \ref{31} than the solution of the problem \eqref{S}.

For that, we need the following assumption on $\alpha$:
\begin{equation}
\label{def_exponent_tp_alpha}
\alpha\in\Lb{t(p)} \quad \text{ with }\quad
\begin{cases}
t(p) = 2 & \text{if\quad} p=2\\
t(p) > 2 & \text{if\quad} \frac{3}{2}\leq p\leq 3, p \neq 2\\
t(p)> \frac{2}{3} \max\{p,p^\prime\} & \text{otherwise}
\end{cases}
\end{equation}
and where $t(p) = t(p')$. Also we will always assume, unless stated otherwise, $\mathbb{F}$ is a $3\times 3$ matrix and $\bm{h}\cdot \vn = 0$ on $\Gamma$ and we will not repeat these hypothesis every time.

\begin{theorem}[{\bf Existence of weak and strong solutions of Stokes problem}]
\label{ch4_teo2}
Let $p\in (1,\infty)$.\\
{\bf (i)} If
$$
\bm{f}\in\vL{r(p)},\,\, \mathbb{F}\in\mathbb{L}^p(\Omega),\,\, \bm{h}\in\vWfracb{-\frac{1}{p}}{p} \text{ and } \alpha\in\Lb{t(p)}
$$
where $t(p)$ as above and $r(p)$ is defined in \eqref{def_exponent_rp}, then the Stokes problem \eqref{S} has a unique solution $(\vu,\pi)\in\vW{1}{p}\times L^{p}_0(\Omega)$.\\
{\bf (ii)} Moreover, if $ \mathbb{F} = 0$ and $$
\bm{f}\in\vL{p},\,\, \bm{h}\in\vWfracb{1-\frac{1}{p}}{p} \text{ and } \alpha \in \Wfracb{1-\frac{1}{q}}{q}
$$
with $q>\frac{3}{2}$ if $p\le \frac{3}{2}$ and $q=p$ otherwise,
then the solution $(\vu,\pi)$ belongs to $\vW{2}{p}\times\W{1}{p}$.
\end{theorem}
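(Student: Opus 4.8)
The plan is to establish the two parts separately, handling the $W^{1,p}$ existence-uniqueness first and then bootstrapping to $W^{2,p}$ regularity. Since the excerpt tells us this theorem collects results proved in the Hilbert case (Theorem \ref{thm_weak_sol_Stokes_Nbc}) and in Corollary \ref{thm_W1p_regularity_Stokes_Nbc} and Theorem \ref{thm_W2p_regularity_Stokes_Nbc}, and that the more general Theorem \ref{31} already contains the weak solution, I would first reduce the assertion to those statements. For part (i), the strategy is the classical variational one: introduce the bilinear form associated with the symmetric-gradient formulation and the space of divergence-free, tangential vector fields $\vVsolT{p}$. The natural weak formulation uses the identity $2\int_\Omega \DT\vu : \DT\bm{v}\,\mathrm{d}x + \int_\Gamma \alpha\,\vu_\vt\cdot\bm{v}_\vt\,\mathrm{d}s$, so the key analytic ingredients are a Korn-type inequality to get coercivity on the divergence-free tangential space and a generalized inf-sup (LBB) condition to recover the pressure. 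The boundary term $\int_\Gamma \alpha\,\vu_\vt\cdot\bm{v}_\vt$ must be controlled, and this is exactly where the hypothesis $\alpha\in\Lb{t(p)}$ with the threshold in \eqref{def_exponent_tp_alpha} enters: by Sobolev trace embedding $\vW{1}{p}\hookrightarrow\vLb{s}$ one needs $t(p)$ large enough that the trilinear boundary expression is bounded on $\vW{1}{p}$, and the stated exponents are precisely those ensuring this by Hölder on $\Gamma$.

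For the $L^p$ theory with $p\neq 2$ I would not prove coercivity directly (the form is not coercive in $\vW{1}{p}$); instead I would use a duality/perturbation argument anchored at the Hilbert case. The canonical route is: first prove the a priori $\bm{L}^2$-estimate and $W^{1,p}$-estimate for the homogeneous problem (which the introduction attributes to Z. Shen's reverse-Hölder characterization), then treat the friction and data terms $\alpha\vu_\vt$, $\bm{f}$, $\div\,\mathbb{F}$, $\bm{h}$ as right-hand sides and invoke the already-established solvability for the $\alpha=0$ (or $\mathbf{curl}$-type) problem from \cite{AR},\cite{AS1}. Uniqueness follows from the energy identity: testing the homogeneous problem against $\vu$ itself gives $2\|\DT\vu\|_{L^2}^2 + \int_\Gamma\alpha|\vu_\vt|^2 = 0$, whence by Korn's inequality $\vu$ is a rigid infinitesimal motion, and the tangency condition $\vu\cdot\vn=0$ together with $\alpha>0$ on $\Gamma_0$ forces $\vu=\bm{0}$ (this is where the axisymmetric vector $\bm{\beta}$ of \eqref{beta} is the only possible obstruction, excluded by the assumption $|\Gamma_0|>0$).

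For part (ii), the $W^{2,p}$ regularity, the plan is a standard localization-and-lifting bootstrap. Having a $W^{1,p}$ solution from part (i), I would rewrite \eqref{S} (with $\mathbb{F}=0$) as a Stokes system whose boundary data now involve $\alpha\vu_\vt$ as a known term living in a trace space of order $1-\frac1p$; the hypothesis $\alpha\in\Wfracb{1-\frac1q}{q}$ with the stated $q$ is calibrated so that the product $\alpha\vu_\vt$ lands in $\vWfracb{1-\frac1p}{p}$, using a pointwise-multiplier / fractional Leibniz estimate on $\Gamma$ together with the trace of the $W^{1,p}$ solution. Then I would apply the full-regularity result for the Navier/Navier-type Stokes problem with $\mathbb{F}=0$ (Theorem \ref{thm_W2p_regularity_Stokes_Nbc}) to upgrade $(\vu,\pi)$ to $\vW{2}{p}\times\W{1}{p}$. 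The main obstacle I anticipate is precisely this multiplier estimate on the curved $\HC{1}{1}$ boundary: showing that multiplication by $\alpha\in\Wfracb{1-\frac1q}{q}$ maps $\vWfracb{1-\frac1p}{p}$ into itself (so that the boundary term does not destroy the regularity gain) requires the sharp fractional-Sobolev product rule on $\Gamma$ and is what dictates the restriction $q>\frac32$ when $p\le\frac32$ and $q=p$ otherwise.
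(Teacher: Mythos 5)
Your outline for the Hilbert case, for $p>2$, and for part (ii) matches the paper's proof in essence: Lax--Milgram with Korn-type coercivity on $\vVsolT{2}$ and De Rham for the pressure; for $p>2$ a bootstrap in which $\alpha\vu_{\vt}$ is treated as known data for the $\alpha=0$ theory of \cite{AR} (the paper itself remarks after Corollary \ref{thm_W1p_regularity_Stokes_Nbc} that this route is legitimate for $p>2$, although its own Theorem \ref{31} goes through the curl--curl inf-sup condition of \cite{AS}); and for the strong solution a product estimate showing $\alpha\vu_{\vt}\in\vWfracb{1-\frac{1}{p}}{p}$ followed by the ADN regularity result \cite[Theorem 4.1]{AR} — the paper obtains the product estimate by lifting $\alpha$ into $\W{1}{q}$ and estimating $\frac{\partial\alpha}{\partial x_j}u_i$ and $\alpha\frac{\partial u_i}{\partial x_j}$ in Lebesgue spaces rather than by a fractional Leibniz rule on $\Gamma$, but that is a cosmetic difference.

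The genuine gap is existence and uniqueness for $1<p<2$ in part (i). Every mechanism you invoke is an \emph{upward} bootstrap anchored at the Hilbert solution: perturbing off the $\alpha=0$ solvability requires a solution to already exist in $\vH{1}$, and Shen's weak reverse-H\"older characterization only produces $\bm{W}^{1,p}$ estimates for $p>2$ (the paper uses it exclly there, in Theorem \ref{T1}). When $p<2$ the data $\bm{f}\in\vL{r(p)}$, $\mathbb{F}\in\mathbb{L}^p(\Omega)$, $\bm{h}\in\vWfracb{-\frac{1}{p}}{p}$ are strictly weaker than what the $L^2$ theory accepts, so there is no $\vH{1}$ solution to perturb from and the bootstrap cannot start; the paper states this obstruction explicitly ("but only for $p>2$. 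We need Theorem \ref{31} to obtain the existence of solution for $p<2$"). The missing idea is transposition: the bilinear form $a(\vu,\bm{\varphi})=2\int_{\Omega}\DT\vu:\DT\bm{\varphi}+\int_{\Gamma}\alpha\vu_{\vt}\cdot\bm{\varphi}_{\vt}$ is symmetric, so the operator $A\in\mathcal{L}(\vVsolT{p},[\vVsolT{p'}]')$ it induces equals its adjoint; once $A$ is proved to be an isomorphism for every $p\ge 2$, duality (Theorem \ref{infsup}) immediately makes $A$ an isomorphism for $p'\le 2$, giving existence and uniqueness simultaneously. Your single word "duality" gestures at this but no argument is given, and it cannot be replaced by the perturbation route you describe. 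Relatedly, your uniqueness proof (testing the homogeneous problem against $\vu$ to get $2\|\DT\vu\|^2_{\mathbb{L}^2(\Omega)}+\int_{\Gamma}\alpha|\vu_{\vt}|^2=0$) presupposes $\vu\in\vH{1}$ and therefore also fails for $p<2$ without either the isomorphism property or a separate regularity step for solutions of the homogeneous problem.
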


Also we obtain some uniform bounds for the weak solution of the problem (\ref{S}) in $\vW{1}{p}$ for all $p\in (1,\infty)$, which we believe are quite interesting on its own.
\begin{theorem}[{\bf Stokes estimates}]
\label{main_result_est}
Let $p\in (1,\infty)$ and $(\vu,\pi)\in\vW{1}{p}\times L^{p}_0(\Omega)$ be the solution of the Stokes problem (\ref{S}). Then it satisfies the following estimates:\\
{\bf (i)} if $\Omega$ is not axisymmetric, then
	\begin{equation*}
	\|\vu\|_{\vW{1}{p}} + \|\pi\|_{L^{p}(\Omega)} \leq C_p(\Omega) \left( \|\bm{f}\|_{\vL{r(p)}} + \|\mathbb{F}\|_{\mathbb{L}^p(\Omega)} + \|\bm{h}\|_{\vWfracb{-\frac{1}{p}}{p}} \right) .
	\end{equation*}
	\medskip
{\bf (ii)} if $\Omega$ is axisymmetric and $\alpha \geq \alpha_* >0$, then
	\begin{equation*}
	\|\vu\|_{\vW{1}{p}} + \|\pi\|_{L^{p}(\Omega)} \leq \frac{C_p(\Omega)}{\min\{2,\alpha_*\}} \left( \|\bm{f}\|_{\vL{r(p)}} + \|\mathbb{F}\|_{\mathbb{L}^p(\Omega)}+ \|\bm{h}\|_{\vWfracb{-\frac{1}{p}}{p}} \right) .
	\end{equation*}
\end{theorem}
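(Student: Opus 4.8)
The plan is to prove the a priori bound only, since existence and uniqueness of $(\vu,\pi)\in\vW{1}{p}\times L^{p}_0(\Omega)$ are already furnished by Theorem \ref{ch4_teo2}; one then establishes the estimate first for $p=2$ with the stated $\alpha$-dependence and bootstraps to all $p\in(1,\infty)$ by a Shen-type real-variable argument together with duality. The natural starting point is the variational formulation: testing \eqref{S} against $\bm{v}$ in $\vVsolT{2}=\{\bm{v}\in\vH{1}:\div\,\bm{v}=0,\ \bm{v}\cdot\vn=0\ \text{on}\ \Gamma\}$ and integrating by parts turns the condition \eqref{Nbc} into the bilinear form
\begin{equation*}
a(\vu,\bm{v}) = 2\int_\Omega \DT\vu:\DT\bm{v}\,\mathrm{d}x + \int_\Gamma \alpha\,\vu_{\vt}\cdot\bm{v}_{\vt}\,\mathrm{d}s ,
\end{equation*}
with right-hand side depending linearly on $\bm{f}$, $\mathbb{F}$ and $\bm{h}$. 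Since $\alpha\ge 0$, the boundary integral is a sign-definite contribution, which is precisely what makes uniformity in $\alpha$ plausible.

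For $p=2$ the estimate reduces to coercivity of $a$ on $\vVsolT{2}$, which is governed by a Korn-type inequality, and this is where the axisymmetry dichotomy enters. The kernel $\{\bm{v}\in\vVsolT{2}:\DT\bm{v}=\bm{0}\}$ is trivial precisely when $\Omega$ is not axisymmetric, and equals $\spn\{\bm{\beta}\}$, with $\bm{\beta}$ as in \eqref{beta}, when it is. In the non-axisymmetric case I would invoke the resulting Korn inequality $\|\bm{v}\|_{\vH{1}}^2\le C_K(\Omega)\,\|\DT\bm{v}\|_{\vL{2}}^2$; dropping the nonnegative friction term then yields $a(\vu,\vu)\ge \tfrac{2}{C_K}\|\vu\|_{\vH{1}}^2$, a coercivity constant independent of $\alpha$, so that Lax--Milgram plus an $\alpha$-independent inf--sup (de Rham) recovery of $\pi$ gives (i) for $p=2$. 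In the axisymmetric case Korn holds only modulo $\bm{\beta}$, so I would use $\|\bm{v}\|_{\vH{1}}^2\le C_K(\Omega)\big(\|\DT\bm{v}\|_{\vL{2}}^2+\|\bm{v}_{\vt}\|_{\vLb{2}}^2\big)$ and note that $\int_\Gamma\alpha|\vu_{\vt}|^2\ge\alpha_*\|\vu_{\vt}\|_{\vLb{2}}^2$ exactly controls the missing $\bm{\beta}$-direction; this produces $a(\vu,\vu)\ge \tfrac{\min\{2,\alpha_*\}}{C_K}\|\vu\|_{\vH{1}}^2$, which is the source of the factor $1/\min\{2,\alpha_*\}$ in (ii).

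To pass from $p=2$ to $p>2$ I would follow Shen's principle that a $\vW{1}{p}$ bound for the associated system is equivalent to a weak reverse H\"older inequality (cf. \eqref{rhi}) for $|\nabla\vu|$. Concretely, for local solutions of the homogeneous problem on interior and boundary balls I would establish an inequality of the form $\big(\frac{1}{|B|}\int_{B\cap\Omega}|\nabla\vu|^{p}\big)^{1/p}\le C\big(\frac{1}{|2B|}\int_{2B\cap\Omega}|\nabla\vu|^{2}\big)^{1/2}$ plus controlled data terms, using Caccioppoli estimates and the $L^2$ theory just obtained; Shen's real-variable lemma then upgrades this to the global bound for $p>2$, with data measured in $\vL{r(p)}$, $\mathbb{L}^p(\Omega)$ and $\vWfracb{-\frac{1}{p}}{p}$, and inheriting the $\alpha$-dependence of the $L^2$ base estimate (so the axisymmetric factor $1/\min\{2,\alpha_*\}$ propagates). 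The range $1<p<2$ I would then obtain by duality: since $t(p)=t(p')$, the adjoint boundary value problem has the same structure and the same coercivity constants, so pairing against $\vL{p'}$-data transfers the $p'>2$ estimate to the $p<2$ estimate while preserving the $\alpha$-dependence.

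The main obstacle is keeping every constant uniform in $\alpha$ (and, in the axisymmetric case, of the exact order $1/\min\{2,\alpha_*\}$) all the way through Shen's machinery. In the Hilbert step this is transparent because the $\alpha$-term is sign-definite, but in the reverse H\"older step the large-parameter boundary integral $\int_\Gamma\alpha\,\vu_{\vt}\cdot\bm{v}_{\vt}$ enters the Caccioppoli argument on boundary balls and could, if handled naively, contribute a constant growing with $\alpha$. The delicate point is to show that this term only \emph{improves} the local estimates, reflecting the fact that as $\alpha\to\infty$ the solution degenerates toward the Dirichlet solution rather than blowing up, so that the reverse H\"older constant, and hence $C_p(\Omega)$, can be chosen depending only on $p$ and $\Omega$. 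Verifying this $\alpha$-robustness of the boundary Caccioppoli and reverse-H\"older estimates is where the real work lies.
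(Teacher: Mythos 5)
Your proposal follows essentially the same route as the paper: an $\alpha$-uniform $\vH{1}$ estimate via Korn-type coercivity with the axisymmetric/non-axisymmetric dichotomy (the paper's Theorem \ref{L^2estimate}), then Caccioppoli plus weak reverse H\"older plus Shen's real-variable lemma for $p>2$ with the friction term dropped by its sign (Theorem \ref{T1}, Lemma \ref{Lrhi}, Lemma \ref{G0}), and finally duality exploiting $t(p)=t(p')$ for $1<p<2$ (Propositions \ref{P1} and \ref{P2}, assembled in Theorem \ref{T2}). The only cosmetic difference is that the paper runs the Shen machinery separately for the $\mathbb{F}$ and $\bm{f}$ right-hand sides and recovers the $\bm{h}$-term estimate by duality even for $p>2$, whereas you fold all data into one reverse-H\"older step, but this does not change the substance of the argument.
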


In Theorem \ref{L^2estimate} and Theorem \ref{13}, we discuss in detail the estimates of weak and strong solutions in the Hilbert case and the proof of Theorem \ref{main_result_est} for general $p$ is done in Theorem \ref{T2}.

The next theorem gives existence of weak and strong solutions for the following Navier-Stokes problem and some estimates.

\begin{equation}
\label{NS}
\tag{NS}
\begin{cases}
-\Delta\vu + \vu\cdot \nabla \vu+ \nabla \pi=\bm{f} + \div \ \mathbb{F},\quad \div\;\vu=0 & \text{in $\Omega$,}\\
\vu\cdot\vn=0, \qquad \ \left[(2\DT\vu + \mathbb{F})\vn\right]_{\vt}+\alpha\vu_{\vt}=\bm{h}& \text{on $\Gamma$.}
\end{cases}
\end{equation} 

\begin{theorem}[{\bf Existence of weak and strong solutions of Navier-Stokes problem and estimates}]
Let $p\in(\frac{3}{2},\infty)$ and
$$
\bm{f}\in \vL{r(p)},\,\, \mathbb{F} \in \mathbb{L}^p(\Omega),\,\, \bm{h}\in \vWfracb{-\frac{1}{p}}{p} \text{ and } \alpha\in \Lb{t(p)}.
$$
{\bf 1.} Then the problem \eqref{NS} has a solution $\left( \vu,\pi\right) \in\vW{1}{p}\times L^p_0(\Omega)$.\\
{\bf 2.} Also for any $p\in (1,\infty)$, if $\mathbb{F} = 0$ and
$$
\bm{f}\in\vL{p},\,\, \bm{h}\in\vWfracb{1-\frac{1}{p}}{p} \text{ and } \alpha\in \Wfracb{1-\frac{1}{q}}{q}
$$
with $q>\frac{3}{2}$ if $p\le \frac{3}{2}$ and $q=p$ otherwise, then $(\vu,\pi)\in\vW{2}{p}\times\W{1}{p}$.\\
{\bf 3.} For $p=2$, the weak solution $\left( \vu,\pi\right)\in \vH{1}\times L^2_0(\Omega)$ satisfies the following estimates:\\

{\bf a)} if $\Omega$ is not axisymmetric, then
\begin{equation*}
\|\vu\|_{\vH{1}}+\|\pi\|_{\L{2}}\leq{C(\Omega)}\left(\|\bm{f}\|_{\vL{\frac{6}{5}}}+ \|\mathbb{F}\|_{\mathbb{L}^2(\Omega)}+\|\bm{h}\|_{\vHfracbd{1}{2}}\right) .
\end{equation*}

{\bf b)} if $\Omega$ is axisymmetric and\\
\indent $\left(\textit{i} \right)$ $\alpha \geq \alpha_* >0$ on $\Gamma$, then
\begin{equation*}
\|\vu\|_{\vH{1}}+\|\pi\|_{L^{2}(\Omega)}\leq \frac{C(\Omega)}{\min\{2,\alpha_*\}}\left(\|\bm{f}\|_{\vL{\frac{6}{5}}}+ \|\mathbb{F}\|_{\mathbb{L}^2(\Omega)}+\|\bm{h}\|_{\vHfracbd{1}{2}}\right) .
\end{equation*}
\indent $\left(\textit{ii} \right)$ $\bm
{f}, \mathbb{F}$ and $\bm{h}$ satisfy the condition:
\begin{equation*}
\int\displaylimits_{\Omega}{ \bm{f} \cdot \bm{\beta}} -\int\displaylimits_{\Omega}{\mathbb{F}:\nabla \bm{\beta}}+ \left\langle \bm{h}, \bm{\beta}\right\rangle_\Gamma = 0
\end{equation*}
\qquad then, the solution $\vu$ satisfies $\int\displaylimits_{\Gamma}{\alpha \vu \cdot \bm{\beta}} = 0$ and
\begin{equation*}
\|\DT\vu\|_{\mathbb{L}^2(\Omega)}^2+\int\displaylimits_{\Gamma}{\alpha |\vu_{\vt}|^2} + \|\pi\|^2_{\L{2}}\leq C(\Omega)\left(\|\bm{f}\|_{\vL{\frac{6}{5}}}+ \|\mathbb{F}\|_{\mathbb{L}^2(\Omega)}+\|\bm{h}\|_{\vHfracbd{1}{2}}\right)^2 .
\end{equation*}
\indent In particular, if $\alpha$ is a constant, then $\int\displaylimits_{\Gamma}{\vu \cdot \bm{\beta}} = 0$ and
\begin{equation*}
\|\vu\|_{\vH{1}}+\|\pi\|_{\L{2}}\leq{C(\Omega)}\left(\|\bm{f}\|_{\vL{\frac{6}{5}}}+ \|\mathbb{F}\|_{\mathbb{L}^2(\Omega)}+\|\bm{h}\|_{\vHfracbd{1}{2}}\right) .
\end{equation*}
\end{theorem}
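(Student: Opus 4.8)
The plan is to build the solution on top of the linear Stokes theory of Theorem~\ref{ch4_teo2} and the estimates of Theorem~\ref{main_result_est}, treating the convective term as a perturbation. Since $\div\,\vu=0$, one has $(\vu\cdot\nabla)\vu=\div(\vu\otimes\vu)$, so the weak formulation reads: find $\vu\in\vVsolT{2}$ with
$$
2\int_\Omega\DT\vu:\DT\bm{\varphi}+\int_\Gamma\alpha\,\vu_\vt\cdot\bm{\varphi}_\vt+\int_\Omega(\vu\cdot\nabla)\vu\cdot\bm{\varphi}=\langle\mathcal{L},\bm{\varphi}\rangle
$$
for every $\bm{\varphi}\in\vVsolT{2}$, where $\mathcal{L}$ collects the data. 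For existence (Part~1) I would set up a fixed-point map $T:\bm{w}\mapsto\vu$, where $\vu$ solves the linear Stokes problem of Theorem~\ref{ch4_teo2} with the additional forcing $\div(-\bm{w}\otimes\bm{w})$. The restriction $p>\tfrac32$ is exactly what makes this forcing admissible: from $\vW{1}{p}\hookrightarrow\vL{p^\ast}$ one gets $\bm{w}\otimes\bm{w}\in\mathbb{L}^p(\Omega)$ precisely when $p\ge\tfrac32$, so $T$ maps $\vW{1}{p}$ into itself, and the compact embedding $\vW{1}{p}\hookrightarrow\hookrightarrow\vL{q}$ makes $T$ compact. A Leray--Schauder argument then closes, once an a priori bound on all possible fixed points is available.

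That a priori bound --- and the $p=2$ estimates of Part~3 --- come from testing the weak formulation with $\vu$ itself. The decisive structural fact is that, because $\div\,\vu=0$ and $\vu\cdot\vn=0$ on $\Gamma$, the trilinear term is antisymmetric, hence $\int_\Omega(\vu\cdot\nabla)\vu\cdot\vu=0$; the nonlinearity disappears from the energy identity, leaving
$$
2\,\|\DT\vu\|^2_{\mathbb{L}^2(\Omega)}+\int_\Gamma\alpha\,|\vu_\vt|^2=\langle\mathcal{L},\vu\rangle .
$$
Coercivity of the left-hand side is where the geometry enters: a Korn-type inequality controls $\|\vu\|_{\vH{1}}$ by $\|\DT\vu\|_{\mathbb{L}^2(\Omega)}$ unless $\Omega$ is axisymmetric, in which case $\DT$ has the nontrivial kernel element $\bm{\beta}=\bm{b}\times\bm{x}$ (which satisfies $\bm{\beta}\cdot\vn=0$ on $\Gamma$) and coercivity must be recovered from the boundary term, which is why Part~3b requires $\alpha\ge\alpha_\ast>0$. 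This reproduces the non-axisymmetric estimate~3a and the axisymmetric estimate~3b(i) with the factor $1/\min\{2,\alpha_\ast\}$ exactly as in Theorem~\ref{main_result_est}.

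For the conditional estimate~3b(ii) I would test instead with $\bm{\beta}$. Two cancellations make this clean: since $\DT\bm{\beta}=0$ the strain term drops, and by the antisymmetry again $\int_\Omega(\vu\cdot\nabla)\vu\cdot\bm{\beta}=-\int_\Omega(\vu\cdot\nabla)\bm{\beta}\cdot\vu=-\int_\Omega(\bm{b}\times\vu)\cdot\vu=0$, so the identity collapses to $\int_\Gamma\alpha\,\vu\cdot\bm{\beta}=\langle\mathcal{L},\bm{\beta}\rangle$. The hypothesis $\int_\Omega\bm{f}\cdot\bm{\beta}-\int_\Omega\mathbb{F}:\nabla\bm{\beta}+\langle\bm{h},\bm{\beta}\rangle_\Gamma=0$ is precisely $\langle\mathcal{L},\bm{\beta}\rangle=0$, yielding $\int_\Gamma\alpha\,\vu\cdot\bm{\beta}=0$; when $\alpha$ is constant this forces $\int_\Gamma\vu\cdot\bm{\beta}=0$, i.e.\ orthogonality of $\vu$ to $\spn\{\bm{\beta}\}$, which restores the full Korn/Poincar\'e coercivity and hence the stated $\vH{1}$ bound. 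The intermediate estimate for $\|\DT\vu\|^2+\int_\Gamma\alpha|\vu_\vt|^2+\|\pi\|^2$ then follows from the energy identity together with the inf-sup bound on $\pi$.

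Finally, the strong-solution claims (Part~2) follow by a bootstrap on the weak solution: with $\vu\in\vH{1}\hookrightarrow\vL{6}$ one has $(\vu\cdot\nabla)\vu\in\vL{3/2}$, so feeding it as a right-hand side into the Stokes $\vW{2}{p}$-regularity of Theorem~\ref{ch4_teo2}(ii) improves the integrability of $\vu$; iterating raises the regularity until $(\vu\cdot\nabla)\vu\in\vL{p}$ and $\vu\in\vW{2}{p}$, the requirement $q>\tfrac32$ on $\alpha$ when $p\le\tfrac32$ being dictated by the hypotheses of that Stokes regularity step. I expect the main obstacle to be the coercivity bookkeeping in the axisymmetric case --- pinning down exactly when the boundary friction term compensates the loss of Korn coercivity on $\spn\{\bm{\beta}\}$, and propagating the resulting constants through the fixed-point and bootstrap arguments so that the estimates remain uniform in $\alpha$ as claimed.
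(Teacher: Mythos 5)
Your architecture for Parts 2 and 3 matches the paper's: the $p=2$ estimates are obtained exactly as you describe (energy identity after testing with $\vu$, the nonlinearity vanishing by antisymmetry of $b$; Korn-type coercivity split according to whether $\Omega$ is axisymmetric; testing with $\bm{\beta}$ and the cancellations $\DT\bm{\beta}=0$ and $b(\vu,\vu,\bm{\beta})=0$ for case b(ii)), and the strong regularity is the same bootstrap through the linear Stokes $\vW{2}{p}$-theory. For existence at $p=2$ the paper uses Galerkin approximation plus Brouwer's theorem rather than your Leray--Schauder map, which is an interchangeable choice; and for $p>2$ the paper needs no fixed point at all, since the $\vH{1}$ solution already satisfies $(\vu\cdot\nabla)\vu\in\vL{3/2}\hookrightarrow\vL{r(p)}$ for $p\le 3$ and can be promoted by the linear theory of Corollary \ref{thm_W1p_regularity_Stokes_Nbc}.

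The genuine gap is the range $\frac{3}{2}<p<2$, which the paper explicitly flags as the nontrivial case. Your a priori bound for the Leray--Schauder argument is supposed to come from ``testing the weak formulation with $\vu$ itself,'' but for a fixed point $\vu\in\vW{1}{p}$ with $p<2$ this is not legitimate: the admissible test functions lie in $\vVsolT{p'}$ with $p'>2$, so $\vu$ is not among them; worse, the trilinear term $b(\vu,\vu,\vu)$ need not even be finite, since $\vu\in\vL{p^*}$ and $\nabla\vu\in\vL{p}$ give integrability of $(\vu\cdot\nabla)\vu\cdot\vu$ only when $\frac{1}{p}+\frac{2}{p^*}\le 1$, i.e. $p\ge\frac{9}{5}$. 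So on $\frac{3}{2}<p<\frac{9}{5}$ the energy identity is meaningless as written, and on $\frac{9}{5}\le p<2$ it is unjustified. The paper instead treats $\frac{3}{2}<p<2$ by a separate construction following Serre, based on an iterative lemma that transfers solvability from an exponent $q$ to the exponent $s$ with $\frac{1}{s}=\frac{1}{q}+\frac{1}{p}-\frac{2}{3}$. Your framework can be salvaged, but only by adding a step you did not state: any solution of $\vu=\lambda T(\vu)$ in $\vVsolT{p}$ self-improves, because $\vu\otimes\vu\in\mathbb{L}^{p^*/2}(\Omega)$ with $\frac{p^*}{2}>p$ whenever $p>\frac{3}{2}$, so uniqueness for the linear Stokes problem pushes $\vu$ up the scale $p_{n+1}=\frac{3p_n}{2(3-p_n)}$, which strictly increases and leaves $(\frac{3}{2},2)$ in finitely many steps; only after landing in $\vH{1}$ may you invoke the energy identity and then feed the resulting bound back through the Stokes estimates. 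Without this self-improvement (or Serre's construction), Part 1 remains unproved precisely in the range $\frac{3}{2}<p<2$ that makes the statement interesting.
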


We refer to Theorem \ref{39} and Corollary \ref{reg_NS} for the proof of above result, which is similar to that of Stokes problem. Note that in the above theorem, the existence of weak solution in $\vW{1}{p}$ for $\frac{3}{2}<p<2$ is not trivial.

The last interesting result to mention here, is the strong convergence of \eqref{NS} to the Navier-Stokes equations with no-slip boundary condition when $\alpha$ grows large (see Theorem \ref{NS inf}). The proof is essentially based on the estimates obtained above.

\begin{theorem}[{\bf Limit case for Navier-Stokes problem}]
Let $p\ge 2$, $\alpha$ be a constant and $(\vu_\alpha, \pi_\alpha)$ be a solution of \eqref{NS} where
	$$
	\bm{f}\in\vL{r(p)},\,\, \mathbb{F}\in\mathbb{L}^p(\Omega) \text{ and } \bm{h}\in \vWfracb{-\frac{1}{p}}{p}.
	$$
	Then
	$$
	(\vu_\alpha, \pi _\alpha) \rightarrow (\vu_\infty, \pi_\infty) \quad \text{ in } \quad \vW{1}{p}\times L^{p}_0(\Omega) \quad \text{ as } \quad \alpha \rightarrow \infty
	$$
	where $(\vu_\infty,\pi_\infty)$ is a solution of the Navier-Stokes problem with Dirichlet boundary condition,
	\begin{equation*}
	\begin{aligned}
	\begin{cases} 
	-\Delta\vu_\infty + \vu_\infty \cdot \nabla \vu_\infty + \nabla \pi_\infty=\bm{f} +\div \ \mathbb{F} &\text{ in } \ \Omega ,\\
	\div\;\vu_\infty=0 &\text{ in } \ \Omega ,\\
	\vu_\infty =\bm{0} &\text{ on } \ \Gamma .
	\end{cases}
	\end{aligned}
	\end{equation*}
\end{theorem}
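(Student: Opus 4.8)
The plan is to combine the uniform $\vW{1}{p}$-bounds with the limiting result already established for the \emph{linear} Stokes problem, isolating the nonlinearity by a perturbation (splitting) argument so that the awkward boundary term never has to be estimated directly.

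First I would record the uniform bounds. For $\alpha$ constant with $\alpha\ge\alpha_*>0$, Theorem \ref{main_result_est} together with the a priori estimates obtained for \eqref{NS} give a bound on $\|\vu_\alpha\|_{\vW{1}{p}}+\|\pi_\alpha\|_{L^p_0(\Omega)}$ that is independent of $\alpha$ (in the non-axisymmetric case the constant is already $\alpha$-free, in the axisymmetric case it even decays like $1/\alpha$). By reflexivity I extract a subsequence with $\vu_\alpha\rightharpoonup\vu_\infty$ weakly in $\vW{1}{p}$ and $\pi_\alpha\rightharpoonup\pi_\infty$ weakly in $L^p_0(\Omega)$; the compact embedding $\vW{1}{p}\hookrightarrow\hookrightarrow\vL{q}$ yields strong convergence in $\vL{q}$ for the relevant $q$, which suffices to pass to the limit in the convective term $(\vu_\alpha\cdot\nabla)\vu_\alpha$. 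The boundary condition is handled by writing $(\vu_\alpha)_{\vt}=\tfrac{1}{\alpha}\big(\bm{h}-[(2\DT\vu_\alpha+\mathbb{F})\vn]_{\vt}\big)$: the bracket is bounded in $\vWfracb{-\frac{1}{p}}{p}$ uniformly in $\alpha$, so the tangential trace tends to $\bm 0$, and together with $\vu_\alpha\cdot\vn=0$ this forces $\vu_\infty=\bm{0}$ on $\Gamma$. Passing to the limit in the weak formulation then identifies $(\vu_\infty,\pi_\infty)$ as a solution of the Navier--Stokes problem with Dirichlet boundary condition.

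The heart of the matter is upgrading this to strong convergence. The naive idea of writing the equation for $\vu_\alpha-\vu_\infty$ fails, because $\vu_\infty$ does not satisfy the Navier condition and the difference inherits the fixed, non-vanishing boundary datum $\bm{h}-[(2\DT\vu_\infty+\mathbb{F})\vn]_{\vt}$, which the uniform estimate cannot make small. To circumvent this I introduce the auxiliary field $\vu_\alpha^{*}$, defined as the solution of the linear Stokes problem \eqref{S} with the \emph{same} boundary datum $\bm{h}$ and the \emph{same} parameter $\alpha$, but with the frozen source $\bm{f}-(\vu_\infty\cdot\nabla)\vu_\infty$. I then split
\begin{equation*}
\vu_\alpha-\vu_\infty=\big(\vu_\alpha-\vu_\alpha^{*}\big)+\big(\vu_\alpha^{*}-\vu_\infty\big).
\end{equation*}
Since $\vu_\infty$ is precisely the solution of the Stokes--Dirichlet problem with source $\bm{f}-(\vu_\infty\cdot\nabla)\vu_\infty$, the second bracket tends to $\bm{0}$ in $\vW{1}{p}$ by the limiting result already proved for the linear Stokes problem (Section \ref{the limiting case}). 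The first bracket solves \eqref{S} with parameter $\alpha$, \emph{zero} boundary datum, and right-hand side $-\big[(\vu_\alpha\cdot\nabla)\vu_\alpha-(\vu_\infty\cdot\nabla)\vu_\infty\big]$; the uniform Stokes estimate of Theorem \ref{main_result_est} therefore bounds its $\vW{1}{p}\times L^p_0(\Omega)$-norm by $C\,\|(\vu_\alpha\cdot\nabla)\vu_\alpha-(\vu_\infty\cdot\nabla)\vu_\infty\|_{\vL{r(p)}}$, with $C$ independent of $\alpha$.

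It thus remains to show that this nonlinear source tends to $\bm 0$. Writing $(\vu_\alpha\cdot\nabla)\vu_\alpha-(\vu_\infty\cdot\nabla)\vu_\infty=\big((\vu_\alpha-\vu_\infty)\cdot\nabla\big)\vu_\alpha+(\vu_\infty\cdot\nabla)(\vu_\alpha-\vu_\infty)$ and applying H\"older's inequality, each term is controlled by $\|\vu_\alpha-\vu_\infty\|_{\vL{a}}$, with $\tfrac1a=\tfrac1{r(p)}-\tfrac1p$, times a $\vW{1}{p}$-bounded factor. Since $\vu_\alpha\to\vu_\infty$ strongly in $\vL{a}$ --- by Rellich for $a$ below the Sobolev exponent, and by interpolating the strong $\vL{6}$ convergence (equivalently in $\vH{1}$, obtained from the energy identity in the case $p=2$) against the uniform $\vW{1}{p}$-bound otherwise --- the source converges to $\bm{0}$ in $\vL{r(p)}$, whence $\vu_\alpha-\vu_\alpha^{*}\to\bm{0}$ in $\vW{1}{p}$. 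Combining the two brackets gives strong convergence of $\vu_\alpha$ in $\vW{1}{p}$, and the accompanying pressure estimate in \eqref{S} gives $\pi_\alpha\to\pi_\infty$ in $L^p_0(\Omega)$; as every weakly convergent subsequence produces a Dirichlet solution and the argument applies to each, uniqueness of the limit yields convergence of the whole family. The main obstacle is exactly the non-vanishing boundary datum of the difference, and the splitting trick --- which offloads it onto the already-established linear limiting theorem --- is what makes the proof go through.
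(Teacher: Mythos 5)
Your architecture is genuinely different from the paper's and, at the structural level, viable. The paper works directly with the system satisfied by $\vu_\alpha-\vu_\infty$: for $p=2$ it absorbs the fixed boundary datum $-2[(\DT\vu_\infty)\vn]_{\vt}$ through the energy identity and weak convergence of traces, and for $p>2$ it combines the resulting a.e.\ convergence of $\DT(\vu_\alpha-\vu_\infty)$ with the $\alpha$-uniform bound of Theorem \ref{T2}. Your splitting $\vu_\alpha-\vu_\infty=(\vu_\alpha-\vu_\alpha^{*})+(\vu_\alpha^{*}-\vu_\infty)$ instead offloads that boundary datum onto the linear limiting theorem: since $(\vu_\infty\cdot\nabla)\vu_\infty\in\vL{r(p)}$ for $p\ge2$ and the Stokes--Dirichlet problem with source $\bm{f}+\div\,\mathbb{F}-(\vu_\infty\cdot\nabla)\vu_\infty$ has $\vu_\infty$ as its unique solution, Theorem \ref{inf}~(ii) gives $\vu_\alpha^{*}\to\vu_\infty$ strongly in $\vW{1}{p}$, while $\vu_\alpha-\vu_\alpha^{*}$ solves \eqref{S} with homogeneous Navier data and is controlled, uniformly in $\alpha\ge1$, by the nonlinear source. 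This is a clean way to bypass the paper's a.e.-convergence step.

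The genuine gap is in your treatment of that nonlinear source. You claim that in the decomposition $(\vu_\alpha\cdot\nabla)\vu_\alpha-(\vu_\infty\cdot\nabla)\vu_\infty=((\vu_\alpha-\vu_\infty)\cdot\nabla)\vu_\alpha+(\vu_\infty\cdot\nabla)(\vu_\alpha-\vu_\infty)$ H\"older controls \emph{each} term by $\|\vu_\alpha-\vu_\infty\|_{\vL{a}}$ times a bounded factor. That is true for the first term, but false for the second: H\"older gives
\begin{equation*}
\|(\vu_\infty\cdot\nabla)(\vu_\alpha-\vu_\infty)\|_{\vL{r(p)}}\le \|\vu_\infty\|_{\vL{3}}\,\|\nabla(\vu_\alpha-\vu_\infty)\|_{\vL{p}},
\end{equation*}
and $\nabla(\vu_\alpha-\vu_\infty)$ converges to zero only \emph{weakly} in $\vL{p}$; its strong convergence is exactly the conclusion you are trying to reach, so this step is circular. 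The repair is the device the paper itself uses: since both fields are divergence free, write $(\vu_\alpha\cdot\nabla)\vu_\alpha-(\vu_\infty\cdot\nabla)\vu_\infty=\div\left(\vu_\alpha\otimes\vu_\alpha-\vu_\infty\otimes\vu_\infty\right)$ and feed this into Theorem \ref{T2} as a right-hand side of the form $\div\,\mathbb{F}$; then
\begin{equation*}
\|\vu_\alpha\otimes\vu_\alpha-\vu_\infty\otimes\vu_\infty\|_{\mathbb{L}^p(\Omega)}\le \|\vu_\alpha-\vu_\infty\|_{\vL{s}}\left(\|\vu_\alpha\|_{\vW{1}{p}}+\|\vu_\infty\|_{\vW{1}{p}}\right)
\end{equation*}
with $s<p^{*}$ (e.g.\ $s=3$ when $2\le p<3$), no gradient of the difference appears, and Rellich's theorem makes the right-hand side vanish as $\alpha\to\infty$. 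With this modification your proof closes, and is arguably cleaner than the paper's for $p>2$. One final caveat: your appeal to ``uniqueness of the limit'' to upgrade subsequential convergence to convergence of the whole family is not justified, because the limit problem is the nonlinear Navier--Stokes system with Dirichlet condition, whose solution need not be unique for large data; as in the paper, the conclusion should be understood along a subsequence (or for each weak cluster point).
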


\section{Notations and preliminary results}
\label{sec_not-use-res}
\setcounter{equation}{0}

Before studying the problem \eqref{S'} and (\ref{NS'}) with (\ref{impbc})-\eqref{Nbc}, we review some basic notations and functional framework. We will use the term \textit{axisymmetric} to mean a non-empty set which is generated by rotation around an axis. The vector fields and matrix fields (and the corresponding spaces) defined over $\Omega$ or over $\R^{3}$ are denoted by bold font and blackboard bold font respectively. Unless otherwise stated, we follow the convention that $C$ is an unspecified positive constant that may vary from expression to expression, even across an inequality (but not across an equality). Also $C$ depends on $\Omega$ generally and the dependence of $C$ on other parameters will be specified within parenthesis when necessary. 

Note that the vector-valued Laplace operator of a vector-field $\bm{v} = (v_1,v_2,v_3)$ is equivalently defined as
$$\Delta \bm{v} = 2 \ \div \ \DT\bm{v} - \textbf{grad} \ \div \ \bm{v}.$$
We denote by $\vD{\Omega}$ the set of smooth functions (infinitely differentiable) with compact support in $\Omega$. Define
$$\vDsol{\Omega} := \left\{ \bm{v}\in \vD{\Omega}; \ \div \ \bm{v} = 0 \text{ in } \Omega \right\}$$
and
$$
L^p_0(\Omega) := \left\{ v\in \L{p}; \int\displaylimits_{\Omega}{v}= 0 \right\}.
$$
If $p\in [1,\infty)$, $p'$ denotes the conjugate exponent of $p$ i.e. $\frac{1}{p}+\frac{1}{p'}=1$. For $p,r\in [1,\infty)$, we introduce the following space
$$\bm{H}^{r,p}(\div, \Omega) := \left\{ \bm{v}\in \bm{L}^r(\Omega); \ \div \ \bm{v}\in L^p(\Omega)\right\} $$
equipped with the norm
$$\|\bm{v}\|_{\bm{H}^{r,p}(\div, \Omega)} = \|\bm{v}\|_{\bm{L}^r(\Omega)} + \|\div \ \bm{v}\|_{L^p(\Omega)}.$$
It can be shown that $\vD{\overline{\Omega}}$ is dense in $\bm{H}^{r,p}(\div, \Omega)$ (cf. \cite[Proposition 1.0.2]{Nour}). The closure of $\vD{\Omega}$ in $\bm{H}^{r,p}(\div, \Omega)$ is denoted by $\bm{H}^{r,p}_0(\div, \Omega)$ and can be characterized as
$$\bm{H}^{r,p}_0(\div, \Omega) = \left\{\bm{v}\in \bm{H}^{r,p}(\div, \Omega); \ \bm{v}\cdot \bm{n}=0 \ \text{ on } \ \Gamma\right\}.$$
Also for $p\in (1,\infty)$, the dual space of $\bm{H}^{r,p}_0(\div, \Omega)$, denoted by $[\bm{H}^{r,p}_0(\div, \Omega)]^\prime$, can be characterized as follows (cf. \cite[Proposition 1.0.4]{Nour}):

\begin{proposition}
\label{40}
A distribution $\bm{f}$ belongs to $[\bm{H}^{r,p}_0(\div, \Omega)]^\prime$ iff there exists $\bm{\psi} \in \bm{L}^{r^\prime}(\Omega)$ and $\chi\in L^{p^\prime}(\Omega)$ such that $\bm{f} = \bm{\psi}+ \nabla \chi$. Moreover, we have the estimate :
$$\|\bm{f}\|_{[\bm{H}^{r,p}_0(\div, \Omega)]^\prime} \leq \underset{\bm{f} = \bm{\psi}+ \nabla \chi}{\inf} \ \max\{\|\bm{\psi}\|_{\bm{L}^{r^\prime}(\Omega)}, \|\chi\|_{L^{p^\prime}(\Omega)}\} .$$
\end{proposition}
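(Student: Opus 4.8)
The plan is to realise $\bm{H}^{r,p}_0(\div,\Omega)$ isometrically as a closed subspace of a product of Lebesgue spaces and then read off its dual via Hahn--Banach. Equip the product $Y:=\bm{L}^r(\Omega)\times L^p(\Omega)$ with the norm $\|(\bm{w},g)\|_Y=\|\bm{w}\|_{\bm{L}^r(\Omega)}+\|g\|_{L^p(\Omega)}$, and consider the linear map $T:\bm{H}^{r,p}_0(\div,\Omega)\to Y$, $T\bm{v}=(\bm{v},\div\,\bm{v})$. By the very definition of the norm on $\bm{H}^{r,p}(\div,\Omega)$, the map $T$ is an isometry onto its range $Z:=T\big(\bm{H}^{r,p}_0(\div,\Omega)\big)$, and $Z$ is closed in $Y$ because $\bm{H}^{r,p}_0(\div,\Omega)$ is complete. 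The whole argument will then hinge on transporting functionals across $T$ and on the elementary fact that, for $p\in(1,\infty)$ and $r\in[1,\infty)$, the dual of $Y$ with the sum norm is $\bm{L}^{r'}(\Omega)\times L^{p'}(\Omega)$ with the max norm.

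I would dispatch the easy implication and the estimate first. Suppose $\bm{f}=\bm{\psi}+\nabla\chi$ with $\bm{\psi}\in\bm{L}^{r'}(\Omega)$ and $\chi\in L^{p'}(\Omega)$. Since $\vD{\Omega}$ is dense in $\bm{H}^{r,p}_0(\div,\Omega)$ by definition, the distributional identity $\langle\nabla\chi,\bm{v}\rangle=-\int_\Omega\chi\,\div\,\bm{v}$ passes from test functions to every $\bm{v}\in\bm{H}^{r,p}_0(\div,\Omega)$, the boundary term vanishing precisely because $\bm{v}\cdot\vn=0$ on $\Gamma$. Hölder's inequality then gives $|\langle\bm{f},\bm{v}\rangle|\le\max\{\|\bm{\psi}\|_{\bm{L}^{r'}(\Omega)},\|\chi\|_{L^{p'}(\Omega)}\}\,\|\bm{v}\|_{\bm{H}^{r,p}(\div,\Omega)}$, so $\bm{f}\in[\bm{H}^{r,p}_0(\div,\Omega)]'$; taking the infimum over all admissible representations yields the claimed estimate.

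For the converse, start from $\bm{f}\in[\bm{H}^{r,p}_0(\div,\Omega)]'$ and push it through $T$: the functional $\tilde f$ defined on $Z$ by $\tilde f(T\bm{v})=\langle\bm{f},\bm{v}\rangle$ is continuous with $\|\tilde f\|_{Z'}=\|\bm{f}\|$, since $T$ is an isometry. By Hahn--Banach I extend $\tilde f$ to $F\in Y'$ with the same norm, and by the sum-norm/max-norm duality above, $F$ is represented by a pair $(\bm{\psi},-\chi)\in\bm{L}^{r'}(\Omega)\times L^{p'}(\Omega)$ with $\max\{\|\bm{\psi}\|_{\bm{L}^{r'}(\Omega)},\|\chi\|_{L^{p'}(\Omega)}\}=\|\bm{f}\|$. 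Evaluating on $T\bm{v}$ gives $\langle\bm{f},\bm{v}\rangle=\int_\Omega\bm{\psi}\cdot\bm{v}-\int_\Omega\chi\,\div\,\bm{v}$ for all $\bm{v}$, and the integration-by-parts identity of the previous paragraph rewrites the second term as $\langle\nabla\chi,\bm{v}\rangle$, whence $\bm{f}=\bm{\psi}+\nabla\chi$ as distributions.

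The one point that needs genuine care — and which I expect to be the main obstacle — is exactly that integration-by-parts step $\int_\Omega\chi\,\div\,\bm{v}=-\langle\nabla\chi,\bm{v}\rangle$ for $\chi\in L^{p'}(\Omega)$ and $\bm{v}\in\bm{H}^{r,p}_0(\div,\Omega)$. It is immediate on $\vD{\Omega}$ as the definition of the distributional gradient, and it extends to all of $\bm{H}^{r,p}_0(\div,\Omega)$ by density together with the continuity of both sides in the graph norm. This is precisely why the characterization is stated for the closed subspace $\bm{H}^{r,p}_0$ (carrying $\bm{v}\cdot\vn=0$) rather than the full space $\bm{H}^{r,p}(\div,\Omega)$: without the vanishing normal trace a surviving boundary contribution would force an extra boundary term into the representation of $\bm{f}$. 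Everything else is bookkeeping — completeness makes $Z$ closed, and the product duality is standard.
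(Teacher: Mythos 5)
Your proof is correct, and there is nothing in the paper to compare it against: the paper does not prove this proposition but cites it (as \cite[Proposition 1.0.4]{Nour}), and your argument — embedding $\bm{H}^{r,p}_0(\div,\Omega)$ isometrically into $\bm{L}^r(\Omega)\times L^p(\Omega)$ via $\bm{v}\mapsto(\bm{v},\div\,\bm{v})$, extending by Hahn--Banach, and representing the extension through the duality $\bigl(\bm{L}^r\times L^p\bigr)'=\bm{L}^{r'}\times L^{p'}$ with the sum/max norm pairing — is exactly the standard proof used in such references. One phrasing quibble: for $\chi\in L^{p'}(\Omega)$ there is no trace and hence no ``boundary term'' that vanishes because $\bm{v}\cdot\vn=0$; the rigorous mechanism for identifying $\langle\nabla\chi,\bm{v}\rangle$ with $-\int_\Omega\chi\,\div\,\bm{v}$ on all of $\bm{H}^{r,p}_0(\div,\Omega)$ is purely the density of $\vD{\Omega}$ in that space (the role of the subscript $0$ being to make the dual a space of distributions), which is the argument you correctly give in your final paragraph.
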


\noindent We also recall the following useful result (cf. \cite[Theorem 3.5]{AS}):
\begin{proposition}
\label{X}
Let $\bm{v} \in \vL{p}$ with $\div \ \bm{v}\in \L{p}$, $\mathbf{curl} \ \bm{v}\in \vL{p}$ and $\bm{v}\cdot \vn \in \Wfracb{1-\frac{1}{p}}{p}$. Then $\bm{v}\in \vW{1}{p}$ and satisfies the estimate:
$$\|\bm{v}\|_{\vW{1}{p}} \leq C \left( \|\bm{v}\|_{\vL{p}}+\|\mathbf{curl} \ \bm{v}\|_{\vL{p}}+\|\div \ \bm{v}\|_{\L{p}}+\|\bm{v}\cdot \vn \|_{\Wfracb{1-\frac{1}{p}}{p}}\right) . $$
\end{proposition}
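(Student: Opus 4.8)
The plan is to decompose $\bm v$ into a gradient part that carries its divergence and normal trace and a curl part that carries its rotation, realizing each part as a potential obtained from an auxiliary elliptic problem, and then to read off $\vW{1}{p}$-regularity from $\vW{2}{p}$-regularity of the two potentials. All the quantitative control comes from up-to-the-boundary $L^p$ Calder\'on--Zygmund estimates, which is where the $\HC{1}{1}$ regularity of $\Gamma$ is used.

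First I would normalize the divergence and the normal trace simultaneously. Since $\div\,\bm v\in\L{p}$ and $\bm v\cdot\vn\in\Wfracb{1-\frac{1}{p}}{p}$, and since the Green formula gives the compatibility $\int_\Omega\div\,\bm v=\langle\bm v\cdot\vn,1\rangle_\Gamma$, I solve the Neumann problem
\begin{equation*}
\Delta\theta=\div\,\bm v \ \text{ in }\Omega,\qquad \frac{\partial\theta}{\partial\vn}=\bm v\cdot\vn \ \text{ on }\Gamma .
\end{equation*}
By the $L^p$ theory for the Neumann Laplacian on the $\HC{1}{1}$ domain $\Omega$, there is a solution $\theta\in\W{2}{p}$, unique up to a constant, with $\|\theta\|_{\W{2}{p}}\le C\big(\|\div\,\bm v\|_{\L{p}}+\|\bm v\cdot\vn\|_{\Wfracb{1-\frac{1}{p}}{p}}\big)$. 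Setting $\bm z:=\bm v-\nabla\theta$ gives $\div\,\bm z=0$ and $\bm z\cdot\vn=0$, while $\mathbf{curl}\,\bm z=\mathbf{curl}\,\bm v=:\bm F\in\vL{p}$ is unchanged and $\nabla\theta\in\vW{1}{p}$ is controlled. It therefore suffices to treat the divergence-free, zero-normal-trace field $\bm z$.

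Next I would recover $\bm z$ from its curl through a vector potential. Because $\div\,\bm z=0$ in $\Omega$ and $\bm z\cdot\vn=0$ on $\Gamma$ (with vanishing fluxes across the connected components of $\Gamma$ and any cutting surfaces), the $L^p$ vector-potential theorem produces $\bm A\in\vW{1}{p}$ with
\begin{equation*}
\mathbf{curl}\,\bm A=\bm z,\qquad \div\,\bm A=0 \ \text{ in }\Omega,\qquad \bm A\times\vn=\bm 0 \ \text{ on }\Gamma,
\end{equation*}
and $\|\bm A\|_{\vW{1}{p}}\le C\|\bm z\|_{\vL{p}}$. Using $-\Delta\bm A=\mathbf{curl}\,\mathbf{curl}\,\bm A-\nabla\div\,\bm A=\mathbf{curl}\,\bm z=\bm F$, I see that $\bm A$ solves the vector Poisson equation $-\Delta\bm A=\bm F\in\vL{p}$ under the tangential (magnetic) boundary condition $\bm A\times\vn=\bm 0$, $\div\,\bm A=0$. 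For this boundary value problem the vector Laplacian is $\vW{2}{p}$-regular on $\HC{1}{1}$ domains, so $\bm A\in\vW{2}{p}$ with $\|\bm A\|_{\vW{2}{p}}\le C\big(\|\bm F\|_{\vL{p}}+\|\bm z\|_{\vL{p}}\big)$. Hence $\bm z=\mathbf{curl}\,\bm A\in\vW{1}{p}$, and assembling with Step~1 yields $\bm v=\bm z+\nabla\theta\in\vW{1}{p}$ with the stated estimate, the $\|\bm v\|_{\vL{p}}$ term on the right absorbing $\|\bm z\|_{\vL{p}}\le\|\bm v\|_{\vL{p}}+\|\nabla\theta\|_{\vL{p}}$ and any finite-dimensional harmonic corrections.

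The main obstacle is concentrated in the second step: the existence of the $\vW{1}{p}$ vector potential $\bm A$ with the prescribed boundary behaviour, together with the $\vW{2}{p}$ regularity of the vector Laplacian under the magnetic boundary condition. Both rest on up-to-the-boundary $L^p$ Calder\'on--Zygmund estimates (hence the $\HC{1}{1}$ hypothesis on $\Gamma$) and on correctly handling the topology of $\Omega$: when $\Gamma$ is disconnected or $\Omega$ is not simply connected, the vanishing-flux compatibility conditions and the finite-dimensional spaces of harmonic fields must be accounted for, though these are smooth and finite-dimensional and are therefore dominated by lower-order norms. Finally, a density argument approximating $\bm z$ by smooth fields in the graph norm of $\{\bm v\in\vL{p}:\div\,\bm v\in\L{p},\ \mathbf{curl}\,\bm v\in\vL{p}\}$ legitimizes the Green formulas and trace identities used throughout.
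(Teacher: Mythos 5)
Your proposal addresses a statement that the paper itself never proves: Proposition \ref{X} is simply recalled from \cite[Theorem 3.5]{AS}, where it is established by localization and flattening of the $\HC{1}{1}$ boundary combined with Calder\'on--Zygmund/ADN estimates for the resulting div--curl system, the case of a general normal trace being reduced to the case $\bm{v}\cdot\vn=0$ by exactly your Neumann lifting. Your route replaces the localization core by the classical potential-theoretic argument of Amrouche--Bernardi--Dauge--Girault: Neumann lifting, then a tangential-trace vector potential $\bm{A}$, then $\vW{2}{p}$ regularity of the vector Laplacian under the boundary conditions $\bm{A}\times\vn=\bm{0}$, $\div\,\bm{A}=0$. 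This is a coherent and genuinely different organization of the proof, and it is correct in outline; what it buys is a transparent decomposition $\bm{v}=\nabla\theta+\mathbf{curl}\,\bm{A}+(\text{harmonic fields})$ from which the estimate can be read off term by term.

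Two points, however, need to be made explicit rather than waved at. First, the flux correction is not an optional refinement: the paper allows $\Gamma$ to be non-connected and $\Omega$ multiply connected, and then your field $\bm{z}$ need not have vanishing fluxes $\left\langle \bm{z}\cdot\vn,1\right\rangle_{\Sigma_j}$ through the cuts, so the vector-potential theorem with $\bm{A}\times\vn=\bm{0}$ simply does not apply to $\bm{z}$ itself. You must first subtract $\sum_j \left\langle \bm{z}\cdot\vn,1\right\rangle_{\Sigma_j}\widetilde{\nabla}q_j^T$, where the fields $\widetilde{\nabla}q_j^T$ span $\bm{K}^p_T(\Omega)$, are curl-free, divergence-free, tangent to $\Gamma$, belong to $\vW{1}{p}$, and are normalized so that their cut fluxes are $\delta_{jk}$; the coefficients are bounded by $C\|\bm{z}\|_{\vL{p}}$, so the final estimate survives. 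Second, be aware of the provenance of your two black boxes: both the existence of $\bm{A}\in\vW{1}{p}$ with $\bm{A}\times\vn=\bm{0}$ and $\|\bm{A}\|_{\vW{1}{p}}\le C\|\bm{z}\|_{\vL{p}}$, and the $\vW{2}{p}$ theory for the magnetic boundary condition, are in the literature proved by the same localization/ADN machinery, and the vector-potential theorem in particular rests on the sibling embedding (the version of the present proposition with $\bm{v}\times\vn$ prescribed instead of $\bm{v}\cdot\vn$). So your argument is not circular --- it reduces the normal-trace case to the tangential-trace case, which has an independent proof --- but it is a reduction to results of equal depth rather than a more elementary or self-contained proof.
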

\noindent
We need to introduce the following spaces also :
$$\vVsolT{p}:=\left\{\bm{v}\in\vW{1}{p}; \ \div\;\bm{v}=0 \text{\; in $\Omega \ $\;and\;} \ \bm{v}\cdot\vn=0\text{\; on $\Gamma$}\right\}$$
equipped with the norm of $\vW{1}{p}$,
$$
\bm{H}^1_{\vt}(\Omega):= \left\lbrace \bm{v}\in\vH{1}; \ \bm{v}\cdot\vn=0\text{\; on $\Gamma$}\right\rbrace
$$
 and
\begin{equation}
\label{Ep}
\vE{p}:=\left\{\left(\bm{v}, \pi\right) \in\vW{1}{p}\times \L{p}; \ -\Delta\bm{v} + \nabla \pi\in\vL{r(p)}\right\}
\end{equation}
where
\begin{equation}
\label{def_exponent_rp}
\begin{cases}
r(p)= \max \left\lbrace 1,\frac{3p}{p+3} \right\rbrace  &\text{ if } \quad p \ne \frac{3}{2}\\
r(p)>1 &\text{ if } \quad p=\frac{3}{2}
\end{cases}
\end{equation}
which is a Banach space with the norm
$$\|\left( \bm{v}, \pi\right) \|_{\vE{p}} := \|\bm{v}\|_{\vW{1}{p}} + \|\pi\|_{\L{p}} + \|-\Delta\bm{v} + \nabla\pi\|_{\vL{r(p)}} .$$

Let us now introduce some notations to describe the boundary. Consider any point $P$ on $\Gamma$ and choose an open neighbourhood $W$ of $P$ in $\Gamma$, small enough to allow the existence of 2 families of $\mathcal{C} ^2$ curves on $W$ with these properties : a curve of each family passes through every point of $W$ and the unit tangent vectors to these curves form an orthogonal system (which we assume to have the direct orientation) at every point of $W$. The lengths $s_1, s_2$ along each family of curves, respectively, are a possible system of coordinates in $W$. We denote by $\bm{\tau}_1, \bm{\tau}_2$ the unit tangent vectors to each family of curves.

With this notations, we have $\bm{v}= \sum_{k=1}^{2}v_k \bm{\tau}_
{k} + (\bm{v}\cdot\vn)\vn$ where $\bm{\tau}_k = (\tau_{k1}, \tau_{k2}, \tau_{k3})$ and $v_k = \bm{v}\cdot \bm{\tau}_k$. In the sequel, for simplicity of notation, we will use
$$\bm{\Lambda} \bm{v} = \sum_{k=1}^2 \left( \bm{v}_{\bm{\tau}}\cdot \frac{\partial \vn}{\partial s_k}\right) \bm{\tau}_k .$$

We recall the following relations which give the equivalence of the two boundary conditions \eqref{Nbc} and \eqref{Navier type} and which will be used extensively to prove some of our main results (for proof, see \cite[Appendix A]{AR}). Note that $\Omega$ being $\HC{1}{1}$ is sufficient and there is a sign change in the second relation, compared to \cite{AR} and it is the corrected formulation.

\begin{lemma}
For any $\bm{v} \in \vW{2}{p}$, we have the following equalities:
$$2\left[(\DT\bm{v})\vn\right]_{\vt} = \nabla_{\vt}(\bm{v}\cdot \vn) + \left( \frac{\partial \bm{v}}{\partial \vn}\right) _{\vt} - \bm{\Lambda}\bm{v} \ ,$$
$$\mathbf{curl} \ \bm{v} \times \vn = - \nabla_{\vt} (\bm{v}\cdot \vn)+ \left( \frac{\partial \bm{v}}{\partial \vn}\right) _{\vt} + \bm{\Lambda}\bm{v} \ .$$
\end{lemma}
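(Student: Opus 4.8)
The plan is to reduce both identities to a componentwise computation in the local orthonormal boundary frame $\{\bm{\tau}_1,\bm{\tau}_2,\vn\}$ introduced above, and then to pass from smooth fields to $\bm{v}\in\vW{2}{p}$ by density. First I would establish the equalities for $\bm{v}\in\vD{\overline{\Omega}}$. Since both sides involve only the traces of $\bm{v}$ and of $\nabla\bm{v}$ on $\Gamma$, and since $\bm{v}\mapsto(\bm{v}|_\Gamma,\nabla\bm{v}|_\Gamma)$ maps $\vW{2}{p}$ continuously into $\Wfracb{1-\frac1p}{p}$-type trace spaces in which every term on both sides depends continuously, the general case follows by approximating $\bm{v}$ by smooth fields. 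All the geometric content therefore lives in the smooth computation.

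For the first identity I would start from $2\DT\bm{v}=\nabla\bm{v}+\nabla\bm{v}^{T}$ and split $2(\DT\bm{v})\vn=(\nabla\bm{v})\vn+(\nabla\bm{v}^{T})\vn$. The first piece is exactly the normal derivative $(\nabla\bm{v})\vn=\frac{\partial\bm{v}}{\partial\vn}$, whose tangential projection produces the term $\big(\frac{\partial\bm{v}}{\partial\vn}\big)_{\vt}$. For the second piece I would compute its tangential components by pairing with $\bm{\tau}_k$, finding $(\nabla\bm{v}^{T}\vn)\cdot\bm{\tau}_k=\vn\cdot\frac{\partial\bm{v}}{\partial s_k}$, where $\frac{\partial}{\partial s_k}$ denotes the tangential derivative along the $k$-th coordinate curve. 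The key manipulation is the product rule $\frac{\partial}{\partial s_k}(\bm{v}\cdot\vn)=\frac{\partial\bm{v}}{\partial s_k}\cdot\vn+\bm{v}\cdot\frac{\partial\vn}{\partial s_k}$, which lets me replace $\vn\cdot\frac{\partial\bm{v}}{\partial s_k}$ by $\frac{\partial}{\partial s_k}(\bm{v}\cdot\vn)-\bm{v}\cdot\frac{\partial\vn}{\partial s_k}$. Summing over $k$ gives $[(\nabla\bm{v}^{T})\vn]_{\vt}=\nabla_{\vt}(\bm{v}\cdot\vn)-\bm{\Lambda}\bm{v}$, after noting that $\frac{\partial\vn}{\partial s_k}$ is tangential (since $\vn\cdot\frac{\partial\vn}{\partial s_k}=\tfrac12\frac{\partial}{\partial s_k}|\vn|^2=0$), so only $\bm{v}_{\vt}$ contributes to $\bm{v}\cdot\frac{\partial\vn}{\partial s_k}$, matching the definition of $\bm{\Lambda}\bm{v}$. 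Adding the two pieces yields the first equality.

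For the second identity I would use the elementary vector identity $\mathbf{curl}\,\bm{v}\times\vn=(\nabla\bm{v})\vn-(\nabla\bm{v}^{T})\vn=\frac{\partial\bm{v}}{\partial\vn}-(\nabla\bm{v}^{T})\vn$, obtained from the Levi-Civita contraction $\epsilon_{lmi}\epsilon_{mjk}=\delta_{ij}\delta_{lk}-\delta_{ik}\delta_{lj}$. Because $\mathbf{curl}\,\bm{v}\times\vn$ is automatically orthogonal to $\vn$, it coincides with its own tangential part, so I can simply reuse the two projections already computed: $\big(\frac{\partial\bm{v}}{\partial\vn}\big)_{\vt}$ for the first term and $\nabla_{\vt}(\bm{v}\cdot\vn)-\bm{\Lambda}\bm{v}$ for the second, which gives $-\nabla_{\vt}(\bm{v}\cdot\vn)+\big(\frac{\partial\bm{v}}{\partial\vn}\big)_{\vt}+\bm{\Lambda}\bm{v}$, as claimed.

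The main obstacle is bookkeeping but genuinely delicate: correctly accounting for the variation of the unit normal along $\Gamma$. The term $\bm{\Lambda}\bm{v}$ encodes the second fundamental form through $\frac{\partial\vn}{\partial s_k}$, and the sign with which it enters each identity is precisely the point flagged in the remark preceding the lemma (the corrected sign relative to \cite{AR}). I would therefore be especially careful that the product rule is applied with the frame in its direct orientation and that the projections onto $\bm{\tau}_k$ consistently track this sign; everything else is routine.
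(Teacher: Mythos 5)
Your proof is correct: the splitting $2(\DT\bm{v})\vn=(\nabla\bm{v})\vn+(\nabla\bm{v}^{T})\vn$, the identity $(\nabla\bm{v}^{T}\vn)\cdot\bm{\tau}_k=\vn\cdot\partial\bm{v}/\partial s_k$ combined with the product rule $\partial_{s_k}(\bm{v}\cdot\vn)=\vn\cdot\partial_{s_k}\bm{v}+\bm{v}\cdot\partial_{s_k}\vn$ (using that $\partial\vn/\partial s_k$ is tangential), and the Levi-Civita contraction giving $\mathbf{curl}\,\bm{v}\times\vn=\partial\bm{v}/\partial\vn-(\nabla\bm{v})^{T}\vn$ all check out and reproduce exactly the signs of the paper's (corrected) statement, with the density step from $\vD{\overline{\Omega}}$ to $\vW{2}{p}$ handled legitimately since every term is continuous into $\vLb{p}$ when $\Gamma$ is $\HC{1}{1}$ (so that $\partial\vn/\partial s_k\in\bm{L}^{\infty}(\Gamma)$). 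Note that the paper gives no proof of this lemma in the text, deferring to \cite{AR}, so your argument is precisely the local-frame computation being cited there, and it independently confirms the sign correction flagged in the surrounding remark.
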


\begin{remark}
\label{EF}
\rm{In the particular case $\bm{v}\cdot \vn = 0$ on $\Gamma$, we obtain, for all $\bm{v} \in \vW{2}{p}$,
$$2\left[(\DT\bm{v})\vn\right]_{\vt} = \left( \frac{\partial \bm{v}}{\partial \vn}\right) _{\vt} - \bm{\Lambda}\bm{v} \quad \text{ and } \quad
\mathbf{curl} \ \bm{v} \times \vn = \left( \frac{\partial \bm{v}}{\partial \vn}\right) _{\vt} + \bm{\Lambda}\bm{v} $$
which implies that 
\begin{equation}\label{26}
2\left[(\DT\bm{v})\vn\right]_{\vt} = \mathbf{curl} \ \bm{v} \times \vn - 2\bm{\Lambda}\bm{v} .
\end{equation}
}
\end{remark}

Next, we prove the following Green formula to define the trace of the strain tensor of a vector field.

\begin{lemma}
\label{lem_trace_strain_tensor}
Let $\Omega$ be Lipschitz. Then,\\
\noindent{\bf (i)} $\vD{\overline\Omega}\times \D{\overline{\Omega}}$ is dense in $\vE{p}$ and\\
\noindent{\bf (ii)} The linear mapping $\left( \bm{v},\pi\right) \mapsto\left[(\DT\bm{v})\vn\right]_{\vt}$, defined on $\vD{\overline\Omega}\times \D{\overline{\Omega}}$ can be extended to a linear, continuous map from $\vE{p}$ to $\vWfracb{-\frac{1}{p}}{p}$. Moreover we have the following relation: for all $\left( \bm{v},\pi\right) \in\vE{p}$ and $\bm{\varphi}\in\vVsolT{p'},$
\begin{equation}
\label{green}
 \int\displaylimits_{\Omega}{\left( -\Delta\bm{v} + \nabla\pi\right) \cdot\bm{\varphi}} =2\int\displaylimits_{\Omega}{\DT\bm{v}:\DT\bm{\varphi}}-2
\left\langle \left[(\DT\bm{v})\vn\right]_{\vt},{\bm{\varphi}}\right\rangle_{\vWfracb{-\frac{1}{p}}{p}\times\vWfracb{\frac{1}{p}}{p'}}.
\end{equation}
\end{lemma}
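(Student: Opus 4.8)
The plan is to prove the elementary identity \eqref{green} first on smooth pairs and then transfer everything to $\vE{p}$ by the density assertion (i), which is the genuinely technical part. Throughout, the choice of $r(p)$ in \eqref{def_exponent_rp} is exactly what makes the continuity estimate work: since $\tfrac{1}{r(p)}=\tfrac1p+\tfrac13$, we have $r(p)'=(p')^{*}$, so the Sobolev embedding $\vW{1}{p'}\hookrightarrow\vL{r(p)'}$ holds, with the strict inequality $r(p)>1$ at $p=\tfrac32$ guaranteeing $r(p)'<\infty$, and $\vW{1}{p'}\hookrightarrow\bm L^{\infty}(\Omega)=\vL{r(p)'}$ when $p<\tfrac32$ (then $r(p)=1$ and $p'>3$).

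For the density (i) I would argue by duality. Let $\ell$ be a continuous linear functional on $\vE{p}$ vanishing on $\vD{\overline\Omega}\times\D{\overline\Omega}$; it suffices to show $\ell=0$. Since $(\bm v,\pi)\mapsto(\bm v,\pi,-\Delta\bm v+\nabla\pi)$ embeds $\vE{p}$ isometrically into $\vW{1}{p}\times\L{p}\times\vL{r(p)}$, Hahn--Banach together with the representation of the dual of this product gives $\bm G\in(\vW{1}{p})'$, $h\in\L{p'}$ and $\bm\xi\in\vL{r(p)'}$ with $\ell(\bm v,\pi)=\langle\bm G,\bm v\rangle+\int_\Omega h\,\pi+\int_\Omega(-\Delta\bm v+\nabla\pi)\cdot\bm\xi$. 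Testing the vanishing of $\ell$ against $\bm\phi\in\vD{\Omega}$ (with $\pi=0$) and against $\psi\in\D{\Omega}$ (with $\bm v=0$) yields, in $\mathcal{D}^\prime(\Omega)$, the relations $\Delta\bm\xi=\bm G$ and $\div\bm\xi=h$; in particular $\bm\xi$ acquires the extra regularity $\div\bm\xi\in\L{p'}$ and $\Delta\bm\xi\in(\vW{1}{p})'$. Feeding these back into the vanishing of $\ell$ on the full space $\vD{\overline\Omega}\times\D{\overline\Omega}$ and integrating by parts (now keeping the boundary contributions, which are licit because the test pairs are smooth up to $\Gamma$) shows that the resulting boundary traces of $\bm\xi$ are annihilated against arbitrary smooth data; this forces those traces to vanish and, together with the interior equations, yields $\bm\xi\equiv0$, whence $\bm G=0$, $h=0$ and $\ell=0$.

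With (i) in hand, I would prove \eqref{green} for $(\bm v,\pi)\in\vD{\overline\Omega}\times\D{\overline\Omega}$ and $\bm\varphi\in\vVsolT{p'}$ by integration by parts. Writing $-\Delta\bm v=-2\,\div\DT\bm v+\nabla\div\bm v$, integrating the $\div\DT\bm v$ term by parts and using the symmetry $\DT\bm v:\nabla\bm\varphi=\DT\bm v:\DT\bm\varphi$ produces $2\int_\Omega\DT\bm v:\DT\bm\varphi$ together with the boundary term $-2\int_\Gamma(\DT\bm v\,\vn)\cdot\bm\varphi$; the constraints $\div\bm\varphi=0$ and $\bm\varphi\cdot\vn=0$ kill both the $\nabla\div\bm v$ and the $\nabla\pi$ contributions, and since $\bm\varphi$ is tangential on $\Gamma$ the boundary term equals $-2\int_\Gamma[(\DT\bm v)\vn]_{\vt}\cdot\bm\varphi$, giving \eqref{green}. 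The right-hand side is then bounded by $C\|(\bm v,\pi)\|_{\vE{p}}\,\|\bm\varphi\|_{\vW{1}{p'}}$ via the embedding above. Finally, given a tangential $\bm g\in\vWfracb{\frac1p}{p'}$ on $\Gamma$, I construct a divergence-free lifting $\bm\varphi\in\vVsolT{p'}$ with $\bm\varphi|_\Gamma=\bm g$ and $\|\bm\varphi\|_{\vW{1}{p'}}\le C\|\bm g\|_{\vWfracb{\frac1p}{p'}}$: first lift $\bm g$ to some $\bm w\in\vW{1}{p'}$, then set $\bm\varphi:=\bm w-\bm z$ where $\bm z\in\vWz{1}{p'}$ is a Bogovskii-type correction solving $\div\bm z=\div\bm w$, which is solvable because $\int_\Gamma\bm g\cdot\vn=0$. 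This yields $\|[(\DT\bm v)\vn]_{\vt}\|_{\vWfracb{-\frac1p}{p}}\le C\|(\bm v,\pi)\|_{\vE{p}}$ for smooth pairs, and by density (i) the map $(\bm v,\pi)\mapsto[(\DT\bm v)\vn]_{\vt}$ extends continuously to $\vE{p}$ with values in $\vWfracb{-\frac1p}{p}$, while \eqref{green} passes to the limit for all $(\bm v,\pi)\in\vE{p}$ and $\bm\varphi\in\vVsolT{p'}$.

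The main obstacle, I expect, is the final step of the density argument: extracting the boundary relations for the representative $\bm\xi\in\vL{r(p)'}$ and genuinely concluding $\bm\xi\equiv0$. This requires a Green/trace formula valid under only $\bm\xi\in\vL{r(p)'}$, $\div\bm\xi\in\L{p'}$ and $\Delta\bm\xi\in(\vW{1}{p})'$ --- precisely the low-regularity trace theory this lemma is meant to set up --- so some care is needed to avoid circularity, presumably by approximating $\bm\xi$ or by invoking the $\bm H^{r,p}(\div,\Omega)$ density already available from \cite{Nour}. The divergence-free lifting, though standard, is the other place where the Lipschitz regularity of $\Gamma$ enters.
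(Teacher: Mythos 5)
Your part (ii) is essentially the paper's own argument (integration by parts for smooth pairs, the embedding $\vW{1}{p'}\hookrightarrow\vL{(r(p))'}$ for the continuity estimate, a divergence-free lifting to realize arbitrary tangential boundary data, then extension by density), and it is fine \emph{once} the density assertion (i) is available. The genuine gap is in your proof of (i).

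The step ``this forces those traces to vanish and, together with the interior equations, yields $\bm\xi\equiv0$, whence $\bm G=0$, $h=0$'' cannot be carried out, because the conclusion $\bm\xi\equiv 0$ is simply false in general: the Hahn--Banach representation of a functional on $\vE{p}$ is not unique, and in particular the \emph{zero} functional admits nonzero representations. Concretely, take any nonzero $\bm\psi\in\vD{\Omega}$ and set $\bm G=\Delta\bm\psi$, $h=\div\,\bm\psi$, $\bm\xi=\bm\psi$; integrating by parts (no boundary terms, since $\bm\psi$ has compact support) shows that $\langle\bm G,\bm v\rangle+\int_\Omega h\,\pi+\int_\Omega(-\Delta\bm v+\nabla\pi)\cdot\bm\xi=0$ for \emph{every} $(\bm v,\pi)\in\vE{p}$. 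Hence the hypothesis ``$\bm\ell$ vanishes on smooth pairs'' can never force $\bm\xi\equiv0$; moreover the interior identities $\Delta\bm\xi=\bm G$, $\div\,\bm\xi=h$ are mere identifications of $\bm G$ and $h$ in terms of $\bm\xi$, not equations constraining $\bm\xi$. The correct endpoint of the duality argument is weaker and still sufficient: one shows that $\bm\xi$ belongs to $\vWz{1}{p'}$ (zero trace, with full $W^{1,p'}$ regularity), and then, for an \emph{arbitrary} $(\bm v,\pi)\in\vE{p}$, one approximates $\bm\xi$ in $\vW{1}{p'}$ by functions $\bm\psi_k\in\vD{\Omega}$ and integrates by parts to conclude $\bm\ell(\bm v,\pi)=0$ --- the functional is annihilated without its representative being zero.

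The second defect, which you yourself flag, is that even the trace statement for $\bm\xi$ is inaccessible by your route: pairing $\Delta\bm\xi$ against test functions not vanishing on $\Gamma$ requires precisely the low-regularity Green formula that this lemma is meant to establish. The paper's proof removes both obstacles at once by never working on $\Gamma$ at all: it represents the $\vW{1}{p}$-component of $\bm\ell$ not as an abstract element of $(\vW{1}{p})'$ but as $\langle\bm\varphi,P\bm v\rangle$ with $\bm\varphi\in\bm{W}^{-1,p'}(\R^{3})$ and $P$ an extension operator, and it extends $\bm\xi$ and $h$ by zero to $\R^{3}$, obtaining $\widetilde{\bm\xi}$, $\widetilde{h}$. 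Testing $\bm\ell$ against restrictions to $\Omega$ of pairs in $\vD{\R^{3}}\times\D{\R^{3}}$ yields $\Delta\widetilde{\bm\xi}=\bm\varphi$ and $\div\,\widetilde{\bm\xi}=\widetilde{h}$ in $\mathcal{D}'(\R^{3})$ --- identities in the whole space, so no boundary contribution ever arises. Whole-space elliptic regularity then gives $\widetilde{\bm\xi}\in\bm{W}^{1,p'}(\R^{3})$, and since $\widetilde{\bm\xi}$ is an extension by zero, this membership by itself implies $\bm\xi\in\vWz{1}{p'}$: the zero trace comes for free, with no Green formula for rough fields and no circularity. With that, the approximation argument described above closes the proof of (i), and your part (ii) goes through as written.
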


\begin{proof}
{\bf (i)} The proof of the density result is very much similar to \cite[Lemma 4.2.1]{Nour}. Let $P:\vW{1}{p}\rightarrow\bm{W}^{1,p}(\R^{3})$ be the extension operator such that $P\vu\big|_{\Omega}=\vu$. Then for all $\bm{\ell}\in[\vE{p}]'$, there exists $(\bm{\varphi},\lambda,\bm{\psi})\in\bm{W}^{-1,p'}(\R^{3})\times\L{p^\prime}\times\vL{(r(p))^\prime}$ such that for any $\left( \bm{v},\pi\right) \in\vE{p}$,
$$\langle{\bm{\ell},\left( \bm{v},\pi\right) }\rangle=\langle\bm{\varphi},P\bm{v}\rangle_{\bm{W}^{-1,p'}(\R^{3})\times \bm{W}^{1,p}(\R^{3})}+ \int\displaylimits_{\Omega}{\lambda \pi}+\int\displaylimits_{\Omega}{\bm{\psi} \cdot \left( -\Delta\bm{v}+\nabla \pi\right)} .$$
Thanks to the Hahn-Banach theorem, it suffices to show that any $\bm{\ell}$ which vanishes on $\vD{\overline\Omega}\times \D{\overline{\Omega}}$ is actually zero on $\vE{p}$.

Let us suppose that $\bm{\ell}=\bm{0}$ in $\vD{\overline\Omega}\times \D{\overline{\Omega}}$ and let $\widetilde{\lambda}\in L^{p^\prime}(\R^{3})$ and  $\widetilde{\bm{\psi}}\in\bm{L}^{(r(p))^\prime}(\R^{3})$ be the
extension by zero to $\R^{3}$. Then for all $\left( \bm{V}\times \Pi\right) \in\vD{\R^{3}}\times \D{\R^{3}}$, let $\bm{v}=\bm{V}\big|_{\Omega}$ and $\pi = \Pi\big|_{\Omega} $ so that $\left( \bm{v},\pi\right) \in \vD{\overline\Omega}\times \D{\overline{\Omega}}$ and
$$\langle\bm{\varphi},\bm{V}\rangle_{\bm{W}^{-1,p'}(\R^{3})\times \bm{W}^{1,p}(\R^{3})}+\int_{\R^{3}}\widetilde{\lambda} \Pi +\int_{\R^{3}}\widetilde{\bm{\psi}}\cdot\left( -\Delta\bm{V}+\nabla \Pi\right) \mathrm{\;d}x
=0$$
since $\langle\bm{\varphi},\bm{V}\rangle=\langle\bm{\varphi},P\bm{v}\rangle$. It then follows that
\begin{equation*}
\left\langle \bm{\varphi}-\Delta\widetilde{\bm{\psi}},\bm{V} \right\rangle _{\bm{\mathcal{D}}^\prime(\R^3)\times \vD{\R^{3}}} = 0 \quad \text{ and } \quad \left\langle \widetilde{\lambda}- \div \ \widetilde{\bm{\psi}}, \Pi\right\rangle _{\mathcal{D}^\prime(\R^{3})\times \D{\R^{3}}} = 0 .
\end{equation*}
i.e. $\bm{\varphi}-\Delta\widetilde{\bm{\psi}}=\bm{0}$ and $\widetilde{\lambda}- \div \ \widetilde{\bm{\psi}} = 0$
in the sense of distribution in $\R^{3}$. Hence, $\Delta\widetilde{\bm{\psi}}\in\bm{W}^{-1,p'}(\R^{3})$. As a consequence, $\widetilde{\bm{\psi}}\in\bm{W}^{1,p'}(\R^{3})$ and therefore $\bm{\psi}\in\vWz{1}{p'}$. Then by density of $\vD{\Omega}$ in $ \vWz{1}{p'}$, there exists a sequence $(\bm{\psi}_k)_k\subset\vD{\Omega}$ such that $\bm{\psi}_k\rightarrow\bm{\psi}$ as $k\rightarrow\infty$ in $\vW{1}{p'}$. Also $\Delta \widetilde{\bm{\psi}}_k \rightarrow \Delta \widetilde{\bm{\psi}}$ in $\bm{W}^{-1.p^\prime}(\R^{3})$.
Now, for any $\left( \bm{v},\pi\right) \in\vE{p},$ we have,
\begin{align*}
& \quad \left\langle \bm{\ell},\left( \bm{v},\pi\right) \right\rangle\\
&
=\langle\bm{\varphi},P\bm{v}\rangle_{\bm{W}^{-1,p'}(\R^{3})\times \bm{W}^{1,p}(\R^{3})}+\int\displaylimits_{\Omega}{\lambda \pi} +\int\displaylimits_{\Omega}{ \bm{\psi}\cdot\left( -\Delta\bm{v}+\nabla\pi\right) }\\
& = \langle\Delta\widetilde{\bm{\psi}},P\bm{v}\rangle_{\bm{W}^{-1,p'}(\R^{3})\times \bm{W}^{1,p}(\R^{3})}+ \int\displaylimits_{\Omega}{\pi \ \div \ \bm{\psi} }+\left\langle \bm{\psi}, \left( -\Delta\bm{v}+ \nabla \pi\right) \right\rangle _{\vWz{1}{p^\prime}\times \vW{-1}{p}}\\
&=\lim_{k\rightarrow\infty}[\langle\Delta\widetilde{\bm{\psi}}_k,P\bm{v}\rangle_{\bm{W}^{-1,p'}(\R^{3})\times \bm{W}^{1,p}(\R^{3})} + \int\displaylimits_{\Omega}{\pi \ \div \ \bm{\psi}_k} \ +\\
& \qquad \quad + \left\langle \bm{\psi}_k,\left( -\Delta\bm{v}+\nabla\pi\right)\right\rangle_{\vWz{1}{p^\prime}\times \vW{-1}{p}}]\\
& = \lim_{k\rightarrow\infty} [\left\langle \Delta \bm{\psi}_k, \bm{v}\right\rangle_{\vD{\Omega}\times \bm{\mathcal{D}}^\prime(\Omega)} - \left\langle \bm{\psi}_k, \nabla\pi \right\rangle _{\vD{\Omega}\times \bm{\mathcal{D}}^\prime(\Omega)} + \left\langle \bm{\psi}_k, -\Delta\bm{v} +\nabla \pi\right\rangle  _{\vD{\Omega}\times \bm{\mathcal{D}}^\prime(\Omega)} ]\\
&=0 \ .
\end{align*}
Thus $\bm{\ell}$ is identically zero.

{\bf (ii)} The Green formula for $(\bm{v},\pi)\in \vD{\overline\Omega}\times \D{\overline{\Omega}}$ and $\bm{\varphi}\in \vVsolT{p'}$ follows immediately by integration by parts and then we use the density result (cf. \cite[Lemma 2.4]{AR}).
\hfill
\end{proof}

\begin{remark}
\rm{
{\bf 1.} The following Green formula also can be obtained in the same way as (\ref{green}), which will be used later: for $(\bm{v},\pi)\in \vW{1}{p}\times \vL{p}$, $\mathbb{F}\in\mathbb{L}^p(\Omega)$ such that $-\div(2\DT \bm{v}+\mathbb{F}) + \nabla\pi \in \vL{r(p)}$ and $ \bm{\varphi}\in \vVsolT{p'}$,
\begin{equation}
\label{green1}
 \int\displaylimits_{\Omega}{\left( -\div(2\DT \bm{v}+\mathbb{F}) + \nabla\pi\right) \cdot\bm{\varphi}} =2\int\displaylimits_{\Omega}{\DT\bm{v}:\DT\bm{\varphi}}+\int\displaylimits_{\Omega}{\mathbb{F}:\nabla \bm{\varphi}}-
\left\langle \left[(2\DT\bm{v}+\mathbb{F})\vn\right]_{\vt},{\bm{\varphi}}\right\rangle_{\Gamma}.
\end{equation}

{\bf 2.} In fact, we can obtain Lemma \ref{lem_trace_strain_tensor} for any $\bm{v}\in \vE{p}$ where
$$\vE{p}:=\left\{\bm{v} \in\vW{1}{p}; \ \Delta\bm{v} \in[\bm{H}^{r(p)',p'}_0(\div, \Omega)]^\prime\right\} .$$
Thus we can extend \eqref{26} in $\vWfracb{-\frac{1}{p}}{p}$ as follows: let $\Omega$ be $\HC{1}{1}$ and for any $\bm{v}\in \vW{1}{p}$ with $\Delta\bm{v} \in\vL{r(p)}$ and $\bm{v}\cdot \vn = 0$ on $\Gamma$,
\begin{equation}
2\left[(\DT\bm{v})\vn\right]_{\vt} = \mathbf{curl} \ \bm{v} \times \vn - 2\bm{\Lambda}\bm{v} \quad \text{ in } \quad \vWfracb{-\frac{1}{p}}{p} .
\end{equation}}
\end{remark}

The following two propositions give some Korn-type inequalities which will be useful in the context. '$\simeq$' denotes the equivalence of two norms.
\begin{proposition}
\label{prop_equivalent_norms_c2}
	Let $\Omega$ be Lipschitz. Then, for all $\vu\in\vH{1}$ with $\vu\cdot\vn=0$ on $\Gamma$, we have the following equivalence of norms:
\begin{equation}
\label{eqn_equivalence_norms_c1}
	\|\vu\|_{\vH{1}}\simeq\|\DT\vu\|_{\mathbb{L}^2(\Omega)} \quad \text{ if \ $\Omega$ is not axisymmetric} \ ,
\end{equation}
	and
\begin{equation}
\label{eqn_equivalence_norms_c2}
	\|\vu\|_{\vH{1}}\simeq\|\DT\vu\|_{\mathbb{L}^2(\Omega)}+\|\vu_{\vt}\|_{\bm{L}^2(\Gamma)} \quad \text{ if \ $\Omega$ is axisymmetric} \ .
\end{equation}
\end{proposition}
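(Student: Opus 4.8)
The plan is to prove the non-trivial bound $\|\vu\|_{\vH{1}}\le C(\cdots)$ in each case; the reverse inequality is immediate, since $\DT\vu$ consists of first-order derivatives of $\vu$ and the trace map $\vH{1}\to\bm{L}^2(\Gamma)$ is continuous, so both right-hand sides are controlled by $\|\vu\|_{\vH{1}}$. The starting point is the classical second Korn inequality, valid for every $\vu\in\vH{1}$, namely $\|\vu\|_{\vH{1}}\le C(\|\vu\|_{\vL{2}}+\|\DT\vu\|_{\mathbb{L}^2(\Omega)})$. It then remains to absorb the lower-order term $\|\vu\|_{\vL{2}}$ by exploiting the boundary constraint $\vu\cdot\vn=0$ (and, in the axisymmetric case, the extra tangential boundary term).

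I would carry this out by a compactness argument of Peetre--Tartar type. Suppose the claimed inequality fails. Then there is a sequence $(\vu_n)\subset\vH{1}$ with $\vu_n\cdot\vn=0$ on $\Gamma$, $\|\vu_n\|_{\vH{1}}=1$ for all $n$, and $\|\DT\vu_n\|_{\mathbb{L}^2(\Omega)}\to 0$ in the non-axisymmetric case, respectively $\|\DT\vu_n\|_{\mathbb{L}^2(\Omega)}+\|(\vu_n)_{\vt}\|_{\bm{L}^2(\Gamma)}\to 0$ in the axisymmetric case. Being bounded in $\vH{1}$, a subsequence converges weakly in $\vH{1}$ and, by the Rellich compactness theorem, strongly in $\vL{2}$ to a limit $\vu$. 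Applying the second Korn inequality to the differences $\vu_n-\vu_m$ shows that $(\vu_n)$ is Cauchy in $\vH{1}$, whence $\vu_n\to\vu$ strongly in $\vH{1}$. Consequently $\|\vu\|_{\vH{1}}=1$, $\DT\vu=\bm{0}$, the constraint $\vu\cdot\vn=0$ passes to the limit, and in the axisymmetric case continuity of the trace also yields $\vu_{\vt}=\bm{0}$ on $\Gamma$.

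The conclusion would then rest on identifying the kernel of $\DT$. Since $\DT\vu=\bm{0}$, the field $\vu$ is an infinitesimal rigid displacement, $\vu(x)=\bm{a}+\bm{b}\times\bm{x}$ for constant vectors $\bm{a},\bm{b}\in\R^3$. In the non-axisymmetric case the geometric meaning of ``not axisymmetric'' is precisely that no nonzero rigid displacement is tangent to $\Gamma$; hence $\vu\cdot\vn=0$ on $\Gamma$ forces $\bm{a}=\bm{b}=\bm{0}$, i.e. $\vu=\bm{0}$, contradicting $\|\vu\|_{\vH{1}}=1$. In the axisymmetric case the tangent rigid displacements form the one-dimensional space spanned by the rotation $\bm{\beta}(x)=\bm{b}\times\bm{x}$ about the symmetry axis; but the additional condition $\vu_{\vt}=\bm{0}$ on $\Gamma$, together with $\vu\cdot\vn=0$, gives $\vu=\bm{0}$ on $\Gamma$, and a nonzero affine field $\bm{a}+\bm{b}\times\bm{x}$ can vanish only on a set of zero surface measure, so again $\vu=\bm{0}$, a contradiction. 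Either way the assumption fails and the inequality holds.

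The main obstacle will be the last step: rigorously relating the algebraic condition ``$\vu\cdot\vn=0$ on $\Gamma$ for a rigid displacement $\vu$'' to the geometric (non-)axisymmetry of $\Omega$, and checking that for a bounded domain the only tangent rigid displacements are rotations about a fixed axis (a screw component would produce helical orbits incompatible with a bounded surface), so that exactly these rotations survive in the axisymmetric case while the added tangential boundary control eliminates them. Everything else (the second Korn inequality, Rellich compactness, trace continuity) is standard.
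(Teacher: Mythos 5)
Your proposal is correct, and for the axisymmetric equivalence \eqref{eqn_equivalence_norms_c2} it is essentially the paper's own argument: the paper likewise normalizes a contradicting sequence, extracts a weak $\vH{1}$ limit, uses Rellich compactness together with the second Korn inequality, and identifies the limit as a field with $\DT\bm{v}=0$ vanishing on $\Gamma$, hence zero. (A cosmetic difference: the paper gets the contradiction by applying Korn directly to the normalized sequence, $1=\|\bm{v}_m\|_{\vH{1}}\le C\left(\|\bm{v}_m\|_{\vL{2}}+\|\DT\bm{v}_m\|_{\mathbb{L}^2(\Omega)}\right)\rightarrow 0$, while you first show the sequence is Cauchy in $\vH{1}$ and then kill the limit; the two are interchangeable.) The genuine difference lies in the non-axisymmetric equivalence \eqref{eqn_equivalence_norms_c1}: the paper does not prove it at all, but cites \cite[Lemma 3.3]{AR}, whereas you prove it by the same compactness scheme reduced to the geometric fact that a bounded, non-axisymmetric domain admits no nonzero rigid displacement $\bm{a}+\bm{b}\times\bm{x}$ tangent to $\Gamma$. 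This makes your treatment self-contained and unified, but it shifts the real work onto that geometric lemma, which you only sketch: to be rigorous one must show that a rigid field tangent a.e. to a Lipschitz boundary generates a one-parameter group of rigid motions preserving $\overline{\Omega}$ (a Nagumo/Bony--Brezis type invariance argument), after which boundedness of $\Omega$ rules out translations and screw motions, leaving only rotations about a fixed axis, i.e. axisymmetry; the paper's citation of \cite{AR} outsources exactly this geometry. Your final observation in the axisymmetric case --- that a nonzero rigid field cannot vanish on a set of positive surface measure --- is also the step the paper asserts without comment ("which yields $\bm{v}=\bm{0}$ in $\Omega$"), so there you have supplied a justification the paper leaves implicit.
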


\begin{proof}
Since \eqref{eqn_equivalence_norms_c1} follows from \cite[Lemma 3.3]{AR}, we only prove \eqref{eqn_equivalence_norms_c2}. Also, it is enough to show that there exists $C>0$ such that
$$\|\vu\|_{\vH{1}}\leq C\left(\|\DT\vu\|_{\mathbb{L}^2(\Omega)}+\|\vu_{\vt}\|_{\bm{L}^2(\Gamma)}\right)$$
as the other inequality is obvious.
	
We prove by contradiction. Suppose for any $m\in\N$, there exists $\vu_m\in\vH{1}$ such that $\vu_m\cdot\vn=0$ on $\Gamma$ and
	\begin{equation}
	\label{ineq_hip_reduction}
	\|\vu_m\|_{\vH{1}}> m\left(\|\DT\vu_m\|_{\mathbb{L}^2(\Omega)}+\|(\vu_m)_{\vt}\|_{\bm{L}^2(\Gamma)}\right).
	\end{equation}
Since $\|\vu_m\|_{\vH{1}}>0$, we can define $\bm{v}_m:=\frac{\vu_m}{\|\vu_m\|_{\vH{1}}}$ so that $\|\bm{v}_m\|_{\vH{1}}=1$ for all $m\in\N$. Then from \eqref{ineq_hip_reduction}, we have
	$$
	\|\DT\bm{v}_m\|_{\mathbb{L}^2(\Omega)}+\|(\bm{v}_m)_{\vt}\|_{\bm{L}^2(\Gamma)}<\frac{1}{m}.
	$$
Hence, as $m\rightarrow\infty$,
	\begin{equation*}
	\label{cv_DTvn_L2}
	\DT\bm{v}_m\ {\rightarrow}\ 0 \ \text{ in }\vL{2} \quad \text{ and } \quad (\bm{v}_m)_{\vt}\ {\rightarrow}\ \bm{0} \ \text{ in }\bm{L}^2(\Gamma).
	\end{equation*}
But as $\bm{v}_m\cdot\vn=0$ on $\Gamma$, we get	$\bm{v}_m\ {\rightarrow}\ \bm{0}\text{\;\;in\;\;}\bm{L}^2(\Gamma)$.
On the other hand, since $\{\bm{v}_m\}_m$ is bounded in $\vH{1}$, there exists a subsequence, which we still call $\{\bm{v}_m\}_m$ and $\bm{v}\in\vH{1}$ such that $\bm{v}_{m}\rightharpoonup\bm{v} \ \text{ weakly in } \vH{1}$. Thus
	$$\DT\bm{v}=0 \ \text{ in } \Omega \quad \text{ and } \quad \bm{v}=\bm{0} \ \text{ on } \Gamma$$
which yields $\bm{v}=\bm{0}$ in $\Omega$. But this is a contradiction since from Korn inequality, we have
\begin{equation*}
1 = \|\bm{v}_m\|_{\vH{1}} \le C \left( \|\bm{v}_m\|_{\vL{2}} + \|\DT \bm{v}_m\|_{\mathbb{L}^2(\Omega)}\right) \rightarrow \bm{0}.
\end{equation*}
Thus (\ref{eqn_equivalence_norms_c2}) follows.
\hfill
\end{proof}

\begin{proposition}
	Let $\Omega$ be Lipschitz. For $\Omega$ axisymmetric, we have the following inequalities: for all $\vu\in\vH{1}$ with $\vu\cdot \vn = 0$ on $\Gamma$,
	\begin{equation}\label{43}
	\|\vu\|^2_{\vL{2}} \leq C \left[ \|\DT\vu\|^2_{\mathbb{L}^2(\Omega)}+ \left( \int\displaylimits_{\Omega}{ \vu\cdot \bm{\beta}}\right)^2\right]
	\end{equation}
	and
	\begin{equation}\label{44}
	\|\vu\|_{\vL{2}}^2 \leq C \left[ \|\DT\vu\|_{\mathbb{L}^2(\Omega)}^2 + \left(\int\displaylimits_{\Gamma}{\vu\cdot \bm{\beta}}\right)^2 \right]  .
	\end{equation}
\end{proposition}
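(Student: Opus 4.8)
The plan is to establish both \eqref{43} and \eqref{44} by the contradiction/compactness scheme already used in Proposition \ref{prop_equivalent_norms_c2}; the only genuinely new ingredient is a precise description of the kernel of $\DT$ inside $\bm{H}^1_{\vt}(\Omega)$ when $\Omega$ is axisymmetric.

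For \eqref{43} I would argue by contradiction: if it fails, there is a sequence $\vu_m\in\vH{1}$ with $\vu_m\cdot\vn=0$ on $\Gamma$ and
$$
\|\vu_m\|^2_{\vL{2}} > m\left[\|\DT\vu_m\|^2_{\mathbb{L}^2(\Omega)} + \left(\int_\Omega \vu_m\cdot\bm{\beta}\right)^2\right].
$$
Normalizing $\bm{v}_m := \vu_m/\|\vu_m\|_{\vH{1}}$ so that $\|\bm{v}_m\|_{\vH{1}}=1$, and using $\|\bm{v}_m\|_{\vL{2}}\le 1$, the displayed inequality forces $\|\DT\bm{v}_m\|_{\mathbb{L}^2(\Omega)}\to 0$ and $\int_\Omega\bm{v}_m\cdot\bm{\beta}\to 0$. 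Since $\{\bm{v}_m\}$ is bounded in $\vH{1}$, I extract a subsequence converging weakly in $\vH{1}$ and (by Rellich) strongly in $\vL{2}$ to some $\bm{v}$. Passing to the limit gives $\DT\bm{v}=0$, continuity of the trace yields $\bm{v}\cdot\vn = 0$ on $\Gamma$, and strong $\vL{2}$ convergence gives $\int_\Omega\bm{v}\cdot\bm{\beta}=0$.

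The crux is the kernel identification. The condition $\DT\bm{v}=0$ forces $\bm{v}$ to be an infinitesimal rigid motion $\bm{v}(x)=\bm{a}+\bm{c}\times\bm{x}$, and $\bm{v}\cdot\vn=0$ on $\Gamma$ restricts these to the infinitesimal isometries that leave $\Omega$ invariant. For an axisymmetric $\Omega$ with axis direction $\bm{b}$, this space is one-dimensional, spanned by the rotation field $\bm{\beta}(x)=\bm{b}\times\bm{x}$, so $\bm{v}=\lambda\bm{\beta}$ for some $\lambda\in\R$. Then $0=\int_\Omega\bm{v}\cdot\bm{\beta}=\lambda\int_\Omega|\bm{\beta}|^2$, and since $\bm{\beta}$ vanishes only on the axis (a null set in $\Omega$) we have $\int_\Omega|\bm{\beta}|^2>0$, whence $\lambda=0$ and $\bm{v}=\bm{0}$. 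This contradicts Korn's inequality $1=\|\bm{v}_m\|_{\vH{1}}\le C(\|\bm{v}_m\|_{\vL{2}}+\|\DT\bm{v}_m\|_{\mathbb{L}^2(\Omega)})\to 0$, proving \eqref{43}.

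For \eqref{44} I would run the identical scheme with the volume functional replaced by the boundary functional $\int_\Gamma\vu\cdot\bm{\beta}$. The only modification is that I pass this boundary term to the limit using compactness of the trace map $\vH{1}\to\bm{L}^2(\Gamma)$, which factors through the compact embedding $\vHfracb{1}{2}\hookrightarrow\bm{L}^2(\Gamma)$, so that $\int_\Gamma\bm{v}_m\cdot\bm{\beta}\to\int_\Gamma\bm{v}\cdot\bm{\beta}=0$; combined with $\bm{v}=\lambda\bm{\beta}$ and $\int_\Gamma|\bm{\beta}|^2>0$ (as $\Gamma$ meets the axis in a set of zero surface measure) this again yields $\lambda=0$ and the same contradiction. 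I expect the main obstacle to be the rigorous kernel characterization, namely showing that on an axisymmetric domain the only tangential rigid motions are the rotations about the symmetry axis; the compactness and lower-semicontinuity steps are routine.
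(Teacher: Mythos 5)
Your proof of \eqref{44} is essentially the paper's own: the same contradiction scheme with Rellich compactness, the compact trace embedding $\vHfracb{1}{2}\hookrightarrow\vLb{2}$ to pass the boundary functional to the limit, the identification of the limit as $\lambda\bm{\beta}$, and the same final contradiction (the paper normalizes $\|\vu_m\|_{\vL{2}}+\|\DT\vu_m\|_{\mathbb{L}^2(\Omega)}=1$ instead of $\|\vu_m\|_{\vH{1}}=1$, so Korn's inequality enters there to get $\vH{1}$-boundedness rather than at the last step in yours; the difference is immaterial). Where you genuinely diverge is \eqref{43}: the paper proves it \emph{without} compactness, by quoting the inequality \eqref{55} of \cite{AR},
\begin{equation*}
\inf_{\bm{w}\in\bm{\mathcal{T}}(\Omega)}\|\vu+\bm{w}\|^2_{\vL{2}} \le C(\Omega)\Bigl(\|\DT\vu\|^2_{\mathbb{L}^2(\Omega)}+\int\displaylimits_{\Gamma}|\vu\cdot\vn|^2\Bigr),
\end{equation*}
noting that for an axisymmetric domain $\bm{\mathcal{T}}(\Omega)=\spn\{\bm{\beta}\}$, and computing the infimum over $c\in\R$ in closed form: it is the orthogonal-projection error $\|\vu\|^2_{\vL{2}}-\|\bm{\beta}\|^{-2}_{\vL{2}}\bigl(\int_{\Omega}\vu\cdot\bm{\beta}\bigr)^2$. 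Since $\vu\cdot\vn=0$ kills the boundary term, \eqref{43} follows at once, with a constant explicit in terms of $C(\Omega)$ and $\|\bm{\beta}\|_{\vL{2}}$. So for \eqref{43} the paper's route is quantitative and hides the kernel fact inside the citation, whereas yours replaces the citation by a fresh compactness argument, at the price of losing control on $C$ and of having to justify the kernel characterization ($\DT\bm{v}=0$ in $\Omega$ and $\bm{v}\cdot\vn=0$ on $\Gamma$ imply $\bm{v}\in\spn\{\bm{\beta}\}$) yourself --- but that is exactly the fact the paper itself asserts, with the same level of detail, in its proof of \eqref{44}, so your argument is no less rigorous. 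One caveat common to both proofs: they require the axis of symmetry to be unique (for a ball, $\bm{\mathcal{T}}(\Omega)$ is three-dimensional and \eqref{43} as stated fails), but that concerns the reading of ``axisymmetric'' in the statement, not your reasoning.
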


\begin{proof}
{\bf (i)}	First recall from (\ref{beta}) that $\bm{\beta} \in \mathcal{C}^\infty (\R^{3})$ and $\DT\bm{\beta} = 0$ in $\R^{3}$. Then \eqref{43} follows from the following result \cite[Lemma 3.3]{AR}:
	\begin{equation}
	\label{55}
	\inf\limits_{\bm{w}\in \bm{\mathcal{T}}(\Omega)} \|\vu + \bm{w} \|^2_{\vL{2}} \leq C(\Omega) \left( \|\DT\vu\|^2_{\mathbb{L}^2(\Omega)} + \int\displaylimits_{\Gamma}{\left| \vu\cdot \vn \right| ^2}\right) .
	\end{equation}
	Since, $\bm{w} = c\bm{\beta}$ for some $c\in \R$, $\inf\limits_{\bm{w}\in \bm{\mathcal{T}}(\Omega)} \|\vu + \bm{w} \|^2_{\vL{2}} = \inf\limits_{c \ \in \ \R} \ \|\vu + c\bm{\beta} \|^2_{\vL{2}}$ and this infimum is attained at
	$$ c =  \frac{1}{\|\bm{\beta}\|^2_{\vL{2}}} \left( \int\displaylimits_{\Omega}{\vu\cdot \bm{\beta}}\right) .$$
	Now,
	\begin{align*}
	& \quad \left\|  \vu - \frac{1}{\|\bm{\beta}\|
		^2_{\vL{2}}} \left( \int\displaylimits_{\Omega}{\vu\cdot \bm{\beta}}\right) 
	\bm{\beta}\right\| ^2_{\vL{2}}\\
	& = \|\vu\|^2_{\vL{2}} - \frac{2}{\|\bm{\beta}\|
		^2_{\vL{2}}}\left( \int\displaylimits_{\Omega}{\vu\cdot \bm{\beta}}\right) ^2 + 
	\frac{1}{\|\bm{\beta}\|^2_{\vL{2}}}\left( \int\displaylimits_{\Omega}{\vu\cdot 
		\bm{\beta}}\right) ^2\\
	& = \|\vu\|^2_{\vL{2}} - \frac{1}{\|\bm{\beta}\|
		^2_{\vL{2}}}\left( \int\displaylimits_{\Omega}{\vu\cdot \bm{\beta}}\right) ^2 .
	\end{align*}
	Hence, we obtain \eqref{43}.\\
	
{\bf (ii)} Now we prove the inequality \eqref{44}, by contradiction.
Let us denote
$$
|||\vu||| := \|\vu\|_{\vL{2}} + \|\DT\vu\|_{\mathbb{L}^2(\Omega)}.
$$
Now assume that for all $m\in\N$, there exists $\vu_m\in\vH{1}$ 
	with $\vu_m\cdot\vn=0$ on $\Gamma$ and $|||\vu_m|||= 1$ such that
	\begin{equation}
	\label{47}
	1> m \left[ \|\DT\vu_m\|_{\mathbb{L}^2(\Omega)}^2 + \left( 
	\int\displaylimits_{\Gamma}{\vu_m \cdot \bm{\beta}}\right) ^2\right] .
	\end{equation}
	So $\{\vu_m\}_m$ is a bounded sequence in $\vH{1}$; Hence 
	there exists a subsequence, we still call $\{\vu_m\}_m$ and 
	$\vu$ in $\vH{1}$ so that $\vu_m \rightharpoonup \vu$ in $\vH{1}$.
	This gives $\vu\cdot \vn = 0$ on $\Gamma$ and $\vu_m 
	\rightarrow \vu$ in $\vL{2}$.
	But from \eqref{47}, we have
	\begin{equation*}
	\DT\vu_m \rightarrow 0 \ \text{ in } \ \vL{2} \quad \text{ and } \quad 	\int\displaylimits_{\Gamma}{\vu_m \cdot \bm{\beta}} \rightarrow 0.
	\end{equation*}
	Then $\DT\vu = 0$ in $\Omega$ which implies $\vu = c\bm{\beta}$ for some $c\in\R$. But also $\vu_m \rightharpoonup \vu$ in $\vHfracb{1}{2}$ and $\vHfracb{1}{2}$ is compactly embedded in $\vLb{2}$ implies $\vu_m \rightarrow \vu$ in $ \vLb{2}$. Therefore, we have $\vu_m \cdot \bm{\beta} \rightarrow \vu\cdot \bm{\beta} \text{ in } \vLb{2}$ which yields $\vu\cdot\bm{\beta} = 0$ on $\Gamma$. Hence, $\vu = \bm{0}$ in $\Omega$. But then
	$$1 = |||\vu_m||| = \|\vu_m\|_{\vL{2}} + \|\DT\vu _m\|_{\mathbb{L}^2(\Omega)} \rightarrow 0$$
	which is a contradiction.
	\hfill
\end{proof}

\section{Stokes equations: $L^2$-theory}
\label{section 3}
\setcounter{equation}{0}
In this section, we study the well-posedness of solutions of the Stokes problem \eqref{S} in the Hilbert space. First we prove the existence and uniqueness of the weak solution.

\begin{theorem}[{\bf Existence in $\boldsymbol{H}^{1}(\Omega)$}]
\label{thm_weak_sol_Stokes_Nbc}
Let $\Omega$ be Lipschitz and
$$
\bm{f}\in\vL{\frac{6}{5}}, \mathbb{F}\in \mathbb{L}^2(\Omega), \bm{h}\in\vHfracbd{1}{2} \text{ and } \alpha\in\Lb{2}
$$
with $\alpha >0$ on $\Gamma_0\subseteq \Gamma$. Then the Stokes problem \eqref{S} has a unique solution $(\vu,\pi)\in \vH{1}\times L^2_0(\Omega)$ which satisfies the estimate:
\begin{equation}
\label{S4estimate}
\|\vu\|_{\vH{1}} + \|\pi\|_{\L{2}} \le C(\alpha) \left( \|\bm{f}\|_{\vL{\frac{6}{5}}} + \|\mathbb{F}\|_{\mathbb{L}^2(\Omega)} +\|\bm{h}\|_{\vHfracbd{1}{2}}\right) .
\end{equation}
\end{theorem}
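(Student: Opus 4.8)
The plan is to recast \eqref{S} as a variational problem on the Hilbert space $V := \vVsolT{2}$ and apply Lax--Milgram. Testing the momentum equation against $\bm{\varphi}\in V$, invoking the Green formula \eqref{green1}, and using the boundary condition in the form $[(2\DT\vu+\mathbb{F})\vn]_{\vt} = \bm{h}-\alpha\vu_{\vt}$ together with $\vu\cdot\vn=\bm{\varphi}\cdot\vn=0$ on $\Gamma$ (so that $\vu=\vu_{\vt}$, $\bm{\varphi}=\bm{\varphi}_{\vt}$ there), the problem reduces to finding $\vu\in V$ with $a(\vu,\bm{\varphi})=L(\bm{\varphi})$ for all $\bm{\varphi}\in V$, where
\[
a(\vu,\bm{\varphi}) := 2\int_{\Omega}\DT\vu:\DT\bm{\varphi} + \int_{\Gamma}\alpha\,\vu\cdot\bm{\varphi}, \qquad L(\bm{\varphi}) := \int_{\Omega}\bm{f}\cdot\bm{\varphi} - \int_{\Omega}\mathbb{F}:\nabla\bm{\varphi} + \langle\bm{h},\bm{\varphi}\rangle_{\Gamma}.
\]
Continuity of $L$ on $V$ follows from $\vH{1}\hookrightarrow\vL{6}$ (pairing with $\bm{f}\in\vL{\frac{6}{5}}$), the bound $|\int_{\Omega}\mathbb{F}:\nabla\bm{\varphi}|\le\|\mathbb{F}\|_{\mathbb{L}^2(\Omega)}\|\bm{\varphi}\|_{\vH{1}}$, and the duality $\vHfracbd{1}{2}\times\vHfracb{1}{2}$. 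For $a$, the volume term is clearly bounded; the point where the hypothesis $\alpha\in\Lb{2}$ is used is the boundary term, which via the trace embedding $\vH{1}\hookrightarrow\vHfracb{1}{2}\hookrightarrow\vLb{4}$ and Hölder satisfies $|\int_{\Gamma}\alpha\,\vu\cdot\bm{\varphi}|\le\|\alpha\|_{\Lb{2}}\|\vu\|_{\vLb{4}}\|\bm{\varphi}\|_{\vLb{4}}\le C\|\alpha\|_{\Lb{2}}\|\vu\|_{\vH{1}}\|\bm{\varphi}\|_{\vH{1}}$.

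Coercivity is the delicate step and the main obstacle. Since $\alpha\ge 0$, one has $a(\vu,\vu)=2\|\DT\vu\|_{\mathbb{L}^2(\Omega)}^2+\int_{\Gamma}\alpha|\vu|^2\ge 2\|\DT\vu\|_{\mathbb{L}^2(\Omega)}^2$, so when $\Omega$ is not axisymmetric \eqref{eqn_equivalence_norms_c1} immediately gives $a(\vu,\vu)\ge c\|\vu\|_{\vH{1}}^2$. The axisymmetric case is genuinely harder because $\DT\vu=0$ no longer forces $\vu=\bm{0}$: the rotational fields $\vu=c\bm{\beta}$ (see \eqref{beta}) survive, and $\|\DT\vu\|_{\mathbb{L}^2(\Omega)}$ alone is not coercive. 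I would argue by contradiction: a sequence $\vu_m\in V$ with $\|\vu_m\|_{\vH{1}}=1$ and $a(\vu_m,\vu_m)\to 0$ gives $\DT\vu_m\to 0$ in $\mathbb{L}^2(\Omega)$ and $\int_{\Gamma}\alpha|\vu_m|^2\to 0$. Passing to a weak limit $\vu$ in $\vH{1}$ (strong in $\vL{2}$ by compactness, and, by compact trace, strong in $\vLb{q}$ for $q<4$) yields $\DT\vu=0$, hence $\vu=c\bm{\beta}$, and $\int_{\Gamma}\alpha|\vu|^2=0$. Because $\alpha>0$ on $\Gamma_0$ with $|\Gamma_0|>0$, while $\bm{\beta}=\bm{b}\times\bm{x}$ vanishes only on the rotation axis (a set of zero surface measure), we must have $c=0$, i.e. $\vu=\bm{0}$; the standard Korn inequality $\|\vu_m\|_{\vH{1}}\le C(\|\vu_m\|_{\vL{2}}+\|\DT\vu_m\|_{\mathbb{L}^2(\Omega)})\to 0$ then contradicts $\|\vu_m\|_{\vH{1}}=1$. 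This secures coercivity with a constant depending on $\alpha$, and Lax--Milgram produces a unique $\vu\in V$.

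It remains to recover the pressure and the boundary condition and to derive \eqref{S4estimate}. Restricting the identity $a(\vu,\bm{\varphi})=L(\bm{\varphi})$ to $\bm{\varphi}\in\vDsol{\Omega}$, where all boundary contributions drop, gives $\langle-\Delta\vu-\bm{f}-\div\ \mathbb{F},\bm{\varphi}\rangle=0$ for every compactly supported divergence-free field, so de Rham's theorem furnishes $\pi\in L^2_0(\Omega)$ with $-\Delta\vu+\nabla\pi=\bm{f}+\div\ \mathbb{F}$ in $\Omega$. Then $-\div(2\DT\vu+\mathbb{F})+\nabla\pi=\bm{f}\in\vL{\frac{6}{5}}$, and since $r(2)=\frac{6}{5}$ by \eqref{def_exponent_rp}, the trace $[(2\DT\vu+\mathbb{F})\vn]_{\vt}$ is well defined in $\vHfracbd{1}{2}$ by Lemma \ref{lem_trace_strain_tensor}; reinserting \eqref{green1} into the weak formulation and using that the tangential traces of elements of $V$ are dense in the relevant trace space identifies $[(2\DT\vu+\mathbb{F})\vn]_{\vt}+\alpha\vu_{\vt}=\bm{h}$ on $\Gamma$. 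Finally, coercivity gives $c\|\vu\|_{\vH{1}}^2\le a(\vu,\vu)=L(\vu)\le C(\|\bm{f}\|_{\vL{\frac{6}{5}}}+\|\mathbb{F}\|_{\mathbb{L}^2(\Omega)}+\|\bm{h}\|_{\vHfracbd{1}{2}})\|\vu\|_{\vH{1}}$, and the pressure bound follows from the inf-sup condition (the lower bound for $\nabla$ from $L^2_0(\Omega)$ into $\vH{-1}$) applied to $\nabla\pi=\Delta\vu+\bm{f}+\div\ \mathbb{F}$, producing \eqref{S4estimate}; uniqueness is immediate, since the homogeneous problem forces $a(\vu,\vu)=0$ hence $\vu=\bm{0}$ and then $\nabla\pi=\bm{0}$, so $\pi=0$ in $L^2_0(\Omega)$.
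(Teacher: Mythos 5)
Your proposal is correct and takes essentially the same route as the paper: a Lax--Milgram argument on $\vVsolT{2}$, de Rham's theorem for the pressure, the Green formula \eqref{green1} to recover the Navier boundary condition (this is exactly Proposition \ref{34}, which the paper cites), and the Korn-type norm equivalences for coercivity. The only substantive difference is that you spell out the compactness/contradiction argument establishing coercivity in the axisymmetric case (where $\DT\vu=0$ admits the nontrivial kernel $c\bm{\beta}$ and one must use $\alpha>0$ on $\Gamma_0$), which the paper compresses into its citation of Proposition \ref{prop_equivalent_norms_c2}; this is a faithful completion of the paper's argument rather than a different approach.
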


\begin{proof}
Existence of a unique solution $(\vu,\pi)\in \vH{1}\times L^2_0(\Omega)$ of (\ref{S}) follows from Lax-Milgram theorem. The bilinear form
$$\forall \ \vu, \bm{\varphi}\in\vVsolT{2}, \qquad \bm{a}(\vu, \bm{\varphi}) = 2\int\displaylimits_{\Omega}{\DT\vu:\DT\bm{\varphi}}+\int\displaylimits_{\Gamma}{\alpha\vu_{\vt}\cdot\bm{\varphi}_{\vt}} $$
is clearly continuous. And from Proposition \ref{prop_equivalent_norms_c2},
\begin{align}
\label{S40}
\bm{a}(\vu, \vu) = 2 \|\DT\vu\|^2_{\mathbb{L}^2(\Omega)} + \int\displaylimits_{\Gamma}{\alpha |\vu_{\vt}|^2} \ge C(\alpha) \|\vu\|_{\vH{1}}^2
\end{align}
which shows that it is also coercive on $\vVsolT{2}$. Moreover, the linear form $\bm{\ell} :\vVsolT{2} \rightarrow \R$, defined as
$$\bm{\ell}(\bm{\varphi}) = \int\displaylimits_{\Omega}{\bm{f}\cdot
\bm{\varphi}}-\int\displaylimits_{\Omega}{\mathbb{F}:\nabla \bm{\varphi}}+\langle\bm{h},\bm{\varphi}\rangle_{{\vHfracbd{1}{2}}\times {\bm{H}^\frac{1}{2}(\Gamma)}} $$
is continuous on $\vVsolT{2}$. Hence, Lax-Milgram theorem gives the existence of a unique $\vu\in\vVsolT{2}$ satisfying
\begin{equation}
\label{S41}
\bm{a}(\vu, \bm{\varphi}) =\bm{\ell}(\bm{\varphi}) \qquad \forall  \bm{\varphi}\in\vVsolT{2}.
\end{equation}
This completes the proof since (\ref{S41}) is equivalent to the problem (\ref{S}) (see Proposition \ref{34}). Existence of the pressure term follows from De Rham theorem.

The estimate (\ref{S4estimate}) is obvious from (\ref{S40}) and (\ref{S41}).
\hfill
\end{proof}

\begin{remark}
\rm{ Note that with $\alpha > 0$ on some $\Gamma_0\subseteq \Gamma$ with $|\Gamma_0| > 0$, we get the uniqueness of the solution of the Stokes problem \eqref{S}. But for the case $\alpha \equiv 0$ on $\Gamma$, there is a non-trivial kernel when $\Omega$ is axisymmetric (see \cite[Theorem 3.4]{AR}).
	
Indeed, consider the kernel $\bm{\mathcal{T}}_\alpha (\Omega)$ of the Stokes operator: $(\vu,\pi)\in \vH{1}\times L^2_0(\Omega)$ satisfying (\ref{S}) with $\bm{ f}= \bm{0}$ and $\bm{h} = \bm{0}$. Then we have the energy estimate
\begin{equation*}
2\|\DT\vu\|^2_{\mathbb{L}^2(\Omega)} + \int\displaylimits_{\Gamma}{\alpha |\vu_{ \vt}|^2} = 0
\end{equation*}
with $\alpha \ge 0$ on $\Gamma$. Hence $\DT\bm{v}=0$ in $\Omega$ implies $\bm{u}(\bm{x}) = \bm{b}\times \bm{x} + \bm{c} \text{ for almost all } \bm{x} \in \Omega$ (in fact, for all $x\in \overline{\Omega}$ since $\vu\in\vH{2}\hookrightarrow \bm{C}^0(\overline{\Omega})$) where $\bm{b}, \bm{c}\in\R^3$ are arbitrary constant vectors. But also $\bm{v}\cdot \vn = 0$ on $\Gamma$ gives $\bm{c} = \bm{0}$.\\
{\bf a)} If $\alpha >0$ on $\Gamma_0$, then $\bm{b}\times \bm{x} = \bm{0}, \bm{x}\in \Gamma_0$ and thus $\bm{b} = \bm{0}$ \textit{i.e.} $\bm{\mathcal{T}}_\alpha (\Omega) = \{\bm{0}\}$.\\
{\bf b)} If $\alpha \equiv 0$ on $\Gamma$, we can verify easily that 

{\bf i)} $\bm{u}(\bm{x}) = \bm{b}\times \bm{x}$ if $\Omega$ is axisymmetric \textit{i.e.} $\bm{b}$ is co-linear to the axis of $\Omega$ and $\dim \bm{\mathcal{T}}_\alpha (\Omega)=1$.

{\bf ii)} $\vu = \bm{0}$ if $\Omega$ is not axisymmetric \textit{i.e.} $\bm{\mathcal{T}}_\alpha (\Omega) = \{\bm{0}\}$.
}
\end{remark}

In the next theorem we improve the estimate (\ref{S4estimate}) with respect to $\alpha$ in some particular cases.

\begin{theorem}[{\bf Estimates in $\boldsymbol{H}^{1}(\Omega)$}]
\label{L^2estimate}
With the same assumption on $\bm{f}, \mathbb{F}, \bm{h}$ and $\alpha$ as in Theorem \ref{thm_weak_sol_Stokes_Nbc}, the solution $(\vu,\pi)\in \vH{1}\times L^{2}_0(\Omega)$ of the Stokes problem \eqref{S} satisfies the following estimates:\\
{\bf a)} if $\Omega$ is not axisymmetric, then
\begin{equation}
\label{51}
\|\vu\|_{\vH{1}}+\|\pi\|_{L^{2}(\Omega)}\leq {C(\Omega)}\left(\|\bm{f}\|_{\vL{\frac{6}{5}}}  +\|\mathbb{F}\|_{\mathbb{L}^2(\Omega)}+\|\bm{h}\|_{\vHfracbd{1}{2}}\right) .
\end{equation}
{\bf b)} if $\Omega$ is axisymmetric and \\
\indent $\left(\textit{i} \right)$ $\alpha \geq \alpha_* >0$ on $\Gamma$, then
\begin{equation}
\label{52}
\|\vu\|_{\vH{1}}+\|\pi\|_{L^{2}(\Omega)}\leq \frac{C(\Omega)}{\min\{2,\alpha_*\}}\left(\|\bm{f}\|_{\vL{\frac{6}{5}}}+\|\mathbb{F}\|_{\mathbb{L}^2(\Omega)}+\|\bm{h}\|_{\vHfracbd{1}{2}}\right) .
\end{equation}
\indent $\left(\textit{ii} \right)$ $\bm{f}, \mathbb{F}$ and $\bm{h}$ satisfy the condition:
\begin{equation}
\label{50}
\int\displaylimits_{\Omega}{ \bm{f} \cdot \bm{\beta}} -\int\displaylimits_{\Omega}{\mathbb{F}:\nabla \bm{\beta}}+ \left\langle \bm{h}, \bm{\beta}\right\rangle_\Gamma = 0
\end{equation}
\qquad then, the solution $\vu$ satisfies $\int\displaylimits_{\Gamma}{\alpha \vu \cdot \bm{\beta}} = 0$ and
\begin{equation}
\label{53}
\|\DT\vu\|_{\mathbb{L}^2(\Omega)}^2+\int\displaylimits_{\Gamma}{\alpha |\vu_{\vt}|^2} + \|\pi\|^2_{\L{2}}\leq C(\Omega)\left(\|\bm{f}\|_{\vL{\frac{6}{5}}}+\|\mathbb{F}\|_{\mathbb{L}^2(\Omega)}+\|\bm{h}\|_{\vHfracbd{1}{2}}\right)^2 .
\end{equation}
\qquad In particular, if $\alpha$ is a non-zero constant, then $\int\displaylimits_{\Gamma}{\vu \cdot \bm{\beta}} = 0$ and
\begin{equation}
\label{54}
\|\vu\|_{\vH{1}}+ \|\pi\|_{L^{2}(\Omega)}\leq C(\Omega)\left(\|\bm{f}\|_{\vL{\frac{6}{5}}}+\|\mathbb{F}\|_{\mathbb{L}^2(\Omega)}+\|\bm{h}\|_{\vHfracbd{1}{2}}\right) .
\end{equation}
\end{theorem}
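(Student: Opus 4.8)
Everything rests on the energy identity obtained by taking $\bm{\varphi}=\vu$ in the variational formulation \eqref{S41}, which together with \eqref{S40} reads
\begin{equation*}
2\|\DT\vu\|^2_{\mathbb{L}^2(\Omega)}+\int\displaylimits_{\Gamma}{\alpha|\vu_{\vt}|^2}=\bm{\ell}(\vu),
\end{equation*}
combined with the continuity bound $|\bm{\ell}(\vu)|\le C\big(\|\bm{f}\|_{\vL{\frac{6}{5}}}+\|\mathbb{F}\|_{\mathbb{L}^2(\Omega)}+\|\bm{h}\|_{\vHfracbd{1}{2}}\big)\|\vu\|_{\vH{1}}$, which is immediate from $\vH{1}\hookrightarrow\vL{6}$ and the trace theorem. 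For the pressure I would recover $\pi$ by De Rham and then test the full equation with a field $\bm{\varphi}\in\vH{1}$ vanishing on $\Gamma$ and chosen so that $\div\bm{\varphi}=\pi$ (inf-sup); this annihilates every boundary contribution and yields $\|\pi\|_{\L{2}}\le C(\Omega)\big(\text{data}+\|\DT\vu\|_{\mathbb{L}^2(\Omega)}\big)$ with a constant free of $\alpha$, so controlling $\|\DT\vu\|_{\mathbb{L}^2(\Omega)}$ controls $\pi$.

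\textbf{Cases (a) and (b)(i).} For (a), since $\alpha\ge0$ I simply discard $\int_\Gamma\alpha|\vu_{\vt}|^2$, so that $2\|\DT\vu\|^2_{\mathbb{L}^2(\Omega)}\le\bm{\ell}(\vu)$; the non-axisymmetric Korn equivalence \eqref{eqn_equivalence_norms_c1} then gives $\|\vu\|^2_{\vH{1}}\le C\bm{\ell}(\vu)\le C(\text{data})\|\vu\|_{\vH{1}}$, hence \eqref{51} after the pressure step, with $C(\Omega)$ free of $\alpha$. For (b)(i) I keep the boundary term and use $\alpha\ge\alpha_*$ to get $\min\{2,\alpha_*\}\big(\|\DT\vu\|^2_{\mathbb{L}^2(\Omega)}+\|\vu_{\vt}\|^2_{\bm{L}^2(\Gamma)}\big)\le\bm{\ell}(\vu)$; the axisymmetric equivalence \eqref{eqn_equivalence_norms_c2} then closes the estimate with the factor $1/\min\{2,\alpha_*\}$, giving \eqref{52} (the bare data term in the pressure is absorbed since $1/\min\{2,\alpha_*\}\ge\tfrac12$).

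\textbf{Case (b)(ii), the crux.} Here $\alpha$ may degenerate, so neither $\|\vu_{\vt}\|_{\bm{L}^2(\Gamma)}$ nor $\|\vu\|_{\vH{1}}$ is controllable uniformly in $\alpha$, which is exactly why \eqref{53} is stated for the energy rather than for $\|\vu\|^2_{\vH{1}}$. Recalling from \eqref{beta} that $\bm{\beta}\in\vVsolT{2}$ with $\DT\bm{\beta}=0$ and $\bm{\beta}\cdot\vn=0$, I first test \eqref{S41} with $\bm{\varphi}=\bm{\beta}$: the bulk term vanishes and $\bm{a}(\vu,\bm{\beta})=\int_\Gamma\alpha\vu\cdot\bm{\beta}$ (using $\bm{\beta}_{\vt}=\bm{\beta}$), while $\bm{\ell}(\bm{\beta})=0$ is precisely the compatibility condition \eqref{50}; this proves $\int_\Gamma\alpha\vu\cdot\bm{\beta}=0$. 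Next I decompose $\vu=\bm{w}+c\bm{\beta}$ with $c=\|\bm{\beta}\|^{-2}_{\vL{2}}\int_\Omega\vu\cdot\bm{\beta}$, so that $\int_\Omega\bm{w}\cdot\bm{\beta}=0$ and $\DT\bm{w}=\DT\vu$; inequality \eqref{43} gives $\|\bm{w}\|^2_{\vL{2}}\le C\|\DT\vu\|^2_{\mathbb{L}^2(\Omega)}$ and standard Korn then $\|\bm{w}\|_{\vH{1}}\le C\|\DT\vu\|_{\mathbb{L}^2(\Omega)}$. Since $\bm{\ell}(\bm{\beta})=0$, one has $\bm{\ell}(\vu)=\bm{\ell}(\bm{w})\le C(\text{data})\|\bm{w}\|_{\vH{1}}\le C(\text{data})\|\DT\vu\|_{\mathbb{L}^2(\Omega)}$; feeding this into the energy identity gives $\|\DT\vu\|_{\mathbb{L}^2(\Omega)}\le C(\text{data})$, then $\int_\Gamma\alpha|\vu_{\vt}|^2\le C(\text{data})^2$, and with the pressure bound this is \eqref{53} with $C(\Omega)$ independent of $\alpha$. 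Finally, when $\alpha$ is a non-zero constant, $\int_\Gamma\alpha\vu\cdot\bm{\beta}=0$ upgrades to $\int_\Gamma\vu\cdot\bm{\beta}=0$, so \eqref{44} now yields $\|\vu\|^2_{\vL{2}}\le C\|\DT\vu\|^2_{\mathbb{L}^2(\Omega)}$ and ordinary Korn recovers the full $\|\vu\|_{\vH{1}}$, giving \eqref{54}.

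\textbf{Main obstacle.} The delicate point is (b)(ii): because $\alpha$ is allowed to vanish, the coercivity argument of (b)(i) breaks down and the naive bound $\|\vu\|_{\vH{1}}\le C\sqrt{\text{energy}}$ genuinely fails as $\alpha\to0$ (the constraint $\int_\Gamma\alpha\vu\cdot\bm{\beta}=0$ becomes vacuous). The resolution is to never estimate $\|\vu\|_{\vH{1}}$ and instead exploit that the kernel direction $\bm{\beta}$ is annihilated by the load, $\bm{\ell}(\bm{\beta})=0$; splitting off the $\bm{\beta}$-component and controlling only the orthogonal remainder $\bm{w}$ through \eqref{43} is what makes the energy estimate close uniformly in $\alpha$.
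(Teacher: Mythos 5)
Your proof is correct, and for the velocity estimates it follows essentially the same route as the paper: the energy identity from testing with $\vu$, the Korn equivalences \eqref{eqn_equivalence_norms_c1}--\eqref{eqn_equivalence_norms_c2} for (a) and (b)(i), and for (b)(ii) the exploitation of the invariance $\bm{\ell}(\vu)=\bm{\ell}(\vu+k\bm{\beta})$ granted by \eqref{50}; your explicit decomposition $\vu=\bm{w}+c\bm{\beta}$ with $c=\|\bm{\beta}\|_{\vL{2}}^{-2}\int_\Omega \vu\cdot\bm{\beta}$ is exactly the minimizer in the paper's $\inf_{k\in\R}\|\vu+k\bm{\beta}\|$ argument via \eqref{55}/\eqref{43} and Korn.

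The one genuine difference is the pressure. The paper estimates $\|\pi\|_{\L{2}}\le\|\nabla\pi\|_{\vH{-1}}\le C(\Omega)\left(\text{data}+\|\vu\|_{\vH{1}}\right)$, which suffices for \eqref{51} and \eqref{52} but is not directly usable in case (b)(ii), where $\|\vu\|_{\vH{1}}$ is precisely what fails to be uniformly controlled in $\alpha$; closing \eqref{53} by that route needs the additional (unstated) observation that $\Delta\vu=\Delta(\vu+k\bm{\beta})$ since $\Delta\bm{\beta}=0$, so that $\|\nabla\pi\|_{\vH{-1}}$ is in fact bounded by $\text{data}+\inf_k\|\vu+k\bm{\beta}\|_{\vH{1}}\le \text{data}+C\|\DT\vu\|_{\mathbb{L}^2(\Omega)}$. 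Your Bogovskii-type argument --- testing with $\bm{\varphi}\in\bm{H}^1_0(\Omega)$ solving $\div\,\bm{\varphi}=\pi$, which kills all boundary terms --- yields $\|\pi\|_{\L{2}}\le C(\Omega)\left(\text{data}+\|\DT\vu\|_{\mathbb{L}^2(\Omega)}\right)$ with a constant manifestly free of $\alpha$, and this closes \eqref{53} in all cases at once; it is a small but real improvement over the paper's handling of this step. Likewise, your derivation of $\int_\Gamma\alpha\vu\cdot\bm{\beta}=0$ by taking $\bm{\varphi}=\bm{\beta}$ in \eqref{S41} (legitimate since $\bm{\beta}\in\vVsolT{2}$ and $\DT\bm{\beta}=0$) makes explicit for general $\alpha$ an identity the paper records only in the constant-$\alpha$ case.
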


\begin{remark}
	\rm{Note that in the case of $\Omega$ axisymmetric, if $\alpha$ is a non-zero constant, we can use the estimate (\ref{52}) with $\alpha = \alpha_*$. In particular if $\alpha = \frac{1}{n}, n\in \mathbb{N}^*$, the corresponding solution $(\vu_n,\pi_n)$ satisfy
	\begin{equation*}
		\label{S4RE1}
		\|\vu_n\|_{\vH{1}} + \|\pi_n\|_{\L{2}} \le nC(\Omega) \left( \|\bm{f} \|_{\vL{\frac{6}{5}}}+\|\mathbb{F}\|_{\mathbb{L}^2(\Omega)} + \|\bm{h}\|_{\vHfracbd{1}{2}}\right) .
	\end{equation*}
But this estimate is not optimal, when we suppose (\ref{50}). In fact because of $\int\displaylimits_{\Gamma}{\vu_{n}\cdot \bm{\beta}} = 0$, we have by (\ref{54}) the better estimate
	\begin{equation*}
		\label{S4RE2}
		\|\vu_n\|_{\vH{1}} + \|\pi_n\|_{\L{2}} \le C(\Omega) \left( \|\bm{f} \|_{\vL{\frac{6}{5}}}+\|\mathbb{F}\|_{\mathbb{L}^2(\Omega)} + \|\bm{h}\|_{\vHfracbd{1}{2}}\right)
	\end{equation*}
where $C(\Omega)$ does not depend on $n$. That means if $\alpha\rightarrow 0$, (\ref{54}) is better estimate than (\ref{52}).
	}
\end{remark}

\begin{proof}
The solution $\vu$ satisfies:
\begin{align}
2\int\displaylimits_{\Omega}{|\DT\vu|^2}+\int\displaylimits_{\Gamma}{\alpha|\vu_{\vt}|^2}
& \leq C(\Omega) \left(\|\bm{f}\|_{\vL{\frac{6}{5}}}+\|\mathbb{F}\|_{\mathbb{L}^2(\Omega)}+\|\bm{h}
\|_{\vHfracbd{1}{2}}\right) \|\vu\|_{\vH{1}}
\label{3} .
\end{align}
{\bf a)} If $\Omega$ is not axisymmetric, estimate \eqref{eqn_equivalence_norms_c1} shows that the norm $\|\DT\vu\|_{\mathbb{L}^2(\Omega)}$ is equivalent to the norm $\|\vu\|_{\vH{1}}$ and hence from \eqref{3}, it follows 
\begin{equation}
\label{7}
\|\vu\|_{\vH{1}}\leq C(\Omega)\left(\|\bm{f}\|_{\vL{\frac{6}{5}}}+\|\mathbb{F}\|_{\mathbb{L}^2(\Omega)}+\|\bm{h}\|_{\vHfracbd{1}{2}}\right) .
\end{equation}
On the other hand,
\begin{align}
\|\pi\|_{L^{2}(\Omega)}\leq\|\nabla\pi\|_{\vH{-1}}
 & \leq C(\Omega)\left(\|\bm{f}\|_{\vL{\frac{6}{5}}}+\|\mathbb{F}\|_{\mathbb{L}^2(\Omega)}+\|\Delta\vu\|_{\vH{-1}}\right)\notag\\
 & \leq C(\Omega)\left(\|\bm{f}\|_{\vL{\frac{6}{5}}}+\|\mathbb{F}\|_{\mathbb{L}^2(\Omega)}+\|\vu\|_{\vH{1}}\right)\notag\\
 & \leq C(\Omega)\left(\|\bm{f}\|_{\vL{\frac{6}{5}}}+\|\mathbb{F}\|_{\mathbb{L}^2(\Omega)}+\|\bm{h}\|_{\vHfracbd{1}{2}}\right).
 \label{ineq_est_pressure_L2}
\end{align}
The estimate \eqref{51} then follows from \eqref{7} and \eqref{ineq_est_pressure_L2}.\\
\\
{\bf b)} If $\Omega$ is axisymmetric and\\
\indent $\left(\textit{i} \right)$ $\alpha \geq \alpha_* >0$, estimate \eqref{eqn_equivalence_norms_c2} gives,
\begin{align}
 \|\vu\|_{\vH{1}}^2 
& \leq \frac{C(\Omega)}{\min\{2,\alpha_*\}} \left( 2\|\DT\vu\|_{\mathbb{L}^2(\Omega)}^2+\alpha_*\|\vu_{\vt}\|_{\vLb{2}}^2\right)  \notag\\
& \leq \frac{C(\Omega)}{\min\{2,\alpha_*\}} \left(  2\int\displaylimits_{\Omega}{|\DT\vu|^2}+\int\displaylimits_{\Gamma}{\alpha|\vu_{\vt}|^2} \right) \label{4} ;
\end{align}

Hence estimate \eqref{52} follows from \eqref{3}.\\

\indent $\left(\textit{ii} \right)$ $\bm{f}, \mathbb{F}$ and $\bm{h}$ satisfy the condition \eqref{50}, then from \eqref{S41}, we get
\begin{align*}
2\int\displaylimits_{\Omega}{|\DT\vu|^2}+\int\displaylimits_{\Gamma}{\alpha|\vu_{\vt}|^2}
& = \int\displaylimits_{\Omega}{\bm{f} \cdot \vu} - \int\displaylimits_{\Omega}{\mathbb{F}:\nabla \vu} + \langle \bm{h}, \vu \rangle_{\Gamma}\\
& = \int\displaylimits_{\Omega}{\bm{f} \cdot (\vu+k\bm{\beta})} - \int\displaylimits_{\Omega}{\mathbb{F}:\nabla (\vu + k\bm{\beta})}+ \langle \bm{h}, \vu + k\bm{\beta}\rangle_{\Gamma} \qquad \forall
 k\in\R\\
& \leq C(\Omega)\left(\|\bm{f}_{\vL{\frac{6}{5}}} + \|\mathbb{F}\|_{\mathbb{L}^2(\Omega)}+ \|\bm{h}\|_{\vHfracbd{1}{2}} \right) \inf\limits_{k\in\R} \|\vu+k\bm{\beta}\|_{\vH{1}} .
\end{align*}

Also from Korn inequality and the inequality \eqref{55}, we know,
$$\inf\limits_{k\in\R} \|\vu+k\bm{\beta}\|_{\vH{1}}^2 \leq C(\Omega) \left(\inf\limits_{k\in\R} \|\vu+k\bm{\beta}\|_{\vL{2}}^2 + \|\DT\vu\|_{\mathbb{L}^2(\Omega)}^2 \right)\leq C(\Omega) \ \|\DT\vu\|_{\mathbb{L}^2(\Omega)}^2 .$$
which yields
$$2\int\displaylimits_{\Omega}{|\DT\vu|^2}+\int\displaylimits_{\Gamma}{\alpha|\vu_{\vt}|^2} \leq C(\Omega) \left(\|\bm{f}_{\vL{\frac{6}{5}}}+\|\mathbb{F}\|_{\mathbb{L}^2(\Omega)}+ \|\bm{h}\|_{\vHfracbd{1}{2}} \right) \|\DT\vu\|_{\mathbb{L}^2(\Omega)}.$$
This in turn implies
$$\|\DT\vu\|_{\mathbb{L}^2(\Omega)} \leq C(\Omega) \left(\|\bm{f}_{\vL{\frac{6}{5}}}+\|\mathbb{F}\|_{\mathbb{L}^2(\Omega)}+ \|\bm{h}\|_{\vHfracbd{1}{2}} \right)$$
 and then
$$\int\displaylimits_{\Gamma}{\alpha|\vu_{\vt}|^2} \leq C(\Omega) \left(\|\bm{f}_{\vL{\frac{6}{5}}}+\|\mathbb{F}\|_{\mathbb{L}^2(\Omega)}+ \|\bm{h}\|_{\vHfracbd{1}{2}} \right)^2 $$
and hence the inequality \eqref{53}.\\

\indent Moreover, if $\alpha$ is a non-zero constant, the variational formulation \eqref{S41} gives,
$$\int\displaylimits_{\Gamma}{\vu\cdot \bm{\beta}} = 0 .$$
So now \eqref{44} shows that the norm $\|\DT\vu\|_{\mathbb{L}^2(\Omega)}$ is equivalent to the full norm $\|\vu\|_{\vH{1}}$  and \eqref{54} is a consequence of\eqref{3}.
\hfill
\end{proof}

Next we discuss the strong solution of the system \eqref{S} and the corresponding bounds, not depending on $\alpha$.

\begin{theorem}[{\bf Existence and estimate in $\boldsymbol{H}^{2}(\Omega)$}]
\label{13}
Assume that $\alpha$ is a constant. Then if
$$
\bm{f} \in \vL{2} \text{ and } \bm{h}\in\vHfracb{1}{2},
$$
the solution $(\vu,\pi)$ of the Stokes problem \eqref{S} with $\mathbb{F}=0$ belongs to $\vH{2}\times \H1$. Also it satisfies the following estimates:\\
{\bf (i)} if $\Omega$ is not axisymmetric, then
\begin{equation}
\label{5}
\|\vu\|_{\vH{2}} + \|\pi\|_{\H{1}} \leq C(\Omega) \left(\|\bm{f}\|_{\vL{2}} + \|\bm{h}\|_{\vHfracb{1}{2}}\right).
\end{equation}
 {\bf (ii)} if $\Omega$ is axisymmetric, then
\begin{equation}
\label{63}
\|\vu\|_{\vH{2}} + \|\pi\|_{\H{1}} \leq \frac{C(\Omega)}{\min\{2,\alpha\}} \left(\|\bm{f}\|_{\vL{2}} + \|\bm{h}\|_{\vHfracb{1}{2}}\right).
\end{equation}

If moreover, $\bm{f}, \bm{h}$ satisfy the condition:
$$\int\displaylimits_{\Omega}{\bm{f}\cdot \bm{\beta}} + \left\langle \bm{h}, \bm{\beta}\right\rangle _{\Gamma} = 0$$
\indent then
\begin{equation}
\label{6}
\|\vu\|_{\vH{2}} + \|\pi\|_{\H{1}} \leq C(\Omega) \left(\|\bm{f}\|_{\vL{2}} + \|\bm{h}\|_{\vHfracb{1}{2}}\right).
\end{equation}
\end{theorem}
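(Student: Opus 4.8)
The weak solution $(\vu,\pi)\in\vH{1}\times L^2_0(\Omega)$ is already provided, and is unique, by Theorem~\ref{thm_weak_sol_Stokes_Nbc}; the task is to show that it is in fact a strong solution and to quantify its norm. I would upgrade the regularity by rewriting the strain-form boundary condition as a condition on $\mathbf{curl}\,\vu$ and then invoking the $\vH{2}$-theory for the Navier-type (curl) boundary value problem. Since $-\Delta\vu+\nabla\pi=\bm{f}\in\vL{2}\subset\vL{r(2)}$ (recall $r(2)=\tfrac{6}{5}$ from \eqref{def_exponent_rp}), the pair $(\vu,\pi)$ lies in $\vE{2}$, so the trace $\left[(\DT\vu)\vn\right]_{\vt}$ is well defined by Lemma~\ref{lem_trace_strain_tensor} and the identity \eqref{26} holds in $\vHfracbd{1}{2}$. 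Using $\vu\cdot\vn=0$, it turns the boundary condition of \eqref{S} (with $\mathbb{F}=0$) into
\begin{equation*}
\mathbf{curl}\,\vu\times\vn+\alpha\vu_{\vt}=\bm{h}+2\bm{\Lambda}\vu\qquad\text{on }\Gamma ,
\end{equation*}
so that $\vu$ solves a Stokes system carrying a Robin--Navier condition with right-hand side $\bm{h}+2\bm{\Lambda}\vu$.

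\textbf{Regularity and the $\alpha$-uniform estimate.} I would then apply the $\vH{2}$ elliptic theory for this Navier-type problem (cf.\ \cite{AS1}), obtained after localisation and flattening of the boundary; here the geometric term $\bm{\Lambda}\vu$ produces only $L^\infty$ coefficients, which is precisely what the $\HC{1}{1}$ regularity of $\Gamma$ guarantees, and, together with the trace term $\alpha\vu_{\vt}$, enters as a lower-order contribution controlled by $\|\vu\|_{\vH{1}}$. This yields $\vu\in\vH{2}$, $\pi\in\H{1}$ and the a priori bound
\begin{equation*}
\|\vu\|_{\vH{2}}+\|\pi\|_{\H{1}}\le C(\Omega)\left(\|\bm{f}\|_{\vL{2}}+\|\bm{h}\|_{\vHfracb{1}{2}}+\|\vu\|_{\vH{1}}\right).
\end{equation*}
The delicate point, and what I expect to be the main obstacle, is that the constant $C(\Omega)$ must be independent of $\alpha$. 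This is why the term $\alpha\vu_{\vt}$ should be kept \emph{inside} the boundary operator rather than moved to the data: since $\alpha\ge 0$, in the energy identities underlying the localised estimate it carries a favourable sign and can only help, and as $\alpha\to\infty$ the condition degenerates to the Dirichlet one, whose $\vH{2}$-theory has an $\alpha$-free constant. Moving $\alpha\vu_{\vt}$ to the right-hand side would instead create a spurious factor $\alpha\|\vu_{\vt}\|_{\vHfracb{1}{2}}$ and destroy the uniformity.

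\textbf{Assembling the three estimates.} With the $\alpha$-uniform a priori bound in hand, the three inequalities follow by inserting the $L^2$-estimates of Theorem~\ref{L^2estimate}, using the embeddings $\|\bm{f}\|_{\vL{\frac{6}{5}}}\le C\|\bm{f}\|_{\vL{2}}$ and $\|\bm{h}\|_{\vHfracbd{1}{2}}\le C\|\bm{h}\|_{\vHfracb{1}{2}}$. If $\Omega$ is not axisymmetric, \eqref{51} bounds $\|\vu\|_{\vH{1}}$ by the data with an $\alpha$-free constant, giving \eqref{5}. If $\Omega$ is axisymmetric, \eqref{52} contributes the factor $\tfrac{1}{\min\{2,\alpha\}}\ge\tfrac12$, which absorbs the $\alpha$-free part of the a priori bound and produces \eqref{63}. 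Finally, under the compatibility condition $\int_\Omega\bm{f}\cdot\bm{\beta}+\langle\bm{h},\bm{\beta}\rangle_\Gamma=0$, the sharper bound \eqref{54}, which is $\alpha$-uniform even in the axisymmetric case, replaces \eqref{52} and yields \eqref{6}. Existence of the strong solution requires no separate argument: the $\vH{1}$ weak solution is unique, hence it coincides with the $\vH{2}$ solution just constructed.
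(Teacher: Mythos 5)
You correctly identify the crux of this theorem — the $\vH{2}$ constant must be independent of $\alpha$, and the only mechanism for this is to keep the friction term on the left of the energy identities, where $\alpha\ge 0$ gives it a favourable sign — and your final assembly of \eqref{5}, \eqref{63} and \eqref{6} from the $\vH{1}$ bounds of Theorem \ref{L^2estimate} is exactly how the paper concludes. The gap is that the step carrying all the weight is not proved: you ``apply the $\vH{2}$ elliptic theory for this Navier-type problem (cf.\ \cite{AS1})'' with $\alpha\vu_{\vt}$ kept inside the boundary operator, but no such theory exists in that reference — \cite{AS1} treats the condition $\mathbf{curl}\,\vu\times\vn=\bm{0}$ \emph{without} a friction term, and its estimates cannot simply be invoked for the Robin-type operator. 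Producing a localized $\vH{2}$ estimate in which the $\alpha$-term is kept with its sign is precisely the content of the proof, not something that can be delegated. The paper does this work from scratch (its ``Method II''): Nirenberg's difference-quotient method applied directly to the variational formulation \eqref{56}, with test function $\bm{\varphi}=-D^{-h}_k(\zeta^2D^h_k\vu)$, $k=1,2$; the resulting boundary term $\int_\Gamma\alpha\zeta^2|D^h_k\vu_{\vt}|^2\ge 0$ is simply discarded from the left-hand side (this is where the constancy of $\alpha$ and the sign $\alpha\ge 0$ are used, and where the uniformity in $\alpha$ comes from), after which the tangential regularity of $\pi$ and the normal second derivatives are recovered algebraically from the equations, and a general $\HC{1}{1}$ domain is handled by flattening.

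Your specific reduction also has a defect that would persist even if a Robin--curl theory were available: rewriting the boundary condition as $\mathbf{curl}\,\vu\times\vn+\alpha\vu_{\vt}=\bm{h}+2\bm{\Lambda}\vu$ moves the curvature term into the data, and on a $\HC{1}{1}$ domain $\bm{\Lambda}\vu$ is \emph{not} an $\vHfracb{1}{2}$ datum: the coefficients $\partial\vn/\partial s_k$ are merely $L^\infty(\Gamma)$ (the normal is only Lipschitz), multiplication by $L^\infty$ does not preserve $\vHfracb{1}{2}$, and from $\vu\in\vH{1}$ one only gets $\bm{\Lambda}\vu\in\vLb{4}$. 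Consequently the a priori bound $\|\vu\|_{\vH{2}}+\|\pi\|_{\H{1}}\le C(\Omega)\left(\|\bm{f}\|_{\vL{2}}+\|\bm{h}\|_{\vHfracb{1}{2}}+\|\vu\|_{\vH{1}}\right)$ does not follow from your reduction, since it implicitly requires $\|\bm{\Lambda}\vu\|_{\vHfracb{1}{2}}\le C\|\vu\|_{\vH{1}}$, which is false. The paper's variational method never sees $\bm{\Lambda}\vu$ at all, which is one reason it works at $\HC{1}{1}$ regularity. (If you only want existence of $\vu\in\vH{2}$ without the uniform estimate, the paper's short ``Method I'' is the clean version of your idea: for constant $\alpha$ one has $\alpha\vu_{\vt}\in\vHfracb{1}{2}$, so it can be treated as data and the $\alpha=0$ regularity result of \cite{AR} applies — but this inevitably produces $\alpha$-dependent constants, which is exactly why the difference-quotient argument is unavoidable for \eqref{5}--\eqref{6}.)
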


\begin{remark}
	\rm{
	{\bf 1}. We show in Theorem \ref{thm_W2p_regularity_Stokes_Nbc} the existence of $\vu\in \vH{2}$ for more general $\alpha$, not necessarily constant.

{\bf 2}. It is not sensible to consider non-zero $ \mathbb{F}\in \mathbb{H}^1(\Omega)$ for the strong solution as we are considering any $\vL{2}$ data $\bm{f}$ in the RHS.}
\end{remark}

\begin{proof}
\textbf{Method I:} If $\alpha$ is a constant and $\bm{f}\in \vL{2}$ and $\bm{h}\in \vHfracb{1}{2}$, then $\vu \in \vH{1}$ and therefore $\alpha \vu_{\vt} \in \vHfracb{1}{2}$. So using the regularity result for strong solution \cite[Theorem 4.1]{AR}, we get that $\vu\in \vH{2}$.

But concerning the estimate, with this method, using the results in \cite{AR}, we can not obtain the bound on $\vu$, independent of $\alpha$. Thus we need to consider the fundamental but long method, explained below.
\\
\textbf{Method II:} Here we follow the method of difference quotient as in the book of L.C.Evans \cite{Evans}. Without loss of generality, we consider $\bm{h} = \bm{0}$, for ease of notation. Also, let denote the difference quotient by,
	\begin{equation*}
	D^h_k \vu (x) = \frac{\vu(x+h\bm{e}_k) - \vu(x)}{h}, \quad k = 1,2,3, \quad h\in \R.
	\end{equation*}
	\\
\textbf{Interior regularity:} The unique solution $(\vu,\pi)$ in $\vH{1}\times L^{2}_0(\Omega)$ of \eqref{S} belongs to $\bm{H}^2_{loc}(\Omega)\times H^1_{loc}(\Omega)$ with the corresponding local estimate \eqref{5}-(\ref{6}), can be shown in the same way using difference quotient as for the Dirichlet boundary condition, with the help of Proposition \ref{L^2estimate}, since it does not depend on which boundary condition is considered. Thus we do not repeat it.\\
	\\
	\textbf{Boundary regularity:} The solution $(\vu,\pi)$ satisfies the variational formulation, for all $\bm{\varphi} \in \bm{H}^1_{\vt}(\Omega)$,
	\begin{equation}
	\label{56}
	2 \int\displaylimits_{\Omega}{\DT\vu:\DT\bm{\varphi}} + \int\displaylimits_{\Gamma}{\alpha \vu_{\vt}\cdot \bm{\varphi_{\vt}} } - \int\displaylimits_{\Omega}{\pi \ \div \bm{\varphi}} = \int\displaylimits_{\Omega}{\bm{f} \cdot \bm{\varphi}} .
	\end{equation}
\textbf{Case 1. $\Omega  = B(0,1) \cap \R^3_+$ :} First we consider the case when $\Omega$ is a half ball.	Set $V := B(0,\frac{1}{2}) \cap \R^3_+$ and choose a cut-off function $\zeta \in \D{\R^3}$ such that
	\begin{align*}
	\begin{cases}
	& \zeta \equiv 1 \text{ on } B(0,\frac{1}{2}), \ \zeta \equiv 0 \text{ on } \R^3 \setminus B(0,1),\\
	& 0\leq \zeta \leq 1 .
	\end{cases}
	\end{align*}
	So $\zeta \equiv 1$ on $V$ and vanishes on the curved part of $\Gamma$.
	
\textbf{i) Tangential regularity of velocity:} Let $h>0$ be small and $\bm{\varphi} = -D^{-h}_k(\zeta^2 D^h_k \vu), k= 1,2$. Clearly, $\bm{\varphi} \in \bm{H}^1_{\vt}(\Omega)$. So substituting $\bm{\varphi}$ into the identity \eqref{56} we obtain,
	\begin{equation}
	\label{57}
	\begin{aligned}
	&2\int\displaylimits_{\Omega}{\zeta^2 |D^h_k \DT \vu|^2} + 2 \int\displaylimits_{\Omega}{D^h_k \DT \vu : 2\zeta \nabla \zeta D^h_k \vu } + \int\displaylimits_{\Gamma}{ \alpha \zeta ^2 |D^h_k \vu_{\vt}|^2} \\
	&- \int\displaylimits_{\Omega}{\pi \ \div (-D^{-h}_k(\zeta^2 D^h_k \vu))} = \int\displaylimits_{\Omega}{\bm{f}\cdot ( -D^{-h}_k(\zeta^2 D^h_k \vu))} .
	\end{aligned}
	\end{equation}
	Now we estimate the different terms. In this proof, from here onwards, the constant $C$ might depend on $\zeta$ which we do not mention. For the second term in the left hand side, using Cauchy's inequality with $\epsilon$, we get 
	\begin{equation}
	\label{58}
	\begin{aligned}
	|\int\displaylimits_{\Omega}{D^h_k \DT \vu : 2\zeta \nabla \zeta D^h_k \vu }| & \leq C \int\displaylimits_{\Omega}{2\zeta |D^h_k \DT \vu| |D^h_k \vu| }\\
	& \leq C \left[ \epsilon \int\displaylimits_{\Omega}{\zeta ^2 |D^h_k\DT \vu |^2} + \frac{1}{\epsilon} \int\displaylimits_{\Omega}{|D^h_k \vu|^2}\right] .
	\end{aligned}
	\end{equation}
	Similarly, for the fourth term in the left hand side, we write,
	\begin{equation*}
	|\int\displaylimits_{\Omega}{\pi \ \div (-D^{-h}_k(\zeta^2 D^h_k \vu))}| \leq \epsilon \int\displaylimits_{\Omega}{|\div (-D^{-h}_k(\zeta^2 D^h_k \vu))|^2} + \frac{C}{\epsilon} \int\displaylimits_{\Omega}{|\pi|^2}
	\end{equation*}
	But note that,
	\begin{align*}
	\div (D^{-h}_k(\zeta^2 D^h_k \vu)) &= D^{-h}_k\div (\zeta^2 D^h_k \vu)\\ &= D^{-h}_k (2\zeta \nabla \zeta \cdot D^h_k \vu) + D^{-h}_k (\zeta ^2 \underbrace{\div (D^h_k \vu)}_{= 0})\\
	&= D^{-h}_k (2\zeta \nabla \zeta) \cdot D^h_k \vu(x-he_k) + 2\zeta \nabla \zeta \cdot D^{-h}_k D^h_k \vu
	\end{align*}
	which means
	\begin{align*}
	\int\displaylimits_{\Omega}{|\div (-D^{-h}_k(\zeta^2 D^h_k \vu))|^2} & \leq C \left( \int\displaylimits_{\Omega}{|D^h_k \vu|^2} + \int\displaylimits_{\Omega}{\zeta^2|D^{-h}_kD^h_k \vu|^2}\right) \\
	& \leq C \left( \int\displaylimits_{\Omega}{|D^h_k \vu|^2} + \int\displaylimits_{\Omega}{\zeta^2|\nabla D^h_k \vu|^2}\right) .
	\end{align*}
	Therefore, 
	\begin{equation}
	\label{59}
	|\int\displaylimits_{\Omega}{\pi \ \div (D^{-h}_k(\zeta^2 D^h_k \vu))}| \leq \epsilon \left( \int\displaylimits_{\Omega}{|D^h_k \vu|^2} + \int\displaylimits_{\Omega}{\zeta^2|\nabla D^h_k \vu|^2}\right)  + \frac{C}{\epsilon} \int\displaylimits_{\Omega}{|\pi|^2} .
	\end{equation}
	And for the right hand side, proceeding in the same way, we derive,
	\begin{equation*}
	|\int\displaylimits_{\Omega}{\bm{f}\cdot ( -D^{-h}_k(\zeta^2 D^h_k \vu))}| \leq \epsilon \int\displaylimits_{\Omega}{|D^{-h}_k(\zeta^2 D^h_k \vu)|^2} + \frac{C}{\epsilon} \int\displaylimits_{\Omega}{|\bm{f}|^2}.
	\end{equation*}
	But, since
	\begin{align*}
	\int\displaylimits_{\Omega}{|D^{-h}_k(\zeta^2 D^h_k \vu)|^2} \leq C \int\displaylimits_{\Omega}{|\nabla (\zeta^2 D^h_k \vu)|^2} \leq C \left( \int\displaylimits_{\Omega}{|D^h_k \vu|^2} + \int\displaylimits_{\Omega}{ \zeta ^2|\nabla D^h_k \vu|^2}\right)
	\end{align*}
	we get,
	\begin{equation}
	\label{60}
	|\int\displaylimits_{\Omega}{\bm{f}\cdot ( -D^{-h}_k(\zeta^2 D^h_k \vu))}| \leq \epsilon \left( \int\displaylimits_{\Omega}{|D^h_k \vu|^2} + \int\displaylimits_{\Omega}{ \zeta ^2|\nabla D^h_k \vu|^2}\right) + \frac{C}{\epsilon} \int\displaylimits_{\Omega}{|\bm{f}|^2} .
	\end{equation}
	Hence, incorporating \eqref{58}, \eqref{59} and \eqref{60} in \eqref{57} yields,
	\begin{equation}
	\label{61}
	\begin{aligned}
	& 2\int\displaylimits_{\Omega}{\zeta^2 |D^h_k \DT \vu|^2} + \int\displaylimits_{\Gamma}{ \alpha \ \zeta ^2 |D^h_k \vu_{\vt}|^2}\\
	\leq \ & \epsilon \left(\int\displaylimits_{\Omega}{ \zeta ^2|\DT D^h_k \vu|^2} + \int\displaylimits_{\Omega}{ \zeta ^2|\nabla D^h_k \vu|^2}\right) + \frac{C_1}{\epsilon} \left( \int\displaylimits_{\Omega}{|\bm{f}|^2} + \int\displaylimits_{\Omega}{|\pi|^2} \right)\\
	& \qquad \qquad \qquad \qquad \qquad \qquad \qquad \qquad \qquad \qquad + C_2 \int\displaylimits_{\Omega}{|D^h_k \vu|^2}\\
	\leq \ & \epsilon \int\displaylimits_{\Omega}{ \zeta ^2|\nabla D^h_k \vu|^2}+ \frac{C_1}{\epsilon} \left( \int\displaylimits_{\Omega}{|\bm{f}|^2} + \int\displaylimits_{\Omega}{|\pi|^2} \right)+ C_2 \int\displaylimits_{\Omega}{|D^h_k \vu|^2}.
	\end{aligned}
	\end{equation} 
	Furthermore, we see that
	\begin{equation*}
	\begin{aligned}
	\|\zeta D^h_k \vu \|^2_{\vH{1}} & \leq C\left( \|\zeta D^h_k \vu \|^2_{\vL{2}} + \|\DT(\zeta D^h_k \vu )\|^2_{\mathbb{L}^2(\Omega)}\right) \\
	& \leq C\left( \|\zeta D^h_k \vu \|^2_{\vL{2}} + \|\nabla \zeta D^h_k \vu\|^2_{\vL{2}} + \|\zeta \DT D^h_k \vu\|^2_{\mathbb{L}^2(\Omega)}\right) \\
	& \leq C\left( \|D^h_k \vu\|^2_{\vL{2}} + \|\zeta \DT D^h_k \vu\|^2_{\mathbb{L}^2(\Omega)}\right)
	\end{aligned}
	\end{equation*} 
	and
	\begin{equation*}
	\begin{aligned}
	\|\zeta \nabla D^h_k \vu\|^2_{\mathbb{L}^2(\Omega)} = \| \nabla(\zeta D^h_k \vu) - \nabla \zeta D^h_k \vu\|^2_{\mathbb{L}^2(\Omega)}
	& \leq \| \nabla(\zeta D^h_k \vu)\|^2_{\mathbb{L}^2(\Omega)} + C \ \|D^h_k \vu\|^2_{\vL{2}} \\
	& \leq C\left( \|\zeta D^h_k \vu\|^2_{\vH{1}} + \|D^h_k \vu\|^2_{\vL{2}}\right) .
	\end{aligned}
	\end{equation*}
	Combining these inequalities with \eqref{61}, we have,
	\begin{equation}
	\label{16}
	\begin{aligned}
	\|\zeta D^h_k \vu\|^2_{\vH{1}} \leq \epsilon \|\zeta D^h_k \vu\|^2_{\vH{1}} + \frac{C_1}{\epsilon} \left( \|\bm{f}\|^2_{\vL{2}} + \|\pi\|^2_{\vL{2}}\right) + C_2 \|D^h_k \vu\|^2_{\vL{2}}.
	\end{aligned}
	\end{equation}
	Choosing $\epsilon$ small, we obtain,
	\begin{equation*}
	\|D^h_k \vu\|^2_{\bm{H}^1(V)} \leq \|\zeta D^h_k \vu\|^2_{\vH{1}} \leq C \left( \|\bm{f}\|^2_{\vL{2}} + \|\pi\|^2_{\vL{2}} + \|D^h_k \vu\|^2_{\vL{2}}\right)
	\end{equation*} 
	for $k=1,2$ and sufficiently small $|h|\neq 0$; which yields that $\partial^2 \vu / \partial x_i \partial x_j$ belongs to $\bm{L}^2(V)$ for all $i,j = 1,2,3$ except for $i=j=3$ with the corresponding estimates, using the estimates in Proposition \ref{L^2estimate} for $(\vu,\pi)$ in $\vH{1}\times \vL{2}$. 

\textbf{ii)	Tangential regularity of pressure:} Now we deduce the tangential regularity of the pressure in terms of the above derivatives of $\vu$. Indeed, for $i=1,2$, from the Stokes equation, we get,
	\begin{equation*}
	\frac{\partial}{\partial x_i}(\nabla \pi) = \frac{\partial}{\partial x_i} (\bm{f} + \Delta \vu) = \frac{\partial \bm{f}} {\partial x_i} + \div (\nabla \frac{\partial \vu}{\partial x_i}) .
	\end{equation*}
	Since there is no term of the form $\partial^2 \vu/ \partial x_3^2$, by preceding arguments, we obtain
	\begin{equation*}
	\nabla \frac{\partial \pi}{\partial x_i} = \frac{\partial}{\partial x_i}(\nabla \pi) \in \bm{H}^{-1}(V) .
	\end{equation*}
	Furthermore, as we already know $\frac{\partial \pi}{\partial x_i}\in {H}^{-1}(V)$, Ne\v{c}as inequality implies $\frac{\partial \pi}{\partial x_i}\in L^2(V)$ which also satisfies the usual estimate.
	
\textbf{iii) Normal regularity:} For the complete regularity of the solution, it remains to study the derivatives of $\vu$ and $\pi$ in the direction $\bm{e}_3$. Differentiating the divergence equation with respect to $x_3$ gives,
	\begin{equation*}
	\frac{\partial ^2 u_3}{\partial x_3^2} = -\sum_{i=1}^{2} \frac{\partial ^2 u_i}{\partial x_i \partial x_3} \in L^2(V) .
	\end{equation*}
Also from the 3rd component of the Stokes equation, we can write,
	\begin{equation*}
	\frac{\partial \pi}{\partial x_3} = f_3 + \Delta u_3 \in L^2(V)
	\end{equation*}
	which proves that $\pi\in H^1(V)$. Finally, for $i=1,2$, writing the \textit{i}th equation of the system in the form
	\begin{equation*}
	\frac{\partial^2 u_i}{\partial x_3^2}= -\sum_{j=1}^{2} \frac{\partial^2 u_i}{\partial x_j^2} - f_i + \frac{\partial \pi}{\partial x_i} \in L^2(V) 
	\end{equation*}
gives $u_i \in H^2(V)$. Hence apart from the regularity of $\vu$ and $\pi$, we obtain the existence of a constant $C= C(\Omega)>0$ independent of $\alpha$ such that
	\begin{equation*}
	\|\vu\|_{\bm{H}^{2}(V)} + \|\pi\|_{H^{1}(V)} \leq C \|\bm{f}\|_{\vL{2}} .
	\end{equation*}
	
\textbf{Case 2. General domain:} Now we drop the assumption that $\Omega$ is a half ball and consider the general case. Since $\Gamma$ is $\HC{1}{1}$, for any $x_0 \in \Gamma$, we can assume, upon relabelling the coordinate axes,
	\begin{equation*}
	\Omega\cap B(x_0,r) = \left\lbrace x\in B(x_0,r) : x_3>H(x^\prime)\right\rbrace 
	\end{equation*} 
	for some $r>0$ and $H : \R^2 \rightarrow \R$ of class $\HC{1}{1}$. We denoted here $x^\prime = (x_1,x_2)$. Let us now introduce the change of variable,
	\begin{equation*}
	y = (x_1, x_2, x_3 - H(x^\prime)) := \phi(x)
	\end{equation*}
	i.e.
	\begin{equation*}
	x = (y_1,y_2,y_3+H(y^\prime)) := \phi^{-1}(y) .
	\end{equation*}
	Choose $s>0$ small so that the half ball $\Omega^\prime := B(0,s) \cap \R^3_+$ lies in $\phi(\Omega\cap B(x_0,r))$. Also define $V^\prime := B(0,s/2) \cap \R^3_+$ and $\vu^\prime (y) = \vu (\phi^{-1}(y))$ for $y\in \Omega^\prime$. It is easy to see
	$$\vu^\prime \in \bm{H}^1(\Omega^\prime)$$
	and
	$$\vu^\prime \cdot \vn = 0 \ \text{ on } \ \partial \Omega^\prime \cap \partial \R^3_+ .$$
	Note that, with this transformation, it follows, for $i=1,2,3$ and $j=1,2$,
	\begin{equation*}
	\frac{\partial u_i}{\partial x_j} = \frac{\partial u_i^\prime}{\partial y_j} - \frac{\partial H}{\partial y_j} \frac{\partial u_i^\prime}{\partial y_3}
	\end{equation*}
	and
	\begin{equation*}
	\frac{\partial u_i}{\partial x_3} = \frac{\partial u_i^\prime}{\partial y_3} .
	\end{equation*}
	So,
	\begin{equation*}
	\div \ \vu = \div \ \vu^\prime - \sum_{j=1}^{2} \frac{\partial H}{\partial y_j} \frac{\partial u_j^\prime}{\partial y_3}
	\end{equation*}
	and
	\begin{equation*}
	\DT\vu: \DT\bm{\varphi} = \DT\vu^\prime: \DT\bm{\varphi}^\prime + \partial H \ \nabla \vu^\prime : \nabla \bm{\varphi}^\prime
	\end{equation*}
	where $\partial H$ denotes the product of first order derivatives of $H$ i.e. the form $\frac{\partial H}{\partial y_i}\frac{\partial H}{\partial y_j}$.
	Therefore, \eqref{56} becomes under this change of variable
	\begin{equation}
	\label{2}
	\begin{aligned}
	&2 \int\displaylimits_{\Omega^\prime}{\DT\vu^\prime:\DT\bm{\varphi}^\prime} + \int\displaylimits_{\Gamma^\prime}{\alpha \vu_{\vt}^\prime\cdot \bm{\varphi}^\prime_{\vt} } - \int\displaylimits_{\Omega^\prime}{\pi^\prime \ \div \bm{\varphi}^\prime} \\
	&= \int\displaylimits_{\Omega^\prime}{\bm{f}^\prime \cdot \bm{\varphi}^\prime} - \int\displaylimits_{\Omega^\prime}{\pi^\prime \frac{\partial H}{\partial y_j} \frac{\partial u_j^\prime}{\partial y_3}} + \int\displaylimits_{\Omega^\prime}\partial H \ \nabla \vu^\prime :\nabla \bm{\varphi}^\prime.
	\end{aligned}
	\end{equation}
Now choosing the test function $\bm{\varphi}^\prime = - D^{-h}_k(\zeta^2 D^h_k \vu^\prime)$ as before, we estimate the extra terms. For the second term in the right hand side, it follows
	\begin{equation*}
	|\int\displaylimits_{\Omega^\prime}{\pi^\prime \frac{\partial H}{\partial y_j} \frac{\partial u_j^\prime}{\partial y_3}}| \leq C \left( \|\pi^\prime\|^2_{L^2(\Omega^\prime)} + \|\nabla\vu^\prime\|^2_{\mathbb{L}^2(\Omega')}\right) .
	\end{equation*}
	And for the last term in the right hand side,
	\begin{equation*}
	\begin{aligned}
	& |\int\displaylimits_{\Omega^\prime}\partial H \ \nabla \vu^\prime :\nabla (D^{-h}_k(\zeta^2 D^h_k \vu^\prime)) | \\
	= \ & |\int\displaylimits_{\Omega^\prime}\partial H \ \nabla \vu^\prime : D^{-h}_k \nabla (\zeta^2 D^h_k \vu^\prime) |\\
	= \ & |\int\displaylimits_{\Omega^\prime}D^h_k(\partial H \ \nabla \vu^\prime) : \nabla (\zeta^2 D^h_k \vu^\prime)|\\
	= \ & |\int\displaylimits_{\Omega^\prime}{\left( D^h_k\partial H \ \nabla \vu^\prime (y+he_k) + \partial H \ D^h_k \nabla \vu^\prime\right) : \left( 2\zeta \nabla \zeta D^h_k \vu^\prime + \zeta ^2 \nabla D^h_k \vu^\prime\right) }|\\
	\leq \ & C \left( \int\displaylimits_{\Omega^\prime} {|\nabla \vu^\prime|^2 \mathrm{\;d}y} + \epsilon \int\displaylimits_{\Omega^\prime}{\zeta ^2 |\nabla D^h_k \vu^\prime|^2\mathrm{\;d}y} + \frac{1}{\epsilon}\int\displaylimits_{\Omega^\prime}{|D^h_k\vu^\prime|^2\mathrm{\;d}y} + \int\displaylimits_{\Omega^\prime}{\zeta ^2 |D^h_k \nabla \vu^\prime |^2|\partial H| }\right) .
	\end{aligned}
	\end{equation*}
Note that, in the last line, we have used the fact that $H$ is $\HC{1}{1}$ and thus $ D^h_k\partial H$ is bounded. Hence, accumulating all these inequalities, we obtain from \eqref{2},
	\begin{equation*}
	\begin{aligned}
	&\|\zeta D^h_k \vu^\prime\|^2_{\bm{H}^1(\Omega^\prime)}\\
	\leq & C\left( \|\nabla\vu^\prime\|^2_{\mathbb{L}^2(\Omega')} + \|\pi^\prime \|^2_{L^2(\Omega^\prime)} + \|\bm{f}^\prime\|^2_{\bm{L}^2(\Omega^\prime)} + \epsilon\|\zeta D^h_k \vu^\prime\|^2_{\bm{H}^1(\Omega^\prime)} + \int\displaylimits_{\Omega^\prime} {\zeta^2 |\partial H| |\nabla D^h_k \vu^\prime|^2 }\right) .
	\end{aligned}
	\end{equation*}
But $|\partial H|$ is small for sufficiently small $s>0$, since
	\begin{equation*}
	\frac{\partial H}{\partial y_1}(0,0) = 0 = \frac{\partial H}{\partial y_2}(0,0)
	\end{equation*}
	which gives,
	\begin{equation*}
	\|D^h_k \vu^\prime \|^2_{\bm{H}^1(V^\prime)} \leq \|\zeta D^h_k \vu^\prime\|^2_{\bm{H}^1(\Omega^\prime)} \leq C \|\bm{f}\|^2_{\vL{2}}.
	\end{equation*}
	Then proceeding as in the case of half ball, we can deduce,
	\begin{equation*}
	\vu^\prime \in \bm{H}^2(V^\prime) \ \text{ and } \ \|\vu^\prime\|_{\bm{H}^2(V^\prime)} \leq C(\Omega) \|\bm{f}\|_{\vL{2}}.
	\end{equation*}
	Consequently,
	\begin{equation*}
	\|\vu\|_{\bm{H}^2(V)} \leq C(\Omega) \|\bm{f}\|_{\vL{2}}
	\end{equation*}
	where $V = \phi^{-1} (V')$.
	
	Now as $\Gamma$ is compact, we can cover $\Gamma$ with finitely many sets $\{V_i\}$ as above. Thus summing the resulting estimates, along with the interior estimate, we get $\vu \in \vH{2}$ with the bounds, as mentioned in the Theorem.
	\hfill
\end{proof}

\section{Stokes equations: $L^p$-theory}
\setcounter{equation}{0}
\subsection{General solution in $W^{1,p}(\Omega)$}
In this subsection, we study the regularity of weak solution of the Stokes problem (\ref{S}). Before discussing the $L^p$-regularity, we want to show the equivalent formulation of the problem \eqref{S}.

\begin{lemma}
\label{23}
Let $p\in(1,\infty)$. For $\alpha\in\Lb{t(p)}$ with $t(p)$ defined in (\ref{def_exponent_tp_alpha}), $\vu\in \vW{1}{p}$ and $\bm{\varphi}\in \vW{1}{p^\prime}$, the integral over the boundary $\int\displaylimits_{\Gamma}{\alpha \vu_{\vt} \cdot \bm{\varphi}_{\vt}}$ is well-defined.
\end{lemma}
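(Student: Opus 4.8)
The statement to be proved is that $\alpha\,\vu_{\vt}\cdot\bm{\varphi}_{\vt}\in\Lb{1}$, so that the boundary integral makes sense. The plan is to combine three standard ingredients: the trace theorem, the Sobolev embedding on the two–dimensional compact manifold $\Gamma$, and the generalized H\"older inequality. First I recall that $\vu\in\vW{1}{p}$ has a trace in $\vWfracb{1-\frac{1}{p}}{p}$, and since $|\vu_{\vt}|\le|\vu|$ the tangential part $\vu_{\vt}$ lies in the same space; likewise $\bm{\varphi}_{\vt}\in\vWfracb{1-\frac{1}{p'}}{p'}$. Because $\Gamma$ is $2$-dimensional and the differentiability order satisfies $\left(1-\tfrac1p\right)p=p-1$, the fractional Sobolev embedding on $\Gamma$ gives, for $1<p<3$, $\vWfracb{1-\frac{1}{p}}{p}\hookrightarrow\vLb{q^\sharp(p)}$ with $q^\sharp(p)=\frac{2p}{3-p}$, i.e. $\frac{1}{q^\sharp(p)}=\frac{3-p}{2p}$; for $p=3$ the embedding holds into every $\vLb{q}$ with $q<\infty$; and for $p>3$ into $\vLb{\infty}$. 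The same applies with $p$ replaced by $p'$.

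Since $|\Gamma|<\infty$, we have $\Lb{a}\subset\Lb{b}$ whenever $a\ge b$, so each of $\alpha$, $\vu_{\vt}$, $\bm{\varphi}_{\vt}$ belongs to every Lebesgue space on $\Gamma$ with exponent no larger than $t(p)$, $q^\sharp(p)$, $q^\sharp(p')$ respectively. Consequently it suffices, by the generalized H\"older inequality, to verify the single numerical condition $\frac{1}{t(p)}+\frac{1}{q^\sharp(p)}+\frac{1}{q^\sharp(p')}\le 1$ (with the convention $1/\infty=0$, and, at the borderline $p=3$ or $p'=3$, with the finite embedding exponent chosen as large as needed): whenever it holds one may select three exponents $\ge 1$ summing to exactly $1$ and conclude that the product lies in $\Lb{1}$.

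It remains to check this inequality in the regimes of \eqref{def_exponent_tp_alpha}. If $\frac32<p<3$, then also $\frac32<p'<3$, and a direct computation yields $\frac{1}{q^\sharp(p)}+\frac{1}{q^\sharp(p')}=\frac{3-p}{2p}+\frac{3-p'}{2p'}=\frac32\left(\frac1p+\frac{1}{p'}\right)-1=\frac12$, so the requirement reduces to $t(p)\ge 2$, which is guaranteed by $t(p)>2$; the exact case $p=2$ is the tight identity $\frac12+\frac14+\frac14=1$, realized precisely with $t(2)=2$. At the endpoints $p=3$ (resp.\ $p=\frac32$) one trace lies in $\vLb{2}$ and the other in every finite $\vLb{q}$, and the strict bound $t(p)>2$ leaves a positive margin that absorbs an arbitrarily small contribution from the critical embedding. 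Finally, if $p>3$ then $p'<\frac32$, $\vu_{\vt}\in\vLb{\infty}$, and the condition becomes $\frac{1}{t(p)}+\frac{3-p'}{2p'}\le 1$, i.e. $t(p)\ge\frac{2p}{3}=\frac23\max\{p,p'\}$, which is exactly the hypothesis $t(p)>\frac23\max\{p,p'\}$; the case $p<\frac32$ is symmetric, obtained by interchanging the roles of $\vu$ and $\bm{\varphi}$. The main obstacle is the bookkeeping at the critical exponents $p\in\{\frac32,3\}$, where the boundary embedding just fails to reach $\vLb{\infty}$ and the strict inequalities in the definition of $t(p)$ are what make the argument go through.
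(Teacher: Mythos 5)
Your proof is correct and follows essentially the same route as the paper: trace theorem, fractional Sobolev embedding of $\vWfracb{1-\frac{1}{p}}{p}$ and $\vWfracb{1-\frac{1}{p'}}{p'}$ into boundary Lebesgue spaces (your $q^\sharp(p)$, $q^\sharp(p')$ are exactly the paper's exponents $s$ and $m$), and a H\"older-type exponent count in the same case split governed by \eqref{def_exponent_tp_alpha}. The only cosmetic difference is that you verify the single three-factor H\"older condition $\frac{1}{t(p)}+\frac{1}{q^\sharp(p)}+\frac{1}{q^\sharp(p')}\le 1$, whereas the paper first forms $\alpha\vu_{\vt}\in\vLb{q}$ and then checks $q\ge m'$ — an equivalent bookkeeping.
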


\begin{proof}
	For convenience, from now on, we consider $\alpha\in\Lb{t(p)}$ where we recall that
	\begin{align*}
	t(p) = \begin{cases}
    2 & \text{if\quad} p=2\\
	2+\varepsilon & \text{if\quad} \frac{3}{2}\leq p\leq3, p \neq 2\\
	\frac{2}{3}\max\{p,p'\}+\varepsilon  & \text{otherwise}
	\end{cases}
	\end{align*}
	for some arbitrary $\varepsilon >0$ sufficiently small. Since $\bm{\varphi}\in\bm{W}^{1,p^\prime}(\Omega)$, we have  $\bm{\varphi}_{\vt}\in\vWfracb{1-\frac{1}{p'}}{p'}\hookrightarrow\vLb{m}$ where
\begin{equation}
\label{S5Em}
\frac{1}{m}=\begin{cases}
1 - \frac{3}{2p} & \text{\quad if\;}p>\frac{3}{2},\\
\text{any positive real number}<1 & \text{\quad if\;}p=\frac{3}{2},\\
0 & \text{\quad if\;}p<\frac{3}{2}.
\end{cases}
\end{equation}
Similarly, for $\vu\in\vW{1}{p}$, \ $\vu_{\vt}\in\vWfracb{1-\frac{1}{p}}{p}\hookrightarrow\vLb{s}$ with
\begin{equation}
\label{S5Es}
\frac{1}{s}=\begin{cases}
\frac{3}{2p}-\frac{1}{2} & \text{\quad if\;}p<3,\\
\text{any positive real number} <1 & \text{\quad if\;}p=3,\\
0 & \text{\quad if\;}p>3.
\end{cases}
\end{equation}
We want to show that $\alpha \vu_{\vt} \in \vLb{m^\prime}$.\\
\\
 \textbf{(i) } $p = 2$: Since $\alpha \in L^2(\Gamma)$, $\alpha\vu_{\vt}\in\vLb{q} \text{\; with\; }\frac{1}{q}=\frac{1}{2}+\frac{1}{4} = \frac{3}{4}$ by H\"older inequality. But $\frac{1}{m^\prime} = 1-\frac{1}{m} = \frac{3}{4}$  i.e. $q = m^\prime$. So the integral is well-defined.\\
\\
\textbf{(ii) } $\frac{3}{2} \leq p\leq 3, \ p\neq 2$: As $\alpha\in L^{2+\varepsilon}(\Gamma)$, $\alpha\vu_{\vt}\in\vLb{q} \text{\; with\; }\frac{1}{q}=\frac{1}{2+\varepsilon}+\frac{1}{s} .$\\
So, for $\frac{3}{2}<p<3$, \ $\frac{1}{q} = \frac{3}{2p}-\frac{1}{2}+\frac{1}{2+\varepsilon}$. Then clearly, $q > m^\prime$.\\
For $p=3$, \ $\frac{1}{q} = \frac{1}{2+\varepsilon}+\frac{1}{s}$ where $s$ is any number $>1$. Then, we can choose $s$ suitably such that $q > m^\prime$.\\
For $p = \frac{3}{2}, \ \frac{1}{q} = \frac{1}{2+\varepsilon}+ \frac{1}{2}$ \ and  we can choose $m>1$ suitably so that $q>m^\prime$. Hence, in all cases, the integral is well-defined.\\
\\
\textbf{(iii) } $p> 3$: Since $\alpha\in L^{\frac{2}{3}p+\varepsilon}(\Gamma)$, $\alpha\vu_{\vt}\in\vLb{q} \text{\; with\; }\frac{1}{q}=\frac{1}{\frac{2}{3}p+\varepsilon}$. So clearly $q>m^\prime$ and thus the integral is well-defined.\\
\\
\textbf{(iv) } $p<\frac{3}{2}$: As $\alpha \in  L^{\frac{2}{3}p^\prime+\varepsilon}(\Gamma)$, $\alpha\vu_{\vt}\in\vLb{q}$ with $\frac{1}{q}=\frac{3}{2p}-\frac{1}{2}+ \frac{1}{\frac{2}{3}p^\prime+\varepsilon}$. Again $q>m^\prime$ and the integral is well-defined.
\hfill
\end{proof}

\begin{proposition}
\label{34}
Let $p\in (1,\infty)$ and
$$ \bm{f}\in\vL{r(p)}, \mathbb{F}\in \mathbb{L}^p(\Omega), \bm{h}\in\vWfracb{-\frac{1}{p}}{p} \text{ and } \alpha\in\Lb{t(p)}$$ with $r(p)$ and $t(p)$ defined by \eqref{def_exponent_rp} and \eqref{def_exponent_tp_alpha} respectively. Then the following two problems are equivalent:
	
\noindent{\bf (i)} find $(\vu,\pi)\in\vW{1}{p}\times\L{p}$ satisfying \eqref{S} in the sense of distributions, and
	
\noindent{\bf (ii)} find $\vu\in\vVsolT{p}$ such that for all $\bm{\varphi}\in\vVsolT{p'},$
	\begin{equation}
	\label{varfor_Stokes_Nbc}
	2\int\displaylimits_{\Omega}{\DT\vu:\DT\bm{\varphi}}+\int\displaylimits_{\Gamma}{\alpha\vu_{\vt}\cdot\bm{\varphi}_{\vt}}=\int\displaylimits_{\Omega}{\bm{f}\cdot
		\bm{\varphi}} - \int\displaylimits_\Omega{\mathbb{F}:\nabla \bm{\varphi}}+\left\langle \bm{h},\bm{\varphi}\right\rangle _{\Gamma}.
	\end{equation}
\end{proposition}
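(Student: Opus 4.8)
The plan is to prove the two implications $(i)\Rightarrow(ii)$ and $(ii)\Rightarrow(i)$ using the Green formula \eqref{green1} as the bridge, after rewriting the momentum equation in divergence form. Since $\div\ \vu=0$, the identity $\Delta\vu=2\,\div\ \DT\vu-\textbf{grad}\ \div\ \vu$ reduces the first line of \eqref{S} to $-\div(2\DT\vu+\mathbb{F})+\nabla\pi=\bm{f}$ in $\Omega$, which is exactly the form to which \eqref{green1} applies. Note that $\bm{f}\in\vL{r(p)}$ makes the left-hand side of \eqref{green1} well-defined, while Lemma \ref{23} guarantees that $\int\displaylimits_{\Gamma}{\alpha\vu_{\vt}\cdot\bm{\varphi}_{\vt}}$ makes sense for $\vu\in\vW{1}{p}$ and $\bm{\varphi}\in\vW{1}{p'}$.

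For $(i)\Rightarrow(ii)$, the constraints $\div\ \vu=0$ and $\vu\cdot\vn=0$ immediately give $\vu\in\vVsolT{p}$. Fix $\bm{\varphi}\in\vVsolT{p'}$ and apply \eqref{green1}; since $-\div(2\DT\vu+\mathbb{F})+\nabla\pi=\bm{f}$, this yields $\int\displaylimits_{\Omega}{\bm{f}\cdot\bm{\varphi}}=2\int\displaylimits_{\Omega}{\DT\vu:\DT\bm{\varphi}}+\int\displaylimits_{\Omega}{\mathbb{F}:\nabla\bm{\varphi}}-\langle[(2\DT\vu+\mathbb{F})\vn]_{\vt},\bm{\varphi}\rangle_{\Gamma}$. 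Replacing the strain trace on $\Gamma$ by $\bm{h}-\alpha\vu_{\vt}$ through the Navier condition $[(2\DT\vu+\mathbb{F})\vn]_{\vt}+\alpha\vu_{\vt}=\bm{h}$ and rearranging produces exactly \eqref{varfor_Stokes_Nbc}.

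For $(ii)\Rightarrow(i)$, I would first recover the equation and the pressure. Restricting \eqref{varfor_Stokes_Nbc} to $\bm{\varphi}\in\vDsol{\Omega}$ kills both boundary terms, leaving $2\int\displaylimits_{\Omega}{\DT\vu:\DT\bm{\varphi}}=\int\displaylimits_{\Omega}{\bm{f}\cdot\bm{\varphi}}-\int\displaylimits_{\Omega}{\mathbb{F}:\nabla\bm{\varphi}}$; integrating by parts shows that the distribution $-\div(2\DT\vu+\mathbb{F})-\bm{f}$ annihilates every divergence-free test field. By De Rham's theorem there is $\pi\in\mathcal{D}'(\Omega)$ with $-\div(2\DT\vu+\mathbb{F})+\nabla\pi=\bm{f}$; since every other term lies in $\vW{-1}{p}$ (using $\vL{r(p)}\hookrightarrow\vW{-1}{p}$ with $r(p)$ as in \eqref{def_exponent_rp}), we obtain $\nabla\pi\in\vW{-1}{p}$, hence $\pi\in\L{p}$, which we normalize in $L^p_0(\Omega)$. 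Using $\div\ \vu=0$ once more recovers $-\Delta\vu+\nabla\pi=\bm{f}+\div\ \mathbb{F}$, and $\div\ \vu=0$, $\vu\cdot\vn=0$ are built into $\vu\in\vVsolT{p}$.

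It remains to recover the boundary condition, which is the main obstacle. Applying \eqref{green1} with the equation just obtained, for an arbitrary $\bm{\varphi}\in\vVsolT{p'}$, and subtracting \eqref{varfor_Stokes_Nbc}, all interior terms cancel and one is left with $\langle[(2\DT\vu+\mathbb{F})\vn]_{\vt}+\alpha\vu_{\vt}-\bm{h},\bm{\varphi}\rangle_{\Gamma}=0$ for every $\bm{\varphi}\in\vVsolT{p'}$. The delicate point is to deduce that the tangential field $[(2\DT\vu+\mathbb{F})\vn]_{\vt}+\alpha\vu_{\vt}-\bm{h}\in\vWfracb{-\frac{1}{p}}{p}$ actually vanishes, which requires that the tangential traces $\{\bm{\varphi}_{\vt}:\bm{\varphi}\in\vVsolT{p'}\}$ be dense in the tangential part of $\vWfracb{\frac{1}{p}}{p'}$. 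I would prove this surjectivity directly: given tangential data $\bm{g}$ with $\bm{g}\cdot\vn=0$, lift it to some $\bm{w}\in\vW{1}{p'}$ with trace $\bm{g}$ (so $\bm{w}\cdot\vn=0$ and $\bm{w}_{\vt}=\bm{g}$); because $\int\displaylimits_{\Omega}{\div\ \bm{w}}=\int\displaylimits_{\Gamma}{\bm{w}\cdot\vn}=0$, solve $\div\ \bm{z}=\div\ \bm{w}$ with $\bm{z}\in\vWz{1}{p'}$ via a bounded right inverse of the divergence, and set $\bm{\varphi}=\bm{w}-\bm{z}\in\vVsolT{p'}$, which has the prescribed tangential trace. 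This density argument is the crux and lets us conclude that the Navier boundary condition holds, completing the equivalence.
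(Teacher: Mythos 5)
Your proof is correct and follows essentially the same route as the paper: De Rham's theorem for the pressure, the Green formula \eqref{green1} to pass between the PDE and the variational identity, and recovery of the Navier boundary condition by testing against divergence-free tangential fields. The only difference is that you explicitly construct the divergence-free lift of tangential trace data (trace lifting plus a Bogovskii correction in $\vWz{1}{p'}$), a step the paper merely asserts when it invokes the existence of $\bm{\varphi}\in\vVsolT{p'}$ with $\bm{\varphi}=\bm{\mu}_{\vt}$ on $\Gamma$.
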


\begin{proof}
Let $\vu\in\vVsolT{p}$ satisfies the variational formulation \eqref{varfor_Stokes_Nbc}. We want to show it implies {\bf (i)}. Choosing $\bm{\varphi}\in\bm{\mathcal{D}}_{\sigma}(\Omega)$ as a test function in \eqref{varfor_Stokes_Nbc}, we have
	$$\langle -\Delta\vu,\bm{\varphi}\rangle_{\bm{\mathcal{D}}^\prime(\Omega)\times \vD{\Omega}} = 2\int\displaylimits_{\Omega}{\DT\vu:\DT\bm{\varphi}} = \int\displaylimits_{\Omega}{\bm{f}\cdot \bm{\varphi}} - \int\displaylimits_\Omega{\mathbb{F}:\nabla \bm{\varphi}} .$$	
So by De Rham's theorem, there exists $\pi \in L^p(\Omega)$, defined uniquely up to an additive constant such that 
	\begin{equation}
	\label{1}
	-\Delta\vu + \nabla \pi = \bm{f} + \div \ \mathbb{F}\quad \text{ in } \Omega.
	\end{equation}
Also $\vu\in\vVsolT{p}$ implies $\div \ \vu=0 \text{\; in $\Omega$\;and\;}\vu\cdot\vn=0\text{\; on $\Gamma$}.$ Thus it remains to prove the Navier boundary condition. Multiplying equation \eqref{1} by $\bm{\varphi}\in\vVsolT{p'}$ and using the Green's formula \eqref{green1}, we obtain from \eqref{varfor_Stokes_Nbc},
\begin{equation}
\label{S4E0}
	 \forall \ \bm{\varphi}\in\vVsolT{p'}, \qquad \left\langle \left[(2\DT\vu+\mathbb{F})\vn\right]_{\vt},\bm{\varphi}\right\rangle _{\Gamma} + \int\displaylimits_{\Gamma}{\alpha\vu_{\vt}\cdot\bm{\varphi}_{\vt}}=\left\langle \bm{h},\bm{\varphi}\right\rangle_\Gamma .
\end{equation}
	Now, let $\mu \in \bm{W}^{\frac{1}{p},p^\prime}(\Omega)$. There exists $ \bm{\varphi}\in\bm{W}^{1,p^\prime}(\Omega)$ such that $ \div \ \bm{\varphi}=0$ in $\Omega$ and $\bm{\varphi}=\bm{\mu_\tau}$ on $\Gamma$. Then $\bm{\varphi}\in\vVsolT{p'}$ and using (\ref{S4E0}),
	\begin{align*}
	\langle [(2\DT\vu+\mathbb{F})\vn]_{\vt}+\alpha\vu_{\vt}-\bm{h},{\bm{\mu}}\rangle_\Gamma &= \langle [(2\DT\vu+\mathbb{F})\vn]_{\vt}+\alpha\vu_{\vt}-\bm{h},{\bm{\mu}_{\vt}}\rangle_\Gamma\\
	&= \langle [(2\DT\vu+\mathbb{F})\vn]_{\vt}+\alpha\vu_{\vt}-\bm{h},{\bm{\varphi}}\rangle_\Gamma = 0.
	\end{align*}
	Hence, $$[(2\DT\vu+\mathbb{F})\vn]_{\vt}+\alpha\vu_{\vt}=\bm{h} \qquad \text{ on } \ \Gamma.$$
	Conversely, using the Green formula \eqref{green1}, we can easily deduce that every solution of \eqref{S} also solves \eqref{varfor_Stokes_Nbc}.
	\hfill
\end{proof}
\begin{comment}

\end{comment}
Now we are in the position to study the $L^p$-regularity of the solution of \eqref{S}. We begin with recalling some useful results. For the following theorem, see \cite[Theorem 4.2]{AS}.

\begin{theorem}
\label{infsup}
Let $X$ and $M$ be two reflexive Banach spaces and $X^\prime$ and $M^\prime$ be their dual spaces. Let $a$ be the continuous bilinear form defined on $X\times M$, $A\in \mathcal{L}(X;M^\prime)$ and $A^\prime\in \mathcal{L}(M;X^\prime)$ be the operators defined by
$$\forall v\in X, \quad \forall w\in M, \quad a(v,w)=\left\langle Av, w\right\rangle  = \left\langle v,A^\prime w\right\rangle $$
and $V= \text{Ker } A$. Then the following statements are equivalent :

\noindent(i) There exists $C = C(\Omega) >0$ such that
\begin{equation}\label{15}
\underset{\underset{w \neq 0}{w\in M}}{\inf} \quad \underset{\underset{v \neq 0}{v \in X}}{\sup} \quad \frac{a(v,w)}{\|v\|_X \ \|w\|_M} \geq C .
\end{equation}
(ii) The operator $A:X/V \mapsto M^\prime$ is an isomorphism and $\frac{1}{C}$ is the continuity constant of $A^{-1}$.

\noindent (iii) The operator $A^\prime: M \mapsto X^\prime \perp V$ is an isomorphism and $\frac{1}{C}$ is the continuity constant of $(A^\prime)^{-1}$.
\end{theorem}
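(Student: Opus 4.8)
The plan is to recognise the inf--sup condition (i) as nothing more than a lower bound on the adjoint operator $A'$, and then to read off (ii) and (iii) from the closed range theorem together with the reflexivity of $X$ and $M$. First I would note that, for each fixed $w\in M$,
\[
\sup_{\substack{v\in X\\ v\neq 0}}\frac{a(v,w)}{\|v\|_X}
=\sup_{v\neq 0}\frac{\langle v,A'w\rangle}{\|v\|_X}
=\|A'w\|_{X'},
\]
so that \eqref{15} is equivalent to the coercivity estimate $\|A'w\|_{X'}\ge C\,\|w\|_M$ for every $w\in M$. An operator that is bounded below is injective with closed range, and conversely an injective operator with closed range has a continuous inverse on its range by the bounded inverse theorem; hence (i) is equivalent to the statement that $A'$ is injective, has closed range, and satisfies $\|(A')^{-1}\|=1/C$ on that range.

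Next I would bring in the closed range theorem. Since $M$ is reflexive we may identify $M''=M$, so that the $A'$ of the statement is genuinely the Banach-space adjoint of $A$; the annihilator relations then read
\[
\ker A'=(\mathrm{Range}\,A)^\perp,\qquad
\overline{\mathrm{Range}\,A'}=(\ker A)^\perp=X'\perp V,\qquad
\overline{\mathrm{Range}\,A}={}^\perp(\ker A'),
\]
with the two ranges closed simultaneously. Assuming (i), $A'$ has closed range, so $\mathrm{Range}\,A'=X'\perp V$; being also injective, $A'\colon M\to X'\perp V$ is a continuous bijection onto a closed (hence Banach) subspace, and the bounded inverse theorem upgrades it to an isomorphism with $\|(A')^{-1}\|=1/C$, which is precisely (iii). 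The converse (iii)$\Rightarrow$(i) is immediate, an isomorphism onto $X'\perp V$ being in particular bounded below by $C$. To obtain (ii) I would use that the closed range of $A'$ forces $\mathrm{Range}\,A$ to be closed, with $\mathrm{Range}\,A={}^\perp(\ker A')={}^\perp\{0\}=M'$ because $A'$ is injective; thus $A$ is onto, while it is injective on $X/V$ by the very definition $V=\ker A$, so $A\colon X/V\to M'$ is an isomorphism. Conversely (ii) gives $\mathrm{Range}\,A=M'$ closed and $\ker A'=(M')^\perp=\{0\}$, returning us to (i).

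I expect the one genuinely delicate point to be the bookkeeping of the continuity constants: the statement asserts that $1/C$ is the continuity constant of $A^{-1}$ in (ii) \emph{and} of $(A')^{-1}$ in (iii), so I must check that the two inf--sup quantities (inf over $w$, sup over $v$; and inf over $\bar v\in X/V$, sup over $w$) actually coincide, rather than merely both being bounded below by $C$. For this I would invoke the quotient duality $(X/V)'=X'\perp V$ together with $(A^{-1})'=(A')^{-1}$ and the isometry $\|T\|=\|T'\|$, which give
\[
\|A^{-1}\|_{M'\to X/V}=\|(A^{-1})'\|_{X'\perp V\to M}=\|(A')^{-1}\|_{X'\perp V\to M}=\tfrac1C .
\]
Everything else is a routine application of the bounded inverse theorem, so the real content is packaged entirely into the closed range theorem and this duality identity.
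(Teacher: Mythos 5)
Your proof is correct. Note that the paper itself does not prove this statement at all: it is quoted verbatim from the reference \cite[Theorem 4.2]{AS} (Amrouche--Seloula), and the proof given there (following the classical Girault--Raviart argument) is essentially the one you reconstructed — identify the inf-sup condition with the lower bound $\|A'w\|_{X'}\ge C\|w\|_M$, apply the closed range theorem with reflexivity to pass between $A$ and $A'$, and use the quotient duality $(X/V)'\simeq X'\perp V$ with $\|A^{-1}\|=\|(A')^{-1}\|$ to get the matching constants.
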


Next we introduce the kernel:
$$\bm{K}^p_T (\Omega) =\left\lbrace \bm{v}\in\vL{p}; \ \div \ \bm{v}= 0, \ \mathbf{curl} \ \bm{v} =\bm{0} \text{ in } \Omega, \ \bm{v}\cdot \vn =0 \text{ on } \Gamma \right\rbrace. $$
Thanks to \cite[Corollary 4.1]{AS}, we know that this kernel is trivial iff $\Omega$ is simply connected. Otherwise, it is of finite dimension and spanned by the functions $\widetilde{\mathbf{grad}} \ q_j^T, 1\leq j\leq J$, where $q^T_j$ is the unique solution up to an additive constant of the problem:
\begin{align*}
\begin{cases}
-\Delta q_j^T &= 0 \text{ in } \Omega^o,\\
\partial_n q_j^T &= 0 \text{ on } \Gamma,\\
[q_j^T]_k &= \text{ constant \quad and } \quad [\partial_n q_j^T]_k = 0, \quad 1\leq k \leq J,\\
\left\langle \partial_n q_j^T,1\right\rangle _{\Sigma_k} &= \delta_{jk}, \quad 1\leq k\leq J .
\end{cases}
\end{align*}
Recall that $\Sigma_j$ are the cuts in $\Omega$ such that the open set $\Omega^0 = \Omega \backslash \bigcup\limits_{j=1}^{J} \Sigma_j$ is simply connected. For more details, see \cite{AS}.

Also, recall the following inf-sup condition (see \cite[Lemma 4.4]{AS}):
\begin{lemma}
There exists a constant $C > 0$, depending only on $\Omega$ and $p$ such that
\begin{equation}
\label{29}
\underset{\underset{\bm{\varphi\neq 0}}{\bm{\varphi}\in\bm{V}^{p^\prime}(\Omega)}}{\inf} \ \ \underset{\underset{\bm{\xi}\neq 0}{\bm{\xi}\in \vVsolT{p}}}{\sup}\frac{\int\displaylimits_{\Omega}{\mathbf{curl} \ \bm{\xi} \cdot  \mathbf{curl} \ \bm{\varphi}}}{\|\bm{\xi}\|_{\vVsolT{p}}\|\bm{\varphi}\|_{\bm{V}^{p^\prime}(\Omega)}} \geq C
\end{equation}
where
$$ \bm{V}^{p^\prime}(\Omega) := \left\lbrace \bm{v}\in \vVsolT{p^\prime}; \ \left\langle \bm{v}\cdot \vn , 1\right\rangle _{\Sigma_j} = 0 \quad \forall \ 1\leq j\leq J\right\rbrace .$$
\end{lemma}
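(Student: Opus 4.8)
The plan is to recognise \eqref{29} as the inf--sup condition (i) of Theorem \ref{infsup} for the bilinear form
$$a(\bm{\xi},\bm{\varphi})=\int_\Omega \mathbf{curl}\,\bm{\xi}\cdot\mathbf{curl}\,\bm{\varphi}$$
on $X\times M$ with $X=\vVsolT{p}$ and $M=\bm{V}^{p^\prime}(\Omega)$. By the equivalence (i)$\Leftrightarrow$(iii) of that theorem it is enough to produce, for each fixed $\bm{\varphi}\in\bm{V}^{p^\prime}(\Omega)$, a field $\bm{\xi}\in\vVsolT{p}$ for which $a(\bm{\xi},\bm{\varphi})\ge C\,\|\bm{\varphi}\|_{\vW{1}{p^\prime}}\,\|\bm{\xi}\|_{\vVsolT{p}}$. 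The first step is a Poincar\'e-type equivalence of norms on $\bm{V}^{p^\prime}(\Omega)$, namely $\|\bm{\varphi}\|_{\vW{1}{p^\prime}}\simeq\|\mathbf{curl}\,\bm{\varphi}\|_{\vL{p^\prime}}$: since $\div\,\bm{\varphi}=0$ in $\Omega$ and $\bm{\varphi}\cdot\vn=0$ on $\Gamma$, Proposition \ref{X} bounds $\|\bm{\varphi}\|_{\vW{1}{p^\prime}}$ by $C(\|\bm{\varphi}\|_{\vL{p^\prime}}+\|\mathbf{curl}\,\bm{\varphi}\|_{\vL{p^\prime}})$, and the lower order term is absorbed by a standard Peetre--Tartar compactness argument. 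Here the flux normalisation $\langle\bm{\varphi}\cdot\vn,1\rangle_{\Sigma_j}=0$ is precisely what forces $\bm{V}^{p^\prime}(\Omega)\cap\bm{K}^{p^\prime}_T(\Omega)=\{\bm{0}\}$, so that $\mathbf{curl}\,\bm{\varphi}=\bm{0}$ implies $\bm{\varphi}=\bm{0}$ and the argument closes.

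For the construction of $\bm{\xi}$ I would use a duality map. Put
$$\bm{F}:=|\mathbf{curl}\,\bm{\varphi}|^{p^\prime-2}\,\mathbf{curl}\,\bm{\varphi}\in\vL{p},\qquad \|\bm{F}\|_{\vL{p}}=\|\mathbf{curl}\,\bm{\varphi}\|_{\vL{p^\prime}}^{p^\prime-1},$$
so that $\int_\Omega \bm{F}\cdot\mathbf{curl}\,\bm{\varphi}=\|\mathbf{curl}\,\bm{\varphi}\|_{\vL{p^\prime}}^{p^\prime}$. Then I would let $\bm{\xi}\in\vVsolT{p}$ be the vector potential of $\bm{F}$ furnished by the $L^p$ curl--curl theory of \cite{AS}: it satisfies $\div\,\bm{\xi}=0$ in $\Omega$, $\bm{\xi}\cdot\vn=0$ on $\Gamma$ together with the flux/tangential normalisation ensuring uniqueness, the variational identity $a(\bm{\xi},\bm{\psi})=\int_\Omega \bm{F}\cdot\mathbf{curl}\,\bm{\psi}$ for all $\bm{\psi}\in\bm{V}^{p^\prime}(\Omega)$, and the regularity bound $\|\bm{\xi}\|_{\vVsolT{p}}\le C\|\bm{F}\|_{\vL{p}}$. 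Testing this identity against $\bm{\psi}=\bm{\varphi}$ gives $a(\bm{\xi},\bm{\varphi})=\|\mathbf{curl}\,\bm{\varphi}\|_{\vL{p^\prime}}^{p^\prime}$, while the bound reads $\|\bm{\xi}\|_{\vVsolT{p}}\le C\|\mathbf{curl}\,\bm{\varphi}\|_{\vL{p^\prime}}^{p^\prime-1}$. Dividing, and using the norm equivalence of the first step,
$$\frac{a(\bm{\xi},\bm{\varphi})}{\|\bm{\xi}\|_{\vVsolT{p}}}\ge \frac{1}{C}\,\|\mathbf{curl}\,\bm{\varphi}\|_{\vL{p^\prime}}\ge \frac{1}{C}\,\|\bm{\varphi}\|_{\bm{V}^{p^\prime}(\Omega)},$$
which is exactly \eqref{29}. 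For $p=2$ this is transparent, since one simply takes $\bm{\xi}=\bm{\varphi}$ and applies the equivalence $\|\mathbf{curl}\,\bm{\varphi}\|_{\vL{2}}^2\ge C\|\bm{\varphi}\|_{\vH{1}}^2$.

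The main obstacle is the input invoked in the second step: the $L^p$ solvability and $\vW{1}{p}$-regularity of the curl--curl problem with the normal boundary condition $\bm{\xi}\cdot\vn=0$. This is the genuine analytic content, requiring the full $L^p$ elliptic theory for the vector-potential operator together with a careful treatment of the topology of $\Omega$ --- the cuts $\Sigma_j$, the finite-dimensional kernel $\bm{K}^p_T(\Omega)$, and the flux conditions defining $\bm{V}^{p^\prime}(\Omega)$ --- so as to guarantee well-posedness modulo the kernel and the uniform estimate $\|\bm{\xi}\|_{\vVsolT{p}}\le C\|\bm{F}\|_{\vL{p}}$. All of this is established in \cite{AS}, which is why the statement is quoted here rather than reproved; its role in the present paper is only to feed this inf--sup estimate into the $L^p$ well-posedness of the Stokes problem \eqref{S}.
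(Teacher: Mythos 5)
The first thing to note is that the paper does not prove this lemma at all: it is recalled verbatim from \cite[Lemma 4.4]{AS}, so there is no in-paper argument to compare yours against. Your closing observation --- that the analytic content lives in \cite{AS} and the statement is quoted rather than reproved --- is exactly the paper's own position. Moreover, your first step is a correct reconstruction of one of the two real ingredients: on $\bm{V}^{p^\prime}(\Omega)$ the equivalence $\|\bm{\varphi}\|_{\vW{1}{p^\prime}}\simeq\|\mathbf{curl}\ \bm{\varphi}\|_{\vL{p^\prime}}$ does follow from Proposition \ref{X} plus a Peetre--Tartar compactness argument, and the flux conditions $\langle\bm{\varphi}\cdot\vn,1\rangle_{\Sigma_j}=0$ are indeed what kill the intersection with $\bm{K}^{p^\prime}_T(\Omega)$, since the kernel is spanned by the fields $\widetilde{\mathbf{grad}}\, q_j^T$ whose fluxes through the cuts form the identity matrix.

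If, however, your sketch is read as a proof, the second step begs the question. The input you invoke --- existence of $\bm{\xi}\in\vVsolT{p}$ with $\int_{\Omega}\mathbf{curl}\ \bm{\xi}\cdot\mathbf{curl}\ \bm{\psi}=\int_{\Omega}\bm{F}\cdot\mathbf{curl}\ \bm{\psi}$ for \emph{all} $\bm{\psi}\in\bm{V}^{p^\prime}(\Omega)$, together with $\|\bm{\xi}\|_{\vW{1}{p}}\le C\|\bm{F}\|_{\vL{p}}$ --- says that the functional $\bm{\psi}\mapsto\int_{\Omega}\bm{F}\cdot\mathbf{curl}\ \bm{\psi}$ lies in the range of the operator $A$ associated with this very bilinear form, with a preimage bounded uniformly in $\bm{F}$. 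By Theorem \ref{infsup} this solvability-with-estimate is equivalent to the inf-sup condition \eqref{29} you are trying to establish; calling it "the $L^p$ curl--curl theory" does not make it an independent fact. The non-circular route, which is essentially what \cite{AS} does, is to ground that step in results proved there by elliptic regularity rather than by inf-sup arguments: decompose $\bm{F}=\nabla\chi+\bm{g}$ with $\chi\in\Wz{1}{p}$ solving a Dirichlet problem, correct $\bm{g}$ by elements of the normal kernel $\bm{K}^p_N(\Omega)$ so that its fluxes through each connected component of $\Gamma$ vanish, apply the tangential vector potential theorem of \cite{AS} to the corrected field to obtain $\bm{\xi}$ with the $\vW{1}{p}$ bound, and then verify that the two discarded pieces pair to zero against $\mathbf{curl}\ \bm{\psi}$: the gradient part because $\chi$ vanishes on $\Gamma$ and $\div\,\mathbf{curl}\ \bm{\psi}=0$, the harmonic part because the flux of a curl through each closed component of $\Gamma$ is zero. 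These verifications, where the boundary conditions and the topology of $\Omega$ genuinely enter, are precisely what is missing from your step 2; with them included, your architecture (norm equivalence plus duality element plus vector potential) is correct and matches the proof in \cite{AS}.
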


\noindent Let us now define the bilinear form: for $\vu\in \vVsolT{p}$ and $\bm{\varphi}\in \vVsolT{p^\prime}$,
\begin{equation}
\label{a}
a(\vu,\bm{\varphi}) = 2\int\displaylimits_{\Omega}{\DT\vu : \DT\bm{\varphi}} + \int\displaylimits_{\Gamma}{\alpha \vu_{\vt}\cdot \bm{\varphi}_{\vt}} .
\end{equation}

\begin{theorem}
\label{31}
Let $p\in (1,\infty)$, $\bm{\ell}\in [\vVsolT{p^\prime}]^\prime$ and $\alpha\in \Lb{t(p)}$. Then the problem:
\begin{equation}
\label{30}
\text{ find } \vu\in\vVsolT{p} \ \text{such that for any } \ \bm{\varphi}\in\vVsolT{p^\prime}, \quad a(\vu,\bm{\varphi}) = \left\langle \bm{\ell}, \bm{\varphi}\right\rangle 
\end{equation}
has a unique solution.
\end{theorem}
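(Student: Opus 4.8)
The plan is to read \eqref{30} as an abstract variational problem governed by the bilinear form $a$ of \eqref{a} and to apply the generalized inf-sup theorem (Theorem \ref{infsup}) with $X=\vVsolT{p}$ and $M=\vVsolT{p'}$. Writing $A\in\mathcal{L}(X;M')$ for the operator induced by $a$, the problem is uniquely solvable for every $\bm{\ell}\in[\vVsolT{p'}]'$ exactly when $A$ is an isomorphism, i.e. when the inf-sup condition holds and $\ker A=\{\bm 0\}$. I would obtain the inf-sup condition from the already established curl inf-sup \eqref{29} by a compact-perturbation argument, and deduce injectivity by reducing to the coercive Hilbert case $p=2$ treated in Theorem \ref{thm_weak_sol_Stokes_Nbc}.

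First I would reformulate $a$ in terms of the curl. Using the extended identity \eqref{26} (valid in $\vWfracb{-\frac{1}{p}}{p}$ for $\vu\in\vVsolT{p}$), the Green formula \eqref{green}, and the standard Green formula for the curl together with $\mathbf{curl}\,\mathbf{curl}\,\vu=-\Delta\vu$ for divergence-free fields, I expect to arrive at
\[
a(\vu,\bm{\varphi})=\int_{\Omega}{\mathbf{curl}\,\vu\cdot\mathbf{curl}\,\bm{\varphi}}+\int_{\Gamma}{(\alpha\vu_{\vt}-2\bm{\Lambda}\vu)\cdot\bm{\varphi}_{\vt}}
\]
for all $\vu\in\vVsolT{p}$ and $\bm{\varphi}\in\vVsolT{p'}$. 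This decomposes $A=A_0+B$, where $A_0$ is the operator of the curl form and $B$ collects the two boundary contributions.

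The next step is to prove that $B$ is compact. The traces $\vu\mapsto\vu_{\vt}$ and $\bm{\varphi}\mapsto\bm{\varphi}_{\vt}$ factor through the compact embeddings $\vWfracb{1-\frac{1}{p}}{p}\hookrightarrow\hookrightarrow\vLb{s}$; combined with $\alpha\in\Lb{t(p)}$ and the \emph{strict} integrability thresholds defining $t(p)$ in \eqref{def_exponent_tp_alpha}, which already ensure the boundary integral is well defined by Lemma \ref{23}, together with the boundedness of the geometric coefficients appearing in $\bm{\Lambda}$ (guaranteed by $\Gamma\in\HC{1}{1}$), the boundary bilinear form gains compactness. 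Since the curl inf-sup \eqref{29} shows that $A_0$ is Fredholm of index zero on these spaces — its only degeneracy being the finite-dimensional harmonic-field kernel $\bm{K}^p_T(\Omega)$ — the perturbed operator $A=A_0+B$ is again Fredholm of index zero. Hence $A$ is an isomorphism if and only if it is injective.

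It then remains to show $\ker A=\{\bm 0\}$. Suppose $\vu\in\vVsolT{p}$ satisfies $a(\vu,\bm{\varphi})=0$ for all $\bm{\varphi}\in\vVsolT{p'}$. When $p\ge 2$ the inclusions $\vVsolT{p}\subset\vVsolT{2}\subset\vVsolT{p'}$ allow the choice $\bm{\varphi}=\vu$, giving
\[
2\|\DT\vu\|_{\mathbb{L}^2(\Omega)}^2+\int_{\Gamma}{\alpha|\vu_{\vt}|^2}=0,
\]
so $\DT\vu=\bm 0$ forces $\vu$ to be an infinitesimal rigid field, and then $\vu\cdot\vn=0$ on $\Gamma$ with $\vu_{\vt}=\bm 0$ on $\Gamma_0$ (where $\alpha>0$) yields $\vu=\bm 0$, exactly as in the kernel computation following Theorem \ref{thm_weak_sol_Stokes_Nbc}. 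For $1<p<2$ one has $p'>2$, so $A$ at exponent $p'$ is already an isomorphism and in particular surjective; injectivity of $A$ at exponent $p$ then follows by duality. In both cases $A$ is injective, hence an isomorphism, which is the desired existence and uniqueness. I expect the main obstacle to be the compactness of the boundary perturbation $B$ under the minimal regularity $\alpha\in\Lb{t(p)}$ — this is precisely what the threshold \eqref{def_exponent_tp_alpha} is designed to secure.
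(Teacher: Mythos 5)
Your proposal is sound and reaches the theorem by a genuinely different route than the paper. Both arguments pivot on the same two ingredients — the curl reformulation of the form $a$ (your identity is exactly the paper's \eqref{17}, obtained from \eqref{26} and integration by parts) and the curl inf-sup condition \eqref{29} — and both treat $1<p<2$ by duality/symmetry of $a$. The difference is the core step for $p>2$: the paper first solves the problem in $\vVsolT{2}$ by Lax--Milgram and then runs a regularity bootstrap, treating $\alpha\vu_{\vt}$ and $\bm{\Lambda}\vu$ as boundary data of successively better integrability, solving an auxiliary curl--curl problem at each stage via Theorem \ref{infsup}, identifying $\mathbf{curl}\,\bm{v}=\mathbf{curl}\,\vu$, and upgrading $\vu$ through Proposition \ref{X} in finitely many steps (roughly $1/\varepsilon$ of them, with separate cases $2<p\le 3$ and $p>3$). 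Your compact-perturbation/Fredholm-alternative argument replaces that entire iteration: once $A_0$ is Fredholm of index zero and $B$ is compact, uniqueness (energy for $p\ge 2$, duality for $p<2$) yields the isomorphism in one stroke, uniformly in $p$. The paper's route buys self-containedness and explicit estimates; yours buys brevity and conceptual clarity, at the price of two operator-theoretic facts you assert rather than prove: (a) that $A_0:\vVsolT{p}\to[\vVsolT{p'}]'$ is Fredholm of index zero — this does follow from \eqref{29}, but needs the decomposition $\vVsolT{p'}=\bm{V}^{p'}(\Omega)\oplus\bm{K}^{p'}_T(\Omega)$, the identification $\ker A_0=\bm{K}^{p}_T(\Omega)$, and the fact that both kernels have the same dimension $J$; and (b) compactness of $B$.

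On (b), one correction: your justification via the \emph{strict} thresholds in \eqref{def_exponent_tp_alpha} fails at $p=2$, where $t(2)=2$ leaves no integrability slack. Compactness still holds there, but by a truncation argument rather than by exponent room: approximate $\alpha$ in $\Lb{2}$ by bounded $\alpha_k$, note $\|B_\alpha-B_{\alpha_k}\|\le C\|\alpha-\alpha_k\|_{\Lb{2}}$ by the H\"older estimates of Lemma \ref{23}, and check each $B_{\alpha_k}$ is compact using the compact trace embedding; alternatively, bypass Fredholm theory at $p=2$ entirely by quoting the coercive Lax--Milgram solution of Theorem \ref{thm_weak_sol_Stokes_Nbc}. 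With (a) and (b) filled in, your proof is complete.
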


\begin{proof} 
Let us consider first $p \geq 2$. Since $[\vVsolT{p^\prime}]^\prime \hookrightarrow [\vVsolT{2}]^\prime$, by Lax-Milgram theorem there exists a unique $\vu\in\vVsolT{2}$ satisfying
\begin{equation}
\label{28}
\forall \ \bm{\varphi} \in \vVsolT{2}, \quad a(\vu,\bm{\varphi}) = \left\langle \bm{\ell}, \bm{\varphi}\right\rangle_{[\vVsolT{2}]^\prime\times \vVsolT{2} } .
\end{equation}	
Now we want to show that $\vu\in\vW{1}{p}$. Since the inf-sup condition \eqref{15} is known for the bilinear form
$$b(\vu,\bm{\varphi}) = \int\displaylimits_{\Omega}{\mathbf{curl} \ \vu \cdot \mathbf{curl} \ \bm{\varphi}}$$
with suitable spaces $X$ and $M$ (see \eqref{29}), we will use another formulation of problem \eqref{28}. For that, in particular for $\bm{\varphi}\in \bm{H}^2(\Omega)$ with $\div \ \bm{\varphi} = 0$ in $\Omega$ and $\bm{\varphi}\cdot \vn = 0$ on $\Gamma$, we have the relation
$$a(\vu,\bm{\varphi}) = \left\langle \bm{\ell}, \bm{\varphi}\right\rangle  .$$
Therefore, using \eqref{26} and integration by parts, we get, for any $\bm{\varphi} \in \bm{H}^2(\Omega)$ with $\div \ \bm{\varphi} = 0$ in $\Omega$ and $\bm{\varphi}\cdot \vn = 0$ on $\Gamma$,
\begin{equation}
\label{17}
\int\displaylimits_{\Omega}{\mathbf{curl} \ \vu \cdot \mathbf{curl} \ \bm{\varphi}} = \left\langle \bm{\ell}, \bm{\varphi}\right\rangle_{[\vVsolT{2}]^\prime\times \vVsolT{2}} - \int\displaylimits_{\Gamma}{\alpha \vu_{\vt} \cdot \bm{\varphi}_{\vt}} + 2\int\displaylimits_{\Gamma}{\bm{\Lambda} \vu \cdot \bm{\varphi}} .
\end{equation}
But since $\Omega$ is $\HC{1}{1}$, using the density result
%\begin{lemma}\label{33}
$$
 \left\lbrace \bm{v}\in \bm{\mathcal{D}}(\overline{\Omega}), \div \ \bm{v}=0 \text{ in } \Omega, \bm{v}\cdot \vn = 0 \text{ on } \Gamma\right\rbrace \text{ is dense in } \vVsolT{2},
 $$
%\end{lemma}
we have the relation \eqref{17} for all $\bm{\varphi}\in \vVsolT{2}$. Now we are in position to prove that $\vu \in \vW{1}{p}$ and for that we consider different cases.\\
\textbf{(i) } $2<p\leq3$ :\\
\\
\textbf{ 1\textsuperscript{st} Step:} Since $\vu_{\vt}\in\vLb{4}$ and $\alpha\in\Lb{2+\varepsilon}$, we have $\alpha\vu_{\vt}\in\vLb{q_1}$ with $\frac{1}{q_1}=\frac{1}{4}+\frac{1}{2+\varepsilon}$. But, $\vLb{q_1}\hookrightarrow\vWfracb{-\frac{1}{p_1}}{p_1}$ with $p_1=\frac{3}{2}q_1 \ > 2$ i.e.
$\frac{1}{p_1}=\frac{2}{3}\left(\frac{1}{4}+\frac{1}{2+\varepsilon}\right).$\\
Therefore, as $\vWfracb{\frac{1}{p_1}}{p_1^\prime}\hookrightarrow \vLb{q_1^\prime}$ with $\frac{4}{3}< q_1^\prime < 4$ and $\bm{\Lambda}\vu \in \vLb{4}$, the mapping
\begin{equation}
\label{S5e0}
\left\langle \bm{L},\bm{\varphi}\right\rangle = \left\langle \bm{\ell}, \bm{\varphi}\right\rangle_{[\vVsolT{s'_1}]'\times \vVsolT{s'_1}} - \int\displaylimits_{\Gamma}{\alpha \vu_{\vt}\cdot \bm{\varphi}_{\vt} } + 2\int\displaylimits_{\Gamma}{\bm{\Lambda} \vu\cdot \bm{\varphi}} \quad \text{ for } \ \bm{\varphi}\in \bm{V}^{s_1^\prime}(\Omega)
\end{equation}
defines an element in the dual space of $\bm{V}^{s_1^\prime}(\Omega)$ with $s_1 = \text{min }\{p_1,p\}$. Now from the inf-sup condition \eqref{29} and using Theorem \ref{infsup}, there exists a unique $\bm{v}\in \vVsolT{s_1}$ such that
\begin{equation}
\label{18}
\forall \bm{\varphi}\in \bm{V}^{s_1^\prime}(\Omega) \ , \quad \int\displaylimits_{\Omega}{\mathbf{curl} \ \bm{v} \cdot \mathbf{curl} \ \bm{\varphi}} = \left\langle \bm{L}, \bm{\varphi}\right\rangle_{[\bm{V}^{s_1^\prime}(\Omega)]^\prime\times \bm{V}^{s_1^\prime}(\Omega)} .
\end{equation}
We will show that $\mathbf{curl} \ \bm{v} = \mathbf{curl} \ \vu$. For that first we extend \eqref{18} to any test function $\bm{\varphi}\in \vVsolT{s_1^\prime}$. Since \ $\widetilde{\nabla} q_j^T \in \vVsolT{2} \hookrightarrow \vVsolT{s_1^\prime}$, using \eqref{17} we get
\begin{align*}
\left\langle \bm{L},\widetilde{\nabla} q_j^T\right\rangle &= \left\langle \bm{\ell}, \widetilde{\nabla} q_j^T\right\rangle - \int\displaylimits_{\Gamma}{ \alpha \vu_{\vt}\cdot {(\widetilde{\nabla} q_j^T)}_{\vt}} + 2\int\displaylimits_{\Gamma}{\bm{\Lambda} \vu \cdot \widetilde{\nabla} q_j^T}\\
& = \int\displaylimits_{\Omega}{\mathbf{curl} \ \vu \cdot \mathbf{curl} \ \widetilde{\nabla} q_j^T\ }= 0 .
\end{align*}
Hence, for any $\bm{\varphi} \in \vVsolT{s_1^\prime}$, we set
$\bm{\tilde{\varphi}} = \bm{\varphi} - \underset{j}{\Sigma}\left\langle \bm{\varphi}\cdot \vn, 1\right\rangle _{\Sigma _j} \widetilde{\nabla} q_j^T $
which implies
$$\left\langle \bm{L}, \bm{\varphi} \right\rangle  = \left\langle \bm{L}, \bm{\tilde{\bm{\varphi}}}\right\rangle + \underset{j}{\Sigma}\left\langle \bm{\varphi}\cdot \vn, 1\right\rangle _{\Sigma _j} \left\langle \bm{L}, \widetilde{\nabla} q_j^T\right\rangle = \left\langle \bm{L}, \bm{\tilde{\bm{\varphi}}}\right\rangle $$
and also $\widetilde{\bm{\varphi}} \in \bm{V}^{s_1^\prime}(\Omega)$.
Therefore \eqref{18} yields,
$$\int\displaylimits_{\Omega}{\mathbf{curl} \ \bm{v} \cdot \mathbf{curl} \ \bm{\varphi}} = \int\displaylimits_{\Omega}{\mathbf{curl} \ \bm{v} \cdot \mathbf{curl} \ \bm{\tilde{\bm{\varphi}}}} = \left\langle \bm{L}, \bm{\tilde{\bm{\varphi}}}\right\rangle = \left\langle \bm{L}, \bm{\varphi} \right\rangle  . $$
So finally, we get that $\bm{v}\in \vVsolT{s_1}$ satisfies
\begin{equation}
\label{19}
\forall \bm{\varphi}\in \vVsolT{s_1^\prime}, \qquad \int\displaylimits_{\Omega}{\mathbf{curl} \ \bm{v} \cdot \mathbf{curl} \ \bm{\varphi}} = \left\langle \bm{L}, \bm{\varphi}\right\rangle .
\end{equation}
Now as $\vVsolT{2}\hookrightarrow \vVsolT{s_1^\prime}$, we deduce from \eqref{17} that
\begin{equation}
\label{S5e1}
\forall \bm{\varphi}\in \vVsolT{2}, \quad \int\displaylimits_{\Omega}{\mathbf{curl} \ \bm{v} \cdot \mathbf{curl} \ \bm{\varphi}} = \int\displaylimits_{\Omega}{\mathbf{curl} \ \vu \cdot \mathbf{curl} \ \bm{\varphi}}
\end{equation}
which gives,
\begin{equation}
\label{S5e2}
\mathbf{curl} \ \vu = \mathbf{curl} \ \bm{v} \quad \text{ in } \ \Omega .
\end{equation}
Therefore, as $\vu \in \vL{6}\hookrightarrow \vL{s_1}, \mathbf{curl} \ \vu\in \vL{s_1}, \div \ \vu = 0$ in $\Omega$ and $\vu \cdot \vn = 0$ on $\Gamma$; from Proposition \ref{X}, we deduce $\vu \in \vW{1}{s_1}$. If $s_1 = p$, the proof is complete. Otherwise, $s_1 = p_1$ and we proceed to the next step.\\
\\
\textbf{ 2\textsuperscript {nd} Step:} Now $\vu\in\vW{1}{p_1}$ implies $\vu_{\vt}\in\vLb{m}$ where $\frac{1}{m}=\frac{3}{2p_1}-\frac{1}{2} (\text{ since } p_1 \leq 3)$. Then $\alpha\vu_{\vt}\in\vLb{q_2}$ where $\frac{1}{q_2}=\frac{1}{m}+\frac{1}{2+\varepsilon}$. But, $\vLb{q_2}\hookrightarrow\vWfracb{-\frac{1}{p_2}}{p_2}$ with $p_2=\frac{3}{2}q_2 \ > p_1$ i.e. $\frac{1}{p_2}=\frac{2}{3}\left(\frac{1}{4}+\frac{1}{2+\varepsilon}-\frac{1}{2}+\frac{1}{2+\varepsilon}\right)
=\frac{2}{3}\left(\frac{2}{2+\varepsilon}-\frac{1}{2}+\frac{1}{4}\right)$. So $\vWfracb{\frac{1}{p_2}}{p_2^\prime}\hookrightarrow \vLb{q_2^\prime}$ with $m^\prime < q_2^\prime$ and $\bm{\Lambda}\vu\in \vLb{m}$ and hence the mapping $\bm{L}$ in (\ref{S5e0}) where now $\left\langle \bm{\ell},\bm{\varphi}\right\rangle $ is the duality between $[\vVsolT{s'_2}]'$ and $\vVsolT{s'_2}$,
defines an element in the dual of $\bm{V}^{s_2^\prime}(\Omega)$ with $s_2 = \text{min } \{p_2, p\}$. Therefore, as in the previous step, there exists a unique $\bm{v}\in \vVsolT{s_2}$ such that (\ref{19}) holds for any $\bm{\varphi}\in \vVsolT{s'_2}$ and then (\ref{S5e2}).
Thus we get, $\vu \in \vL{p_1^*}\hookrightarrow \vL{s_2}, \mathbf{curl} \ \vu\in \vL{s_2}, \div \ \vu = 0$ in $\Omega$ and $\vu \cdot \vn = 0$ on $\Gamma$ which gives $\vu \in \vW{1}{s_2}$. If $s_2 = p$, we are done. Otherwise, $s_2 = p_2$ and we proceed next.\\
\\
\textbf{(k+1)\textsuperscript{th} Step:} Proceeding similarly, we get $\vu \in \vVsolT{p_{k+1}}$ with
$\frac{1}{p_{k+1}}=\frac{2}{3}\left(\frac{k+1}{2+\varepsilon}-\frac{k}{2}+\frac{1}{4}\right)$ (where in each step, we assumed that $p_k<3$) which also satisfies
\begin{equation*}
\forall \ \bm{\varphi}\in \vVsolT{p_{k+1}^\prime}, \quad \int\displaylimits_{\Omega}{\mathbf{curl} \ \vu \cdot \mathbf{curl} \ \bm{\varphi}} = \left\langle \bm{\ell}, \bm{\varphi}\right\rangle - \int\displaylimits_{\Gamma}{ \alpha \vu_{\vt}\cdot \bm{\varphi}_{\vt}} + 2\int\displaylimits_{\Gamma}{\bm{\Lambda} \vu \cdot \bm{\varphi}} .
\end{equation*}
Now choose $k=[\frac{1}{\varepsilon}-\frac{1}{2}]+1$ such that $p_{k+1}\geq3\geq p$ (where $[ a]$ stands for the greatest integer less than or equal to $a$). Hence $\vu\in\vW{1}{p}$. i.e. for $2< p \leq 3$, there exists a unique $\vu\in \vVsolT{p}$ such that for any $\bm{\varphi}\in \vVsolT{p^\prime}$, we have (\ref{17}) where the duality bracket $\left\langle \bm{\ell}, \bm{\varphi}\right\rangle_{[\vVsolT{2}]' \times \vVsolT{2}}$ is replaced by $\left\langle \bm{\ell}, \bm{\varphi}\right\rangle_{[\vVsolT{p'}]' \times \vVsolT{p'}} $.\\
\\
\textbf{(ii)} $p > 3$ : From the previous case, we have that $\vu\in\vW{1}{3}$ which implies $\vu_{\vt}\in\vLb{s}$ for all $s\in(1,\infty)$. Now $\alpha\in\Lb{\frac{2}{3}p+\varepsilon}$ gives $\alpha\vu_{\vt}\in \vLb{q}$ where $\frac{1}{q} = \frac{1}{s} + \frac{1}{\frac{2}{3}p+ \epsilon}$. Choosing $s>1$ suitably, we can get $q = \frac{2}{3}p$ and hence $\vLb{q}\hookrightarrow\vWfracb{-\frac{1}{p}}{p}$. Since $\vWfracb{\frac{1}{p}}{p^\prime}\hookrightarrow \vLb{q^\prime}$ with $s^\prime < q^\prime$ and $\bm{\Lambda}\vu\in \vLb{s}$, the mapping $\bm{L}$ in (\ref{S5e0})
defines an element in the dual of $\vVsolT{p^\prime}$. So there exists a unique $\bm{v}\in \vVsolT{p}$ such that (\ref{19}) holds for any $\bm{\varphi}\in \vVsolT{p^\prime}$ and thus (\ref{S5e2}).
Therefore we obtain similarly $\vu \in \vW{1}{p}$. Hence, $\vu\in \vVsolT{p}$ solves the problem \eqref{30} for all \ $2\leq p <\infty$ .

Finally consider the operator $A\in \mathcal{L}(\vVsolT{p}, (\vVsolT{p^\prime})^\prime)$, associated to the bilinear form $a$ in \eqref{a}, defined as $\left\langle A\xi, \varphi\right\rangle = a(\xi, \varphi)$ . As proved above, for $p \geq 2$, the operator $A$ is an isomorphism from $\vVsolT{p}$ to $(\vVsolT{p^\prime})^\prime$. Then the adjoint operator, which is equal to $A$ is an isomorphism from $\vVsolT{p^\prime}$ to $(\vVsolT{p})^\prime$ for $p^\prime \leq 2$. This means that the operator $A$ is an isomorphism for $p\leq 2$ also, which ends the proof.
\hfill
\end{proof}

As a consequence, the above theorem yields the next important inf-sup condition.
\begin{proposition}
For all $p\in (1,\infty)$ and $\alpha\in \Lb{t(p)}$, there exists a constant $\gamma= \gamma(\Omega, p, \alpha)>0$ such that
\begin{equation}
\label{infsup-ineq}
\underset{\underset{\bm{\varphi}\neq 0}{\bm{\varphi}\in \vVsolT{p^\prime}}}{\inf} \ \ \underset{\underset{\bm{u}\neq 0}{\bm{u}\in \vVsolT{p}}}{\sup} \frac{2\int\displaylimits_{\Omega}{\DT\bm{u}:\DT\bm{\varphi}}+ \int\displaylimits_{\Gamma}{\alpha \bm{u}_{\vt}\cdot \bm{\varphi}_{\vt}}}{\|\bm{u}\|_{\vVsolT{p}} \ \|\bm{\varphi}\|_{\vVsolT{p^\prime}}} \geq \gamma .
\end{equation}
Also for any $\bm{\ell}\in [\vVsolT{p^\prime}]^\prime$, the unique solution $\vu\in \vVsolT{p}$ of the variational problem:
\begin{equation*}
\forall \ \bm{\varphi}\in\vVsolT{p^\prime}, \quad 2\int\displaylimits_{\Omega}{\DT\vu : \DT\bm{\varphi}} + \int\displaylimits_{\Gamma}{\alpha \vu_{\vt}\cdot \bm{\varphi}_{\vt}} = \left\langle \bm{\ell}, \bm{\varphi}\right\rangle 
\end{equation*}
given by Theorem \ref{31}, satisfies the following estimate:
\begin{equation}
\label{S42}
\|\vu\|_{\vW{1}{p}} \le \frac{1}{\gamma} \|\bm{\ell}\|_{[\vVsolT{p'}]'} .
\end{equation}
\end{proposition}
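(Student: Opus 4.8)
The plan is to obtain both assertions as a direct corollary of Theorem \ref{31} via the abstract inf-sup framework of Theorem \ref{infsup}. First I would set $X=\vVsolT{p}$ and $M=\vVsolT{p'}$; these are reflexive Banach spaces (closed subspaces of $\vW{1}{p}$ and $\vW{1}{p'}$), so the hypotheses on the spaces are met. The relevant bilinear form is $a$ from \eqref{a}, together with the operator $A\in\mathcal{L}(X;M')$, $\langle A\vu,\bm\varphi\rangle=a(\vu,\bm\varphi)$, already introduced at the end of the proof of Theorem \ref{31}. To legitimately apply Theorem \ref{infsup} I need $a$ to be continuous on $X\times M$: the term $2\int_\Omega\DT\vu:\DT\bm\varphi$ is controlled by $2\|\vu\|_{\vW{1}{p}}\|\bm\varphi\|_{\vW{1}{p'}}$, while the boundary term is estimated by the very H\"older-plus-trace computation carried out in the proof of Lemma \ref{23}, giving $|\int_\Gamma\alpha\vu_{\vt}\cdot\bm\varphi_{\vt}|\le C\|\alpha\|_{\Lb{t(p)}}\|\vu\|_{\vW{1}{p}}\|\bm\varphi\|_{\vW{1}{p'}}$. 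This is exactly where the dependence of $\gamma$ on $\alpha$ originates.

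Next I would feed in Theorem \ref{31}: for every $\bm\ell\in[\vVsolT{p'}]'$ the problem \eqref{30} has a unique solution. Uniqueness forces $A$ to be injective, so $V:=\mathrm{Ker}\,A=\{\bm{0}\}$ and $X/V=X$; existence for arbitrary $\bm\ell$ makes $A$ surjective onto $M'$, and the bounded inverse theorem then upgrades $A\colon X\to M'$ to a topological isomorphism. This is precisely statement (ii) of Theorem \ref{infsup}, so the equivalence (ii)$\Leftrightarrow$(i) supplies a constant $\gamma=\gamma(\Omega,p,\alpha)>0$ with
$$\inf_{\substack{\bm\varphi\in M\\ \bm\varphi\neq 0}}\ \sup_{\substack{\vu\in X\\ \vu\neq 0}}\frac{a(\vu,\bm\varphi)}{\|\vu\|_X\,\|\bm\varphi\|_M}\ge\gamma,$$
which, once $a$, $X$ and $M$ are written out, is nothing but \eqref{infsup-ineq} (the inf-over-$M=\vVsolT{p'}$ / sup-over-$X=\vVsolT{p}$ orientation matches the roles in which existence and uniqueness were established).

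Finally, for the norm bound \eqref{S42}, Theorem \ref{infsup}(ii) also records that $1/\gamma$ is the continuity constant of $A^{-1}$. Since the solution produced by Theorem \ref{31} is $\vu=A^{-1}\bm\ell$ and the norm on $X$ is the $\vW{1}{p}$-norm, this gives immediately $\|\vu\|_{\vW{1}{p}}=\|A^{-1}\bm\ell\|_X\le\frac{1}{\gamma}\|\bm\ell\|_{[\vVsolT{p'}]'}$. I do not expect a genuine obstacle here: all the analytic work has been absorbed into Theorem \ref{31}, and the only delicate point is the bookkeeping that aligns the abstract roles $X\leftrightarrow\vVsolT{p}$, $M\leftrightarrow\vVsolT{p'}$ with the correct side of the inf-sup and with the conjugate exponent appearing in Theorem \ref{31}.
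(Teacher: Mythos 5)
Your proof is correct and follows essentially the same route as the paper: the paper likewise deduces the inf-sup condition \eqref{infsup-ineq} from the equivalence (i)$\Leftrightarrow$(ii) of Theorem \ref{infsup} applied to the isomorphism established in Theorem \ref{31}, and then reads off \eqref{S42} from the fact that $1/\gamma$ bounds $A^{-1}$. Your explicit verification of the continuity of $a$ via Lemma \ref{23} and of the triviality of $\mathrm{Ker}\,A$ only spells out steps the paper leaves implicit.
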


\begin{remark}
\rm{The inf-sup condition (\ref{infsup-ineq}) will be improved in Theorem \ref{P3} where we obtain that the above continuity constant $ \gamma$ does not depend on $\alpha$.}
\end{remark}

\begin{proof}
Using the equivalence (i) and (ii) in Theorem \ref{infsup}, we obtain the inf-sup condition (\ref{infsup-ineq}) from Theorem \ref{31}. The estimate (\ref{S42}) follows immediately from (\ref{infsup-ineq}).
\hfill
\end{proof}

Finally Theorem \ref{31} enables us to obtain the existence of weak solution for the Stokes problem for all $1<p<\infty$.

\begin{corollary}[{\bf Existence in $\boldsymbol{W}^{1,p}(\Omega)$}]
\label{thm_W1p_regularity_Stokes_Nbc}
Let $p\in (1,\infty)$ and
$$\bm{f}\in\vL{r(p)}, \mathbb{F}\in\mathbb{L}^p(\Omega), \bm{h}\in\vWfracb{-\frac{1}{p}}{p}
\text{ and } \alpha\in\Lb{t(p)}.$$
Then the Stokes problem \eqref{S} has a unique solution $(\vu,\pi)\in\vW{1}{p}\times L^{p}_0(\Omega)$ which satisfies the estimate:
\begin{equation}
\label{S43}
\|\vu\|_{\vW{1}{p}} + \|\pi\|_{\L{p}} \le C(\Omega, \alpha,p) \left( \|\bm{f}\|_{\vL{r(p)}} + \|\mathbb{F}\|_{\mathbb{L}^p(\Omega)} + \|\bm{h}\|_{\vWfracb{-\frac{1}{p}}{p}}\right) .
\end{equation}
\end{corollary}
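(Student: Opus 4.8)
The plan is to recast \eqref{S} as the abstract variational problem \eqref{30} and invoke the existence theory already in place. Define the linear form $\bm{\ell}$ on $\vVsolT{p'}$ by
\begin{equation*}
\langle \bm{\ell}, \bm{\varphi}\rangle := \int_\Omega \bm{f}\cdot\bm{\varphi} - \int_\Omega \mathbb{F}:\nabla\bm{\varphi} + \langle \bm{h}, \bm{\varphi}\rangle_\Gamma ,
\end{equation*}
so that the variational formulation \eqref{varfor_Stokes_Nbc} is precisely problem \eqref{30} for this $\bm{\ell}$ and the bilinear form $a$ of \eqref{a}. Once I verify that $\bm{\ell}\in[\vVsolT{p'}]'$, Theorem \ref{31} yields a unique $\vu\in\vVsolT{p}$ solving \eqref{varfor_Stokes_Nbc}, and Proposition \ref{34} gives its equivalence with \eqref{S}, furnishing through De Rham's theorem a pressure $\pi\in\L{p}$, unique up to an additive constant, which I normalise to lie in $L^p_0(\Omega)$; uniqueness of $\vu$ is already contained in Theorem \ref{31}.

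The crux is the continuity of $\bm{\ell}$, which is exactly where the exponent $r(p)$ of \eqref{def_exponent_rp} is tailored. For the volume term I would bound $|\int_\Omega \bm{f}\cdot\bm{\varphi}| \le \|\bm{f}\|_{\vL{r(p)}}\,\|\bm{\varphi}\|_{\vL{(r(p))'}}$ and then invoke the Sobolev embedding $\vW{1}{p'}\hookrightarrow\vL{(r(p))'}$: for $p>\tfrac32$ one has $p'<3$ and $\tfrac{1}{(r(p))'}=\tfrac{1}{p'}-\tfrac13$ exactly; for $p=\tfrac32$ one uses $\vW{1}{3}\hookrightarrow\vL{q}$ for every finite $q$ together with $r(p)>1$; and for $p<\tfrac32$ one has $p'>3$, so $\vW{1}{p'}\hookrightarrow\vL{\infty}$ matches $r(p)=1$. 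The term $\int_\Omega\mathbb{F}:\nabla\bm{\varphi}$ is controlled directly by Hölder's inequality, and the boundary term by the trace embedding $\vW{1}{p'}\hookrightarrow\vWfracb{\frac{1}{p}}{p'}$ paired against $\bm{h}\in\vWfracb{-\frac{1}{p}}{p}$. Collecting these yields
\begin{equation*}
\|\bm{\ell}\|_{[\vVsolT{p'}]'} \le C\left(\|\bm{f}\|_{\vL{r(p)}} + \|\mathbb{F}\|_{\mathbb{L}^p(\Omega)} + \|\bm{h}\|_{\vWfracb{-\frac{1}{p}}{p}}\right).
\end{equation*}

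The velocity part of \eqref{S43} is then immediate from \eqref{S42}, namely $\|\vu\|_{\vW{1}{p}}\le\tfrac1\gamma\|\bm{\ell}\|_{[\vVsolT{p'}]'}$, combined with the bound on $\|\bm{\ell}\|$ above. For the pressure I would use $\nabla\pi = \bm{f}+\div\,\mathbb{F}+\Delta\vu$ in the sense of distributions together with the Ne\v{c}as/inf-sup estimate $\|\pi\|_{L^p_0(\Omega)}\le C\|\nabla\pi\|_{\vW{-1}{p}}$, observing that the dual embedding gives $\vL{r(p)}\hookrightarrow\vW{-1}{p}$, that $\div\,\mathbb{F}$ is bounded in $\vW{-1}{p}$ by $\|\mathbb{F}\|_{\mathbb{L}^p(\Omega)}$, and that $\|\Delta\vu\|_{\vW{-1}{p}}\le C\|\vu\|_{\vW{1}{p}}$; substituting the velocity estimate closes the proof. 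The only genuinely delicate point is the verification of $\bm{\ell}\in[\vVsolT{p'}]'$ across the three regimes of $p$, since this is exactly what forces the definition of $r(p)$; everything else is bookkeeping layered on top of Theorem \ref{31} and the inf-sup estimate \eqref{S42}.
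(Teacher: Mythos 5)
Your proposal is correct and follows essentially the same route as the paper: you define the same functional $\bm{\ell}$ on $\vVsolT{p'}$, invoke Theorem \ref{31} together with the equivalence in Proposition \ref{34}, and derive \eqref{S43} from the inf-sup estimate \eqref{S42}. The only difference is that you spell out the continuity of $\bm{\ell}$ across the three regimes of $r(p)$ and the Ne\v{c}as-type pressure bound, which the paper leaves implicit ("Clearly $\bm{\ell}\in[\vVsolT{p'}]'$"), and your verifications of these points are accurate.
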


\begin{remark}
	\rm{The existence of $\vu\in\vW{1}{p}$ and corresponding estimate as above can be deduced directly using the regularity result in \cite[Theorem 3.7]{AR} taking $\alpha \vu_{ \vt}$ as the source term in the right hand side, but only for $p>2$. We need Theorem \ref{31} to obtain the existence of solution for $p<2$.}
\end{remark}

\begin{proof}
Let consider
$$
\left\langle \bm{\ell}, \bm{\varphi}\right\rangle  = \int\displaylimits_{\Omega}{ \bm{f}\cdot \bm{\varphi}} -\int\displaylimits_{\Omega}{\mathbb{F}:\nabla \bm{\varphi}} + \left\langle \bm{h}, \bm{\varphi}\right\rangle _\Gamma \text{ for all } \bm{\varphi}\in \vVsolT{p'}.
$$
Clearly $\bm{\ell}\in [\vVsolT{p^\prime}]^\prime$. Then Theorem \ref{31} yields the existence of a unique $\vu \in \vVsolT{p}$ which satisfies the variational formulation \eqref{varfor_Stokes_Nbc}.

The estimate (\ref{S43}) follows from (\ref{S42}) and
$$
\|\bm{\ell}\|_{[\vVsolT{p'}]'} \le C(\Omega) \left( \|\bm{f}\|_{\vL{r(p)}} + \|\mathbb{F}\|_{\mathbb{L}^p(\Omega)}+ \|\bm{h}\|_{\vWfracb{-\frac{1}{p}}{p}}\right).
$$
\hfill
\end{proof}

\begin{remark}
\rm{
{\bf i)} All the previous and following results where we have assumed $\bm{f}\in\vL{r(p)}$ hold true also for $\bm{f}\in [\bm{H}_0^{(r(p))^\prime,p^\prime}(\div ,\Omega)]^\prime$ which is clear from the characterization of the space in Proposition \ref{40}.
	
{\bf ii)} We also want to emphasize that in this work, our assumption on $\alpha$ is quite steep. We need this regularity in order to ensure that $\alpha \vu_{\vt}\in\vWfracb{-\frac{1}{q}}{q}$ for some $q$ so that eventually we can use our tools. But we will see later (Subsection \ref{less regular}) that we may suppose $\alpha$ less regular in some cases.

{\bf iii)} Note that in the case $\alpha \equiv 0$, we are considering here more general Stokes problem than in \cite{AR}. But all the existence results (and the corresponding estimates) hold for that as well. }
\end{remark}

\subsection{Strong solution in $W^{2,p}(\Omega)$}
Concerning the existence of a strong solution, we prove the following regularity result.

\begin{theorem}[{\bf Existence in $\boldsymbol{W}^{2,p}(\Omega)$}]
\label{thm_W2p_regularity_Stokes_Nbc}
Let $p\in (1,\infty)$. Then, for
$$
\bm{f}\in\vL{p}, \bm{h}\in\vWfracb{1-\frac{1}{p}}{p} \text{ and } \alpha \in \Wfracb{1-\frac{1}{q}}{q}
$$
with $q>\frac{3}{2}$ if $p\le \frac{3}{2}$ and $q=p$ otherwise, the solution $(\vu,\pi)$ of the Stokes problem \eqref{S} with $\mathbb{F}=0$, given by Corollary \ref{thm_W1p_regularity_Stokes_Nbc} belongs to $\vW{2}{p}\times\W{1}{p}$ which also satisfies the estimate:
\begin{equation*}
\|\vu\|_{\vW{2}{p}} + \|\pi\|_{\W{1}{p}} \le C(\Omega, \alpha,p) \left( \|\bm{f}\|_{\vL{p}} + \|\bm{h} \|_{\vWfracb{1-\frac{1}{p}}{p}} \right) .
\end{equation*}
\end{theorem}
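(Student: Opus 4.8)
The plan is to reduce the problem to the already-known strong-regularity theory for the case $\alpha=0$ developed in \cite[Theorem 4.1]{AR}, by absorbing the friction term into the boundary datum, exactly as was done for constant $\alpha$ in Method~I of Theorem~\ref{13}. I would start from the weak solution $(\vu,\pi)\in\vW{1}{p}\times L^p_0(\Omega)$ furnished by Corollary~\ref{thm_W1p_regularity_Stokes_Nbc}, and rewrite the Navier condition $[(2\DT\vu)\vn]_{\vt}+\alpha\vu_{\vt}=\bm{h}$ as the pure strain-trace condition
$$[(2\DT\vu)\vn]_{\vt}=\widetilde{\bm{h}}:=\bm{h}-\alpha\vu_{\vt}\quad\text{on }\Gamma.$$
Since $\bm{h}\cdot\vn=0$ and $\vu_{\vt}\cdot\vn=0$, the modified datum satisfies $\widetilde{\bm{h}}\cdot\vn=0$, so it is admissible for the $\alpha=0$ theory; moreover, rearranging the variational identity \eqref{varfor_Stokes_Nbc} shows at once that $(\vu,\pi)$ is a weak solution of the $\alpha=0$ Stokes problem with data $(\bm{f},\widetilde{\bm{h}})$. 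Thus, provided I can show $\widetilde{\bm{h}}\in\vWfracb{1-\frac{1}{p}}{p}$, the cited a priori regularity estimate upgrades it to $(\vu,\pi)\in\vW{2}{p}\times\W{1}{p}$. Only the regularity estimate of that theorem is used, so the kernel issues of the $\alpha=0$ problem on axisymmetric domains play no role here.

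The heart of the matter, and the \textbf{main obstacle}, is therefore the multiplicative estimate
$$\alpha\,\vu_{\vt}\in\vWfracb{1-\frac{1}{p}}{p},\qquad\text{with}\qquad\alpha\in\Wfracb{1-\frac{1}{q}}{q},\ \ \vu_{\vt}\in\vWfracb{1-\frac{1}{p}}{p},$$
where $\vu_{\vt}$ is the trace on the $2$-dimensional manifold $\Gamma$ of $\vu\in\vW{1}{p}$. I would establish it through the Gagliardo seminorm: writing $\alpha(x)\vu_{\vt}(x)-\alpha(y)\vu_{\vt}(y)=\alpha(x)\big(\vu_{\vt}(x)-\vu_{\vt}(y)\big)+\vu_{\vt}(y)\big(\alpha(x)-\alpha(y)\big)$ and bounding the two resulting double integrals by H\"older's inequality, with exponents dictated by the boundary Sobolev embeddings recorded in \eqref{S5Em}--\eqref{S5Es}. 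This is precisely where the hypotheses on $q$ enter. For $p>3$ one has $q=p>3$, so $\Wfracb{1-\frac{1}{q}}{q}$ is a Banach algebra embedded in $C^0(\Gamma)$ and the product stays in $\vWfracb{1-\frac{1}{p}}{p}$ trivially. For $\frac32<p\le 3$ (still $q=p$) neither factor is bounded, and the bound must be extracted from the seminorm splitting together with the embedding $\vu_{\vt}\in\vLb{s}$, $\frac1s=\frac{3}{2p}-\frac12$. For $p\le\frac32$ the trace $\vu_{\vt}$ has poor integrability, which is compensated by demanding the higher regularity $q>\frac32$ of $\alpha$, so that the H\"older exponents close. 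In every regime the outcome has the form $\|\alpha\vu_{\vt}\|_{\vWfracb{1-\frac{1}{p}}{p}}\le C\,\|\alpha\|_{\Wfracb{1-\frac{1}{q}}{q}}\,\|\vu\|_{\vW{1}{p}}$.

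With this in hand the conclusion follows quickly. The $\alpha=0$ regularity result gives $\vu\in\vW{2}{p}$ and, since $\nabla\pi=\bm{f}+\Delta\vu\in\vL{p}$, also $\pi\in\W{1}{p}$, together with
$$\|\vu\|_{\vW{2}{p}}+\|\pi\|_{\W{1}{p}}\le C\big(\|\bm{f}\|_{\vL{p}}+\|\widetilde{\bm{h}}\|_{\vWfracb{1-\frac{1}{p}}{p}}\big).$$
Bounding $\|\widetilde{\bm{h}}\|_{\vWfracb{1-\frac{1}{p}}{p}}\le\|\bm{h}\|_{\vWfracb{1-\frac{1}{p}}{p}}+\|\alpha\vu_{\vt}\|_{\vWfracb{1-\frac{1}{p}}{p}}$ by the multiplicative estimate above, and then controlling $\|\vu\|_{\vW{1}{p}}$ via the weak bound \eqref{S43} (using $\vL{p}\hookrightarrow\vL{r(p)}$ and $\vWfracb{1-\frac{1}{p}}{p}\hookrightarrow\vWfracb{-\frac{1}{p}}{p}$), yields the stated estimate with a constant $C=C(\Omega,\alpha,p)$ depending on $\alpha$ only through $\|\alpha\|_{\Wfracb{1-\frac{1}{q}}{q}}$. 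If a single application of the product estimate turns out to be too costly for the smallest values of $p$, the same scheme can be iterated: one first improves the integrability of $\vu_{\vt}$ through a preliminary gain of regularity for $\vu$ and then repeats, a bootstrap entirely analogous to the step-by-step argument used in the proof of Theorem~\ref{31}.
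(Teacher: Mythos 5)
Your reduction of the problem to the $\alpha=0$ strong regularity theory of \cite[Theorem 4.1]{AR}, by moving $\alpha\vu_{\vt}$ into the boundary datum, is exactly the paper's strategy, and your treatment of the case $p>3$ is sound, since there $(1-\frac{1}{p})p=p-1>2=\dim\Gamma$ and $\Wfracb{1-\frac{1}{p}}{p}$ is indeed a Banach algebra. The genuine gap is the multiplicative estimate you call the heart of the matter: the inequality
\begin{equation*}
\|\alpha\vu_{\vt}\|_{\vWfracb{1-\frac{1}{p}}{p}}\le C\,\|\alpha\|_{\Wfracb{1-\frac{1}{q}}{q}}\,\|\vu\|_{\vW{1}{p}}
\end{equation*}
is false for every $p\le 3$ under the theorem's hypotheses on $q$. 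At that stage of the argument nothing is known about $\vu$ beyond $\vu\in\vW{1}{p}$, and since the trace map is onto, $\vu_{\vt}$ ranges over essentially all of $\vWfracb{1-\frac{1}{p}}{p}$; in the regime $\frac{3}{2}<p\le 3$ one has $q=p$, so your estimate would force the space $\Wfracb{1-\frac{1}{p}}{p}$ to be stable under pointwise multiplication. But a fractional Sobolev space $W^{s,p}$ on a two-dimensional manifold is closed under products only when $sp>2$ (for $sp<2$ take $f(x)=|x|^{-\delta}$ with $\frac{1}{2}\left(\frac{2}{p}-s\right)<\delta<\frac{2}{p}-s$: then $f\in W^{s,p}$ while $f^{2}\notin W^{s,p}$; the borderline $sp=2$, i.e.\ $p=3$, fails as well). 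Your own Gagliardo splitting reveals the obstruction: the term $\int_{\Gamma}|\alpha(x)|^{p}\bigl(\int_{\Gamma}|x-y|^{-(p+1)}|\vu_{\vt}(x)-\vu_{\vt}(y)|^{p}\,dy\bigr)dx$ pairs $|\alpha|^{p}$ against a density which is merely $L^{1}(\Gamma)$, so it needs $\alpha\in\Lb{\infty}$, which $\Wfracb{1-\frac{1}{q}}{q}$ does not provide for $q\le 3$; the same failure occurs for $p\le\frac{3}{2}$, where $q$ may be only slightly larger than $\frac{3}{2}$. So the Hölder exponents do not ``close'' in the seminorm term, only in the integrability term.

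Consequently the bootstrap you relegate to a final ``if needed'' remark is not a fallback but the core of the proof for all $p\le 3$, and it cannot be run with $\|\vu\|_{\vW{1}{p}}$ on the right-hand side: each pass must feed a strictly improved norm of $\vu$ into the product. This is exactly how the paper proceeds. For $p\le\frac{3}{2}$ it first upgrades $\vu$ to $\vW{1}{p^{*}}$ using Corollary \ref{thm_W1p_regularity_Stokes_Nbc}; for $\frac{3}{2}<p<3$ it first obtains $\vu\in\vW{2}{\frac{3}{2}}$ and then an intermediate $\vW{2}{q_{2}}$ with $q_{2}\in(\frac{3}{2},p)$. Moreover, the paper never multiplies on $\Gamma$: it lifts $\alpha$ to a $\W{1}{q}$ function in $\Omega$, applies the integer-order Leibniz rule $\frac{\partial}{\partial x_j}(\alpha u_i)=\frac{\partial\alpha}{\partial x_j}u_i+\alpha\frac{\partial u_i}{\partial x_j}$ together with three-dimensional Sobolev embeddings to conclude $\alpha\vu\in\vW{1}{r}$ with $r$ increasing at each iteration until $r=p$, and only then takes traces to get $\alpha\vu_{\vt}\in\vWfracb{1-\frac{1}{r}}{r}$. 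To salvage your boundary-seminorm route you would have to prove a product estimate landing in a strictly weaker space $\vWfracb{1-\frac{1}{r}}{r}$, $r<p$, and reorganize the iteration around the successively improved norms of $\vu$; as written, your proposal proves the theorem only for $p>3$.
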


\begin{proof}
The proof is done essentially using the existence of weak solution and bootstrap argument. Clearly, the data $\bm{f}, \bm{h}$ and $\alpha$ satisfy the hypothesis of Corollary \ref{thm_W1p_regularity_Stokes_Nbc}. Hence there exists a unique solution $(\vu,\pi)\in\vW{1}{p}\times L^{p}_0(\Omega)$ of \eqref{S}.\\
\\
\textbf{(i) $1< p \leq \frac{3}{2}$:} We also have the following embeddings:\\
$\vL{p} \hookrightarrow \vL{r(q)}, \vWfracb{1-\frac{1}{p}}{p} \hookrightarrow \vWfracb{-\frac{1}{q}}{q} $ and $\Wfracb{1-\frac{1}{\frac{3}{2}+\epsilon}}{\frac{3}{2}+\epsilon} \hookrightarrow \Lb{2+\varepsilon} $
where $q = p^*$ i.e. $\frac{1}{q} = \frac{1}{p} - \frac{1}{3}$ with $q\in (\frac{3}{2},3]$ which show that $(\vu,\pi)\in\vW{1}{q}\times\L{q}$ again using Corollary \ref{thm_W1p_regularity_Stokes_Nbc}. Now $\vu \in \vW{1}{q} \hookrightarrow \vL{q^*}$ and $\nabla \vu \in \vL{q}$. Also as $\alpha \in \Wfracb{1-\frac{1}{\frac{3}{2}+\epsilon}}{\frac{3}{2}+\epsilon}$, we can consider $\alpha \in \W{1}{\frac{3}{2}+\varepsilon}$, using the lift operator. Hence from Sobolev inequality $\alpha \in \L{(\frac{3}{2}+\varepsilon)^*}$ and $\nabla \alpha \in \vL{\frac{3}{2}+\varepsilon}$. All these implies, for all $ i, j =1, 2, 3, $
$$\alpha \frac{\partial u_i}{\partial x_j} \in \L{q_1} \text{ where }\frac{1}{q_1} = \frac{1}{\frac{3}{2}+\varepsilon} - \frac{1}{3} + \frac{1}{q} $$
and $$\frac{\partial \alpha}{\partial x_j} u_i
 \in \L{q_2} \text{ where } \frac{1}{q_2} = \frac{1}{\frac{3}{2}+\varepsilon} + \frac{1}{q^*} \ .$$
But $q_1 = q_2 > p$ and thus $\frac{\partial}{\partial x_j}(\alpha u_i)=\frac{\partial\alpha}{\partial x_j}u_i+\alpha\frac{\partial u_i}{\partial x_j} \in \L{p}$. This implies $\alpha \vu \in \vW{1}{p}$ or in other words $\alpha\vu_{\vt} \in \vWfracb{1-\frac{1}{p}}{p}$. Therefore the (general) regularity result as \cite[Theorem 4.1]{AR} gives $(\vu, \pi )\in \vW{2}{p}\times \W{1}{p}$. Note that it is possible to prove the strong existence result \cite[Theorem 4.1]{AR} only for $\HC{1}{1}$ domain since the problem (\ref{S}) takes the form of an uniformly elliptic operator with complementing boundary conditions in the sense of Agmon-Douglis-Nirenberg \cite{agmon}. \\
\\
\textbf{(ii) $p > \frac{3}{2}$:} First we assume $p< 3$. We have
$\vu\in\vW{2}{\frac{3}{2}}\hookrightarrow\vL{s}$ for all $s\in (1,\infty)$ and $\nabla\vu\in\vW{1}{\frac{3}{2}}\hookrightarrow\vL{3}$.
Also, since $\alpha \in \Wfracb{1-\frac{1}{p}}{p}$, we can consider $\alpha \in \W{1}{p}$. Then $\alpha \in \L{p^*} \text{ and } \nabla \alpha \in \vL{p} $. Therefore for all $i,j=1,2,3,$
$$\frac{\partial\alpha}{\partial x_j}u_i\in\L{q_2} \ \text{ where } \ \frac{1}{q_2}=\frac{1}{p}+\frac{1}{s}$$
and
$$\alpha\frac{\partial u_i}{\partial x_j}\in\L{q_3} \ \text{ where } \ \frac{1}{q_3}=\frac{1}{p^*}+\frac{1}{3}=\frac{1}{p} \ .$$
Clearly, $q_2<q_3$ and then $\frac{\partial}{\partial x_j}(\alpha u_i)\in\L{q_2}$ where $q_2 \in (\frac{3}{2},p)$. That implies $\alpha\vu\in\vW{1}{q_2}$ and hence $\alpha\vu_{\vt}\in\vWfracb{1-\frac{1}{q_2}}{q_2}$. So again by the regularity result, we have $\vu\in\vW{2}{q_2}$ where $q_2\in(\frac{3}{2},p)$.

Now $\vu\in\vW{2}{q_2}\hookrightarrow\vL{\infty}$ and $\nabla\vu\in\vW{1}{q_2}\hookrightarrow\vL{q_2^*}$. So for all $i,j=1,2,3,$
$$\frac{\partial\alpha}{\partial x_j}u_i\in\L{p} \ \text{ and } \ \alpha\frac{\partial u_i}{\partial x_j}\in\L{q_4} \ \text{ where } \ \frac{1}{q_4}=\frac{1}{p^*}+\frac{1}{q_2^*}=\frac{1}{p} + \frac{1}{q_2}-\frac{2}{3}.$$
As $q_4>p$, $\frac{\partial}{\partial x_j}(\alpha u_i)\in\L{p}$ which implies $\alpha\vu\in\vW{1}{p}$ and thus $\alpha\vu_{\vt}\in\vWfracb{1-\frac{1}{p}}{p}$. Therefore the regularity result as \cite[Theorem 4.1]{AR} gives $(\vu,\pi)\in\vW{2}{p}\times\W{1}{p}$.

The case for $p \geq 3$ follows exactly in the same way.
\hfill
\end{proof}

More generally, we have the following regularity result.

\begin{theorem}[{\bf Existence in $\boldsymbol{W}^{m,p}(\Omega)$}]
Let $m\geq 3$. Assume $\Omega$ is of class $\HC{m}{1}$ and $p\in (1,\infty)$. Moreover, let
$$
\bm{f}\in\vW{m-2}{p},\,\, \bm{h}\in\vWfracb{m-1-\frac{1}{p}}{p} \text{ and } \alpha\in\Wfracb{m-1-\frac{1}{p}}{p}.
$$
Then the solution $(\vu,\pi)$ of the problem (\ref{S}) with $\mathbb{F} = 0$, given by Corollary \ref{thm_W1p_regularity_Stokes_Nbc} belongs to $\vW{m}{p}\times\W{m-1}{p}$.
\end{theorem}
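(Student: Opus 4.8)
The plan is to proceed by induction on $m$, using the second–order result of Theorem~\ref{thm_W2p_regularity_Stokes_Nbc} as the base case and gaining one derivative at each step. Concretely, fix $2\le k\le m-1$ and assume that the weak solution produced by Corollary~\ref{thm_W1p_regularity_Stokes_Nbc} already belongs to $\vW{k}{p}\times\W{k-1}{p}$; the induction step is to show $(\vu,\pi)\in\vW{k+1}{p}\times\W{k}{p}$, and after $m-2$ such steps we reach the claimed regularity. Note first that the hypotheses persist when $k$ decreases, since $\bm{f}\in\vW{m-2}{p}\subset\vW{k-1}{p}$, $\bm{h}\in\vWfracb{m-1-\frac1p}{p}\subset\vWfracb{k-\frac1p}{p}$ and $\alpha\in\Wfracb{m-1-\frac1p}{p}\subset\Wfracb{k-\frac1p}{p}$, because in each case the first differentiability index exceeds the second precisely when $m-1\ge k$.

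The mechanism of the step is to move the friction term to the right–hand side of the boundary condition and invoke higher–order elliptic regularity for the resulting friction–free system. Writing the Navier condition as $\left[2(\DT\vu)\vn\right]_{\vt}=\bm{h}-\alpha\vu_{\vt}$, I regard $(\vu,\pi)$ as the solution of the Stokes problem \eqref{S} with $\mathbb{F}=0$, interior datum $\bm{f}\in\vW{k-1}{p}$, and tangential boundary datum $\bm{g}:=\bm{h}-\alpha\vu_{\vt}$. As already observed in the proof of Theorem~\ref{thm_W2p_regularity_Stokes_Nbc}, this system is uniformly elliptic and satisfies the complementing condition in the sense of Agmon--Douglis--Nirenberg \cite{agmon}; hence, granted that $\Gamma$ is of class $\HC{k+1}{1}$—which follows from $\Gamma\in\HC{m}{1}$—and that $\bm{g}\in\vWfracb{k-\frac1p}{p}$, the order-$(k+1)$ analogue of the regularity result \cite[Theorem~4.1]{AR} gives $\vu\in\vW{k+1}{p}$. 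The pressure is then recovered directly from the momentum equation, $\nabla\pi=\bm{f}+\Delta\vu\in\vW{k-1}{p}$, whence $\pi\in\W{k}{p}$.

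It thus remains to establish the regularity of the boundary datum, namely $\alpha\vu_{\vt}\in\vWfracb{k-\frac1p}{p}$ (the term $\bm{h}$ already lies in this space by the inclusion above). Rather than multiplying fractional trace spaces directly, I would, exactly as in the proof of Theorem~\ref{thm_W2p_regularity_Stokes_Nbc}, lift $\alpha$ to a bulk function in $\W{k}{p}$ whose trace on $\Gamma$ is the given $\Wfracb{k-\frac1p}{p}$ datum, and prove the stronger statement $\alpha\vu\in\vW{k}{p}$; the tangential trace of this product is then the desired $\alpha\vu_{\vt}\in\vWfracb{k-\frac1p}{p}$. By the Leibniz rule it suffices to control each mixed term $\partial^{\beta}\alpha\,\partial^{\gamma}\vu$ with $|\beta|+|\gamma|\le k$ in $\L{p}$, which is done by Hölder's inequality after inserting the Sobolev embeddings of $\W{k-|\beta|}{p}$ and $\vW{k-|\gamma|}{p}$ on the three–dimensional domain $\Omega$.

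The main obstacle is precisely this product estimate, and it is what forces the smoothness assumption on $\alpha$. When $kp>3$ the space $\W{k}{p}$ is a Banach algebra and the estimate is immediate, so for every step with $k\ge 3$ (and for $k=2$ when $p>\frac32$) there is nothing delicate. The only genuinely borderline range is the first step $k=2$ with $p\le\frac32$, where $\W{2}{p}$ need not embed into $L^{\infty}$; there one must carry out the explicit Hölder/Sobolev bookkeeping of the mixed terms exactly as in case~(i) of the proof of Theorem~\ref{thm_W2p_regularity_Stokes_Nbc}, verifying that the extra integrability gained by the lower–order factors compensates the derivative loss so that each product term still lands in $\L{p}$. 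Once $\alpha\vu\in\vW{k}{p}$ is secured at every step, the induction closes and the conclusion $(\vu,\pi)\in\vW{m}{p}\times\W{m-1}{p}$ follows.
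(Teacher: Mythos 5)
Your overall architecture — induct on the derivative order, move the friction term to the boundary datum, prove $\alpha\vu\in\vW{k}{p}$ by Leibniz plus H\"older/Sobolev after lifting $\alpha$ into the bulk, then invoke the higher-order Agmon--Douglis--Nirenberg regularity of \cite[Theorem 4.1]{AR} and recover $\pi$ from the momentum equation — is exactly the paper's (the paper writes out $m=3$ and bootstraps for larger $m$ and for $p\ge 3$), and your Banach-algebra reduction for $kp>3$ is correct. The genuine gap is in the one case you flag, $k=2$ with $p\le\frac32$: there the mechanism you describe fails. With only the induction hypothesis $\vu\in\vW{2}{p}$ and the lift $\alpha\in\W{2}{p}$, the extreme Leibniz terms cannot be placed in $\L{p}$: for $(\partial^2\alpha)\,\vu$ one has $\partial^2\alpha\in\L{p}$ and at best $\vu\in\L{q}$ with $\frac1q=\frac1p-\frac23$ (there is no $L^\infty$ embedding since $2p\le 3$), so H\"older gives only $\L{r}$ with $\frac1r=\frac2p-\frac23$, and $r<p$ precisely when $p<\frac32$ (at $p=\frac32$ one gets $r<p$ for every admissible finite $q$); the term $\alpha\,\partial^2\vu$ fails identically. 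So ``the extra integrability gained by the lower-order factors'' does \emph{not} compensate: no bookkeeping at the fixed exponent $p$ closes the estimate.

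What is missing is a bootstrap in the \emph{integrability} exponent, not only in the derivative count — and this is precisely what the paper does, and what case (i) of the proof of Theorem \ref{thm_W2p_regularity_Stokes_Nbc} (which you cite, but whose mechanism you misread) does as well: the gain there comes from re-solving at a higher exponent through the equation, not from the product rule. Concretely, since $\bm{f}\in\vW{1}{p}\hookrightarrow\vL{p^*}$ and $\bm{h}$, $\alpha$ embed into the corresponding data spaces at exponent $p^*$, one first applies Theorem \ref{thm_W2p_regularity_Stokes_Nbc} at the exponent $p^*$ to obtain $\vu\in\vW{2}{p^*}$; because $p^*>\frac32$ for every $p>1$, this yields $\vu\in\vL{\infty}$ and $\nabla\vu\in\vW{1}{p^*}$. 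Only then does the Leibniz estimate close: $(\partial^2\alpha)\,\vu\in\L{p}$ directly, while $\nabla\alpha\cdot\nabla\vu$ and $\alpha\,\partial^2\vu$ lie in $\L{r}$ with $\frac1r=\bigl(\frac1p-\frac13\bigr)+\bigl(\frac1p-\frac23\bigr)=\frac2p-1\le\frac1p$, whence $\alpha\vu\in\vW{2}{p}$, $\alpha\vu_{\vt}\in\vWfracb{2-\frac{1}{p}}{p}$, and the ADN step proceeds. With this correction inserted at the first step (your algebra argument already covers all $k\ge 3$, and $k=2$ with $p>\frac32$), your induction closes and coincides with the paper's proof.
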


\begin{proof}
For ease of understanding, here we proof only the case $m=3$. For $m\geq 3$, the proof is exactly similar.

First assume $p<3$. Also we assume that $\alpha\in \W{2}{p}$. As $\W{1}{p}\hookrightarrow \L{p^*}$, we obtain $\vu\in\vW{2}{p^*}$ from strong regularity result in Theorem \ref{thm_W2p_regularity_Stokes_Nbc}. Note that $p^* > \frac{3}{2}$, so $\vu\in \vL{\infty}$. Now $\nabla \alpha \in \W{1}{p}$ gives $\frac{\partial \alpha}{\partial x_j}u_i \in \W{1}{p}$. Also as $\alpha\in\W{1}{p^*}$ and $\nabla \vu\in \vW{1}{p^*}$, we get $\alpha \frac{\partial u_i}{\partial x_j}\in \W{1}{p}$. Thus $\frac{\partial}{\partial x_j}(\alpha u_i)=\frac{\partial\alpha}{\partial x_j}u_i+\alpha\frac{\partial u_i}{\partial x_j} \in \W{1}{p}$ shows $\alpha \vu \in \vW{2}{p}$ which implies $\alpha\vu_{\vt} \in \vWfracb{2-\frac{1}{p}}{p}$. So from the regularity result as \cite[Theorem 4.1]{AR}, we deduce $(\vu, \pi )\in \vW{3}{p}\times \W{2}{p}.$

For $p\ge 3$, the result follows from bootstrap argument.
\hfill
\end{proof}

\section{Estimates}
\label{sec:6}
\setcounter{equation}{0}
\subsection{First estimates}
\label{sec:6sub:1}
We can deduce estimates giving precise dependence of the weak solution of (\ref{S}) on the friction coefficient $\alpha$ in some particular cases, which is better than (\ref{S43}). Note that the following result is not optimal with respect to $\alpha$ and will be improved in Theorem \ref{T2}.

\begin{proposition}
\label{estimate}
	Let $p>2$. With the same assumptions on $\bm{f}, \mathbb{F}, \bm{h}$ and $\alpha$ as in Corollary \ref{thm_W1p_regularity_Stokes_Nbc}, the solution $(\vu,\pi) \in \vW{1}{p} \times L^{p}_0(\Omega)$ of problem \eqref{S} satisfies the following bounds:\\
{\bf a)} if $\Omega$ is not axisymmetric, then
	\begin{equation}
	\label{12}
	\|\vu\|_{\vW{1}{p}} + \|\pi\|_{L^{p}(\Omega)} \leq C(\Omega,p)\left( 1 + \|\alpha \|^2_{\Lb{t(p)}}\right) \left( \|\bm{f}\|_{\vL{r(p)}}+ \|\mathbb{F}\|_{\mathbb{L}^p(\Omega)} + \|\bm{h}\|_{\vWfracb{-\frac{1}{p}}{p}} \right) .
	\end{equation}
	\medskip
{\bf b)} if $\Omega$ is axisymmetric and \\
	\indent $\left(\textit{i} \right)$ $\alpha \geq \alpha_* >0$ on $\Gamma$, then
	\begin{equation*}
	%\label{16}
	\|\vu\|_{\vW{1}{p}} + \|\pi\|_{L^{p}(\Omega)} \leq \frac{C(\Omega,p)}{\min\{2,\alpha_*\}} \left( 1 + \|\alpha \|^2_{\Lb{t(p)}}\right) \left( \|\bm{f}\|_{\vL{r(p)}}+ \|\mathbb{F}\|_{\mathbb{L}^p(\Omega)} + \|\bm{h}\|_{\vWfracb{-\frac{1}{p}}{p}} \right) .
	\end{equation*}
	\indent $\left(\textit{ii} \right)$ $\bm
	{f}, \mathbb{F}$ and $\bm{h}$ satisfy the condition:
	\begin{equation*}
	%\label{hyp_alpha_small}
	\int\displaylimits_{\Omega}{ \bm{f} \cdot \bm{\beta}} -\int\displaylimits_{\Omega}{\mathbb{F}:\nabla \bm{\beta}}+ \left\langle \bm{h}, \bm{\beta}\right\rangle_\Gamma = 0
	\end{equation*}
	\qquad then,
	\begin{equation*}
	%\label{20}
	\|\vu\|_{\vW{1}{p}} + \|\pi\|_{L^{p}(\Omega)} \leq C(\Omega,p)\left( 1 + \|\alpha \|^2_{\Lb{t(p)}}\right) \left( \|\bm{f}\|_{\vL{r(p)}}+ \|\mathbb{F}\|_{\mathbb{L}^p(\Omega)} + \|\bm{h}\|_{\vWfracb{-\frac{1}{p}}{p}} \right) .
	\end{equation*}
\end{proposition}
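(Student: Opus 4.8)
The plan is to combine the uniform Hilbert-space bound of Theorem~\ref{L^2estimate} with the regularity theory for the pure Navier problem at $\alpha\equiv 0$ (whose constant does not see $\alpha$), treating $\alpha\vu_{\vt}$ as an extra boundary datum. Throughout write $D:=\|\bm{f}\|_{\vL{r(p)}}+\|\mathbb{F}\|_{\mathbb{L}^p(\Omega)}+\|\bm{h}\|_{\vWfracb{-\frac{1}{p}}{p}}$ for the data. First I would record the baseline $\bm{H}^1$-estimate: since $p>2$ and $\Omega$ is bounded we have $\vL{r(p)}\hookrightarrow\vL{\frac65}$, $\mathbb{L}^p(\Omega)\hookrightarrow\mathbb{L}^2(\Omega)$ and $\vWfracb{-\frac{1}{p}}{p}\hookrightarrow\vHfracbd{1}{2}$, so the data also satisfy the hypotheses of Theorem~\ref{L^2estimate}. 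Applying the case matching the geometric assumption (estimate \eqref{51} when $\Omega$ is not axisymmetric; \eqref{52} when $\Omega$ is axisymmetric with $\alpha\ge\alpha_*$; and \eqref{53} together with the Korn-type inequalities \eqref{43}--\eqref{44} under the compatibility condition \eqref{50}) yields $\|\vu\|_{\vH{1}}\le C\,D$ with, respectively, the constant $C(\Omega)$, $\tfrac{C(\Omega)}{\min\{2,\alpha_*\}}$, or $C(\Omega)$. By the trace theorem and the embedding $\vHfracb{1}{2}\hookrightarrow\vLb{4}$ this controls the tangential trace, $\|\vu_{\vt}\|_{\vLb{4}}\le C\,\|\vu\|_{\vH{1}}$, \emph{uniformly in} $\alpha$ up to the displayed geometric factor.

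Next I would reduce to the case $\alpha\equiv 0$. The pair $(\vu,\pi)$ solves \eqref{S} with the friction term moved to the right-hand side, i.e.\ it solves the Stokes problem with datum $\bm{h}-\alpha\vu_{\vt}$ and $\alpha\equiv 0$ on $\Gamma$ (note $(\bm{h}-\alpha\vu_{\vt})\cdot\vn=0$). As observed in the remark following Corollary~\ref{thm_W1p_regularity_Stokes_Nbc}, for $p>2$ the $\alpha\equiv 0$ regularity result of \cite{AR} applies with an $\alpha$-independent constant, giving
\begin{equation*}
\|\vu\|_{\vW{1}{p}}+\|\pi\|_{\L{p}}\le C(\Omega,p)\left(D+\|\alpha\vu_{\vt}\|_{\vWfracb{-\frac{1}{p}}{p}}\right).
\end{equation*}
Everything now rests on bounding the boundary term without reintroducing an uncontrolled multiple of $\|\vu\|_{\vW{1}{p}}$.

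For the boundary term I would pair $\alpha\vu_{\vt}$ against $\bm{\varphi}\in\vWfracb{\frac{1}{p}}{p'}$; by Lemma~\ref{23} the integral is well defined, and Hölder's inequality with $\alpha\in\Lb{t(p)}$ reduces matters to controlling a tangential-trace norm $\|\vu_{\vt}\|_{\vLb{s}}$ in the range dictated by \eqref{S5Es}. The key is to interpolate this trace norm between the low norm $\|\vu_{\vt}\|_{\vLb{4}}$ of the first paragraph (bounded by $C\,D$) and the high norm coming from $\vu_{\vt}\in\vWfracb{1-\frac{1}{p}}{p}$ (bounded by $\|\vu\|_{\vW{1}{p}}$); a Young split then gives, for every $\epsilon>0$,
\begin{equation*}
\|\alpha\vu_{\vt}\|_{\vWfracb{-\frac{1}{p}}{p}}\le C\,\|\alpha\|_{\Lb{t(p)}}\left(\epsilon\,\|\vu\|_{\vW{1}{p}}+C_\epsilon\,D\right),\qquad C_\epsilon\sim\epsilon^{-1}.
\end{equation*}
Choosing $\epsilon\sim\bigl(C\,\|\alpha\|_{\Lb{t(p)}}\bigr)^{-1}$ absorbs the $\|\vu\|_{\vW{1}{p}}$ contribution into the left-hand side of the $\alpha\equiv 0$ estimate and leaves the factor $C_\epsilon\,\|\alpha\|_{\Lb{t(p)}}\sim\|\alpha\|_{\Lb{t(p)}}^2$; together with the trivial term this produces exactly $\bigl(1+\|\alpha\|^2_{\Lb{t(p)}}\bigr)$. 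The pressure is then read off from the equation as in Theorem~\ref{L^2estimate}, and carrying the three geometric constants through the argument gives cases (a) and (b).

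The main obstacle is the third paragraph: producing a bound for $\alpha\vu_{\vt}$ in $\vWfracb{-\frac{1}{p}}{p}$ that is simultaneously \emph{non-circular} (the $\|\vu\|_{\vW{1}{p}}$ part must be absorbable) and carries only the explicit polynomial dependence $\bigl(1+\|\alpha\|^2\bigr)$ on the friction coefficient. This forces one to interpolate the tangential trace against the uniform $\bm{H}^1$-bound rather than estimating crudely, and to check that the Hölder and Sobolev exponents land in the admissible range precisely as encoded in the definition \eqref{def_exponent_tp_alpha} of $t(p)$ --- the endpoint $p=3$ and the transition to $p>3$ being borderline and relying on the $\varepsilon$-margin built into $t(p)$.
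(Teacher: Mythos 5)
Your first two paragraphs reproduce the paper's own strategy (the uniform $\bm{H}^1$ bound from Theorem \ref{L^2estimate}, then the $\alpha\equiv 0$ regularity result of \cite{AR} with $\alpha\vu_{\vt}$ moved to the boundary data, then absorption), and the pressure step is also the same. The genuine gap is in your third paragraph, and it is exactly the step that is supposed to produce the quadratic power of $\|\alpha\|$. If you apply H\"older \emph{first}, with the full weight $\alpha\in\Lb{t(p)}$, then (taking $2<p<3$ as representative) you must control $\|\vu_{\vt}\|_{\vLb{\sigma}}$ with $\frac{1}{\sigma}=\frac{3}{2p}-\frac{1}{t(p)}$, and the best interpolation of $\vLb{\sigma}$ between your two endpoints --- $\vLb{4}$ (controlled by $D$) and $\vWfracb{1-\frac{1}{p}}{p}$, whose sharp Lebesgue image is $\vLb{s}$ with $\frac{1}{s}=\frac{3}{2p}-\frac{1}{2}$ --- has exponent
\begin{equation*}
\theta=\frac{\frac{1}{4}+\frac{1}{t(p)}-\frac{3}{2p}}{\frac{3}{4}-\frac{3}{2p}},
\end{equation*}
which is \emph{not} $\frac12$: for $t(p)$ close to $2$ one has $\theta>\frac12$ for every $p>2$, and $\theta\to 1$ as $t(p)\downarrow 2$, because $\frac{1}{\sigma}\to\frac{1}{s}$, i.e.\ the H\"older target sits at the critical trace exponent and leaves no interpolation room. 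Young's inequality then gives $C_\epsilon\sim\epsilon^{-\theta/(1-\theta)}$, so after absorbing you end up with $\|\alpha\|^{1/(1-\theta)}$ instead of $\|\alpha\|^{2}$; for instance $p=3$, $t(p)=2.1$ yields roughly $\|\alpha\|^{10}$, and the power diverges as the $\varepsilon$-margin in $t(p)$ shrinks. So your displayed inequality with $C_\epsilon\sim\epsilon^{-1}$ is false as stated, and the argument does not prove the asserted bound $\left(1+\|\alpha\|^2_{\Lb{t(p)}}\right)$.

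What the paper does differently --- and this is the missing idea --- is to split the \emph{product} $\alpha\vu_{\vt}$ in dual trace norms rather than interpolating the trace of $\vu$. It invokes the Ehrling-type inequality \eqref{10},
\begin{equation*}
\|\bm{v}\|_{\vWfracb{-\frac{1}{p}}{p}}\le \delta\,\|\bm{v}\|_{\vLb{q}}+\frac{C}{\delta}\,\|\bm{v}\|_{\vHfracbd{1}{2}},
\end{equation*}
with the explicit $C/\delta$, and then applies H\"older with \emph{different} exponents of $\alpha$ on the two pieces: on the $\delta$-piece, $\|\alpha\vu_{\vt}\|_{\vLb{q}}\le\|\alpha\|_{\Lb{2+\epsilon}}\|\vu_{\vt}\|_{\vLb{s}}\le C\|\alpha\|_{\Lb{t(p)}}\|\vu\|_{\vW{1}{p}}$, which is absorbed by choosing $\delta\sim\|\alpha\|_{\Lb{t(p)}}^{-1}$; on the $C/\delta$-piece, $\|\alpha\vu_{\vt}\|_{\vHfracbd{1}{2}}\le C\|\alpha\vu_{\vt}\|_{\vLb{4/3}}\le C\|\alpha\|_{\Lb{2}}\|\vu_{\vt}\|_{\vLb{4}}\le C\|\alpha\|_{\Lb{t(p)}}\|\vu\|_{\vH{1}}$, which the uniform $\bm{H}^1$ bound controls. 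Each piece carries exactly one factor of $\|\alpha\|$, so the choice of $\delta$ produces exactly the quadratic power. The ``exponent $\frac12$'' your Young split needs is thus encoded in the $C/\delta$ structure of \eqref{10} --- an inequality between the three spaces $\vLb{q}$, $\vWfracb{-\frac{1}{p}}{p}$, $\vHfracbd{1}{2}$ applied to the product --- and cannot be recovered from Lebesgue interpolation of $\vu_{\vt}$ alone; to repair your proof, replace the third paragraph by this two-norm splitting.
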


\begin{proof}
We only prove \eqref{12} since the other inequalities follow in the same way. Assume that $\Omega$ is not axisymmetric.
\\
\textbf{(i) $2< p <3$:} As in Lemma \ref{23}, $\alpha \vu_{\vt} \in \vLb{q}$ with $\frac{1}{q} = \frac{3}{2p}-\frac{1}{2} + \frac{1}{2+\epsilon} < \frac{3}{2p}$ and $\vLb{q}\hookrightarrow \vWfracb{-\frac{1}{p}}{p}$. Therefore, $\alpha \vu_{\vt} \in \vWfracb{-\frac{1}{p}}{p}$. But from the relation,
	$$\vLb{q} \underset{\text{compact}}{\hookrightarrow} \vWfracb{-\frac{1}{p}}{p} \underset{\text{continuous}}{\hookrightarrow} \vHfracbd{1}{2} $$
we have for any $\delta > 0$, there exists a constant $C>0$ (independent of $\delta$) such that
	\begin{equation}
	\label{10}
	\|\bm{v}\|_{\vWfracb{-\frac{1}{p}}{p}} \leq \ \delta \ \|\bm{v}\|_{\vLb{q}} + \frac{C}{\delta} \ \|\bm{v}\|_{\vHfracbd{1}{2}} \ \qquad \forall \ \bm{v}\in \vLb {q} \ . 
	\end{equation}
	Choosing $\bm{v} = \alpha \vu_{\vt}$ in \eqref{10} and using H\"{o}lder inequality and trace theorem, we get
	\begin{align*}
	\|\alpha \vu_{\vt}\|_{\vWfracb{-\frac{1}{p}}{p}} &\leq \ \delta \ \|\alpha \vu_{\vt}\|_{\vL{q}} + \frac{C}{\delta} \ \|\alpha \vu_{\vt}\|_{\vLb{4/3}}\notag\\
	&\leq \ \delta \ \|\alpha\|_{L^{2+\epsilon}(\Gamma)} \|\vu\|_{\vW{1}{p}} +\frac{C}{\delta} \ \|\alpha\|_{L^2(\Gamma)} \|\vu\|_{\vH{1}} \notag .
	\end{align*}
	Now the generalized regularity result as \cite[Corollary 3.8]{AR} yields
	\begin{align*}
	& \quad \ \|\vu\|_{\vW{1}{p}} + \|\pi\|_{L^{p}_0(\Omega)}\\
	&\leq C \left( \|\bm{f}\|_{\vL{r(p)}} + \|\mathbb{F}\|_{\mathbb{L}^p(\Omega)}+ \|\bm{h}\|_{\vWfracb{-\frac{1}{p}}{p}} + \|\alpha\vu_{\vt}\|_{\vWfracb{-\frac{1}{p}}{p}}\right) \\
	&\leq C \left( \|\bm{f}\|_{\vL{r(p)}}+ \|\mathbb{F}\|_{\mathbb{L}^p(\Omega)} + \|\bm{h}\|_{\vWfracb{-\frac{1}{p}}{p}} \right) +\delta \|\alpha\|_{L^{2+\epsilon}(\Gamma)} \|\vu\|_{\vW{1}{p}} \\
	& \qquad \qquad \qquad \qquad \qquad \qquad \qquad \qquad \qquad \qquad \quad + \frac{C}{\delta} \|\alpha\|_{L^2(\Gamma)} \|\vu\|_{\vH{1}} .
	\end{align*}
	Choosing $\delta > 0$ such that $1- \delta C_1 \|\alpha\|_{L^{2+\epsilon}(\Gamma)} = \frac{1}{2}$, we obtain
	\begin{align*}
	& \quad \|\vu\|_{\vW{1}{p}} + \|\pi\|_{L^{p}_0(\Omega)}\\
	&\leq \ C \left( \|\bm{f}\|_{\vL{r(p)}} + \|\mathbb{F}\|_{\mathbb{L}^p(\Omega)}+ \|\bm{h}\|_{\vWfracb{-\frac{1}{p}}{p}} \right) + C \ \|\alpha\|_{L^2(\Gamma)} \|\alpha\|_{L^{2+\epsilon}(\Gamma)} \|\vu\|_{\vH{1}}\\
	& \leq \ C (1 +\|\alpha\|^2_{L^{(2+ \epsilon)}(\Gamma)}) \left( \|\bm{f}\|_{\vL{r(p)}}+ \|\mathbb{F}\|_{\mathbb{L}^p(\Omega)} + \|\bm{h}\|_{\vWfracb{-\frac{1}{p}}{p}} \right).
	\end{align*}
	
	\noindent \textbf{(ii) $p \ge 3$:} The analysis is exactly similar to the previous case.
	\hfill
\end{proof}

\begin{remark}
\rm{We can also extend the above estimates of Proposition \ref{estimate} for $p<2$ by duality argument in the same way as in Proposition \ref{P1} and Proposition \ref{P2}.
}
\end{remark}

\subsection{Second estimates}
\label{sec:6sub:2}
In this subsection, we prove one of the main result of this work. We improve the estimates in Proposition \ref{estimate} with respect to $\alpha$ and for all $p\in (1,\infty)$.

First we discuss the estimate for $p>2$ with $\bm{f}=\bm{0}$ and $\bm{h}=\bm{0}$, similar to (\ref{51}) or (\ref{52}).

\begin{theorem}[{\bf Estimates in \boldmath$\W{1}{p}, p> 2$ for RHS $\mathbb{F}$}]
\label{T1}
	Let $p>2$, $\mathbb{F}\in\mathbb{L}^p(\Omega)$ and $\alpha\in\Lb{t(p)}$. Then the solution $\vu\in\vW{1}{p}$ of (\ref{S}) with $\bm{f}=\bm{0}$ and $\bm{h}=\bm{0}$ satisfies the following estimates:\\
\textbf{(i)} if $\Omega$ is not axisymmetric, then
	\begin{equation}
	\label{Lp}
	\|\vu\|_{\vW{1}{p}} \le C_p(\Omega) \ \|\mathbb{F} \|_{\mathbb{L}^p(\Omega)}
	\end{equation}
\textbf{(ii)} if $\Omega$ is axisymmetric and $\alpha\ge \alpha_* >0$, then	
	\begin{equation}
	\label{Lp.}
	\|\vu\|_{\vW{1}{p}} \le C_p(\Omega,\alpha_*) \ \|\mathbb{F} \|_{\mathbb{L}^p(\Omega)}.
	\end{equation}
\end{theorem}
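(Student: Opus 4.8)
The plan is to promote the $\alpha$-uniform $L^{2}$-bound of Theorem \ref{L^2estimate} to an $L^{p}$-bound for $p>2$ by the real-variable machinery of Z.~Shen, whose only extra input is a weak reverse Hölder inequality for solutions of the homogeneous problem. Write $T$ for the solution operator $\mathbb{F}\mapsto\DT\vu$ attached to \eqref{S} with $\bm{f}=\bm{0}$ and $\bm{h}=\bm{0}$; estimates \eqref{51} and \eqref{52} read $\|T\mathbb{F}\|_{\mathbb{L}^p(\Omega)}\le C\|\mathbb{F}\|_{\mathbb{L}^2(\Omega)}$ at the level $p=2$, with $C=C(\Omega)$ when $\Omega$ is not axisymmetric and $C=C(\Omega)/\min\{2,\alpha_*\}$ when $\Omega$ is axisymmetric and $\alpha\ge\alpha_*>0$. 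Since $\div\vu=0$ in $\Omega$ and $\vu\cdot\vn=0$ on $\Gamma$, a Korn inequality in $\vL{p}$ (equivalently, controlling $\mathbf{curl}\,\vu$ and invoking Proposition \ref{X}) recovers $\|\vu\|_{\vW{1}{p}}$ from $\|\DT\vu\|_{\mathbb{L}^p(\Omega)}$ and a lower-order term $\|\vu\|_{\vL{p}}$ that remains under control through the uniform $L^{2}$-bound and the self-improving integrability produced below; neither tool involves $\alpha$. It therefore suffices to prove $\|T\mathbb{F}\|_{\mathbb{L}^p(\Omega)}\le C_p\|\mathbb{F}\|_{\mathbb{L}^p(\Omega)}$ with $C_p$ carrying exactly the advertised dependence on $\alpha$.

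Next I would establish the reverse Hölder inequality. Fix a ball $B=B(x_0,r)$ with either $\overline{2B}\subset\Omega$ or $x_0\in\Gamma$, and let $(\bm{w},\rho)$ solve $-\div(2\DT\bm{w})+\nabla\rho=\bm{0}$, $\div\bm{w}=0$ in $2B\cap\Omega$, supplemented in the boundary case by $\bm{w}\cdot\vn=0$ and $[(2\DT\bm{w})\vn]_{\vt}+\alpha\bm{w}_{\vt}=\bm{0}$ on $\Gamma\cap 2B$. The goal is
\[
\Big(\dashint_{B\cap\Omega}|\DT\bm{w}|^{q}\Big)^{1/q}\le C_q\Big(\dashint_{2B\cap\Omega}|\DT\bm{w}|^{2}\Big)^{1/2},
\]
with $C_q$ independent of $\alpha$. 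The interior case is the classical higher integrability for the Stokes system, coming from a Caccioppoli inequality followed by Gehring's self-improvement, the local pressure being eliminated by the usual Ne\v{c}as estimate $\|\rho-c\|_{L^2(2B)}\le C\|\DT\bm{w}\|_{\mathbb{L}^2(2B)}$. For a boundary ball I would flatten $\Gamma$ through the $\mathcal{C}^{1,1}$ chart exactly as in the proof of Theorem \ref{13}, test the weak formulation against $\bm{\varphi}=\zeta^{2}\bm{w}$ for a cut-off $\zeta$ (admissible since $\bm{w}\cdot\vn=0$ forces $\bm{\varphi}\cdot\vn=0$), and read off the Caccioppoli estimate. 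The boundary contribution produced by this test field is precisely $\int_{\Gamma}\alpha\,\zeta^{2}|\bm{w}_{\vt}|^{2}\ge 0$, so it sits on the favourable side and may simply be discarded when bounding $\int\zeta^{2}|\DT\bm{w}|^{2}$ from above; this sign, and nothing else, is what renders the constant independent of $\alpha$. Feeding the Caccioppoli inequality into a Korn--Poincaré--Sobolev inequality and then Gehring's lemma yields the reverse Hölder inequality for some $q>2$, and, the principal part being the constant-coefficient Stokes operator on a $\mathcal{C}^{1,1}$ domain, $q$ may be taken as large as the prescribed $p$ demands.

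The main obstacle is exactly this boundary step: one must produce the reverse Hölder inequality with a constant blind to $\alpha$, and the only available mechanism is the sign of $\alpha$. The delicate point is that Gehring's lemma requires the right-hand side of the Caccioppoli inequality to appear through a sub-quadratic power of $\DT\bm{w}$, which normally forces one to subtract a rigid motion $\bm{r}$ before applying Poincaré; since $\bm{r}_{\vt}\neq\bm{0}$ in general, testing against $\zeta^{2}(\bm{w}-\bm{r})$ would reintroduce an $\alpha$-dependent cross term $\int_{\Gamma}\alpha\,\zeta^{2}\bm{w}_{\vt}\cdot\bm{r}_{\vt}$, so the subtraction has to be arranged to respect $\bm{w}\cdot\vn=0$ and keep the boundary term non-negative, exploiting that over a small flattened patch only rigid motions tangent to $\Gamma$ need be removed. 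Once the reverse Hölder inequality is secured, Shen's theorem combines it with the $L^{2}$-bound of the first paragraph to yield $\|T\mathbb{F}\|_{\mathbb{L}^p(\Omega)}\le C_p\|\mathbb{F}\|_{\mathbb{L}^p(\Omega)}$, the constant $C_p$ inheriting the $\alpha$-behaviour of the $L^{2}$-estimate: $C_p=C_p(\Omega)$ in the non-axisymmetric case, giving \eqref{Lp}, and $C_p=C_p(\Omega,\alpha_*)$ in the axisymmetric case $\alpha\ge\alpha_*>0$, giving \eqref{Lp.}. Reconstructing the full $\vW{1}{p}$-norm as in the first paragraph then closes the argument.
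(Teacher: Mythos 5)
Your overall architecture coincides with the paper's: start from the $\alpha$-uniform $\bm{L}^2$ estimate, feed it into Shen's real-variable criterion (the paper's Lemma \ref{G0}), and supply as the key input a weak reverse H\"{o}lder inequality for the homogeneous problem whose constant is blind to $\alpha$; the $\alpha$-independence enters exactly where you say it does, namely the boundary term $\int_{\Gamma}\alpha\,\zeta^{2}|\bm{w}_{\vt}|^{2}\ge 0$ in the Caccioppoli inequality \eqref{11} sits on the favourable side and is discarded. The genuine gap is in how you reach the reverse H\"{o}lder inequality \emph{at the prescribed exponent} $p$. Gehring's lemma self-improves a Caccioppoli--Poincar\'{e} inequality only to \emph{some} $q>2$ determined by the constants, never to an arbitrary given exponent; and your claim that ``$q$ may be taken as large as the prescribed $p$ demands'' because ``the principal part is the constant-coefficient Stokes operator'' is precisely what requires proof, since it is the boundary condition, not the principal part, that carries $\alpha$, and any appeal to boundary regularity for this Robin-type condition must produce constants independent of an $\alpha$ that is merely in $\Lb{t(p)}$ and possibly huge. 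Nor is there an $L^\infty$ shortcut: the gradient of a homogeneous solution need not be bounded up to the boundary here. As written, your argument proves the theorem only for $p$ in some non-quantified interval $(2,2+\epsilon)$.

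The paper closes exactly this hole by a different mechanism: an $\alpha$-independent boundary H\"{o}lder estimate \eqref{bhe} for homogeneous solutions (derived from the Caccioppoli inequality via the Giaquinta--Modica machinery), combined with the \emph{interior} Lipschitz estimate for the Stokes system, gives the weighted pointwise bound
\begin{equation*}
|\nabla\vu(x)|\le C_\gamma \left(\frac{r}{\delta(x)}\right)^{\gamma}\left(\ \pppvint\displaylimits_{B(x_0,2r)\cap\Omega}{|\nabla\vu|^2}\right)^{1/2}
\end{equation*}
for every $\gamma\in(0,1)$; choosing $\gamma$ with $p\gamma<1$ makes $\delta(x)^{-p\gamma}$ integrable and yields the reverse H\"{o}lder inequality of Lemma \ref{Lrhi} at \emph{every} finite exponent, with constant independent of $\alpha$. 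Note also that your worry about subtracting rigid motions is avoidable: the paper never subtracts anything, using instead the Sobolev--Poincar\'{e} inequality of Lemma \ref{Lpoincare}, valid for fields with $\bm{v}\cdot\vn=0$ on the boundary patch and with constant independent of $r$ by scaling, so no $\alpha$-dependent cross term ever appears. To repair your proof you would need to supply the weighted gradient bound above (or an equivalent $\alpha$-independent boundary regularity statement) in place of the Gehring step.
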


The proof of the above theorem uses the weak Reverse H\"{o}lder inequality and is similar to the result done for the Laplace-Robin problem in \cite{AGG}. Since $\Omega$ is $\HC{1}{1}$, there exists some $r_0>0$ such that for any $x_0\in\Gamma$, there exists a coordinate system $(x',x_3)$ which is isometric to the usual coordinate system and a $\HC{1}{1}$ function $\psi : \R^2 \rightarrow \R$ so that
$$
B(x_0,r_0)\cap \Omega = \left\lbrace (x',x_3)\in B(x_0,r_0) : x_3 > \psi(x')\right\rbrace
$$
and
$$
B(x_0,r_0)\cap \Gamma = \left\lbrace (x',x_3)\in B(x_0,r_0) : x_3 = \psi(x')\right\rbrace .
$$
%Now consider any ball $B(x_0,r)$ with the property that $0<r<\frac{r_0}{8}$ and either $B(x_0,2r)\subset \Omega$ or $ x_0\in\Gamma$.

In some places, we may write $B$ instead of $B(x_0,r)$ where there is no ambiguity and $aB := B(x_0, a r)$ for $a>0$. Also for any integrable function $f$ on a domain $\omega$, we use the usual notation to denote the average of it by
\begin{equation*}
\dashint \displaylimits_{\omega} {f} = \frac{1}{|\omega|} \int\displaylimits_{\omega} {f}.
\end{equation*}

We require some further results to complete the proof of Theorem \ref{T1}. The following lemma is proved in \cite[Lemma 0.5]{modica}.

\begin{lemma}
	\label{epsilon_lemma}
	Let $f, g, h$ be non-negative functions in $L^1(Q_0)$ where $Q_0$ is a cube in $\R^n$, $Q_R(x_0)$ is a cube centered at $x_0$ with sides $2R$ and let $\beta\in\R^+$. There exists $\delta_0$ such that if for some $\delta\le \delta_0$, the following inequality
	\begin{equation*}
	\int\displaylimits_{Q_R(x_0)} {f} \le C(\delta) \left[ R^{-\beta} \int\displaylimits_{Q_{2R}(x_0)}{g}+ \int\displaylimits_{Q_{2R}(x_0)}{h} \right] + \delta \int\displaylimits_{Q_{2R}(x_0)}{f}
	\end{equation*}
	holds for all $x_0\in Q_0$ and $R< \frac{1}{2}d(x_0, \partial Q_0)$, then there exists a constant $C>0$ such that
	\begin{equation*}
	\int\displaylimits_{Q_R(x_0)} {f} \le C \left[ R^{-\beta} \int\displaylimits_{Q_{2R}(x_0)} {g}+ \int\displaylimits_{Q_{2R}(x_0)}{h} \right]
	\end{equation*}
	for all $x_0\in Q_0$ and all $R < \frac{1}{2}d(x_0, \partial Q_0)$.
\end{lemma}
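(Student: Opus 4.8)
The plan is to remove the self-referential term $\delta\int_{Q_{2R}(x_0)}f$ by a dyadic iteration argument, exploiting the fact that $\delta$ can be taken strictly below $1$. Fix $x_0\in Q_0$ and, for an admissible radius $\rho<\tfrac12 d(x_0,\partial Q_0)$, abbreviate
\[
\Phi(\rho):=\int_{Q_\rho(x_0)}f,\qquad \Psi(\rho):=\rho^{-\beta}\int_{Q_{2\rho}(x_0)}g+\int_{Q_{2\rho}(x_0)}h ,
\]
so that the hypothesis reads $\Phi(\rho)\le C(\delta)\,\Psi(\rho)+\delta\,\Phi(2\rho)$ for every such $\rho$. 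The whole point is that the coefficient in front of $\Phi(2\rho)$ is $\delta$, so that repeated substitution produces a geometric factor that can be summed.

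First I would iterate the inequality along the chain $\rho=R,2R,4R,\dots,2^{N-1}R$, where $N$ is the largest integer with $2^{N}R<\tfrac12 d(x_0,\partial Q_0)$. After $N$ substitutions one obtains
\[
\Phi(R)\le C(\delta)\sum_{j=0}^{N-1}\delta^{\,j}\,\Psi(2^{\,j}R)+\delta^{\,N}\,\Phi(2^{\,N}R).
\]
Using the monotonicity $\int_{Q_{2^{j+1}R}(x_0)}g\le\int_{Q_{2^{N}R}(x_0)}g$, the analogous bound for $h$, and the scaling $(2^{j}R)^{-\beta}=2^{-j\beta}R^{-\beta}$, each data term is dominated by a geometric series, so that the sum is controlled by $\big(\sum_j(\delta 2^{-\beta})^j\big)R^{-\beta}\int g+\big(\sum_j\delta^{\,j}\big)\int h$ over the largest cube reached.

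Next I would fix the threshold $\delta_0<1$ (any value in $(0,1)$ works, e.g. $\delta_0=\tfrac12$). For every $\delta\le\delta_0$ both geometric ratios $\sum_j(\delta 2^{-\beta})^j$ and $\sum_j\delta^{\,j}$ are bounded by $(1-\delta_0)^{-1}$, hence independent of $\delta$, and the data part collapses to a constant multiple of the expression on the right of the desired conclusion. It then remains to discard the remainder $\delta^{N}\Phi(2^{N}R)$: since $f\in L^1(Q_0)$ we have $\Phi(2^{N}R)\le\int_{Q_0}f<\infty$, so the tail is at most $\delta^{N}\|f\|_{L^1(Q_0)}$; because the admissibility constraint $\rho<\tfrac12 d(x_0,\partial Q_0)$ lets the chain be taken arbitrarily long as the cube fills $Q_0$, and $\delta<1$, this tail can be absorbed (letting $N$ be maximal, or passing to the limit by monotone convergence).

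The step I expect to be the main obstacle is precisely the bookkeeping that keeps the remainder term harmless while the supports of the data integrals grow along the chain: a careless iteration yields the data on the full cube rather than on $Q_{2R}(x_0)$, so one must either stop the iteration at the correct scale and estimate $\delta^{N}\Phi(2^{N}R)$ sharply, or localize by covering $Q_R(x_0)$ with finitely many cubes whose doubles remain inside $Q_{2R}(x_0)$ and controlling the overlap constant uniformly in the dimension $n$. The existence of $\delta_0$ is exactly what guarantees that the geometric ratio stays below $1$, which is what makes both the summation and the absorption legitimate; this is the heart of the argument, the rest being the routine estimates indicated above (see \cite{modica} for the details).
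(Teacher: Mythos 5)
Your proposal has a fatal flaw at its first structural decision: the claim that the threshold can be any number below $1$, e.g.\ $\delta_0=\tfrac12$, is false, and with it the whole outward dyadic iteration collapses. Take $f\equiv 1$ and $g=h\equiv 0$ on $Q_0\subset\R^n$. For every admissible pair $(x_0,R)$ the constraint $R<\tfrac12 d(x_0,\partial Q_0)$ forces $Q_{2R}(x_0)\subset Q_0$, hence $\int_{Q_{2R}(x_0)}f=2^{n}\int_{Q_R(x_0)}f$, so the hypothesis holds with $\delta=2^{-n}\le\tfrac12$; but the conclusion fails, since its right-hand side vanishes while $\int_{Q_R(x_0)}f>0$. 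Thus $\delta_0$ must be a small constant depending on the dimension (and on $\beta$), and producing that dimensional threshold is precisely the content of the lemma, not a detail. The same example exposes the remainder problem: the chain of doublings is finite, $N\le\log_2\bigl(d(x_0,\partial Q_0)/(2R)\bigr)$ for fixed $R$, so it cannot be "taken arbitrarily long" and there is no limit in which monotone convergence could help; for $f\equiv 1$ the tail equals $\delta^{N}\int_{Q_{2^{N}R}}f=(\delta 2^{n})^{N}\int_{Q_R}f$, which for $\delta\ge 2^{-n}$ is at least as large as the quantity you are trying to bound. And even where the iteration gives something, it leaves the data integrals on $Q_{2^{N}R}(x_0)$ rather than on $Q_{2R}(x_0)$, which is not the stated conclusion.

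For the record, the paper does not prove the lemma itself but cites Giaquinta--Modica \cite[Lemma 0.5]{modica}, where the engine is exactly the step you flagged as "the main obstacle" and left open: a covering argument combined with absorption across nearby scales at a fixed center. Set $\psi(\tau)=\int_{Q_{\tau R}(x_0)}f$ for $\tau\in[1,2]$. For $1\le t<s\le 2$, cover $Q_{tR}(x_0)$ by at most $C_n(s-t)^{-n}$ cubes of half-side $(s-t)R/2$ centered at lattice points of $Q_{tR}(x_0)$: each is admissible (since $d(y_i,\partial Q_0)>(2-t)R\ge (s-t)R$), its double lies inside $Q_{sR}(x_0)$ — so all data integrals stay on $Q_{2R}(x_0)$ — and the doubles overlap at most $3^{n}$ times. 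Applying the hypothesis on each small cube and summing gives
\begin{equation*}
\psi(t)\le C(\delta)\,C_n\,(s-t)^{-n-\beta}\left[R^{-\beta}\int_{Q_{2R}(x_0)}g+\int_{Q_{2R}(x_0)}h\right]+3^{n}\delta\,\psi(s).
\end{equation*}
Choosing $\delta_0=(2\cdot 3^{n})^{-1}$ makes the coefficient of $\psi(s)$ at most $\tfrac12$, and the standard absorption lemma (iterate along $t_0=1$, $t_{i+1}=t_i+(1-\lambda)\lambda^{i}$ with $\lambda^{n+\beta}>\tfrac12$, and use $\psi\le\|f\|_{L^1(Q_0)}<\infty$ to kill the remainder $2^{-i}\psi(t_i)$) removes the $\psi$-term, yielding the conclusion with the data on $Q_{2R}(x_0)$. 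So the covering-plus-absorption at commensurate scales is not a refinement of your doubling scheme; it is the proof, and without it the statement you argue for is simply untrue.
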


Next we deduce the Caccioppoli inequality for Stokes problem, up to the boundary.
\begin{lemma}[\textbf{Caccioppoli inequality}]
Let $(\vu,\pi)$ be a weak solution of the problem
\begin{equation}
\label{var_form}
2\int\displaylimits_{\Omega}{\DT\vu: \DT\bm{\varphi}} + \int\displaylimits_{\Gamma}{\alpha \vu_{\vt}\cdot \bm{\varphi}_{\vt}} - \int\displaylimits_{\Omega}{\pi \ \div \ \bm{\varphi}} = -\int\displaylimits_{\Omega}{\mathbb{F}: \nabla\bm{\varphi}}\qquad \forall\, \bm{\varphi}\in \bm{H}^1_{\vt}(\Omega).
\end{equation}
Then there exists a constant $C>0$, independent of $\alpha$ such that for all $x_0\in \overline{\Omega}$ and $ 0<r<\frac{r_0}{2}$, we have
\begin{equation}
\label{11}
\int\displaylimits_{B\cap\Omega}{|\nabla \vu|^2} \le C \left( \frac{1}{r^2} \int\displaylimits_{2B\cap\Omega}{|\vu|^2} + \int\displaylimits_{2B\cap\Omega}{|\mathbb{F}|^2}\right) .
\end{equation}
\end{lemma}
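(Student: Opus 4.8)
The plan is to localise the variational formulation \eqref{var_form} with a cut-off function, test against $\bm{\varphi}=\eta^2\vu$, and reduce matters to a reverse inequality of exactly the shape required by Lemma \ref{epsilon_lemma}. Fix $x_0\in\overline\Omega$ and $0<r<\frac{r_0}{2}$, write $B=B(x_0,r)$, and choose $\eta\in\D{\R^3}$ with $\eta\equiv1$ on $B$, $\mathrm{supp}\,\eta\subset 2B$, $0\le\eta\le1$ and $|\nabla\eta|\le C/r$. Since the solution satisfies $\div\,\vu=0$ in $\Omega$ and $\vu\cdot\vn=0$ on $\Gamma$, the field $\bm{\varphi}=\eta^2\vu$ satisfies $\bm{\varphi}\cdot\vn=0$ on $\Gamma$, hence $\bm{\varphi}\in\bm{H}^1_{\vt}(\Omega)$ is admissible.

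Substituting $\bm{\varphi}=\eta^2\vu$, using $\nabla(\eta^2\vu)=\eta^2\nabla\vu+2\eta\,\vu\otimes\nabla\eta$ together with $\DT\vu:\nabla\bm{\varphi}=\DT\vu:\DT\bm{\varphi}$, the identity \eqref{var_form} becomes
\begin{equation*}
2\int\displaylimits_{\Omega}{\eta^2|\DT\vu|^2} + \int\displaylimits_{\Gamma}{\alpha\eta^2|\vu_{\vt}|^2} = -4\int\displaylimits_{\Omega}{\eta\,\DT\vu:(\vu\otimes\nabla\eta)} + 2\int\displaylimits_{\Omega}{\pi\,\eta\,\nabla\eta\cdot\vu} - \int\displaylimits_{\Omega}{\mathbb{F}:(\eta^2\nabla\vu+2\eta\,\vu\otimes\nabla\eta)}.
\end{equation*}
Two structural facts are crucial. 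The friction term $\int_\Gamma\alpha\eta^2|\vu_{\vt}|^2$ is non-negative (as $\alpha\ge0$) and sits on the favourable side, so it may simply be discarded; consequently $\alpha$ never enters the right-hand side and the final constant is independent of $\alpha$. For the pressure term, the cancellation $\int_\Omega\eta\,\nabla\eta\cdot\vu=\frac12\int_\Omega\nabla(\eta^2)\cdot\vu=0$ (again from $\div\,\vu=0$ and $\vu\cdot\vn=0$) lets me replace $\pi$ by $\pi-\overline\pi$, with $\overline\pi$ the mean of $\pi$ on $2B\cap\Omega$. To control $\|\pi-\overline\pi\|_{L^2(2B\cap\Omega)}$ I test \eqref{var_form} against any $\psi\in H^1_0(2B\cap\Omega)$ (extended by zero, so $\psi\cdot\vn=0$ and the friction term drops), obtaining $|\int_{2B\cap\Omega}\pi\,\div\,\psi|\le C(\|\DT\vu\|_{\mathbb{L}^2(2B\cap\Omega)}+\|\mathbb{F}\|_{\mathbb{L}^2(2B\cap\Omega)})\|\nabla\psi\|$; the Ne\v{c}as/Bogovskii surjectivity of the divergence on the Lipschitz domain $2B\cap\Omega$ then yields $\|\pi-\overline\pi\|_{L^2(2B\cap\Omega)}\le C(\|\DT\vu\|_{\mathbb{L}^2(2B\cap\Omega)}+\|\mathbb{F}\|_{\mathbb{L}^2(2B\cap\Omega)})$, with a constant uniform in $r$ by scaling.

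Estimating the right-hand side by Cauchy--Schwarz and Young's inequality (with parameters $\epsilon,\delta$) and using $|\nabla\eta|\le C/r$, I absorb $\epsilon\int_\Omega\eta^2|\DT\vu|^2$ into the left and arrive at
\begin{equation*}
\int\displaylimits_{B\cap\Omega}{|\DT\vu|^2} \le \delta\int\displaylimits_{2B\cap\Omega}{|\DT\vu|^2} + C(\delta)\left(\frac{1}{r^2}\int\displaylimits_{2B\cap\Omega}{|\vu|^2} + \int\displaylimits_{2B\cap\Omega}{|\mathbb{F}|^2}\right),
\end{equation*}
the $\delta$-term arising precisely from the pressure contribution. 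A scaled form of Korn's second inequality on $2B\cap\Omega$, $\|\nabla\vu\|_{\mathbb{L}^2(2B\cap\Omega)}\le C(\|\DT\vu\|_{\mathbb{L}^2(2B\cap\Omega)}+\frac1r\|\vu\|_{\bm{L}^2(2B\cap\Omega)})$ with $C$ uniform in $r$, upgrades this to the same inequality with $|\nabla\vu|^2$ in place of $|\DT\vu|^2$. Taking $f=|\nabla\vu|^2$, $g=|\vu|^2$, $h=|\mathbb{F}|^2$ (extended by zero outside $\Omega$), $\beta=2$, and comparing balls with comparable cubes, this is exactly the hypothesis of Lemma \ref{epsilon_lemma}; choosing $\delta\le\delta_0$ and invoking the lemma removes the $\delta$-term and produces \eqref{11}. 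The interior case $2B\subset\Omega$ is the classical Stokes Caccioppoli estimate, obtained identically but with no boundary term.

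The delicate points, and the main obstacle, are twofold. First, the pressure estimate must hold with a constant independent of both $\alpha$ and the scale $r$: this rests on the uniform $\HC{1}{1}$ character of the truncated domains $2B\cap\Omega$, so that after rescaling to unit size the Bogovskii constant is bounded uniformly in $x_0$ and $r$. Second, one must feed the inequality into Lemma \ref{epsilon_lemma} in the right form; the only term obstructing a direct absorption is the pressure contribution $\frac{C}{r}\|\DT\vu\|_{\mathbb{L}^2(2B\cap\Omega)}\|\vu\|_{\bm{L}^2(2B\cap\Omega)}$, which lives on the larger ball $2B$ and hence cannot be absorbed locally --- it is exactly this term that the iteration lemma is designed to handle. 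Keeping $\alpha\ge0$ throughout guarantees that the constant $C$ in \eqref{11} does not depend on $\alpha$, as claimed.
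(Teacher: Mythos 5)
Your proposal is correct and follows essentially the same route as the paper's proof: testing with $\bm{\varphi}=\eta^2\vu$, discarding the non-negative friction term to get $\alpha$-independence, a Bogovskii-type estimate for $\pi-\pi_0$ on $2B\cap\Omega$, Young's and Korn's inequalities, and finally Lemma \ref{epsilon_lemma} to remove the non-absorbable term on the larger ball. The only differences are cosmetic (you subtract the mean of $\pi$ via the cancellation $\int_\Omega\eta\nabla\eta\cdot\vu=0$ and phrase the pressure bound by duality, while the paper constructs $\bm{\psi}$ with $\div\,\bm{\psi}=\pi-\pi_0$ directly; your explicit scaled Korn inequality is, if anything, more careful about uniformity in $r$).
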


\begin{proof}
	We will use the identity several times that for any 'smooth enough' $\bm{v}$ and any symmetric matrix $\mathbb{M}$, $\int\displaylimits_{ \omega}{\DT \bm{v} : \mathbb{M}} = \int\displaylimits_{ \omega}{\nabla \bm{v} : \mathbb{M}}$. In particular, for any 'smooth enough' $\bm{v}, \bm{\varphi}$,
	$$
	\int\displaylimits_{ \omega}{\DT \bm{v} : \DT\bm{\varphi}} = \int\displaylimits_{ \omega}{\nabla \bm{v} : \DT\bm{\varphi}}.
	$$
{\bf i) Pressure estimate:} Let $\pi_0 = \dashint\displaylimits_{2B\cap \Omega}{\pi}$. Since $\pi-\pi_0\in L^2(2B\cap\Omega)$, there exists a unique $\bm{\psi}\in \bm{H}^1(2B\cap\Omega)$ such that
\begin{equation*}
\begin{cases}
\div \ \bm{\psi}=\pi-\pi_0 &\text{ in } 2B\cap\Omega\\
\bm{\psi} = \bm{0} &\text{ on } \partial(2B\cap\Omega)
\end{cases}
\end{equation*}
satisfying
\begin{equation}
\label{psi}
\|\bm{\psi}\|_{\bm{H}^1(2B\cap\Omega)} \le C(\Omega) \ \|\pi-\pi_0\|_{L^2(2B\cap\Omega)}. 
\end{equation}
Now putting $\bm{\psi}$ as a test function in (\ref{var_form}) (extending by $0$ outside $2B\cap\Omega$, we can consider $\bm{\psi}\in\bm{H}^1_0(\Omega)$) and replacing $\pi$ by $\pi-\pi_0$ (since if $\pi$ satisfies (\ref{S}), $\pi-\pi_0$ also satisfies the same equation), we obtain
\begin{equation*}
2\int\displaylimits_{2B\cap\Omega}{\DT\vu: \DT\bm{\psi}} - \int\displaylimits_{2B\cap\Omega}{(\pi - \pi_0) \ \div \ \bm{\psi}} = -\int\displaylimits_{2B\cap\Omega}{\mathbb{F}: \nabla\bm{\psi}}
\end{equation*}
which implies
\begin{equation*}
\begin{aligned}
\int\displaylimits_{2B\cap\Omega}{|\pi - \pi_0|^2} \le \left( \|\nabla\vu\|_{\mathbb{L}^2(2B\cap\Omega)} + \|\mathbb{F}\|_{\mathbb{L}^2(2B\cap\Omega)}\right) \|\bm{\psi}\|_{\bm{H}^1(2B\cap\Omega)}
\end{aligned}
\end{equation*}
and hence, using (\ref{psi}),
\begin{equation}
\label{pressure}
\|\pi-\pi_0\|_{L^2(2B\cap\Omega)} \le C(\Omega)  \left( \|\nabla\vu\|_{\mathbb{L}^2(2B\cap\Omega)} + \|\mathbb{F}\|_{\mathbb{L}^2(2B\cap\Omega)}\right) .
\end{equation}

{\bf ii) Caccioppoli inequality:}
Consider a cut-off unction $\eta\in C_c^{\infty}(2B)$ such that
\begin{equation}
\label{eta}
0\le \eta \le 1, \quad \eta \equiv 1 \ \text{ on } B \quad \text{ and } \quad |\nabla \eta| \le \frac{C}{r} \ \text{ in } 2B.
\end{equation}
Now choosing $\bm{\varphi} = \eta ^2 \vu$ in (\ref{var_form}), we have,
\begin{equation*}
2\int\displaylimits_{2B \cap\Omega}{\DT\vu: \DT(\eta^2 \vu)} + \int\displaylimits_{2B \cap\Gamma}{\alpha \eta^2 |\vu_{\vt}|^2} - \int\displaylimits_{2B \cap\Omega}{(\pi-\pi_0) \ \div (\eta^2 \vu)} = - \int\displaylimits_{2B \cap\Omega}{\mathbb{F}: \nabla(\eta^2 \vu)}
\end{equation*}
which gives, using the fact that $\div \ \vu =0$ in $\Omega$,
\begin{equation*}
\begin{aligned}
& \quad 2\int\displaylimits_{2B \cap\Omega}{\eta^2 |\DT\vu|^2} +\int\displaylimits_{2B \cap\Gamma}{\alpha \eta^2 |\vu_{\vt}|^2}\\
=& -4\int\displaylimits_{2B \cap\Omega}{\DT\vu:\eta \nabla \eta \vu} + 2\int\displaylimits_{2B \cap\Omega}{(\pi-\pi_0) \eta \nabla \eta \vu} - \int\displaylimits_{2B \cap\Omega}{\mathbb{F}: \eta ^2 \nabla\vu} - 2\int\displaylimits_{2B \cap\Omega}{\mathbb{F}:\eta \nabla \eta \vu}
\end{aligned}
\end{equation*}
where $\nabla \eta \vu$ is the matrix $\nabla \eta \otimes \vu$.
Next using Young's inequality on the RHS yields,
\begin{equation*}
\begin{aligned}
& \quad 2\int\displaylimits_{2B \cap\Omega}{\eta^2 |\DT\vu|^2} +\int\displaylimits_{2B \cap\Gamma}{\alpha \eta^2 |\vu_{\vt}|^2} \\
\le & \quad \varepsilon \int\displaylimits_{2B \cap\Omega}{\eta ^2 |\DT\vu|^2} + C_\varepsilon \int\displaylimits_{2B \cap\Omega}{|\vu|^2|\nabla \eta |^2} + \varepsilon \int\displaylimits_{2B \cap\Omega}{\eta^2|\pi-\pi_0|^2} + C_\varepsilon \int\displaylimits_{2B \cap\Omega}{|\nabla \eta|^2|\vu|^2}\\
& + \varepsilon \int\displaylimits_{2B \cap\Omega}{\eta^2 |\nabla\vu|^2}+ C_\varepsilon \int\displaylimits_{2B \cap\Omega}{\eta ^2|\mathbb{F}|^2 } + \varepsilon \int\displaylimits_{2B \cap\Omega}{\eta^2 |\mathbb{F}|^2}+ C_\varepsilon \int\displaylimits_{2B \cap\Omega}{|\nabla \eta|^2|\vu|^2}.
\end{aligned}
\end{equation*}
Upon choosing $\varepsilon>0$ suitably and using the properties (\ref{eta}) and that $\alpha \ge 0$, we get then
\begin{equation*}
\int\displaylimits_{B \cap\Omega}{|\DT\vu|^2}\le \frac{C}{r^2}\int\displaylimits_{2B \cap\Omega}{|\vu|^2} + \varepsilon \int\displaylimits_{2B \cap\Omega}{|\pi-\pi_0|^2} + \varepsilon \int\displaylimits_{2B \cap\Omega}{|\nabla\vu|^2} + C \int\displaylimits_{2B \cap\Omega}{ |\mathbb{F}|^2}
\end{equation*}
where the constant $C>0$ is independent of $\alpha$. Now plugging the pressure estimate (\ref{pressure}) gives,
\begin{equation*}
\label{0.}
\int\displaylimits_{B\cap\Omega }{|\DT\vu|^2} \le \frac{C}{r^2}\int\displaylimits_{2B\cap\Omega }{|\vu|^2} + \varepsilon \int\displaylimits_{2B\cap\Omega }{|\nabla\vu|^2} + C \int\displaylimits_{2B\cap\Omega }{ |\mathbb{F}|^2}.
\end{equation*}
Next adding the term $\int\displaylimits_{B \cap\Omega}{|\vu|^2}$ in both sides, choosing $r\le 1$ (as $\Omega$ is bounded, we can do so) hence $\frac{1}{r}\ge 1$ and using Korn inequality, we obtain
\begin{equation*}
\int\displaylimits_{B\cap\Omega }{|\nabla\vu|^2}\le \|\vu\|^2_{\bm{H}^1(B\cap\Omega)} \le C(\Omega) \left( \frac{1}{r^2}\int\displaylimits_{2B\cap\Omega }{|\vu|^2} + \int\displaylimits_{2B\cap\Omega }{ |\mathbb{F}|^2}\right) + \varepsilon \int\displaylimits_{2B\cap\Omega }{|\nabla\vu|^2}.
\end{equation*}
Therefore, using Lemma \ref{epsilon_lemma} with $\beta = 2$, we achieve the desired inequality (\ref{11}).
\hfill
\end{proof}

We state the following boundary H\"{o}lder estimate:

\begin{proposition}
	Let $p>1$ and $\gamma \in (0,1)$. Suppose that
	\begin{equation*}
	\begin{cases}
	-\Delta \bm{v} +\nabla z = \bm{0} ,\quad \div \ \bm{v} =0 &\quad  \textup{ in } \ B(x_0,r)\cap\Omega\\
	\bm{v}\cdot \vn =0, \quad \alpha \bm{v}_{\vt} +2 [(\DT\bm{v})\vn]_{\vt} = \bm{0} &\quad \textup{ on } \ B(x_0,r)\cap\Gamma
	\end{cases}
	\end{equation*}
	for some $x_0\in \Gamma$ and $0<r<r_0$, Then for any $x,y \in B(x_0,r/2)\cap \Omega$,
	\begin{equation}
	\label{bhe}
|\bm{v}(x)-\bm{v}(y)|\le C \left( \frac{|x-y|}{r}\right) ^\gamma \left( \pppvint\displaylimits_{ \ B(x_0,r)\cap\Omega}{|\bm{v}|^2} \right) ^{1/2}
	\end{equation}
	where $C>0$ depends only on $\Omega$, but independent of $\alpha$.
\end{proposition}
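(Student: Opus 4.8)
The plan is to show that $\bm v$ belongs locally to the Campanato space $\mathcal{L}^{2,3+2\gamma}$ with a seminorm controlled \emph{independently of $\alpha$}, and then to invoke the isomorphism $\mathcal{L}^{2,3+2\gamma}\cong C^{0,\gamma}$. First I would fix $x_0=0$ and normalise the scale. Under the dilation $x\mapsto x/r$ the system is preserved: $\bm v$ and $z$ rescale, and the Navier condition $\alpha\bm v_{\vt}+2[(\DT\bm v)\vn]_{\vt}=\bm 0$ transforms into the same condition with the new friction coefficient $r\alpha\ge 0$. Thus it suffices to prove an oscillation–decay estimate at unit scale with a constant uniform over all nonnegative friction coefficients; the factor $(|x-y|/r)^\gamma$ in \eqref{bhe} is precisely what this scaling produces. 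Moreover, after flattening the boundary by a $\HC{1}{1}$ change of variables exactly as in the proof of Theorem \ref{13}, the domain at scale $r\to 0$ converges to the half-space $\R^3_+$ and the transformed coefficients converge to constants, since the first derivatives of the graph function vanish at the base point.

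The core step is a one–step excess decay, uniform in $\alpha$: there exist $\theta\in(0,\tfrac14)$ and a threshold depending only on the fixed $\gamma$, both independent of $\alpha$, such that every solution of the homogeneous problem in $B(0,1)\cap\Omega$ satisfies
$$\left(\dashint_{B(0,\theta)\cap\Omega}|\bm v-(\bm v)_{\theta}|^2\right)^{1/2}\le \theta^{\gamma}\left(\dashint_{B(0,1)\cap\Omega}|\bm v-(\bm v)_{1}|^2\right)^{1/2},$$
where $(\bm v)_\rho$ is the average over $B(0,\rho)\cap\Omega$. I would argue by contradiction: were it to fail, I extract sequences $\bm v_k$, $\alpha_k\ge 0$ and domains $\Omega_k$ of vanishing curvature, normalised so that the oscillation over the unit half-ball is $1$ while the decay is violated at scale $\theta$. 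The Caccioppoli inequality \eqref{11}, together with the pressure bound \eqref{pressure}, has a constant independent of $\alpha$ (the nonnegative boundary term $\int_\Gamma\alpha\eta^2|\bm v_{\vt}|^2$ being simply discarded, which is the sole use of $\alpha\ge 0$ and the reason all constants are $\alpha$-free); hence the $\bm v_k$ are bounded in $\vH{1}$ and the normalised pressures in $\L{2}$, so along a subsequence $\bm v_k\rightharpoonup\bm v_\infty$ in $\vH{1}$, strongly in $\vL{2}$, and the flattened domains converge to $\R^3_+$. The decisive point is to identify the limiting boundary condition from the uniform bound on $\int_\Gamma\alpha_k|(\bm v_k)_{\vt}|^2$: either $\alpha_k$ stays bounded and $\bm v_\infty$ solves the constant-coefficient Stokes–Navier problem with the limiting friction, or $\alpha_k\to\infty$ forces $(\bm v_k)_{\vt}\to\bm 0$ in $\vLb{2}$, so that $\bm v_\infty$ solves the Stokes–Dirichlet problem. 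In either case $\bm v_\infty$ is a homogeneous, constant-coefficient Stokes solution in a half-ball, hence $C^\infty$ up to the flat boundary and satisfying the decay with any exponent $\gamma<1$; passing to the limit contradicts the strict violation once $\theta$ is fixed small enough.

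Iterating the excess decay over the dyadic scales $\theta^j$ yields
$$\left(\dashint_{B(0,\rho)\cap\Omega}|\bm v-(\bm v)_{\rho}|^2\right)^{1/2}\le C\rho^{\gamma}\left(\dashint_{B(0,1)\cap\Omega}|\bm v|^2\right)^{1/2}$$
for all $\rho\le 1$, with $C$ independent of $\alpha$, and a covering argument extends this to every ball centred in $\overline\Omega$. The Campanato characterisation of Hölder spaces then gives $[\bm v]_{C^{0,\gamma}(B(x_0,r/2)\cap\Omega)}\le C\,r^{-\gamma}\big(\dashint_{B(x_0,r)\cap\Omega}|\bm v|^2\big)^{1/2}$, and evaluating at two points $x,y\in B(x_0,r/2)\cap\Omega$ produces exactly \eqref{bhe} after undoing the scaling.

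The hardest part will be the uniform-in-$\alpha$ compactness, and in particular the clean identification of the limiting boundary operator across the entire range $\alpha\in[0,\infty]$: one must pass to the limit in $\int_\Gamma\alpha_k(\bm v_k)_{\vt}\cdot\bm\varphi_{\vt}$ via the weak convergence of boundary traces together with the dichotomy above, and must then invoke the boundary regularity of the two limiting constant-coefficient Stokes problems to close the contradiction. Everything else (scaling, flattening, the Caccioppoli bound, and the geometric iteration) is routine and already $\alpha$-independent.
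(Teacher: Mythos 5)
Your strategy (excess decay in Campanato spaces proved by a compactness--contradiction argument, with the dichotomy $\alpha_k$ bounded versus $\alpha_k\to\infty$ to identify the limiting boundary condition) is genuinely different from the paper's proof, which simply observes that the $\alpha$-independent Caccioppoli inequality \eqref{11} places the problem inside the linear Giaquinta--Modica framework and quotes their boundary H\"older estimate, whose constant depends only on the constant in the Caccioppoli inequality. Unfortunately, your core step --- the one-step excess decay --- contains a genuine gap. The class of solutions of the Navier problem is not invariant under subtraction of constants: for a constant vector $\bm{c}$, the function $\bm{v}-\bm{c}$ satisfies $(\bm{v}-\bm{c})\cdot\vn=-\bm{c}\cdot\vn$ and $\alpha(\bm{v}-\bm{c})_{\vt}+2[(\DT(\bm{v}-\bm{c}))\vn]_{\vt}=-\alpha\bm{c}_{\vt}$ on the boundary. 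Hence, after your normalization $\bm{w}_k=(\bm{v}_k-(\bm{v}_k)_1)/E_k$ with $E_k$ the excess, the $\bm{w}_k$ solve an \emph{inhomogeneous} boundary problem whose data $\alpha_k((\bm{v}_k)_1)_{\vt}/E_k$ and $((\bm{v}_k)_1\cdot\vn)/E_k$ are in no way controlled by the normalization $\dashint|\bm{w}_k|^2=1$: the average of $\bm{v}_k$ may be arbitrarily large compared with its oscillation, and $\alpha_k$ arbitrarily large on top of that. In particular you cannot apply the Caccioppoli inequality \eqref{11} to $\bm{w}_k$ --- it is proved by testing with $\eta^2\bm{v}$ and uses the sign of $\int_\Gamma\alpha\eta^2|\bm{v}_{\vt}|^2$, which is destroyed by the cross terms $\alpha\,\bm{v}_{\vt}\cdot\bm{c}_{\vt}$ --- so the sequence $\bm{w}_k$ has no uniform $\vH{1}$ bound, no strong $\vL{2}$ compactness, and no homogeneous limiting problem. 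The blow-up argument collapses at exactly the point you flagged as "the hardest part."

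The natural repair, normalizing instead by $M_k=(\dashint_{B_1\cap\Omega_k}|\bm{v}_k|^2)^{1/2}$ so that the boundary conditions are preserved, restores compactness (this is where your dichotomy would work), but then the contradiction hypothesis becomes $\mathrm{excess}_k(\theta)/M_k>\theta^\gamma\,\mathrm{excess}_k(1)/M_k$, and both sides may tend to zero in the limit (e.g.\ when the limit solution is $\bm{0}$ or has vanishing excess), so no contradiction is reached. This degeneracy is the classical obstruction to proving boundary regularity for Robin-type conditions by naive excess decay, and overcoming it requires a quantitatively different scheme. The paper's route avoids the issue altogether: the Giaquinta--Modica argument never subtracts constants at the boundary, deriving the H\"older estimate \eqref{bhe} directly from the Caccioppoli inequality, which is why the constant is automatically independent of $\alpha$; your proposal, as written, cannot be completed without a substitute for the excess-decay step.
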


\begin{proof}
It can be shown in the exact same way as done in \cite[Theorem 2.8 (a), Part II]{modica}, since we have the corresponding Caccioppoli inequality (\ref{11}) as in \cite[Theorem 2.2, Part II]{modica}.
\hfill
\end{proof}

We will also use the following Sobolev-Poincar\'{e} inequality for certain class of functions.

\begin{lemma}
	\label{Lpoincare}
	Let $\Omega$ be a $\mathcal{C}^1$ bounded domain in $\R^n$ and $1<p<n$. For any $\bm{v}\in\bm{W}^{1,p}(B(x,r)\cap\Omega)$ with $\bm{v}\cdot \vn =0$ on $B(x,r)\cap \Gamma$, the following inequality holds:
	\begin{equation}
	\label{poincare}
	\left( \ \int\displaylimits_{B(x,r)\cap\Omega}{|\bm{v}|^{p*}}\right) ^{1/p*} \le C \left( \ \int\displaylimits_{B(x,r)\cap\Omega}{|\nabla \bm{v}|^{p}} \right) ^{1/p}
	\end{equation}
	where $p*$ is the Sobolev exponent and the constant $C>0$ depends on $\Omega, n$ and $p$, but independent of $r$.
\end{lemma}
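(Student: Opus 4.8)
The plan is to exploit that the asserted inequality is invariant under the scaling $\bm{v}\mapsto\bm{v}(x+r\,\cdot)$ and to reduce it, by flattening $\Gamma$ and extending $\bm{v}$ across it by reflection, to the classical Sobolev--Poincar\'e inequality on a fixed reference domain. First I would verify the scaling. Setting $\bm{v}_r(y)=\bm{v}(x+ry)$ and $\Omega_r=r^{-1}\big((B(x,r)\cap\Omega)-x\big)$, a change of variables gives $\int_{B(x,r)\cap\Omega}|\bm{v}|^{p*}=r^{n}\int_{\Omega_r}|\bm{v}_r|^{p*}$ and $\int_{B(x,r)\cap\Omega}|\nabla\bm{v}|^{p}=r^{\,n-p}\int_{\Omega_r}|\nabla\bm{v}_r|^{p}$. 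Since $\tfrac{n}{p*}=\tfrac{n}{p}-1$, both sides of the target estimate carry the identical factor $r^{\,n/p-1}$, so it suffices to bound the Sobolev embedding constant uniformly over the family $\{\Omega_r\}_{0<r<r_0}$. The gain is that, because $\Gamma$ is $\mathcal{C}^1$, after composing with the boundary chart at $x$ these rescaled domains are uniformly close in the $\mathcal{C}^1$ sense to the model half-ball $B_1^{+}=B(0,1)\cap\{y_n>0\}$, whose flat face carries the condition coming from $\bm{v}\cdot\vn=0$.

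On the model configuration I would use the impermeability condition to extend $\bm{v}$ to the full ball. After flattening, $\bm{v}\cdot\vn=0$ on $B(x,r)\cap\Gamma$ becomes the vanishing of the normal component on $\{y_n=0\}$, so I extend the tangential components by even reflection and the normal component by odd reflection across $\{y_n=0\}$. The odd extension of the normal component is admissible in $\bm{W}^{1,p}$ precisely because that component has zero trace on the flat face, and this is the single place where the hypothesis $\bm{v}\cdot\vn=0$ enters; the even reflection of the tangential components is automatically of class $\bm{W}^{1,p}$. The extended field $\tilde{\bm{v}}$ then lies in $\bm{W}^{1,p}$ of the full model ball, with $\|\tilde{\bm{v}}\|_{L^{p*}}$ and $\|\nabla\tilde{\bm{v}}\|_{L^{p}}$ comparable to the corresponding norms of $\bm{v}$, the comparability constants depending only on $n,p$ and on the $\mathcal{C}^1$ norm of the chart. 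Applying the classical Sobolev--Poincar\'e inequality on the fixed model ball to $\tilde{\bm{v}}$, and then undoing the flattening and the scaling, yields the claimed bound with a constant depending only on $\Omega,n,p$.

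The main obstacle is making this reduction genuinely uniform in $r$: the chart $\phi$ that flattens $B(x,r)\cap\Gamma$ distorts both the domain and the gradient, and one must check that the comparability constants in the reflection step and in the pull-back of $|\nabla\bm{v}|$ do not degenerate as $r\to0$. Here I would use that, under the rescaling, the graph describing $\Gamma$ has gradient tending to $\bm{0}$, so the flattened rescaled domains converge to $B_1^{+}$ and the induced metric converges uniformly to the Euclidean one; a continuity/compactness argument over the compact set $x\in\Gamma$ then furnishes a single constant valid for all $0<r<r_0$. The purely interior situation, in which $B(x,r)\subset\Omega$ and the boundary condition is vacuous, reduces directly to the Sobolev--Poincar\'e inequality on a ball and follows by the same scaling, so the two cases together give the statement.
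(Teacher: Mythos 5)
Your scaling computation is correct and you have identified the right difficulty (uniformity of the constant as $r\to 0$), but the argument fails at its central step, and in a way that cannot be repaired within this strategy. The classical Sobolev--Poincar\'e inequality on the full model ball bounds only the oscillation $\|\tilde{\bm{v}}-\overline{\tilde{\bm{v}}}\|_{L^{p^*}}$, not $\|\tilde{\bm{v}}\|_{L^{p^*}}$ itself. Your odd reflection does force the normal component of $\tilde{\bm{v}}$ to have zero mean, so that component is controlled; but the even reflection of the tangential components leaves their means entirely free. Concretely, a nonzero constant field $\bm{c}$ tangent to the flat face satisfies every hypothesis of your model problem ($\bm{c}\cdot\vn=0$ on the face, $\nabla\bm{c}=\bm{0}$) while $\|\bm{c}\|_{L^{p^*}(B_1^+)}>0$, where $B_1^+=B(0,1)\cap\{y_n>0\}$; so the inequality you want on the model half-ball is simply false, and no application of Sobolev--Poincar\'e to the reflected field can produce it. The same objection defeats your treatment of the purely interior case, where the constraint is vacuous and constants again survive. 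Worse, the failure propagates backwards through the reduction: since your flattened, rescaled domains converge to $B_1^+$, any uniform-in-$r$ constant obtained by a continuity/compactness argument toward that model would, in the limit, prove the false flat-face inequality. The compactness step cannot furnish what the limit object refutes.

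The root of the problem is that the only mechanism excluding constant fields is the variation of the normal $\vn$ over the set where $\bm{v}\cdot\vn=0$ is imposed, and this is exactly the information your blow-up discards: under rescaling, the boundary piece flattens and the constraint degenerates to the annihilation of a single fixed component. The paper's proof keeps this global information. It starts from the Poincar\'e inequality of \cite[Proposition 2.1]{AO} for fields satisfying $\bm{v}\cdot\vn=0$ on \emph{all} of the closed surface $\Gamma$ --- where no nonzero constant can be tangent everywhere, since for any direction $\bm{c}$ the outward normal at a maximizer of $\bm{x}\cdot\bm{c}$ over $\overline{\Omega}$ is parallel to $\bm{c}$ --- then combines it with the Sobolev embedding, and only afterwards invokes a scaling argument for the $r$-independence. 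Any correct argument must, one way or another, extract the inequality from the geometry of the fixed surface $\Gamma$ rather than from the flat local model; your reflection scheme never confronts this, and that is the genuine gap.
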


\begin{proof}
The Poincar\'{e} inequality for functions $\bm{v}\in\vH{1}$ with $\bm{v}\cdot \vn =0$ on $\Gamma$ is well-known, see for example, \cite[Proposition 2.1]{AO}. The same argument also holds for $p\ne 2$. Then the estimate (\ref{poincare}) follows combining the Sobolev estimate.

Now to see that the above constant does not depend on $r$, we use scaling argument. Without loss of generality, consider $x=0$. Let (\ref{poincare}) holds for $r=1$ \textit{i.e.}
\begin{align*}
\left( \ \int\displaylimits_{B(0,1)\cap\Omega}{|\bm{v}(y)|^{p*}\,\mathrm{d}y}\right) ^{1/p*} \le C \left( \ \int\displaylimits_{B(0,1)\cap\Omega}{|\nabla \bm{v}(y)|^{p}\,\mathrm{d}y} \right) ^{1/p}.
\end{align*}
Under the change of variable $y=\frac{z}{r}$, it becomes, denoting $\bm{w}(z)=\bm{v}(y)$,
\begin{align*}
r^{-\frac{p*}{n}}\left( \ \int\displaylimits_{B(0,r)\cap\Omega}{|\bm{w}(z)|^{p*}\,\mathrm{d}z}\right) ^{1/p*} \le C r^{-\frac{p}{n}} \left( r^p \int\displaylimits_{B(0,r)\cap\Omega}{|\nabla \bm{w}(z)|^{p}\,\mathrm{d}z} \right) ^{1/p}
\end{align*}
which yields
\begin{align*}
\left( \ \int\displaylimits_{B(0,r)\cap\Omega}{|\bm{w}(z)|^{p*}\,\mathrm{d}z}\right) ^{1/p*} \le C \left( \ \int\displaylimits_{B(0,r)\cap\Omega}{|\nabla \bm{w}(z)|^{p}\,\mathrm{d}z} \right) ^{1/p}.
\end{align*}
This proves the claim.
\hfill
\end{proof}

\begin{lemma}[\textbf{weak reverse H\"{o}lder inequality}]
	\label{Lrhi}
	Let $p\ge 2$. Then for any $B(x_0,r)$ with the property that $0<r<\frac{r_0}{8}$ and either $B(x_0,2r)\subset \Omega$ or $ x_0\in\Gamma$, the following weak Reverse H\"{o}lder inequality holds:
	\begin{equation}
	\label{rhi}
	\left( \pppvint\displaylimits_{\ B(x_0,r)\cap\Omega} {|\nabla \vu|^p}\right) ^{1/p} \le C \left( \ \pppvint\displaylimits_{B(x_0,2r)\cap\Omega} {|\nabla\vu|^2}\right) ^{1/2} 
	\end{equation}
	whenever $\vu\in \bm{H}^1(B(x_0,2r)\cap \Omega)$ satisfies
	\begin{equation*}
	%\label{erhi.}
	\begin{cases}
	-\Delta \vu +\nabla \pi = \bm{0} ,\quad \div \ \vu =0 \quad &\textup{ in } \ B(x_0,2r)\cap\Omega\\
	\vu\cdot \vn =0, \quad \alpha \vu_{\vt} +2 [(\DT\vu)\vn]_{\vt} = \bm{0} &\textup{ on } \ B(x_0,2r)\cap\Gamma \quad(\text{if } x_0\in\Gamma).
	\end{cases}
	\end{equation*}
	The constant $C>0$ at most depends on $\Omega$ and $p$.
\end{lemma}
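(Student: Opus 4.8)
The plan is to reduce (\ref{rhi}) to a single local Lipschitz-type bound and to deduce the inequality for every $p$ at once from it. Concretely, I would prove that under the stated hypotheses there is a constant $C=C(\Omega)>0$, \emph{independent of $\alpha$}, with
\begin{equation*}
\|\nabla\vu\|_{\bm{L}^\infty(B(x_0,r)\cap\Omega)} \le C\left(\pppvint_{B(x_0,2r)\cap\Omega}|\nabla\vu|^2\right)^{1/2}.
\end{equation*}
Once this is available, (\ref{rhi}) follows immediately for all $p\ge 2$, since $\big(\pppvint_{B(x_0,r)\cap\Omega}|\nabla\vu|^p\big)^{1/p}\le \|\nabla\vu\|_{\bm{L}^\infty(B(x_0,r)\cap\Omega)}$, the $p$-dependence entering only through the volume normalisations. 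I would split the argument according to whether $B(x_0,2r)\subset\Omega$ (interior case) or $x_0\in\Gamma$ (boundary case). As a preliminary self-improvement, combining the Caccioppoli inequality (\ref{11}) with the Sobolev--Poincar\'e inequality of Lemma \ref{Lpoincare} (valid precisely because $\vu\cdot\vn=0$, so no free additive constant is needed) gives a reverse H\"older inequality with exponents $2$ and $2_*<2$; by Gehring's lemma this yields (\ref{rhi}) for some $p>2$, but to reach \emph{all} $p$ one genuinely needs the gradient $L^\infty$ bound above.

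\emph{Interior case.} Here $\vu$ solves the homogeneous Stokes system in $B(x_0,2r)$, hence is smooth in the interior and every derivative solves the same system. The pointwise bound on $\nabla\vu$ by its $L^2$ average is then classical, obtained from the interior Caccioppoli inequality (the interior analogue of (\ref{11})) applied to the constant-free field $\vu-\bm c$ on shrinking balls together with interior $C^{1,\gamma}$ regularity of homogeneous Stokes solutions. This case does not see $\alpha$.

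\emph{Boundary case} (the main part). After flattening $\Gamma$ near $x_0$ exactly as in the proof of Theorem \ref{13}, I would regard the Navier condition as the inhomogeneous Neumann-type condition $2[(\DT\vu)\vn]_{\vt}=-\alpha\vu_{\vt}$ with boundary source $\bm g:=-\alpha\vu_{\vt}$. The boundary H\"older estimate (\ref{bhe}) already provides $\vu\in\bm C^{0,\gamma}$ up to $\Gamma$, with $\|\vu\|_{\bm{L}^\infty(B(x_0,r)\cap\Omega)}$ bounded by the $L^2$ average of $\vu$ with $\alpha$-independent constant; hence $\bm g$ is controlled in $\vLb{t(p)}$ uniformly in $\alpha$ (the sign $\alpha\ge 0$ being used exactly as it was to render (\ref{11}) $\alpha$-independent). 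The gradient bound is then produced by a Campanato-type iteration: at each dyadic scale I compare $\vu$ with the solution $\bm w$ of the frozen, source-free problem, namely the $\alpha\equiv 0$ Navier problem on the half-ball, for which boundary $C^{1,\gamma}$ estimates are available from the Agmon--Douglis--Nirenberg theory underlying \cite[Theorem 4.1]{AR}. The remainder $\vu-\bm w$ obeys an energy estimate controlled by $\|\bm g\|$ and by (\ref{11}); iterating the resulting excess-decay inequality and invoking the $\varepsilon$-lemma (Lemma \ref{epsilon_lemma}) gives the Morrey--Campanato decay equivalent to the claimed Lipschitz bound, as in \cite{modica} and the Laplace--Robin treatment \cite{AGG}.

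I expect the boundary case to be the principal obstacle, specifically keeping every constant independent of $\alpha$ while $\alpha$ is merely in $\Lb{t(p)}$ and cannot be differentiated along $\Gamma$. The device making this work is never to use regularity of $\alpha$: the friction term is always moved to the right-hand side as the lower-order source $\bm g=-\alpha\vu_{\vt}$, whose size is controlled through the uniform estimate (\ref{bhe}) and the sign condition $\alpha\ge 0$, while all differentiated estimates are reserved for the $\alpha\equiv 0$ comparison problem. A secondary technical point is the lack of a free additive constant in the Navier setting (subtracting an arbitrary vector destroys $\vu\cdot\vn=0$); this is exactly why Lemma \ref{Lpoincare}, tailored to fields with $\vu\cdot\vn=0$, must replace the usual mean-value subtraction in the Caccioppoli step.
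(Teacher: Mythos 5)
Your interior case matches the paper, and your preliminary remark is accurate: Caccioppoli \eqref{11} plus Lemma \ref{Lpoincare} plus Gehring self-improvement only yields \eqref{rhi} for some $p_0>2$, not for all $p$. The genuine gap is in the boundary case. The pivotal estimate you propose, a uniform Lipschitz bound $\|\nabla\vu\|_{\bm{L}^\infty(B(x_0,r)\cap\Omega)}\le C\bigl(\pppvint_{B(x_0,2r)\cap\Omega}|\nabla\vu|^2\bigr)^{1/2}$ with $C$ independent of $\alpha$, is not provable under the hypotheses and should be expected to be false. You assert that the source $\bm g=-\alpha\vu_{\vt}$ is ``controlled in $\vLb{t(p)}$ uniformly in $\alpha$'' by invoking the sign condition; this is a conflation. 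The sign $\alpha\ge 0$ only lets one \emph{drop} the nonnegative term $\int_\Gamma \alpha|\vu_{\vt}|^2$ in energy and Caccioppoli estimates; it gives no bound on $\|\alpha\vu_{\vt}\|_{\vLb{t(p)}}$, which scales like $\|\alpha\|_{\Lb{t(p)}}$ and is arbitrarily large. Consequently your Campanato comparison with the $\alpha\equiv 0$ half-ball problem cannot close: the excess-decay error at scale $\rho$ is controlled, via H\"older's inequality with $t(p)=2+\varepsilon$, only by $\|\alpha\|_{\Lb{t(p)}}^2\,\rho^{2\varepsilon/(2+\varepsilon)}$, which is both $\alpha$-dependent and far short of the super-quadratic decay of boundary integrals that a gradient $L^\infty$ bound would require. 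For Robin-type conditions with coefficients that are merely $L^q(\Gamma)$, the solution is H\"older continuous up to $\Gamma$ uniformly in $\alpha$ (this is exactly \eqref{bhe}), but $\nabla\vu$ genuinely may blow up at the boundary.

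The paper's proof is organized precisely around this obstruction: it never claims $\nabla\vu\in L^\infty$ near $\Gamma$. Instead it applies the \emph{interior} gradient estimate at each point $x$, on the ball $B(x,c\delta(x))$ with $\delta(x)=\mathrm{dist}(x,\Gamma)$, to the shifted field $\vu-\vu(y)$, then controls the oscillation $|\vu(z)-\vu(y)|$ by the $\alpha$-uniform boundary H\"older estimate \eqref{bhe} (solution level, not gradient level), and converts $\|\vu\|$ into $\|\nabla\vu\|_{\bm L^2}$ by Lemma \ref{Lpoincare}. The outcome is the degenerating pointwise bound
\begin{equation*}
|\nabla\vu(x)|\le C_\gamma \left(\frac{r}{\delta(x)}\right)^{\gamma}\left( \ \pppvint\displaylimits_{B(x_0,2r)\cap\Omega}{|\nabla \vu|^2}\right)^{1/2}
\qquad\text{for every }\gamma\in(0,1),
\end{equation*}
whose right-hand side blows up as $x\to\Gamma$. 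The key device, which your proposal is missing, is then to choose $\gamma$ so small that $p\gamma<1$: the weight $\delta(x)^{-p\gamma}$ remains integrable over $B(x_0,r)\cap\Omega$, and averaging gives \eqref{rhi} with a constant depending on $p$ (through $\gamma$) and $\Omega$ but not on $\alpha$. If you replace your claimed $L^\infty$ bound by this weighted pointwise bound, the remainder of your outline (interior case, the roles of \eqref{bhe} and of Lemma \ref{Lpoincare} as a substitute for mean subtraction) is consistent with the paper.
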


\begin{proof}
	{\bf case(i) :} $B(x_0,2r)\subset \Omega$.\\
	The weak reverse H\"{o}lder inequality (\ref{rhi}) holds for any $p\ge 2$, by the following interior estimates for Stokes operator \cite[Theorem 2.7 (a)]{GX}:
	\begin{equation*}
	\underset{B(x_0,r)}{\sup} \ |\nabla \vu|\le C \left(\  \pvint\displaylimits_{B(x_0,2r)} {|\nabla \vu|^2}\right) ^{1/2}.
	\end{equation*}
	
	{\bf case(ii) :} $x_0\in\Gamma$.\\
	From the interior gradient estimate for Stokes problem, we can write (eg. see \cite[Theorem 2.7, (3)]{GX})
	\begin{equation*}
	|\nabla \vu(x)|\le \frac{C}{\delta(x)} \left( \pppvint\displaylimits_{ \ B(x, c\delta(x))}{|\vu|^2}\right) ^{1/2}.
	\end{equation*}
	Now for fixed $y\in B(x,2c\delta(x))$, let $\bm{v}(x) = \vu(x) - \vu(y)$. Then $-\Delta \bm{v} +\nabla z = \bm{0} ,\ \div \ \bm{v} =0$ in $B(x,2c\delta(x))$ and thus we may write from the above argument,
	\begin{align*}
	|\nabla \bm{v}(x)|\le \frac{C}{\delta(x)} \left( \pppvint\displaylimits_{ \ B(x,c\delta(x))}{|\bm{v}|^2}\right) ^{1/2}
	\end{align*}
	which gives, along with the boundary H\"{o}lder estimate (\ref{bhe}),
	\begin{align}
	|\nabla \vu(x)| &\le \frac{C}{\delta(x)} \left( \pppvint\displaylimits_{ \ \, B(x,c\delta(x))}{|\vu(z) - \vu(y)|^2\mathrm{d}z}\right) ^{1/2}\nonumber\\
	& = \frac{C}{\delta(x)^{1+\frac{3}{2}}} \left( \ \int\displaylimits_{B(x,c\delta(x))}{|\vu(z) - \vu(y)|^2\mathrm{d}z}\right) ^{1/2}\nonumber\\
	& \le \frac{C}{\delta(x)^{1+\frac{3}{2}}} \left[ \ \int\displaylimits_{B(x,2c\delta(x))}\left( \frac{|z-y|}{r}\right) ^{2\gamma} \ \left( \ \pppvint\displaylimits_{B(x_0,2r)\cap\Omega}{|\vu|^2}\right) \mathrm{d}z\right] ^{1/2}\nonumber\\
	& \le \frac{C}{\delta(x)^{1+\frac{3}{2}}} \left( \ \pppvint\displaylimits_{B(x_0,2r)\cap\Omega}{|\vu|^2}\right)^{1/2} \frac{1}{r^\gamma} \left( \ \int\displaylimits_{B(x,2c\delta(x))}{|z-y|^{2\gamma}\mathrm{d}z }\right)^{1/2}.\label{0}
	\end{align}
	Let us now calculate the last integral in the last inequality. Substituting $w = \frac{z-y}{4c\delta(x)}$, we get
	\begin{align*}
	\int\displaylimits_{B(x,2c\delta(x))}{|z-y|^{2\gamma}\mathrm{d}z } \le C \int\displaylimits_{B(0,1)} {w^{2\gamma} (\delta(x))^{2\gamma +3} \mathrm{d}w}
	& = C \ (\delta(x))^{2\gamma +3} \int\displaylimits_{0}^{1}\int\displaylimits_0^{2\pi} {r^{2\gamma} r^2 \,\,\mathrm{d}r \,\,\mathrm{d}\theta}\\
	& = C \ (\delta(x))^{2\gamma +3} \ \frac{2\pi}{2\gamma +3} \le C_\gamma (\delta(x))^{2\gamma +3}.
	\end{align*}
	Plugging the value of the above integral in (\ref{0}), along with the Sobolev-Poincar\'{e} inequality (\ref{poincare}), we then obtain
	\begin{align*}
	|\nabla \vu(x)| & \le \frac{C_\gamma}{(\delta(x))^{1+\frac{3}{2}}} \left( \ \pppvint\displaylimits_{B(x_0,2r)\cap\Omega}{|\vu|^2}\right)^{1/2} \frac{1}{r^\gamma} (\delta(x))^{\gamma + \frac{3}{2}}\\
	& = C_\gamma \ \frac{(\delta(x))^{\gamma -1}}{r^\gamma} r^{-3/2} \left( \ \int\displaylimits_{B(x_0,2r)\cap\Omega}{|\vu|^2}\right) ^{1/2}\\
	& \le C_\gamma \ \frac{(\delta(x))^{\gamma -1}}{r^\gamma} r^{1-3/2} \left( \ \int\displaylimits_{B(x_0,2r)\cap\Omega}{|\vu|^6}\right)^{1/6}\\
	& \le C_\gamma \left( \frac{r}{\delta(x)}\right) ^{1-\gamma}\left( \ \pppvint\displaylimits_{B(x_0,2r)\cap\Omega}{|\nabla \vu|^2}\right)^{1/2}.
	\end{align*} 
	Since $\gamma \in (0,1)$ is arbitrary, we thus have,
	\begin{equation*}
	|\nabla \vu(x)| \le C_\gamma \left( \frac{r}{\delta(x)}\right) ^{\gamma}\left( \ \pppvint\displaylimits_{B(x_0,2r)\cap\Omega}{|\nabla \vu|^2}\right)^{1/2}.
	\end{equation*}	
	Finally it yields choosing $\gamma$ so that $p\gamma <1$,
	\begin{equation*}
	\left( \pppvint\displaylimits_{ \ B(x_0,r)\cap \Omega}{|\nabla \vu|^p}\right) ^{1/p} \le C_p \left( \ \pppvint\displaylimits_{B(x_0,2r)\cap\Omega}{|\nabla \vu|^2}\right)^{1/2}.
	\end{equation*}
	This completes the proof.	
	\hfill
\end{proof}

With the following abstract lemma which is proved in \cite[Theorem 2.2]{geng}, we are now in a position to prove Theorem \ref{T1}.

\begin{lemma}
	\label{G0}
	Let $\Omega$ be a bounded Lipschitz domain in $\R^3$ and $p>2$. Let $G\in \L{2}$ and $f\in\L{q}$ for some $2<q<p$. Suppose that for each ball $B$ with the property that $|B|\le \beta |\Omega|$ and either $2B \subset \Omega$ or $B$ centers on $\Gamma$, there exist two integrable functions $G_B$ and $R_B$ on $2B\cap \Omega$ such that $|G|\le |G_B| + |R_B|$ on $2B\cap \Omega$ and
	\begin{equation}
	\label{G1}
	\begin{aligned}
	& \left(\frac{1}{|2B\cap \Omega|}\int\displaylimits_{2B\cap \Omega}{|R_B|^p} \right) ^{1/p}\\
	& \le C_1 \left[ \left(\frac{1}{|\gamma B \cap \Omega|}\int\displaylimits_{\gamma B \cap \Omega}{|G|^2}\right)^{1/2} + \underset{B \subset B'}{sup} \left(\frac{1}{|B'\cap \Omega|}\int\displaylimits_{B'\cap \Omega}{|f|^2}\right)^{1/2}\right] 
	\end{aligned}
	\end{equation}
	and
	\begin{equation}
	\label{G2}
	\left(\frac{1}{|2B\cap \Omega|}\int\displaylimits_{2B\cap \Omega}{|G_B|^2} \right) ^{1/2} \le C_2 \ \underset{B \subset B'}{sup} \left(\frac{1}{|B'\cap \Omega|}\int\displaylimits_{B'\cap \Omega}{|f|^2}\right)^{1/2}
	\end{equation}
	where $C_1, C_2 >0$ and $0<\beta <1<\gamma$. Then we have,
	\begin{equation}
	\label{G3}
	\left(\frac{1}{|\Omega|}\int\displaylimits_{ \Omega}{|G|^q} \right) ^{1/q} \le C \left[ \left(\frac{1}{|\Omega|}\int\displaylimits_{\Omega}{|G|^2}\right)^{1/2} + \left(\frac{1}{|\Omega|}\int\displaylimits_{\Omega}{|f|^q}\right)^{1/q}\right]
	\end{equation}
	where $C>0$ depends only on $C_1, C_2, n, p, q, \beta, \gamma$ and $\Omega$. 
\end{lemma}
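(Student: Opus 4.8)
The plan is to prove this by the real-variable method of Z.~Shen, deducing the higher integrability of $G$ from a self-improving distributional inequality for a localized maximal operator. For $h\in L^1(\Omega)$ put
$$\mathcal{M}(h)(x)=\sup_{0<r<r_1}\ \frac{1}{|B(x,r)\cap\Omega|}\int\displaylimits_{B(x,r)\cap\Omega}|h|,\qquad x\in\Omega,$$
with $r_1$ chosen so that every ball occurring in \eqref{G1}--\eqref{G2} is admissible. As $\Omega$ is a bounded Lipschitz domain it enjoys the measure-density property, so $\mathcal{M}$ is of weak type $(1,1)$ and bounded on $L^s(\Omega)$ for every $s>1$, with constants depending only on $\Omega$ and $s$; in particular $\dashint_{\Omega}\mathcal{M}(|f|^2)^{q/2}\le C\,\dashint_{\Omega}|f|^q$. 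Since $|G|^2\le\mathcal{M}(|G|^2)$ a.e., the conclusion \eqref{G3} reduces to the averaged bound
$$\dashint \displaylimits_{\Omega}\mathcal{M}(|G|^2)^{q/2}\ \le\ C\left[\left(\dashint \displaylimits_{\Omega}|G|^2\right)^{q/2}+\dashint \displaylimits_{\Omega}\mathcal{M}(|f|^2)^{q/2}\right].$$
Writing $E(\lambda):=\{x\in\Omega:\mathcal{M}(|G|^2)(x)>\lambda\}$ and $h(\lambda):=|E(\lambda)|$, and fixing a base level $\lambda_0$ comparable to $\dashint_{\Omega}(|G|^2+\mathcal{M}(|f|^2))$, I would bound $\int_{\lambda_0}^{\infty}\lambda^{q/2-1}h(\lambda)\,d\lambda$, which controls the left-hand side through the layer-cake formula.

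The heart is a good-$\lambda$ inequality. For $\lambda\ge\lambda_0$ I apply a Calder\'on--Zygmund/Vitali covering to the open set $E(\lambda)$, obtaining balls $B_j=B(x_j,r_j)$ of bounded overlap, each centered in $\Omega$ with the stopping property $\dashint_{\gamma B_j\cap\Omega}|G|^2\le C\lambda$ and $\sum_j|B_j|\le C\,h(\lambda)$. On $B_j$ I invoke the hypothesis, $|G|\le|G_{B_j}|+|R_{B_j}|$. For the high-integrability half \eqref{G1} gives, on the stopping ball, $(\dashint_{2B_j}|R_{B_j}|^p)^{1/p}\lesssim\lambda^{1/2}$ away from the bad set $\{\mathcal{M}(|f|^2)>c\lambda\}$, so Chebyshev at level $A\lambda$ together with $p>q>2$ yields the crucial strong decay $|\{x\in B_j:\mathcal{M}(|R_{B_j}|^2)>A\lambda\}|\le CA^{-p/2}|B_j|$. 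The merely-$L^2$ half must be treated differently: \eqref{G2} and the elementary bound $\sup_{B_j\subset B'}\dashint_{B'}|f|^2\le\dashint_{B_j}\mathcal{M}(|f|^2)$ give $\int_{2B_j\cap\Omega}|G_{B_j}|^2\lesssim\int_{B_j\cap\Omega}\mathcal{M}(|f|^2)$, an honest datum-integral carrying no factor of $\lambda$. Summing the two halves over $j$ (finite overlap) produces
$$h(A\lambda)\ \le\ CA^{-p/2}\,h(\lambda)\ +\ C\,|\{\mathcal{M}(|f|^2)>c\lambda\}|\ +\ \frac{C}{A\lambda}\int\displaylimits_{E(\lambda)}\mathcal{M}(|f|^2).$$

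Finally I multiply this good-$\lambda$ inequality by $\lambda^{q/2-1}$ and integrate over $(\lambda_0,\infty)$; the substitution $\lambda\mapsto A\lambda$ turns the left side into $A^{-q/2}\int\lambda^{q/2-1}h(\lambda)\,d\lambda$. The self-referential term therefore contributes with relative weight $CA^{-p/2}\!\cdot\!A^{q/2}=CA^{(q-p)/2}$, which can be made smaller than $\tfrac14$ by fixing $A$ large, precisely because $q<p$. The level-set term integrates to $C\int_{\Omega}\mathcal{M}(|f|^2)^{q/2}$. For the last term, Fubini gives $\int_{\lambda_0}^\infty\lambda^{q/2-2}\int_{E(\lambda)}\mathcal{M}(|f|^2)\simeq\int_\Omega\mathcal{M}(|f|^2)\,\mathcal{M}(|G|^2)^{q/2-1}$ (the inner $\lambda$-integral converging since $q>2$), and Young's inequality splits this as $\delta\int_\Omega\mathcal{M}(|G|^2)^{q/2}+C_\delta\int_\Omega\mathcal{M}(|f|^2)^{q/2}$. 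Choosing $\delta$ small \emph{after} $A$ makes the combined self-referential coefficient less than $\tfrac12$, so it is absorbed into the left-hand side; the surviving data terms are dominated by $\int_\Omega\mathcal{M}(|f|^2)^{q/2}\le C\|f\|_{L^q(\Omega)}^q$ and by the base contribution $\lambda_0^{q/2}|\Omega|\simeq(\dashint_\Omega|G|^2)^{q/2}|\Omega|+\cdots$. Rescaling to averages yields \eqref{G3}.

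The main obstacle is the exponent bookkeeping in this absorption. One must recognize that the only feedback into $h(\lambda)$ comes from the high-integrability half $R_B$, and that its decay $A^{-p/2}$ overcomes the scaling weight $A^{q/2}$ exactly under the gap $q<p$; meanwhile the low-integrability half $G_B$, being slaved by \eqref{G2} to the datum $f$, must be estimated through its honest maximal content $\int\mathcal{M}(|f|^2)$ rather than crudely by $\lambda$ — a lazy $L^2$-Chebyshev bound would leave an $A^{-1}$ feedback that, after the $A^{q/2}$ scaling, diverges for $q>2$ and destroys the argument. Keeping these two roles rigidly separate, calibrating $A$ and then $\delta$, and carrying out the whole scheme on a preliminary truncation of $G$ (so that $\int_\Omega\mathcal{M}(|G|^2)^{q/2}$ is a priori finite and no infinite quantity is cancelled) is the delicate part; adapting the covering and the maximal operator to balls cut by $\partial\Omega$ is routine given the Lipschitz regularity of $\Omega$.
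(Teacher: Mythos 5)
The paper does not prove this lemma at all: it is quoted verbatim from Geng \cite[Theorem 2.2]{geng}, whose proof is Shen's real-variable argument, and your proposal reconstructs essentially that proof — Whitney/Calder\'on--Zygmund covering of the level sets of the localized maximal function $\mathcal{M}(|G|^2)$, the splitting of the feedback into the $R_B$ part (strong $A^{-p/2}$ decay from \eqref{G1} via Chebyshev at exponent $p/2$, absorbable precisely because $q<p$) and the $G_B$ part slaved to the datum by \eqref{G2}, followed by integration against $\lambda^{q/2-1}$ with a preliminary truncation to guarantee a priori finiteness. One aside in your write-up is inaccurate, though it does not affect your argument: the ``lazy'' $L^2$-Chebyshev treatment of $G_B$ does \emph{not} destroy the proof, because in the standard two-parameter good-$\lambda$ scheme the datum is thresholded at $\gamma\lambda$ with $\gamma$ a free small parameter chosen after $A$, so the feedback coefficient is $C\gamma A^{q/2-1}$, made small by shrinking $\gamma$; this is in fact how Shen and Geng argue, and it avoids your Fubini--Young step (which is also correct, just an alternative bookkeeping of the same term). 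With that correction, and granting the routine adjustments you defer (boundary-centered replacement balls so the covering only uses balls admissible in the hypotheses, and the doubling/measure-density property of Lipschitz domains for the localized maximal operator), your proposal is a sound proof matching the cited one.
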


\begin{proof}[\textbf{Proof of Theorem \ref{T1}}]
	Given any ball $B$ with either $2B \subset \Omega$ or $B$ centers on $\Gamma$, let $\varphi\in C_c^\infty(8B)$ is a cut-off function such that $0\le \varphi \le 1$ and
	\begin{equation*}
	\varphi =
	\begin{cases}
	& 1 \quad \text{ on } 4B\\
	& 0 \quad \text{ outside } 8B
	\end{cases}
	\end{equation*}
	and we decompose $(\vu,\pi)=(\bm{v},\pi_1)+(\bm{w},\pi_2)$ where $(\bm{v},\pi_1),(\bm{w},\pi_2)$ satisfy
	\begin{equation}
	\label{01}
	\left\{
	\begin{aligned}
	-\Delta\bm{v} +\nabla \pi_1= \div \ (\varphi\mathbb{F}),\quad \div\;\bm{v}=0 \ &\text{ in $\Omega$}\\
	\bm{v}\cdot\vn=0, \quad 2\left[(\DT\bm{v})\vn \right]_{\vt}+\alpha\bm{v}_{\vt}=- [(\varphi\mathbb{F})\vn]_{\vt} \ &\text{ on $\Gamma$}
	\end{aligned}
	\right.
	\end{equation}
	and
	\begin{equation}
	\label{02}
	\left\{
	\begin{aligned}
	-\Delta\bm{w} +\nabla \pi_2= \div \ ((1-\varphi)\mathbb{F}),\quad \div\;\bm{w}=0 \ &\text{ in $\Omega$}\\
	\bm{w}\cdot\vn=0, \quad 2\left[(\DT\bm{w})\vn \right]_{\vt}+\alpha\bm{w}_{\vt}=- [((1-\varphi)\mathbb{F})\vn]_{\vt} \ &\text{ on $\Gamma$}.
	\end{aligned}
	\right.
	\end{equation}
	Multiplying (\ref{01}) by $\bm{v}$ and integrating by parts, we get,
	\begin{equation*}
	\int\displaylimits_\Omega{|\nabla \bm{v}|^2} + \int\displaylimits_\Gamma {\alpha |\bm{v}_{\vt}|^2} =- \int\displaylimits_\Omega{\varphi \mathbb{F}: \nabla \bm{v}}
	\end{equation*}
	which gives
	\begin{equation}
	\label{L2.}
	\|\nabla \bm{v}\|_{\mathbb{L}^{2}(\Omega)} \le \|\varphi\mathbb{F}\|_{\mathbb{L}^{2}(\Omega)} .
	\end{equation}
	
	{\bf (i)} First we consider the case $4B\subset \Omega$. We want to apply Lemma \ref{G0} with $G = |\nabla \vu|, G_B = |\nabla
	\bm{v}|$ and $ R_B = |\nabla \bm{w}|$. It is easy to see that
	$$
	|G| \le |G_B| + |R_B| .
	$$
	Now we verify (\ref{G1}) and (\ref{G2}). For that, using (\ref{L2.}) we get,
	\begin{equation*}
	\begin{aligned}
	\frac{1}{|2B|} \int\displaylimits_{2B}{|G_B|^2} = \frac{1}{|2B|} \int\displaylimits_{2B}{|\nabla \bm{v}|^2} \le \frac{1}{| 2B\cap\Omega|} \int\displaylimits_{ \Omega}{|\nabla \bm{v}|^2}
	& \le \frac{1}{|2B\cap \Omega|} \int\displaylimits_{\Omega}{|\varphi \mathbb{F}|^2} \\
	& \le \frac{C(\Omega)}{|8B\cap \Omega|} \int\displaylimits_{8B \cap\Omega}{| \mathbb{F}|^2}
	\end{aligned}
	\end{equation*}
	where in the last inequality, we used that $|8B\cap\Omega| \le |\Omega|$. This gives the estimate (\ref{G2}).
	
	Next, from (\ref{02}), we observe that $-\Delta\bm{w} +\nabla \pi_2= \bm{0},\ \div\;\bm{w}=0 \text{ in } 4B$. Hence, by the weak reverse H\"{o}lder inequality in Lemma \ref{Lrhi} (using $2B$ instead of $B$), we have
	\begin{equation*}
	\left( \frac{1}{|2B|} \int\displaylimits_{2B}{|\nabla \bm{w}|^p}\right) ^{1/p} \le C_p(\Omega) \left( \frac{1}{|4B|}\int\displaylimits_{4B }{|\nabla \bm{w}|^2}\right) ^{1/2} 
	\end{equation*}
	which implies together with (\ref{L2.}),
	\begin{equation*}
	\begin{aligned}
	\left( \frac{1}{|2B|} \int\displaylimits_{2B}{|R_B|^p}\right) ^{1/p}
	&\le C_p(\Omega) \left( \frac{1}{|4B|}\int\displaylimits_{4B }{|\nabla \bm{w}|^2}\right) ^{1/2} \\
	& \le C_p(\Omega) \left[ \left( \frac{1}{|4B|}\int\displaylimits_{4B }{|\nabla \vu|^2}\right) ^{1/2} + \left( \frac{1}{|4B|}\int\displaylimits_{4B }{|\nabla \bm{v}|^2}\right) ^{1/2} \right] \\
	& \le C_p(\Omega) \left( \frac{1}{|4B|}\int\displaylimits_{4B }{|G|^2}\right) ^{1/2} +  \left( \frac{1}{|8B\cap \Omega|}\int\displaylimits_{8B \cap\Omega}{|\mathbb{F}|^2}\right) ^{1/2}.
	\end{aligned}
	\end{equation*}
	This gives (\ref{G1}). So from (\ref{G3}), it follows that
	\begin{equation*}
	\left(\frac{1}{|\Omega|}\int\displaylimits_{ \Omega}{|\nabla \vu|^q} \right) ^{1/q} \le C_p(\Omega) \left[ \left(\frac{1}{|\Omega|}\int\displaylimits_{\Omega}{|\nabla \vu|^2}\right)^{1/2} + \left(\frac{1}{|\Omega|}\int\displaylimits_{\Omega}{|\mathbb{F}|^q}\right)^{1/q}\right]
	\end{equation*}
	for any $2<q<p$ where $C_p(\Omega)>0$ does not depend on $\alpha$.
	
	Because of the self-improving property of the weak Reverse H\"{o}lder condition (\ref{rhi}), the above estimate holds for any $q\in(2,\tilde{p})$ for some $\tilde{p}>p$ also and in particular, for $q=p$, which along with the $\bm{L}^2$-estimate (\ref{51}) implies (\ref{Lp}).\\
	
	{\bf (ii)} Next consider $B$ centers on $\Gamma$. We apply again Lemma \ref{G0} with $G=|\nabla \vu|, G_B =|\nabla \bm{v}|$ and $ R_B=|\nabla \bm{w}|$, which yields
	$|G| \le |G_B| + |R_B|
	$
	and (\ref{G2}) as before. Also now $\bm{w}$ satisfies the problem
	\begin{equation*}
	\left\{
	\begin{aligned}
	-\Delta \bm{w} +\nabla \pi_2 = \bm{0} ,\quad \div \ \bm{w} =0 \quad &\text{ in } \ 4B\cap\Omega\\
	\bm{w}\cdot \vn =0, \quad \alpha \bm{w}_{\vt} +2 [(\DT\bm{w})\vn]_{\vt} = \bm{0}\quad &\text{ on } 4B \cap\Gamma.
	\end{aligned}
	\right.
	\end{equation*}
	So by the weak reverse H\"{o}lder inequality in Lemma \ref{Lrhi} and the estimate (\ref{L2.}), we obtain
	(\ref{G1}) as in the previous case. Thus we get from (\ref{G3}),
	$$
	\left(\frac{1}{|\Omega|}\int\displaylimits_{ \Omega}{|\nabla \vu|^q} \right) ^{1/q} \le C_p(\Omega) \left[ \left(\frac{1}{|\Omega|}\int\displaylimits_{\Omega}{|\nabla \vu|^2}\right)^{1/2} + \left(\frac{1}{|\Omega|}\int\displaylimits_{\Omega}{|\mathbb{F}|^q}\right)^{1/q}\right]
	$$
	for any $2<q<p$. This completes the proof together with the $\bm{L}^2$-estimate (\ref{52}).
	\hfill
\end{proof}	

The next proposition will be used to study the complete Stokes problem (\ref{S}). We will improve the following result in Proposition \ref{P2} where we consider data less regular.

\begin{proposition}[{\bf Estimates in \boldmath$\W{1}{p}, p> 2$ for RHS ${f}$}]
\label{P0}
	Let $p>2$, $\bm{f}\in\vL{p}$ and $\alpha\in\Lb{t(p)}$. Then the unique solution $(\vu,\pi)\in\vW{1}{p}\times L^p_0(\Omega)$ of (\ref{S}) with $\mathbb{F}=0$ and $\bm{h} =\bm{0}$, satisfies the following estimates:\\
\textbf{(i)} if $\Omega$ is not axisymmetric, then
	\begin{equation}
	\label{Lp0}
	\|\vu\|_{\vW{1}{p}} + \|\pi\|_{\L{p}} \le C_p(\Omega) \ \|\bm{f} \|_{\vL{p}} 
	\end{equation}
\textbf{(ii)} if $\Omega$ is axisymmetric and $\alpha \ge \alpha_* >0$, then	
	\begin{equation}
	\label{Lp0.}
	\|\vu\|_{\vW{1}{p}} + \|\pi\|_{\L{p}}\le C_p(\Omega,\alpha_*) \ \|\bm{f} \|_{\vL{p}} .
	\end{equation}
\end{proposition}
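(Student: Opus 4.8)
The plan is to deduce Proposition \ref{P0} from the matrix-field estimate of Theorem \ref{T1}. Concretely, I would show that the solution $(\vu,\pi)$ of \eqref{S} driven by $\bm{f}$ (with $\mathbb{F}=0$, $\bm{h}=\bm{0}$) coincides with the solution of \eqref{S} driven by a suitable matrix field $\mathbb{F}\in\mathbb{L}^p(\Omega)$ (with $\bm{f}=0$, $\bm{h}=\bm{0}$), whose norm is controlled by $\|\bm{f}\|_{\vL{p}}$ through a constant that does \emph{not} see $\alpha$. Since Theorem \ref{T1} already supplies the bounds \eqref{Lp}/\eqref{Lp.} for the $\mathbb{F}$-problem with exactly the claimed dependence on $\Omega$ (resp.\ on $\Omega,\alpha_*$), transferring them through this identification yields \eqref{Lp0}/\eqref{Lp0.}; the pressure is then recovered from the momentum equation as in the $L^2$ case.

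The reduction rests on comparing weak formulations. By \eqref{varfor_Stokes_Nbc}, the solution with datum $\bm{f}$ satisfies $2\int_\Omega\DT\vu:\DT\bm{\varphi}+\int_\Gamma\alpha\vu_{\vt}\cdot\bm{\varphi}_{\vt}=\int_\Omega\bm{f}\cdot\bm{\varphi}$, whereas the solution with matrix field $\mathbb{F}$ and $\bm{h}=\bm{0}$ satisfies the same left-hand side equal to $-\int_\Omega\mathbb{F}:\nabla\bm{\varphi}$, for all $\bm{\varphi}\in\vVsolT{p'}$. Hence it is enough to produce $\mathbb{F}\in\mathbb{L}^p(\Omega)$ with $\int_\Omega\bm{f}\cdot\bm{\varphi}=-\int_\Omega\mathbb{F}:\nabla\bm{\varphi}$ for every $\bm{\varphi}\in\vVsolT{p'}$; since such $\bm{\varphi}$ are divergence-free and satisfy $\bm{\varphi}\cdot\vn=0$, an integration by parts shows this amounts to requiring $\div\mathbb{F}=\bm{f}$ in $\Omega$ together with the vanishing of the tangential normal trace $[(\mathbb{F})\vn]_{\vt}=\bm{0}$ on $\Gamma$, and $\|\mathbb{F}\|_{\mathbb{L}^p(\Omega)}\le C(\Omega,p)\|\bm{f}\|_{\vL{p}}$. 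By uniqueness in Corollary \ref{thm_W1p_regularity_Stokes_Nbc}, the two $\vW{1}{p}$-solutions then agree.

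To build $\mathbb{F}$ I would proceed in two $\alpha$-free steps. First solve a vector Poisson problem, e.g.\ $\Delta\bm{w}=\bm{f}$ in $\Omega$ with $\bm{w}=\bm{0}$ on $\Gamma$ (or take the Newtonian potential of the zero-extension of $\bm{f}$), obtaining $\bm{w}\in\vW{2}{p}$ with $\|\bm{w}\|_{\vW{2}{p}}\le C\|\bm{f}\|_{\vL{p}}$ by the Calder\'on--Zygmund theory on the $\HC{1}{1}$ domain; then $\mathbb{F}_1:=\nabla\bm{w}$ gives $\div\mathbb{F}_1=\bm{f}$ and $\|\mathbb{F}_1\|_{\mathbb{L}^p(\Omega)}\le C\|\bm{f}\|_{\vL{p}}$. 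This leaves the residual tangential trace $\bm{h}_0:=[(\mathbb{F}_1)\vn]_{\vt}\in\vWfracb{1-\frac{1}{p}}{p}$, which I would cancel by a second, divergence-free matrix field $\mathbb{F}_2\in\mathbb{L}^p(\Omega)$ with $\div\mathbb{F}_2=0$ and $[(\mathbb{F}_2)\vn]_{\vt}=-\bm{h}_0$, built by a trace lifting adapted to matrix fields; setting $\mathbb{F}:=\mathbb{F}_1+\mathbb{F}_2$ produces the required field.

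The main obstacle is exactly this construction, and within it the divergence-free correction $\mathbb{F}_2$ of the boundary trace: one must realize it in $\mathbb{L}^p$ with norm controlled by $\|\bm{f}\|_{\vL{p}}$ and, crucially, with a constant independent of $\alpha$ (which it is, the construction being a pure lifting/elliptic-regularity statement on $\Omega$ that never refers to the friction coefficient). Once $\mathbb{F}$ is in hand, Proposition \ref{P0} follows from Theorem \ref{T1}, the non-axisymmetric and the axisymmetric ($\alpha\ge\alpha_*>0$) cases inheriting respectively the constants $C_p(\Omega)$ and $C_p(\Omega,\alpha_*)$; the pressure estimate is then obtained as in \eqref{ineq_est_pressure_L2}, from the equation and the gradient bound just established.
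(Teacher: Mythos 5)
Your outer logic is fine: comparing the two weak formulations, invoking uniqueness from Corollary \ref{thm_W1p_regularity_Stokes_Nbc}, transferring the bounds \eqref{Lp}--\eqref{Lp.} of Theorem \ref{T1}, and recovering the pressure from the momentum equation are all legitimate, and the last step is exactly what the paper does in \eqref{pressure1}. The genuine gap is the step you yourself call the ``main obstacle'' and then dismiss as ``a pure lifting/elliptic-regularity statement'': the existence of $\mathbb{F}\in\mathbb{L}^p(\Omega)$ with $\div\,\mathbb{F}=\bm{f}$, $\left[\mathbb{F}\vn\right]_{\vt}=\bm{0}$ on $\Gamma$ and $\|\mathbb{F}\|_{\mathbb{L}^p(\Omega)}\le C(\Omega,p)\|\bm{f}\|_{\vL{p}}$. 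This is not routine, for two concrete reasons. First, a divergence-free correction $\mathbb{F}_2$ cannot have an arbitrarily prescribed normal trace: each row of $\mathbb{F}_2$ is a divergence-free vector field, so each component of $\mathbb{F}_2\vn$ must have zero mean over $\Gamma$; a lifting must exploit the free normal part of $\mathbb{F}_2\vn$ to meet these compatibility conditions, which your sketch never addresses. Second, and more seriously, in case (ii) of the proposition every \emph{symmetric} candidate is obstructed: if $\mathbb{F}$ is symmetric, $\div\,\mathbb{F}=\bm{f}$ and $\left[\mathbb{F}\vn\right]_{\vt}=\bm{0}$, then testing with $\bm{\beta}$ from \eqref{beta} gives
\begin{equation*}
\int\displaylimits_{\Omega}{\bm{f}\cdot\bm{\beta}}
=-\int\displaylimits_{\Omega}{\mathbb{F}:\nabla\bm{\beta}}
+\left\langle \mathbb{F}\vn,\bm{\beta}\right\rangle_{\Gamma}=0,
\end{equation*}
since $\nabla\bm{\beta}$ is antisymmetric ($\DT\bm{\beta}=0$) and the boundary term vanishes because $\bm{\beta}\cdot\vn=0$ and $\left[\mathbb{F}\vn\right]_{\vt}=\bm{0}$. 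For generic $\bm{f}$ this compatibility fails, so in the axisymmetric case the lifting must carry a nontrivial antisymmetric part. This rules out precisely the liftings that ``elliptic regularity'' hands you: any $\mathbb{F}_2=2\DT\bm{v}-\tilde{\pi}\,\mathbb{I}$ built from an auxiliary Stokes solve is symmetric, and the natural one-shot construction $\mathbb{F}=\tilde{\pi}\,\mathbb{I}-2\DT\bm{v}$ via the $\alpha=0$ problem of \cite{AR} is unavailable for the same reason (that problem has a nontrivial kernel and a compatibility condition exactly when $\Omega$ is axisymmetric). So your reduction works cheaply in case (i) but is unproven precisely in the case that makes the proposition interesting.

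The gap is fillable, but only with real work that is absent from the proposal: for instance, after your Step 1 one can take the rows of $\mathbb{F}_2$ to be $\nabla\phi_i$ with $\phi_i$ harmonic and Neumann data $-h_{0,i}+(\bm{c}\cdot\vn)\,n_i$, choosing $\bm{c}\in\R^3$ through the invertibility of $\int_{\Gamma}{\vn\otimes\vn}$ so that the three Neumann compatibility conditions hold; this produces a (non-symmetric) admissible $\mathbb{F}_2$ with $\alpha$-independent bounds, but it requires the $L^p$ Neumann theory on $\HC{1}{1}$ domains and the above adjustment argument, neither of which your text supplies. Note finally that the paper takes a different route altogether: it does not perform any lifting, but reruns the real-variable argument of Theorem \ref{T1} (localization, the Caccioppoli inequality \eqref{11}, the weak reverse H\"older inequality \eqref{rhi}, and Lemma \ref{G0}) with $\bm{f}$ in place of $\div\,\mathbb{F}$ on the right-hand side, and then estimates the pressure as in \eqref{pressure1}. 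Your approach, once the lifting is actually constructed, would be an attractive alternative since it reuses Theorem \ref{T1} as a black box; as written, however, its central step is missing.
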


\begin{proof}
	The result follows using the same argument as in Theorem \ref{T1} and hence we do not repeat it. Note that as $\pi\in L^p_0(\Omega)$,
	\begin{equation}
	\label{pressure1}
\|\pi\|_{\L{p}}\le C \|\nabla \pi \|_{\vW{-1}{p}} \le C \left(\|\vu\|_{\vW{1}{p}} + \|\bm{f}\|_{\vW{-1}{p}} \right) \le C \|\bm{f}\|_{\vL{p}}.	
	\end{equation}
	\hfill
\end{proof}

\begin{proposition}[{\bf Estimates in \boldmath$\W{1}{p}$ for RHS $\mathbb{F}$}]
	\label{P1}
	Let $p\in(1,\infty)$, $\mathbb{F}\in\mathbb{L}^p(\Omega)$ and $\alpha\in\Lb{t(p)}$. Then the solution $(\vu,\pi)\in\vW{1}{p}\times L^p_0(\Omega)$ of (\ref{S}) with $\bm{f}=\bm{0}$ and $\bm{h}= \bm{0}$ satisfies the following estimates:\\
\textbf{(i)} if $\Omega$ is not axisymmetric, then
	\begin{equation}
	\label{Lp2}
	\|\vu\|_{\vW{1}{p}} + \|\pi\|_{\L{p}}\le C_p(\Omega) \ \|\mathbb{F} \|_{\mathbb{L}^p(\Omega)} 
	\end{equation}
\textbf{(ii)} if $\Omega$ is axisymmetric and $\alpha\ge \alpha_*>0$, then
\begin{equation}
\label{Lp2..}
\|\vu\|_{\vW{1}{p}}+ \|\pi\|_{\L{p}} \le C_p(\Omega,\alpha_*) \ \|\mathbb{F} \|_{\mathbb{L}^p(\Omega)} .
\end{equation}
\end{proposition}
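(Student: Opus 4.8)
The plan is to treat the two ranges $p\ge 2$ and $1<p<2$ separately: the former by combining the already-established gradient bound with a De~Rham pressure estimate, and the latter by a duality argument that transfers the $\alpha$-independent bounds from the conjugate range $p'>2$.

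For $p\ge 2$ I would first invoke the velocity estimate directly. Theorem~\ref{T1} gives $\|\vu\|_{\vW{1}{p}}\le C_p(\Omega)\|\mathbb{F}\|_{\mathbb{L}^p(\Omega)}$ when $p>2$ (resp. $C_p(\Omega,\alpha_*)$ in the axisymmetric case), while for $p=2$ the same bound is exactly \eqref{51}--\eqref{52}. The pressure term is then controlled as in Proposition~\ref{P0}: since $-\Delta\vu+\nabla\pi=\div\,\mathbb{F}$ and $\pi\in L^p_0(\Omega)$, one has $\|\pi\|_{\L{p}}\le C\|\nabla\pi\|_{\vW{-1}{p}}\le C\bigl(\|\vu\|_{\vW{1}{p}}+\|\mathbb{F}\|_{\mathbb{L}^p(\Omega)}\bigr)$, and the previous line closes the estimate.

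For $1<p<2$ (so $p'>2$) the key observation is that the bilinear form $a$ in \eqref{a} is symmetric, so the adjoint of the Stokes operator is again the same Stokes operator. I would bound $\|\nabla\vu\|_{\mathbb{L}^p(\Omega)}$ and $\|\vu\|_{\vL{p}}$ separately by duality. Given a test matrix $\mathbb{G}\in\mathbb{L}^{p'}(\Omega)$, let $(\bm{w},\sigma)\in\vW{1}{p'}\times L^{p'}_0(\Omega)$ solve the Stokes problem \eqref{S} with right-hand side $\div\,\mathbb{G}$ and $\bm{f}=\bm{0},\bm{h}=\bm{0}$; since $p'>2$, Theorem~\ref{T1} yields $\|\bm{w}\|_{\vW{1}{p'}}\le C_{p'}(\Omega)\|\mathbb{G}\|_{\mathbb{L}^{p'}(\Omega)}$ with a constant not depending on $\alpha$ (only on the lower bound $\alpha_*$ in the axisymmetric case). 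Testing the variational formulation \eqref{varfor_Stokes_Nbc} for $\vu$ against $\bm{w}\in\vVsolT{p'}$ and the analogous one for $\bm{w}$ against $\vu\in\vVsolT{p}$, the symmetry $a(\vu,\bm{w})=a(\bm{w},\vu)$ gives
\[
\int\displaylimits_{\Omega}\mathbb{G}:\nabla\vu=\int\displaylimits_{\Omega}\mathbb{F}:\nabla\bm{w},
\]
whence $\left|\int\displaylimits_{\Omega}\mathbb{G}:\nabla\vu\right|\le C_{p'}(\Omega)\,\|\mathbb{F}\|_{\mathbb{L}^p(\Omega)}\,\|\mathbb{G}\|_{\mathbb{L}^{p'}(\Omega)}$. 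Taking the supremum over $\mathbb{G}$ bounds $\|\nabla\vu\|_{\mathbb{L}^p(\Omega)}$. Repeating the computation with a vector field $\bm{g}\in\vL{p'}$ as right-hand side and using Proposition~\ref{P0} for the corresponding $p'$-solution yields $\|\vu\|_{\vL{p}}\le C_{p'}(\Omega)\|\mathbb{F}\|_{\mathbb{L}^p(\Omega)}$. Adding the two bounds controls $\|\vu\|_{\vW{1}{p}}$, and the pressure is then handled exactly as for $p\ge2$.

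The main obstacle is precisely the $\alpha$-independence of the constant for $p<2$: the naive route through the inf-sup estimate \eqref{S42} produces a constant $\gamma(\Omega,p,\alpha)$ that may degenerate with $\alpha$, so one cannot argue for $p<2$ in isolation. The duality trick circumvents this by importing the $\alpha$-uniform bounds already established for $p'>2$ through the weak reverse H\"older inequality. The only points requiring care are the admissibility of $\bm{w}$ and $\vu$ as mutual test functions --- guaranteed since $\bm{w}\in\vVsolT{p'}$, $\vu\in\vVsolT{p}$ and the boundary integral in $a$ is well defined by Lemma~\ref{23} --- and, in the axisymmetric case, checking that the common lower bound $\alpha\ge\alpha_*>0$ feeds both the primal and the adjoint problems, so that the resulting constant reads $C_p(\Omega,\alpha_*)$.
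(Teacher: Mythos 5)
Your proposal is correct and follows essentially the same route as the paper: for $p>2$ the velocity bound is taken from Theorem \ref{T1} (with \eqref{51}--\eqref{52} covering $p=2$) and the pressure from the Ne\v{c}as-type estimate \eqref{pressure1}, while for $1<p<2$ the paper runs exactly your duality argument, bounding $\|\nabla\vu\|_{\vL{p}}$ against adjoint solutions with right-hand side $\div\,\mathbb{G}$ via Theorem \ref{T1} and $\|\vu\|_{\vL{p}}$ against adjoint solutions with right-hand side $\bm{\varphi}\in\vL{p'}$ via Proposition \ref{P0}, the identity $\int_{\Omega}\mathbb{G}:\nabla\vu=\int_{\Omega}\mathbb{F}:\nabla\bm{w}$ being obtained, as you note, from the symmetry of the form $a$ applied to the two weak formulations. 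No gaps; your explicit remark that the inf-sup constant $\gamma(\Omega,p,\alpha)$ cannot be used because it degenerates in $\alpha$ is precisely the reason the paper resorts to this duality.
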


\begin{proof}
	For $p>2$, the estimates (\ref{Lp2}) and (\ref{Lp2..}) are proved in Theorem \ref{T1}. Now suppose that $1<p<2$. We prove it in two steps. Also without loss of generality, we consider $\Omega$ is not axisymmetric.
	
	{\bf (i)} We first show that
	\begin{equation}
	\label{S6s2E3}
	\|\nabla \vu\|_{\vL{p}}\le C_p(\Omega)\|\mathbb{F}\|_{\mathbb{L}^p(\Omega)}.
	\end{equation}
	For that, we write
	\begin{equation}
	\label{S6s2E2}
	\|\nabla \vu\|_{\vL{p}} = \underset{0\neq \mathbb{G}\in\mathbb{L}^{p'}(\Omega)}{\sup}\frac{|\int\displaylimits_\Omega{\nabla\vu:\mathbb{G} }|}{\|\mathbb{G}\|_{\mathbb{L}^{p'}(\Omega)}}.
	\end{equation}
	Now, for any matrix $\mathbb{G}\in (\mathcal{D}(\Omega))^{3\times 3}$, let $\left( \bm{v}, \tilde{\pi}\right) \in\vW{1}{p'}\times L^{p'}_0(\Omega)$ be the solution of
	\begin{equation*}
	\begin{cases}
	-\Delta \bm{v} + \nabla \tilde{\pi}= \div \ \mathbb{G}, \quad
	\div\;\bm{v}=0 \ &\text{ in $\Omega$}\\
	\bm{v}\cdot\vn=0, \quad \left[(2\DT\bm{v}+\mathbb{G})\vn\right]_{\vt}+\alpha\bm{v}_{\vt}=\bm{0} \ &\text{ on $\Gamma$}.	
	\end{cases}
	\end{equation*}
	Since $p'>2$, from Theorem \ref{T1}, we have
	$$
	\|\bm{v}\|_{\vW{1}{p'}} \le C_p(\Omega) \|\mathbb{G}\|_{\mathbb{L}^{p'}(\Omega)} .
	$$
	Also if $\vu\in\vW{1}{p}$ is the solution of (\ref{S}) with $\bm{f}=\bm{0}, \bm{h}= \bm{0}$, using the weak formulation of the problems satisfied by $\vu$ and $\bm{v}$, we obtain
	$$
	-\int\displaylimits_\Omega{\mathbb{F} : \nabla\bm{v}} = 2\int\displaylimits_\Omega{\DT\vu : \DT\bm{v}} + \int\displaylimits_\Gamma{\alpha \vu_{\vt} \cdot \bm{v}_{\vt}} =- \int\displaylimits_\Omega{\mathbb{G} : \nabla\vu}
	$$
	which gives,
	$$
	|\int\displaylimits_\Omega{\bm{G} : \nabla\vu}| \le \|\mathbb{F}\|_{\mathbb{L}^p(\Omega)} \| \nabla\bm{v}\|_{\vL{p'}} \le C_p(\Omega) \|\mathbb{F}\|_{\mathbb{L}^p(\Omega)} \|\mathbb{G}\|_{\mathbb{L}^{p'}(\Omega)}
	$$
	and hence (\ref{S6s2E3}) follows from (\ref{S6s2E2}).
	
	{\bf (ii)} Next we prove that 
	\begin{equation}
	\label{Lp2.}
	\|\vu\|_{\vL{p}} \le C_p(\Omega) \|\mathbb{F}\|_{\mathbb{L}^p(\Omega)}.
	\end{equation}
	For that, we write similarly
	\begin{equation}
	\label{S6s2E4}
		\|\vu\|_{\vL{p}} = \underset{0\neq \bm{\varphi} \in\vL{p'}}{\sup}\frac{|\int\displaylimits_\Omega{ \vu\cdot\bm{\varphi} }|}{\|\bm{\varphi}\|_{\vL{p'}}}.
	\end{equation}
	 From Proposition \ref{P0}, we get for any $\bm{\varphi}\in\vL{p'}$, the unique solution $\left( \bm{w},\tilde{\pi}\right) \in \vW{1}{p'}\times L^{p'}_0(\Omega)$ of the problem
	\begin{equation}
	\label{S6e0}
	\begin{cases}
	-\Delta \bm{w} + \nabla \tilde{\pi}= \bm{\varphi}, \quad
	\div\;\bm{w}=0 \ &\text{ in $\Omega$}\\
	\bm{w}\cdot\vn=0, \quad 2\left[(\DT\bm{w})\vn\right]_{\vt}+\alpha\bm{w}_{\vt}=\bm{0} \ &\text{ on $\Gamma$}
	\end{cases}
	\end{equation}
	satisfies
	\begin{equation}
	\label{S6e1}
	\|\bm{w}\|_{\vW{1}{p'}}\le C_p(\Omega) \ \|\bm{\varphi}\|_{\vL{p'}}.
	\end{equation}
	Therefore using the weak formulation of the problems satisfied by $\vu$ and $\bm{w}$, we get,
	\begin{align*}
	\int\displaylimits_\Omega{\vu\cdot \bm{\varphi}} = \int\displaylimits_\Omega{ \vu\cdot \left(- \Delta \bm{w}+ \nabla \tilde{\pi}\right) }& = 2\int\displaylimits_\Omega{ \DT\vu: \DT\bm{w}} - 2\int\displaylimits_\Gamma{\vu \cdot (\DT\bm{w})\vn}\\
	&= 2\int\displaylimits_\Omega{\DT\vu : \DT\bm{w}} + \int\displaylimits_\Gamma{\alpha \vu_{\vt}\cdot \bm{w}_{\vt}} = -\int\displaylimits_\Omega{\mathbb{F} : \nabla\bm{w}}
	\end{align*}
	which implies (\ref{Lp2.}) from the relation (\ref{S6s2E4}), along with (\ref{S6e1}).
	
	 The bound on pressure follows as in the previous proposition.
	This completes proof.
	\hfill
\end{proof}

Now we study the complete problem (\ref{S}).

\begin{theorem}[{\bf Complete estimates in \boldmath$\W{1}{p}$}]
	\label{T2}
	Let $p\in(1,\infty)$ and
	$$
	\bm{f}\in \vL{r(p)}, \mathbb{F}\in\mathbb{L}^p(\Omega), \bm{h}\in \vWfracb{-\frac{1}{p}}{p}, \alpha\in\Lb{t(p)}.
	$$
	Then the solution $(\vu,\pi)\in\vW{1}{p}\times L^p_0(\Omega)$ of (\ref{S}) satisfies the following estimates:\\
\textbf{(i)} if $\Omega$ is not axisymmetric, then
	\begin{equation}
	\label{Lp1}
	\|\vu\|_{\vW{1}{p}} +\|\pi\|_{\L{p}}\le C_p(\Omega) \left( \|\bm{f} \|_{\vL{r(p)}} + \|\mathbb{F}\|_{\mathbb{L}^p(\Omega)} + \|\bm{h}\|_{\vWfracb{-\frac{1}{p}}{p}}\right) 
	\end{equation}
\textbf{(ii)} if $\Omega$ is axisymmetric and $\alpha \ge \alpha_* >0$, then
\begin{equation}
\label{Lp1.}
\|\vu\|_{\vW{1}{p}} +\|\pi\|_{\L{p}}\le C_p(\Omega,\alpha_*) \left( \|\bm{f} \|_{\vL{r(p)}} + \|\mathbb{F}\|_{\mathbb{L}^p(\Omega)} + \|\bm{h}\|_{\vWfracb{-\frac{1}{p}}{p}}\right) .
\end{equation}
\end{theorem}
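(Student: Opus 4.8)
The plan is to deduce the complete estimate from the pure-$\mathbb{F}$ estimate of Proposition \ref{P1} by a duality argument, exploiting that the bilinear form $a$ in \eqref{a} is symmetric, so that the adjoint of problem \eqref{S} is again a Stokes--Navier problem of the same type. The key observation is that, since $t(p)=t(p')$, the coefficient $\alpha\in\Lb{t(p)}$ is simultaneously admissible for the exponent $p$ and for $p'$; hence Proposition \ref{P1} may be applied verbatim at the conjugate exponent, and crucially with a constant that is independent of $\alpha$ (respectively of the form $C_p(\Omega,\alpha_*)$ in the axisymmetric case).

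First I would estimate $\|\nabla\vu\|_{\vL{p}}$ by testing against matrices. Writing $\|\nabla\vu\|_{\vL{p}}=\sup\{|\int_\Omega\nabla\vu:\mathbb{G}|:\mathbb{G}\in\mathbb{L}^{p'}(\Omega),\ \|\mathbb{G}\|_{\mathbb{L}^{p'}(\Omega)}=1\}$, for each such $\mathbb{G}$ I let $\bm{w}\in\vVsolT{p'}$ be the solution of \eqref{S} at exponent $p'$ with data $\mathbb{F}=\mathbb{G}$ and $\bm{f}=\bm{0},\bm{h}=\bm{0}$, so that Proposition \ref{P1} yields $\|\bm{w}\|_{\vW{1}{p'}}\le C\,\|\mathbb{G}\|_{\mathbb{L}^{p'}(\Omega)}$ with $C$ uniform in $\alpha$. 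Inserting $\bm{\varphi}=\bm{w}$ into the variational formulation \eqref{varfor_Stokes_Nbc} for $\vu$ and $\bm{\psi}=\vu$ into the one for $\bm{w}$, and subtracting, the symmetric terms $2\int_\Omega\DT\vu:\DT\bm{w}+\int_\Gamma\alpha\vu_{\vt}\cdot\bm{w}_{\vt}$ cancel and give the single identity $-\int_\Omega\mathbb{G}:\nabla\vu=\int_\Omega\bm{f}\cdot\bm{w}-\int_\Omega\mathbb{F}:\nabla\bm{w}+\langle\bm{h},\bm{w}\rangle_\Gamma$. Bounding the right-hand side by Hölder and the duality pairing on $\Gamma$, and using the embeddings $\vW{1}{p'}\hookrightarrow\vL{r(p)'}$ and $\vW{1}{p'}\hookrightarrow\vWfracb{1}{p}$ for the trace (the borderline relations $(p')^{*}=r(p)'$ and $1-1/p'=1/p$ are exactly what make these pairings close), I obtain $|\int_\Omega\mathbb{G}:\nabla\vu|\le C(\|\bm{f}\|_{\vL{r(p)}}+\|\mathbb{F}\|_{\mathbb{L}^p(\Omega)}+\|\bm{h}\|_{\vWfracb{-1}{p}})\,\|\mathbb{G}\|_{\mathbb{L}^{p'}(\Omega)}$, whence the desired gradient bound, uniform in $\alpha$.

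It then remains to recover the full $\vW{1}{p}$-norm and the pressure. Since $\vu\in\vVsolT{p}$ and the only constant vector field with vanishing normal trace on $\Gamma$ is $\bm{0}$, the generalized Poincar\'e inequality on $\vVsolT{p}$ gives $\|\vu\|_{\vL{p}}\le C(\Omega,p)\,\|\nabla\vu\|_{\vL{p}}$ with a constant that does not involve $\alpha$. For the pressure I would use $\nabla\pi=\bm{f}+\div\mathbb{F}+\Delta\vu\in\vW{-1}{p}$ together with the $\alpha$-independent estimate $\|\pi\|_{\L{p}}\le C\,\|\nabla\pi\|_{\vW{-1}{p}}$ valid for $\pi\in L^p_0(\Omega)$, exactly as in \eqref{pressure1}; here I again invoke $\vL{r(p)}\hookrightarrow\vW{-1}{p}$. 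Collecting these bounds yields \eqref{Lp1}; in the axisymmetric case with $\alpha\ge\alpha_*>0$ the same computation, now using part (ii) of Proposition \ref{P1} for $\bm{w}$, produces the constant $C_p(\Omega,\alpha_*)$ of \eqref{Lp1.}.

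The main obstacle is entirely upstream: all the $\alpha$-uniformity rests on Proposition \ref{P1}, itself built on Theorem \ref{T1} and the weak reverse H\"older inequality \eqref{rhi}, and the present argument merely transports that uniformity through a self-adjoint duality. Consequently the delicate points here are not analytic but bookkeeping: one must check that the dual problem is well posed at $p'$ (guaranteed by $t(p)=t(p')$), and that the Sobolev and trace exponents match at the borderline, so that $\bm{f}\in\vL{r(p)}$ and $\bm{h}\in\vWfracb{-1}{p}$ pair against $\bm{w}\in\vW{1}{p'}$ with no loss. I expect verifying these borderline embeddings, and the $\alpha$-free Poincar\'e inequality on $\vVsolT{p}$, to be the only steps requiring genuine care.
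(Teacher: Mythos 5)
Your proposal is correct, but it is organized along a genuinely different route than the paper's own proof, so it is worth comparing the two. The paper proves Theorem \ref{T2} by superposition: it writes $(\vu,\pi)=(\bm{u}_1,\pi_1)+(\bm{u}_2,\pi_2)$, where $\bm{u}_1$ solves \eqref{S} with data $\mathbb{F}$ alone (handled by Proposition \ref{P1}) and $\bm{u}_2$ solves \eqref{S} with data $(\bm{f},\bm{h})$ alone (handled by Proposition \ref{P2}); Proposition \ref{P2} is itself proved by two separate dualities, one against matrices $\mathbb{G}\in\mathbb{L}^{p'}(\Omega)$ for $\|\nabla\vu\|_{\vL{p}}$ (resting on Proposition \ref{P1} at the exponent $p'$) and a second one against vector fields $\bm{\varphi}\in\vL{p'}$ for $\|\vu\|_{\vL{p}}$ (resting on Proposition \ref{P0}). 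You instead run a single duality carrying all three data $\bm{f},\mathbb{F},\bm{h}$ simultaneously --- legitimate because the form \eqref{a} is symmetric, $t(p)=t(p')$, and both $\bm{w}\in\vVsolT{p'}$ and $\vu\in\vVsolT{p}$ are admissible test functions in each other's formulations, so your identity $-\int_\Omega\mathbb{G}:\nabla\vu=\int_\Omega\bm{f}\cdot\bm{w}-\int_\Omega\mathbb{F}:\nabla\bm{w}+\left\langle \bm{h},\bm{w}\right\rangle_\Gamma$ is exactly the mechanism of Proposition \ref{P2} with the $\mathbb{F}$-term retained --- and you replace the paper's second duality entirely by the Poincar\'e inequality $\|\vu\|_{\vL{p}}\le C(\Omega,p)\|\nabla\vu\|_{\vL{p}}$ for fields with $\vu\cdot\vn=0$ on $\Gamma$, whose constant is trivially $\alpha$-free; this inequality is sound (the paper itself invokes it in Lemma \ref{Lpoincare}), though it is essential, as you implicitly do, to state it for the full gradient rather than for $\DT\vu$, since the strain-tensor version fails on axisymmetric domains. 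Your borderline exponent checks ($(p')^{*}=(r(p))'$, trace of $\vW{1}{p'}$ into $\vWfracb{\frac{1}{p}}{p'}$) and the Ne\v{c}as pressure bound as in \eqref{pressure1} are all valid, and in the axisymmetric case part (ii) of Proposition \ref{P1} indeed propagates the constant $C_p(\Omega,\alpha_*)$. What your route buys is economy and transparency: given Proposition \ref{P1}, the theorem follows in one stroke, with $\alpha$-dependence entering at a single point, and Proposition \ref{P0} is not needed at all. What the paper's route buys is modularity: Propositions \ref{P0} and \ref{P2} are stated separately because their arguments are reused elsewhere, notably in the $\alpha$-independent inf-sup condition of Theorem \ref{P3}.
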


To prove the above theorem, we also need the following proposition:
\begin{proposition}[{\bf Estimates in \boldmath$\W{1}{p}$ with RHS $f$ and $h$}]
	\label{P2}
	Let $p\in(1,\infty)$ and
	$$
	\bm{f}\in \vL{r(p)}, \bm{h}\in \vWfracb{-\frac{1}{p}}{p}, \alpha\in\Lb{t(p)}.
	$$
	Then the solution $(\vu,\pi)\in\vW{1}{p}\times L^p_0(\Omega)$ of (\ref{S}) with $\mathbb{F}=0$ satisfies the following estimates:\\
\textbf{(i)} if $\Omega$ is not axisymmetric, then
	\begin{equation}
	\label{Lp3}
	\|\vu\|_{\vW{1}{p}} +\|\pi\|_{\L{p}}\le C_p(\Omega) \left( \|\bm{f} \|_{\vL{r(p)}} + \|\bm{h}\|_{\vWfracb{-\frac{1}{p}}{p}}\right) 
	\end{equation}
\textbf{(ii)} if $\Omega$ is axisymmetric and $\alpha\ge \alpha_* >0$, then
\begin{equation}
\label{Lp3.}
\|\vu\|_{\vW{1}{p}} +\|\pi\|_{\L{p}}\le C_p(\Omega,\alpha_*) \left( \|\bm{f} \|_{\vL{r(p)}} + \|\bm{h}\|_{\vWfracb{-\frac{1}{p}}{p}}\right) .
\end{equation}
\end{proposition}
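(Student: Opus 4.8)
The plan is to treat the contributions of $\bm f$ and $\bm h$ separately and add them by linearity, reducing each to the estimates already proved for the $\mathbb F$-problem (Proposition \ref{P1}, valid for all $p$) and for the $\bm f$-problem with $\vL{p}$-data (Proposition \ref{P0}, valid for $p>2$) by a duality argument that exploits the symmetry of the bilinear form $a$ in \eqref{a}. The key structural point is that every reference estimate carries a constant $C_p(\Omega)$ in the non-axisymmetric case and $C_p(\Omega,\alpha_*)$ in the axisymmetric case with $\alpha\ge\alpha_*>0$, so the duality transmits the $\alpha$-uniformity to the final bounds; I will only describe the non-axisymmetric case \eqref{Lp3}, since \eqref{Lp3.} follows verbatim.

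First I would handle the pure $\bm f$-problem ($\bm h=\bm 0$, $\mathbb F=0$). Observe that $\bm f\in\vL{r(p)}$ is strictly weaker than $\bm f\in\vL{p}$, so Proposition \ref{P0} does not apply directly; instead I dualize. For the gradient I write $\|\nabla\vu\|_{\vL{p}}=\sup_{\mathbb G}|\int_\Omega\nabla\vu:\mathbb G|\,/\,\|\mathbb G\|_{\mathbb{L}^{p'}(\Omega)}$, solve \eqref{S} with right-hand side $\mathbb F=\mathbb G$ at the exponent $p'$ to get $\bm v\in\vW{1}{p'}$ with $\|\bm v\|_{\vW{1}{p'}}\le C\|\mathbb G\|_{\mathbb{L}^{p'}(\Omega)}$ (Proposition \ref{P1}), and then use the symmetry of $a$ together with the two weak formulations \eqref{varfor_Stokes_Nbc} to obtain $\int_\Omega\mathbb G:\nabla\vu=-\int_\Omega\bm f\cdot\bm v$. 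Hence $|\int_\Omega\mathbb G:\nabla\vu|\le\|\bm f\|_{\vL{r(p)}}\|\bm v\|_{\vL{(r(p))'}}\le C\|\bm f\|_{\vL{r(p)}}\|\bm v\|_{\vW{1}{p'}}$ by the Sobolev embedding $\vW{1}{p'}\hookrightarrow\vL{(r(p))'}$ (recall $\tfrac1{(r(p))'}=\tfrac23-\tfrac1p=\tfrac1{(p')^{*}}$), giving $\|\nabla\vu\|_{\vL{p}}\le C\|\bm f\|_{\vL{r(p)}}$ for all $p\in(1,\infty)$.

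For the $\vL{p}$-norm of $\vu$ I would dualize against $\bm\varphi\in\vL{p'}$, solve the $\bm f$-problem with data $\bm f=\bm\varphi$ at the exponent $p'$ to produce $\bm w$, and use symmetry again to get $\int_\Omega\vu\cdot\bm\varphi=\int_\Omega\bm f\cdot\bm w$. The bound $\|\bm w\|_{\vW{1}{p'}}\le C\|\bm\varphi\|_{\vL{p'}}$ is Proposition \ref{P0} when $p'>2$ (and Theorem \ref{L^2estimate} when $p'=2$), so $\|\vu\|_{\vL{p}}\le C\|\bm f\|_{\vL{r(p)}}$ follows at once for $1<p\le2$. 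For $p>2$ the adjoint exponent $p'<2$, so $\|\bm w\|_{\vW{1}{p'}}\le C\|\bm\varphi\|_{\vL{p'}}$ is precisely the $\bm f$-estimate at an exponent below $2$, which has just been established; closing the range $p\le2$ first and only then bootstrapping to $p>2$ is exactly what breaks the apparent circularity. The pressure is then routine: since $\mathbb F=0$, $\nabla\pi=\bm f+\Delta\vu$ in $\Omega$, and $\pi\in L^p_0(\Omega)$ gives $\|\pi\|_{\L{p}}\le C\|\nabla\pi\|_{\vW{-1}{p}}\le C(\|\bm f\|_{\vW{-1}{p}}+\|\vu\|_{\vW{1}{p}})\le C\|\bm f\|_{\vL{r(p)}}$, using $\vL{r(p)}\hookrightarrow\vW{-1}{p}$.

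The pure $\bm h$-problem ($\bm f=\bm 0$, $\mathbb F=0$) I would treat by the identical scheme. Solving the adjoint $\mathbb G$-problem and using symmetry yields $\int_\Omega\mathbb G:\nabla\vu=-\langle\bm h,\bm v\rangle_\Gamma$; since the trace of $\bm v\in\vW{1}{p'}$ lies in $\vWfracb{\frac{1}{p}}{p'}$, which is in duality with $\vWfracb{-\frac{1}{p}}{p}$, the trace theorem and Proposition \ref{P1} give $|\langle\bm h,\bm v\rangle_\Gamma|\le C\|\bm h\|_{\vWfracb{-\frac{1}{p}}{p}}\|\mathbb G\|_{\mathbb{L}^{p'}(\Omega)}$ and hence the gradient bound for all $p$. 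For $\|\vu\|_{\vL{p}}$ I dualize against $\bm\varphi\in\vL{p'}$, solve the $\bm\varphi$-problem, and obtain $\int_\Omega\vu\cdot\bm\varphi=\langle\bm h,\bm w\rangle_\Gamma$, where the bound $\|\bm w\|_{\vW{1}{p'}}\le C\|\bm\varphi\|_{\vL{p'}}$ is now the $\bm f$-estimate already proved for every exponent; the pressure is controlled as before. Adding the $\bm f$- and $\bm h$-contributions gives \eqref{Lp3}. I expect the single genuine obstacle to be the velocity bound in $\vL{p}$: duality always sends $p$ to $p'$, and for $p>2$ this falls below $2$ where Proposition \ref{P0} is unavailable, so the order of proof ($1<p\le2$ before $p>2$) is essential; the gradient and pressure bounds reduce directly to Proposition \ref{P1} and are routine.
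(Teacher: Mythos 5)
Your proposal is correct and follows essentially the same route as the paper's proof: duality against the $\mathbb{G}$-problem of Proposition \ref{P1} for $\|\nabla\vu\|_{\vL{p}}$, duality against the $\bm{\varphi}$-problem (Proposition \ref{P0} type) for $\|\vu\|_{\vL{p}}$ using the symmetry of the bilinear form and the embedding $\vW{1}{p'}\hookrightarrow\vL{(r(p))'}$, and the Ne\v{c}as inequality for the pressure. The only differences are organizational and harmless: the paper keeps $\bm{f}$ and $\bm{h}$ together in a single pairing instead of splitting by linearity, and it handles the sub-2 dual exponent arising for $p>2$ by rerunning the Proposition \ref{P1} duality argument rather than by your (equivalent) device of closing the range $1<p\le 2$ first and then bootstrapping.
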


\begin{proof}
Without loss of generality, we only consider the case $\Omega$ is not axisymmetric. The proof is similar to that of Proposition \ref{P0} with obvious modification.\\
	{\bf(i)} To show
	\begin{equation}
	\label{Lp6}
	\begin{aligned}
	\|\nabla\vu\|_{\vL{p}} \le  C_p(\Omega) \left( \|\bm{f}\|_{\vL{r(p)}}+ \|\bm{h}\|_{\vWfracb{-\frac{1}{p}}{p}}\right)
	\end{aligned}
	\end{equation}
we write
\begin{equation}
\label{S6s2E5}
	\|\nabla\vu\|_{\vL{p}} = \underset{0\neq \mathbb{G}\in \mathbb{L}^{p'}(\Omega)}{\sup}\frac{\left| \int\displaylimits_{\Omega}{\nabla\vu:\mathbb{G}}\right| }{\|\mathbb{G}\|_{\mathbb{L}^{p'}(\Omega)}}.
\end{equation}
For any matrix $\mathbb{G}\in (\mathcal{D} (\Omega))^{3\times 3}$, let $\left( \bm{v}, \tilde{\pi}\right) \in\vW{1}{p'}\times L^{p'}_0(\Omega)$ be the solution of
	\begin{equation*}
	\begin{cases}
	-\Delta \bm{v} + \nabla \tilde{\pi}= \div \ \mathbb{G}, \quad
	\div\;\bm{v}=0 \ &\text{ in $\Omega$}\\
	\bm{v}\cdot\vn=0, \quad \left[(2\DT\bm{v}+\mathbb{G})\vn\right]_{\vt}+\alpha\bm{v}_{\vt}=\bm{0} \ &\text{ on $\Gamma$}.	
	\end{cases}
	\end{equation*}
which satisfies estimate (by Proposition \ref{P1})
	$$
	\|\bm{v}\|_{\vW{1}{p'}} \le C_p(\Omega) \|\mathbb{G}\|_{\mathbb{L}^{p'}(\Omega)} .
	$$
	Also, if $(\vu,\pi)\in\vW{1}{p}\times L^p_0(\Omega)$ is a solution of (\ref{S}) with $\mathbb{F} = 0$, from the weak formulation of the problems satisfied by $\vu$ and $\bm{v}$, we get
	$$
	-\int\displaylimits_{\Omega}{\mathbb{G}: \nabla\vu} = 2\int\displaylimits_{\Omega}{\DT\vu:\DT\bm{v}} + \int\displaylimits_{\Gamma}{\alpha \vu_{\vt}\cdot \bm{v}_{\vt}} = \int\displaylimits_{\Omega}{\bm{f}\cdot \bm{v}}+\left\langle \bm{h} ,\bm{v}\right\rangle _{\Gamma} .
	$$
	This implies, together with the embedding $\vW{1}{p'}\hookrightarrow \vL{(r(p))'}$ for all $p\in (1,\infty)$ (follows from the definition of $r(p)$),
	\begin{equation*}
	\begin{aligned}
	|\int\displaylimits_\Omega{\mathbb{G}: \nabla\vu}| &\le \|\bm{f}\|_{\vL{r(p)}} \|\bm{v}\|_{\vL{(r(p))'}} + \|\bm{h}\|_{\vWfracb{-\frac{1}{p}}{p}} \|\bm{v}\|_{\vWfracb{\frac{1}{p}}{p'}}\\
	&\le C_p(\Omega) \left( \|\bm{f}\|_{\vL{r(p)}}+ \|\bm{h}\|_{\vWfracb{-\frac{1}{p}}{p}}\right)  \|\bm{v}\|_{\vW{1}{p'}}.
	\end{aligned}
	\end{equation*}
Therefore, (\ref{Lp6}) follows from (\ref{S6s2E5}).
	
{\bf (ii)} Next we prove the following bound as done in (\ref{Lp2.}):
	\begin{equation}
	\label{Lp5}
	\|\vu\|_{\vL{p}} \le C_p(\Omega) \left( \|\bm{f}\|_{\vL{r(p)}}+ \|\bm{h}\|_{\vWfracb{-\frac{1}{p}}{p}}\right)
	\end{equation}
	except that we do not need to assume $p<2$ here as in (\ref{Lp2.}). 
Having
\begin{equation}
	\|\vu\|_{\vL{p}} = \underset{0\neq \bm{\varphi} \in\vL{p'}}{\sup}\frac{|\int\displaylimits_\Omega{ \vu\cdot\bm{\varphi} }|}{\|\bm{\varphi}\|_{\vL{p'}}},
\end{equation}
there exists a unique 
	$\left( \bm{w},\tilde{\pi}\right) \in \vW{1}{p'}\times L^{p'}_0(\Omega)$ of the problem (\ref{S6e0}) for any $ \bm{\varphi} \in\vL{p'}$ satisfying the estimate (\ref{S6e1}) (For $p<2$, the estimate (\ref{S6e1}) can be proved by the exact same argument as in Proposition \ref{P1}). Thus we can write,
	\begin{equation*}
	\int\displaylimits_{\Omega}{\vu \cdot \bm{\varphi}} =\int\displaylimits_{\Omega}{\vu\cdot\left( -\Delta \bm{w} + \nabla \tilde{\pi}\right) } =2 \int\displaylimits_{\Omega}{\DT\vu:\DT\bm{w}} + \int\displaylimits_{\Gamma}{\alpha \vu_{\vt}\cdot \bm{w}_{\vt}} = \int\displaylimits_{\Omega}{\bm{f}\cdot \bm{w} } + \left\langle \bm{h},\bm{w}\right\rangle _{\Gamma} 
	\end{equation*}
	which yields (\ref{Lp5}) as before.
For the pressure estimate, we proceed as in (\ref{pressure1}).
	\hfill
\end{proof}

\begin{proof}[\bf Proof of Theorem  \ref{T2}]
	Let $\bm{u}_1\in\vW{1}{p}$ be the weak solution of
	\begin{equation*}
	\begin{cases}
	-\Delta \bm{u}_1 +\nabla \pi_1=\div\, \mathbb{F},\quad \div\;\bm{u}_1=0 \ &\text{ in $\Omega$}\\
	\bm{u}_1\cdot\vn=0, \quad \left[(2\DT\bm{u}_1+\mathbb{F})\vn\right]_{\vt}+\alpha\bm{u}_{1\vt}=\bm{0} \ &\text{ on $\Gamma$}.
	\end{cases}
	\end{equation*}
	given by Proposition \ref{P1} and $\bm{u}_2\in\vW{1}{p}$ be the weak solution of
	\begin{equation*}
	\begin{cases}
	-\Delta\bm{u}_2 +\nabla \pi_2=\bm{f},\quad \div\;\bm{u}_2=0 \ &\text{ in $\Omega$}\\
	\bm{u}_2\cdot\vn=0, \quad 2\left[(\DT\bm{u}_2)\vn\right]_{\vt}+\alpha\bm{u}_{2\vt}=\bm{h} \ &\text{ on $\Gamma$}.
	\end{cases}
	\end{equation*}
	given by Proposition \ref{P2}. Then $(\vu,\pi)=(\bm{u}_1,\pi_1)+(\bm{u}_2, \pi_2)$ and is the solution of the problem (\ref{S}) which also satisfies the estimate (\ref{Lp1}) and (\ref{Lp1.}).
	\hfill
\end{proof}

\begin{remark}
\label{S6s2R0}
\rm{ Note that it is also possible to deduce uniform estimate (\ref{Lp1}) in the case when $\Omega$ is axisymmetric, $\alpha$ is a constant with no strict positive lower bound $\alpha_*$ and the condition (\ref{50}) is satisfied. Indeed, we may use the $L^2$ estimate (\ref{54}) in (\ref{H1W1p}) and carry forward all consequent results.
}
\end{remark}

In the next result, we improve the dependence of the continuity constant $\gamma$ of the inf-sup condition (\ref{infsup-ineq}) on the parameters and show that it is actually independent of $\alpha$.

\begin{theorem}
\label{P3}
Let $p\in (1,\infty)$ and $ \alpha\in \Lb{t(p)}$. We have the following inf-sup condition:
\begin{equation}
\label{inf_sup}
\underset{\underset{\vu\neq 0}{\vu\in \vVsolT{p}}}{\inf} \ \ \underset{\underset{\bm{\varphi}\neq 0}{\bm{\varphi}\in \vVsolT{p'}}}{\sup} \frac{\left| 2\int\displaylimits_{\Omega}{\DT\bm{u}:\DT\bm{\varphi}}+ \int\displaylimits_{\Gamma}{\alpha \bm{u}_{\vt}\cdot \bm{\varphi}_{\vt}}\right| }{\|\bm{u}\|_{\vVsolT{p}} \ \|\bm{\varphi}\|_{\vVsolT{p'}}} \geq \gamma(\Omega,p)
\end{equation}
when either (i) $\Omega$ is not axisymmetric or (ii) $\Omega$ is axisymmetric and $\alpha \ge \alpha_* >0$.
\end{theorem}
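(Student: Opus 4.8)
The plan is to read \eqref{inf_sup} as a statement about the operator associated with the bilinear form $a$ and to show that the only quantity that could carry an $\alpha$-dependence, namely the norm of its inverse, is already controlled by the uniform estimate of Theorem \ref{T2}. Concretely, let $A\in\mathcal{L}(\vVsolT{p},(\vVsolT{p'})')$ be defined by $\langle A\vu,\bm{\varphi}\rangle=a(\vu,\bm{\varphi})$, where $a$ is the form in \eqref{a}. By Theorem \ref{31} the map $A$ is an isomorphism, its kernel being trivial in both cases considered here (for case (ii) this is precisely the fact that $\bm{\mathcal{T}}_\alpha(\Omega)=\{\bm{0}\}$ whenever $\alpha>0$ on a set of positive surface measure). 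Consequently $\|A\vu\|_{(\vVsolT{p'})'}\ge\|A^{-1}\|^{-1}\|\vu\|_{\vVsolT{p}}$ for every $\vu$, and since $\|A\vu\|_{(\vVsolT{p'})'}=\sup_{\bm{\varphi}\neq 0}|a(\vu,\bm{\varphi})|/\|\bm{\varphi}\|_{\vVsolT{p'}}$, the left-hand side of \eqref{inf_sup} equals $\|A^{-1}\|^{-1}$ exactly; this is the transposed form of the equivalence (i)$\Leftrightarrow$(ii) in Theorem \ref{infsup}. Thus it suffices to prove that $\|A^{-1}\|_{\mathcal{L}((\vVsolT{p'})',\vVsolT{p})}\le C_p(\Omega)$ with a constant independent of $\alpha$, and then set $\gamma(\Omega,p)=1/C_p(\Omega)$.

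To estimate $\|A^{-1}\|$ I would take an arbitrary $\bm{\ell}\in(\vVsolT{p'})'$, extend it by Hahn--Banach to an element of $(\vW{1}{p'})'$ of the same norm, and represent it through the isometric embedding $\bm{\varphi}\mapsto(\bm{\varphi},\nabla\bm{\varphi})$ of $\vW{1}{p'}$ into $\vL{p'}\times\mathbb{L}^{p'}(\Omega)$. This yields $\bm{f}\in\vL{p}$ and $\mathbb{F}\in\mathbb{L}^p(\Omega)$ with $\langle\bm{\ell},\bm{\varphi}\rangle=\int_\Omega\bm{f}\cdot\bm{\varphi}-\int_\Omega\mathbb{F}:\nabla\bm{\varphi}$ for all $\bm{\varphi}\in\vVsolT{p'}$ and $\|\bm{f}\|_{\vL{p}}+\|\mathbb{F}\|_{\mathbb{L}^p(\Omega)}\le C\,\|\bm{\ell}\|_{(\vVsolT{p'})'}$, the constant $C$ depending only on $\Omega$ and $p$. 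By Proposition \ref{34} the solution $\vu=A^{-1}\bm{\ell}$ is then exactly the weak solution of \eqref{S} with these $\bm{f},\mathbb{F}$ and $\bm{h}=\bm{0}$; since $\Omega$ is bounded we have $\vL{p}\hookrightarrow\vL{r(p)}$, so the data meet the hypotheses of Theorem \ref{T2}.

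Applying Theorem \ref{T2} --- estimate \eqref{Lp1} when $\Omega$ is not axisymmetric and \eqref{Lp1.} when $\Omega$ is axisymmetric with $\alpha\ge\alpha_*>0$ --- gives $\|\vu\|_{\vVsolT{p}}=\|\vu\|_{\vW{1}{p}}\le C_p(\Omega)\big(\|\bm{f}\|_{\vL{r(p)}}+\|\mathbb{F}\|_{\mathbb{L}^p(\Omega)}\big)\le C_p(\Omega)\,\|\bm{\ell}\|_{(\vVsolT{p'})'}$, with $C_p(\Omega)$ independent of $\alpha$. Taking the supremum over $\bm{\ell}$ bounds $\|A^{-1}\|$ and hence proves \eqref{inf_sup}. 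The whole argument is short precisely because the hard analytic work is already packaged in Theorem \ref{T2}: the single decisive point is the observation that the inf-sup constant is the reciprocal of $\|A^{-1}\|$, so an $\alpha$-independent a priori bound converts directly into an $\alpha$-independent inf-sup constant. The only mild obstacle is the functional-analytic bookkeeping of the representation step, together with the verification that in the axisymmetric case the strict lower bound $\alpha\ge\alpha_*$ is what both keeps $A$ invertible and makes Theorem \ref{T2} applicable with a constant free of $\alpha$.
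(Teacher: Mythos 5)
Your proof is correct, but it takes a genuinely different route from the paper's. The paper argues pointwise in $\vu$: for a fixed $\vu\in\vVsolT{p}$ it writes $\|\DT\vu\|_{\mathbb{L}^p(\Omega)}$ and $\|\vu\|_{\vL{p}}$ as suprema against dual data ($\mathbb{G}\in\mathbb{L}^{p'}_s(\Omega)$, resp.\ $\bm{w}\in\vL{p'}$), solves for each datum a Stokes problem with the conjugate exponent $p'$, invokes the $\alpha$-uniform bounds of Propositions \ref{P0} and \ref{P1} for those dual solutions, and tests their weak formulations with $\vu$; Korn's inequality then assembles the two bounds into \eqref{inf_sup}, with the near-optimal test functions $\bm{\varphi}$ constructed explicitly (this is also why the paper needs the symmetric-matrix reduction in its Step 1). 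You instead work operator-theoretically on the primal side: Theorem \ref{31} makes $A$ a bijection, hence an isomorphism, so the inf-sup constant equals $\|A^{-1}\|^{-1}$; Hahn--Banach plus $L^{p'}$-duality represents any $\bm{\ell}\in(\vVsolT{p'})'$ as $(\bm{f},\mathbb{F})$ data with controlled norms, Proposition \ref{34} identifies $A^{-1}\bm{\ell}$ with the weak solution of \eqref{S} for those data and $\bm{h}=\bm{0}$, and Theorem \ref{T2} (estimates \eqref{Lp1} and \eqref{Lp1.}) bounds $\|A^{-1}\|$ uniformly in $\alpha$. Both routes rest on the same analytic core --- Theorem \ref{T2} is itself assembled from Propositions \ref{P0}--\ref{P2} --- so the difference is organizational rather than substantive: your version is shorter and makes transparent that $\alpha$-uniform solvability estimates are literally equivalent to $\alpha$-uniform inf-sup constants, at the cost of extra functional-analytic scaffolding (surjectivity of $A$ and the representation of abstract functionals as data), while the paper's version needs only the dual-exponent existence results and exhibits the test functions concretely. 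One shared caveat: in case (ii) the constants of Theorem \ref{T2} and of Propositions \ref{P0}--\ref{P1} depend on $\alpha_*$, so in both proofs $\gamma$ is really $\gamma(\Omega,p,\alpha_*)$; your closing remark acknowledges this correctly, whereas the paper's notation $\gamma(\Omega,p)$ silently absorbs it.
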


\begin{proof}
It follows the same proof as in Proposition \ref{P2}. Indeed, let $\vu\in \vVsolT{p}$ and $\vu\ne \bm{0}$. Then by Korn inequality, $\|\vu\|_{\vW{1}{p}}\simeq \|\vu\|_{\vL{p}} + \|\DT\vu\|_{\vL{p}}$.

{\bf 1.} We first write
\begin{equation}
\label{S6s2E0}
\|\DT\vu\|_{\vL{p}} = \underset{0\neq \mathbb{G}\in \mathbb{L}^{p'}(\Omega)}{\sup}\frac{\left| \int\displaylimits_{\Omega}{\DT\vu:\mathbb{G}}\right| }{\|\mathbb{G}\|_{\mathbb{L}^{p'}(\Omega)}} = \underset{0\neq \mathbb{G}\in \mathbb{L}^{p'}_s(\Omega)}{\sup}\frac{\left| \int\displaylimits_{\Omega}{\DT\vu:\mathbb{G}}\right| }{\|\mathbb{G}\|_{\mathbb{L}^{p'}(\Omega)}}
\end{equation}
where $\mathbb{L}^{p'}_s(\Omega)$ is the space of all symmetric matrices in $\mathbb{L}^{p'}(\Omega)$. For the last equality, note that, for any matrix $\mathbb{G}$, it can be decomposed as $\mathbb{G} = \frac{1}{2}(\mathbb{G} + \mathbb{G}^T) + \frac{1}{2}(\mathbb{G}-\mathbb{G}^T)$ and $\DT\vu: (\mathbb{G}-\mathbb{G}^T) = 0$. Therefore, denoting $\mathbb{K} = \frac{1}{2}(\mathbb{G}+\mathbb{G}^T)$, we have $\mathbb{K}\in{\mathbb{L}^{p'}_s(\Omega)}$ and $\|\mathbb{K}\|_{\mathbb{L}^{p'}(\Omega)}\le 2\|\mathbb{G}\|_{\mathbb{L}^{p'}(\Omega)}$ which shows that
\begin{equation*}
\underset{0\neq \mathbb{G}\in \mathbb{L}^{p'}(\Omega)}{\sup}\frac{\left| \int\displaylimits_{\Omega}{\DT\vu:\mathbb{G}}\right| }{\|\mathbb{G}\|_{\mathbb{L}^{p'}(\Omega)}} \le \underset{0\neq \mathbb{K}\in \mathbb{L}^{p'}_s(\Omega)}{\sup}\frac{\left| \int\displaylimits_{\Omega}{\DT\vu:\mathbb{K}}\right| }{\|\mathbb{K}\|_{\mathbb{L}^{p'}(\Omega)}}.
\end{equation*}
And the reverse inequality in the above relation is obvious.

Now for any $\mathbb{G}\in\mathbb{L}^{p'}_s(\Omega)$, let $\left( \bm{\varphi}, \tilde{\pi}\right) \in\vW{1}{p'}\times L^{p'}_0(\Omega)$ be the unique solution of
\begin{equation}
\label{S6s2S0}
\begin{cases}
-\Delta \bm{\varphi} + \nabla \tilde{\pi}= \div \ \mathbb{G}, \quad
\div\;\bm{\varphi}=0 \ &\text{ in $\Omega$}\\
\bm{\varphi}\cdot\vn=0, \quad \left[(2\DT\bm{\varphi}+\mathbb{G})\vn\right]_{\vt}+\alpha\bm{\varphi}_{\vt}=\bm{0} \ &\text{ on $\Gamma$}.
\end{cases}
\end{equation}
Since we have either (i) $\Omega$ is not axisymmetric or (ii) $\Omega$ is axisymmetric and $\alpha \ge \alpha_* >0$, the solution also satisfies estimate (by Proposition \ref{P1}),
\begin{equation}
\label{S6s2}
\|\bm{\varphi}\|_{\vW{1}{p'}} \le C_p(\Omega) \|\mathbb{G}\|_{\mathbb{L}^{p'}(\Omega)}.
\end{equation}
Also taking $\vu$ as a test function in the weak formulation of (\ref{S6s2S0}), we obtain
\begin{equation}
\label{S6s2E1}
2\int\displaylimits_{\Omega}{\DT\bm{\varphi}:\DT\vu} + \int\displaylimits_{\Gamma}{\alpha \bm{\varphi_{\vt}\cdot \vu_{\vt}}} = -\int\displaylimits_{\Omega}{\mathbb{G}: \nabla\vu} = -\int\displaylimits_{\Omega}{\mathbb{G}: \DT\vu}
\end{equation}
where in the last equality, we used $\mathbb{G}$ is symmetric.
Thus, from (\ref{S6s2E0}), combining (\ref{S6s2}) and (\ref{S6s2E1}), we get
\begin{equation*}
\begin{aligned}
\|\DT\vu\|_{\vL{p}}  \le C_p(\Omega) \underset{\underset{\bm{\varphi\ne \bm{0}}}{\bm{\varphi\in\vVsolT{p'}}}}{\sup}{\frac{\left| 2\int\displaylimits_{\Omega}{\DT \vu : \DT \bm{\varphi}}+ \int\displaylimits_{\Gamma}{\alpha \vu_{ \vt}\cdot \bm{\varphi}_{\vt}}\right| }{\|\bm{\varphi}\|_{\vW{1}{p'}}}} .
\end{aligned}
\end{equation*}

{\bf 2.} Similarly as in (\ref{Lp2.}), to show
\begin{equation}
\label{S6s2E6}
\|\vu\|_{\vL{p}} \le C_p(\Omega) \underset{\underset{\bm{\varphi\ne \bm{0}}}{\bm{\varphi\in\vVsolT{p'}}}}{\sup}{\frac{\left| 2\int\displaylimits_{\Omega}{\DT \vu : \DT \bm{\varphi}}+ \int\displaylimits_{\Gamma}{\alpha \vu_{ \vt}\cdot \bm{\varphi}_{\vt}}\right| }{\|\bm{\varphi}\|_{\vW{1}{p'}}}}
\end{equation}
we write 
\begin{equation}
\label{S6s2E4.}
\|\vu\|_{\vL{p}} = \underset{0\neq \bm{w} \in\vL{p'}}{\sup}\frac{|\int\displaylimits_\Omega{ \vu\cdot\bm{w} }|}{\|\bm{w}\|_{\vL{p'}}}.
\end{equation}
So for any $\bm{w}\in\vL{p'}$, the unique solution $\left( \bm{\varphi},\tilde{\pi}\right) \in \vW{1}{p'}\times L^{p'}_0(\Omega)$ of the Stokes problem
\begin{equation}
\label{S6e0.}
\begin{cases}
-\Delta \bm{\varphi} + \nabla \tilde{\pi}= \bm{w}, \quad
\div\;\bm{\varphi}=0 \ &\text{ in $\Omega$}\\
\bm{\varphi}\cdot\vn=0, \quad 2\left[(\DT\bm{\varphi})\vn\right]_{\vt}+\alpha\bm{\varphi}_{\vt}=\bm{0} \ &\text{ on $\Gamma$}
\end{cases}
\end{equation}
satisfies (by Proposition \ref{P0})
\begin{equation}
\label{S6e1.}
\|\bm{\varphi}\|_{\vW{1}{p'}}\le C_p(\Omega) \ \|\bm{w}\|_{\vL{p'}}.
\end{equation}
Therefore taking $\vu$ as a test function in the weak formulation of (\ref{S6e0.}), we get,
\begin{equation*}
2\int\displaylimits_{\Omega}{\DT\bm{\varphi}:\DT\vu} + \int\displaylimits_{\Gamma}{\alpha \bm{\varphi}_{\vt}\cdot \vu_{\vt}} =\int\displaylimits_\Omega{ \vu\cdot\bm{w} }
\end{equation*}
which yields (\ref{S6s2E6}) from (\ref{S6s2E4.}) together with (\ref{S6e1.}).

Hence, the constant $\gamma(\Omega,p)$ is simply the inverse of $C_p(\Omega)$.
\hfill
\end{proof}

\begin{theorem}
\label{W^{2,p}estimate}
	Let $p>2$, $\alpha$ be a constant and $\bm{f}\in\vL{p}$. Then the solution $(\vu,\pi)\in\vW{2}{p}\times \W{1}{p}$ of (\ref{S}) with $\mathbb{F}=0$ and $\bm{h}=\bm{0}$ satisfies the following estimates:\\
	\textbf{(i)} if $\Omega$ is not axisymmetric, then
	\begin{equation}
%	\label{Lp}
	\|\vu\|_{\vW{2}{p}} +\|\pi\|_{\W{1}{p}} \le C_p(\Omega) \ \|\bm{f} \|_{\vL{p}}
	\end{equation}
	\textbf{(ii)} if $\Omega$ is axisymmetric and $\alpha\ge \alpha_* >0$, then	
	\begin{equation}
%	\label{Lp.}
	\|\vu\|_{\vW{2}{p}} +\|\pi\|_{\W{1}{p}} \le C_p(\Omega,\alpha_*) \ \|\bm{f} \|_{\vL{p}}.
	\end{equation}
\end{theorem}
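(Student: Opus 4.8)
The plan is to follow the real-variable (weak reverse H\"older) scheme of Theorem \ref{T1}, but applied now at the level of the second derivatives $\nabla^2\vu$ rather than $\nabla\vu$, the whole point being to keep every constant independent of $\alpha$. Existence and $\vW{2}{p}\times\W{1}{p}$-regularity of $(\vu,\pi)$ are already supplied by Theorem \ref{thm_W2p_regularity_Stokes_Nbc} (a constant $\alpha$ trivially satisfies its hypotheses), so only the uniformity of the bound in $\alpha$ is genuinely at stake. The naive route --- freezing $\alpha\vu_{\vt}$ as a boundary datum and invoking the $\alpha=0$ regularity theory --- is exactly what must be avoided, since $\|\alpha\vu_{\vt}\|_{\vWfracb{1-\frac{1}{p}}{p}}$ carries an explicit factor of $\alpha$; this is the same obstruction already encountered in Method I of Theorem \ref{13}.

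First I would assemble the two ingredients that feed the machinery. The uniform $L^2$ input is the $\bm{H}^2$-estimate of Theorem \ref{13}, which yields $\|\vu\|_{\vH{2}}\le C(\Omega)\|\bm f\|_{\vL{2}}$ (respectively $\frac{C(\Omega)}{\min\{2,\alpha_*\}}\|\bm f\|_{\vL{2}}$ when $\Omega$ is axisymmetric), with a constant independent of $\alpha$, and the first-order uniform bound is Proposition \ref{P0}. The second, and essential, ingredient is a weak reverse H\"older inequality for $\nabla^2\vu$: for a solution of the locally homogeneous problem ($\bm f=\bm0$, $\mathbb F=0$, $\bm h=\bm0$) on $B(x_0,2r)\cap\Omega$ one needs
\[
\Big(\pppvint_{B(x_0,r)\cap\Omega}|\nabla^2\vu|^p\Big)^{1/p}\le C\Big(\pppvint_{B(x_0,2r)\cap\Omega}|\nabla^2\vu|^2\Big)^{1/2},
\]
with $C$ depending only on $\Omega$ and $p$, in perfect analogy with the first-order Lemma \ref{Lrhi}.

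Producing this inequality is the crux, and here I would use crucially that $\alpha$ is \emph{constant}. Tangential difference quotients make $\bm\varphi=-D^{-h}_k(\zeta^2 D^h_k\vu)$, $k=1,2$, admissible test functions, and because the boundary term they generate, $\int_\Gamma \alpha\,\zeta^2|D^h_k\vu_{\vt}|^2$, is non-negative it can simply be discarded --- this is exactly the mechanism that rendered Theorem \ref{13} $\alpha$-independent. It produces an $\alpha$-free Caccioppoli estimate for the tangential second derivatives, after which the normal second derivatives $\partial_3^2 u_i$ are recovered without any new $\alpha$-dependence from the divergence-free condition and from the equation, just as in the normal-regularity step of Theorem \ref{13}. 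Coupling this $L^2$ Caccioppoli bound with the higher integrability of $\nabla\vu$ for the homogeneous problem (on a $\HC{1}{1}$ domain the homogeneous solution belongs to $\vW{2}{q}$ for every finite $q$, hence $\nabla\vu\in\bm{C}^{0,\gamma}$) promotes it, by the usual iteration, to the displayed reverse H\"older inequality.

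Finally I would run the decomposition of Theorem \ref{T1}: given a ball $B$, cut $\bm f$ off by $\varphi\in C_c^\infty(8B)$, split $(\vu,\pi)=(\bm v,\pi_1)+(\bm w,\pi_2)$ into the part driven by $\varphi\bm f$ and the locally homogeneous remainder, and verify the hypotheses of the real-variable Lemma \ref{G0} with $G=|\nabla^2\vu|$, $G_B=|\nabla^2\bm v|$, $R_B=|\nabla^2\bm w|$ and $f=|\bm f|$, using the energy bound for $\bm v$ for \eqref{G2} and the reverse H\"older inequality for $\bm w$ for \eqref{G1}. This gives $\|\nabla^2\vu\|_{\vL{p}}\le C_p(\Omega)\big(\|\nabla^2\vu\|_{\vL{2}}+\|\bm f\|_{\vL{p}}\big)$ for $p>2$; the uniform $\bm H^2$-bound of Theorem \ref{13} then absorbs the first term into $\|\bm f\|_{\vL{p}}$, and the pressure is controlled through $\nabla\pi=\bm f+\Delta\vu$ together with Proposition \ref{P0}, yielding $\|\pi\|_{\W{1}{p}}\le C\|\bm f\|_{\vL{p}}$. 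The axisymmetric case is identical, carrying the $\alpha_*$-dependent constants. The hard part is unquestionably the $\alpha$-uniform second-order reverse H\"older inequality; the rest merely transcribes the first-order argument of Theorem \ref{T1}.
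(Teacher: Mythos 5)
Your strategy is, at its core, the one the paper uses: the paper's published proof of this theorem is a single sentence --- combine the $\alpha$-free $\bm{H}^2$-estimate of Theorem \ref{13} with the $\alpha$-free $\bm{W}^{1,p}$-estimate of Theorem \ref{T2} --- and the argument that sentence compresses is precisely the mechanism you describe: localize and flatten, test with tangential difference quotients $-D^{-h}_k(\zeta^2 D^h_k\vu)$, discard the boundary term $\int_{\Gamma}\alpha\zeta^2|D^h_k\vu_{\vt}|^2\ge 0$ so that no constant ever sees $\alpha$, derive an $\alpha$-free second-order Caccioppoli inequality, recover the normal derivatives from $\div\;\vu=0$ and the momentum equation, self-improve the integrability from $2$ to $p$, and close with the uniform lower-order bounds. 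The genuine divergence is in the self-improvement step. You route it through the real-variable Lemma \ref{G0} with $G=|\nabla^2\vu|$, which obliges you to prove a second-order analogue of Lemma \ref{Lrhi}, i.e. an $\alpha$-uniform weak reverse H\"older inequality for $|\nabla^2\bm{w}|$ for locally homogeneous solutions. The paper's detailed argument avoids this entirely: it applies the Giaquinta--Modica self-improvement lemma directly to the \emph{inhomogeneous} difference-quotient Caccioppoli inequality, keeping $|\bm{f}|^2+|\pi|^2$ on the right-hand side as data and disposing of the pressure with the $\alpha$-free bound $\|\pi\|_{\L{p}}\le C_p(\Omega)\|\bm{f}\|_{\vL{p}}$ of Theorem \ref{T2} --- which is exactly why Theorem \ref{T2} appears in the one-line proof.

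This difference matters, because the step you yourself call the crux is where your sketch has a genuine gap. To invoke Lemma \ref{G0} you need the reverse H\"older constant for $|\nabla^2\bm{w}|$ to be independent of $\alpha$, and your justification --- qualitative $\vW{2}{q}$-regularity of the homogeneous problem, ``hence $\nabla\vu\in\bm{C}^{0,\gamma}$'' --- only yields H\"older continuity with a constant that a priori depends on $\alpha$. This is exactly the trap the paper had to work around at first order: Lemma \ref{Lrhi} is usable only because the boundary H\"older estimate (\ref{bhe}) was proved with an $\alpha$-free constant, and your route needs, but does not supply, the second-order analogue (an $\alpha$-uniform pointwise or H\"older bound for $\nabla\bm{w}$ up to the boundary). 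Two further points are glossed over by ``the usual iteration''. First, Gehring-type self-improvement of an $L^2$ Caccioppoli inequality by itself only gains a small exponent $2+\varepsilon$; reaching every $p>2$ requires either the pointwise estimates just mentioned (this is how Lemma \ref{Lrhi} reaches arbitrary $p$) or the iteration of the inhomogeneous Giaquinta inequality, and one of these must actually be set up. Second, recovering $\partial^2_{33}u_i$, $i=1,2$, from the equation brings in $\nabla\pi$, so the pressure of the homogeneous part must be carried through your reverse H\"older argument; your sketch never says how. If you replace your homogeneous-problem lemma by the paper's device --- run the self-improvement on the inhomogeneous inequality with $|\bm{f}|^2+|\pi|^2$ as data and quote Theorem \ref{T2} for $\pi$ --- all three issues disappear and the rest of your proof closes as written.
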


\begin{remark}
\rm{If we consider the operator of the form $\div (A(x)\nabla)\vu$ instead of $\Delta \vu$ in the first equation of the system (\ref{S}) and continue all the corresponding estimates, we may obtain the above ${\bm W}^{2,p}$-estimate for all $p\in (1, \infty)$ as done in \cite[Theorem 3.1]{AGG}.}
\end{remark}

\begin{proof}
The proof follows combining the $\bm{H}^2$-estimate and the $\bm{W}^{1,p}$-estimate proved in Theorem \ref{13} and Theorem \ref{T2} respectively. 
\hfill
\end{proof}

\section{Limiting cases}
\label{the limiting case}
\setcounter{equation}{0}
Our objective in this section is to study the limiting behaviour of the solution of \eqref{S} when the friction coefficient $\alpha$ goes to 0 and $\infty$.

\subsection{$\alpha$ tends to $0$}
\begin{theorem}
\label{goes 0}
Let $p\in (1,\infty)$, $\Omega$ be not axisymmetric and $(\vu_\alpha, \pi_\alpha)$ be the solution of \eqref{S} where
$$
\bm{f}\in\vL{r(p)}, \mathbb{F}\in\mathbb{L}^p(\Omega), \bm{h}\in \vWfracb{-\frac{1}{p}}{p}, \alpha \in \Lb{t(p)}.
$$
Then as $\alpha \rightarrow 0$ in $\Lb{t(p)}$, we have the convergence,
$$ (\vu_\alpha, \pi _\alpha) \rightarrow (\vu_0, \pi_0) \quad \text{ in } \quad \vW{1}{p}\times L^{p}_0(\Omega) $$
where $(\vu_0,\pi_0)$ is a solution of the following Stokes problem with Navier boundary condition corresponding to $\alpha = 0,$
\begin{equation}
\label{27}
\begin{aligned}
\begin{cases} 
-\Delta\vu_0 +\nabla \pi_0=\bm{f} + \div \ \mathbb{F},\quad \div\;\vu_0=0 & \text{ in } \ \Omega ,\\
\vu_0\cdot\vn=0, \quad \left[(2\DT\vu_0+\mathbb{F})\vn\right]_{\vt}=\bm{h} & \text{ on } \ \Gamma .
\end{cases}
\end{aligned}
\end{equation}
\end{theorem}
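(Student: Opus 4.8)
The plan is to take $(\vu_0,\pi_0)$ to be \emph{the} solution of the limiting problem \eqref{27} --- which is well posed in $\vW{1}{p}\times L^p_0(\Omega)$ by the $\alpha=0$ theory of \cite{AR}, the associated kernel being trivial because $\Omega$ is not axisymmetric --- and to prove the strong convergence by estimating the difference directly. Set $\bm{w}_\alpha:=\vu_\alpha-\vu_0$ and $\rho_\alpha:=\pi_\alpha-\pi_0$, both of zero mean. Subtracting the variational formulation \eqref{varfor_Stokes_Nbc} satisfied by $\vu_0$ (with friction $0$) from the one satisfied by $\vu_\alpha$, the only term that does not cancel is the boundary friction term on the $\vu_\alpha$ side, so that
\[
2\int\displaylimits_{\Omega}{\DT\bm{w}_\alpha:\DT\bm{\varphi}} = -\int\displaylimits_{\Gamma}{\alpha\,(\vu_\alpha)_{\vt}\cdot\bm{\varphi}_{\vt}} \qquad \forall\,\bm{\varphi}\in\vVsolT{p'}.
\]
Hence $(\bm{w}_\alpha,\rho_\alpha)$ is precisely the solution of the Stokes problem \eqref{S} with data $\bm{f}=\bm{0}$, $\mathbb{F}=0$, friction coefficient $0$, and boundary datum $\bm{h}=-\alpha(\vu_\alpha)_{\vt}$ (which is tangential, hence admissible).

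Next I would apply Theorem \ref{T2}(i) to this difference problem. Its constant $C_p(\Omega)$ is independent of the friction coefficient --- this is exactly where the non-axisymmetry of $\Omega$ is used --- so that
\[
\|\bm{w}_\alpha\|_{\vW{1}{p}}+\|\rho_\alpha\|_{\L{p}} \le C_p(\Omega)\,\|\alpha\,(\vu_\alpha)_{\vt}\|_{\vWfracb{-\frac{1}{p}}{p}} .
\]
To bound the right-hand side I would reproduce the H\"older/trace argument of Lemma \ref{23}: since $(\vu_\alpha)_{\vt}\in\vLb{s}$ by the trace embedding and $\alpha\in\Lb{t(p)}$, one gets $\alpha(\vu_\alpha)_{\vt}\in\vLb{q}$ with $\tfrac1q=\tfrac{1}{t(p)}+\tfrac1s$ and, through the continuous embedding $\vLb{q}\hookrightarrow\vWfracb{-\frac{1}{p}}{p}$,
\[
\|\alpha\,(\vu_\alpha)_{\vt}\|_{\vWfracb{-\frac{1}{p}}{p}} \le C\,\|\alpha\|_{\Lb{t(p)}}\,\|\vu_\alpha\|_{\vW{1}{p}} .
\]

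It then remains only to control $\|\vu_\alpha\|_{\vW{1}{p}}$ uniformly in $\alpha$. Applying Theorem \ref{T2}(i) once more, now to $(\vu_\alpha,\pi_\alpha)$ itself, yields $\|\vu_\alpha\|_{\vW{1}{p}}\le C_p(\Omega)\bigl(\|\bm{f}\|_{\vL{r(p)}}+\|\mathbb{F}\|_{\mathbb{L}^p(\Omega)}+\|\bm{h}\|_{\vWfracb{-\frac{1}{p}}{p}}\bigr)$, a bound that stays finite as $\alpha\to0$. Combining the three displays gives
\[
\|\bm{w}_\alpha\|_{\vW{1}{p}}+\|\rho_\alpha\|_{\L{p}} \le C_p(\Omega)\,\|\alpha\|_{\Lb{t(p)}}\bigl(\|\bm{f}\|_{\vL{r(p)}}+\|\mathbb{F}\|_{\mathbb{L}^p(\Omega)}+\|\bm{h}\|_{\vWfracb{-\frac{1}{p}}{p}}\bigr),
\]
which tends to $0$ as $\|\alpha\|_{\Lb{t(p)}}\to0$, proving the announced strong convergence of both velocity and pressure.

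The decisive point --- and the only real obstacle --- is the $\alpha$-independence of the constant in Theorem \ref{T2}(i): it is what makes the product $\|\alpha\|_{\Lb{t(p)}}\,\|\vu_\alpha\|_{\vW{1}{p}}$ vanish in the limit. In the axisymmetric case the corresponding $L^2$- and $L^p$-estimates degenerate like $1/\min\{2,\alpha_*\}$ as $\alpha_*\to0$, so the argument genuinely requires the standing hypothesis that $\Omega$ be non-axisymmetric; a minor secondary point is to check, via Lemma \ref{23}, that the boundary pairing is well defined in $\vWfracb{-\frac{1}{p}}{p}$ uniformly in $\alpha$.
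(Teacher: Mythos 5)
Your proposal is correct, and its backbone is the same as the paper's: estimate the difference $(\vu_\alpha-\vu_0,\pi_\alpha-\pi_0)$ as the solution of a Stokes problem whose only nonzero datum is a boundary term of size $\alpha$, bound that term by $\|\alpha\|_{\Lb{t(p)}}$ times a $\vW{1}{p}$ norm via the H\"older/trace argument of Lemma \ref{23}, and invoke an $\alpha$-uniform $\vW{1}{p}$ estimate (valid precisely because $\Omega$ is not axisymmetric). You differ in two places. First, the paper does not presuppose solvability of the limiting problem \eqref{27}: it obtains $(\vu_0,\pi_0)$ as a weak limit of $(\vu_\alpha,\pi_\alpha)$ (uniform boundedness from \eqref{51} and \eqref{12}, then passage to the limit in \eqref{varfor_Stokes_Nbc}, the boundary term vanishing since $\alpha\vu_{\alpha\vt}\rightharpoonup\bm{0}$), whereas you define $(\vu_0,\pi_0)$ outright via the $\alpha\equiv 0$ theory of \cite{AR}; this is legitimate — the paper's remark after Corollary \ref{thm_W1p_regularity_Stokes_Nbc} extends the $\alpha\equiv0$ results to data including $\mathbb{F}$, and the kernel is trivial for non-axisymmetric $\Omega$ — and it buys a shorter, purely quantitative proof with the explicit rate $O(\|\alpha\|_{\Lb{t(p)}})$, at the price of leaning on that external well-posedness. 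Second, you write the difference system with friction coefficient $0$ and datum $-\alpha(\vu_\alpha)_{\vt}$, which forces you to control $\|\vu_\alpha\|_{\vW{1}{p}}$ uniformly in $\alpha$; the paper instead keeps the friction on the left, $2\left[\DT(\vu_\alpha-\vu_0)\vn\right]_{\vt}+\alpha(\vu_\alpha-\vu_0)_{\vt}=-\alpha\vu_{0\vt}$, so its right-hand side involves only the fixed function $\vu_0$ and no uniform bound on $\vu_\alpha$ is needed at that stage. One small caveat in your version: Theorem \ref{T2} is stated under the standing hypothesis that $\alpha>0$ on a set of positive boundary measure, so applying it verbatim to a problem with friction $0$ is slightly loose; either cite the $\alpha=0$ estimate of \cite{AR} (as extended by the paper's remark), or adopt the paper's friction-$\alpha$ form of the difference system, after which your argument is complete.
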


\begin{proof}
Let $\alpha \rightarrow 0$ in $\Lb{t(p)}$. That means there does not exist any $\alpha_*>0$ such that $\alpha\geq \alpha_*$ on $\Gamma$. Now from the estimates \eqref{51} and estimates \eqref{12}, it is clear that $(\vu_\alpha, \pi_\alpha)$ is bounded in $\vW{1}{p}\times L^p_0(\Omega)$ for all $p\in (1,\infty)$. Then there exists $(\vu_0, \pi_0) \in \vW{1}{p}\times L^p_0(\Omega)$ such that
$$\left( \vu_\alpha, \pi_\alpha\right) \rightharpoonup \left( \vu_0,\pi_0\right)  \ \text{ weakly \ in } \ \vW{1}{p}\times \L{p} .$$
It can be easily shown that $(\vu_0,\pi_0)$ is the unique solution of the Stokes problem with Navier boundary condition, corresponding to $\alpha = 0$,
\begin{equation}
\label{limit system 0}
\begin{aligned}
\begin{cases} 
-\Delta\vu_0 +\nabla \pi_0=\bm{f}+\div \ \mathbb{F},\quad \div\;\vu_0=0 & \text{ in } \ \Omega ,\\
\vu_0\cdot\vn=0, \quad \left[(2\DT\vu_0+\mathbb{F})\vn\right]_{\vt}=\bm{h} & \text{ on } \ \Gamma .
\end{cases}
\end{aligned}
\end{equation}
Indeed, since $(\vu_\alpha, \pi_\alpha)$ is the solution of \eqref{S}, it satisfies the weak formulation \eqref{varfor_Stokes_Nbc}. Now as in Lemma \ref{23}, $\vu_\alpha \rightharpoonup \vu_0$ in $\vW{1}{p}$ implies
$$\vu_{\alpha\vt}\rightharpoonup \vu_{0\vt} \quad \text{ in } \ \vLb{s}$$
where $s$ satisfies (\ref{S5Es}) and because $\alpha \rightarrow 0$ in $\Lb{t(p)}$,
$$\alpha \vu_{\alpha \vt} \rightharpoonup \bm{0} \quad \text{ in } \ \vLb{m^\prime} $$
with $m$ satisfies (\ref{S5Em}).
Hence in the weak formulation \eqref{varfor_Stokes_Nbc}, the boundary term in the left hand side goes to $0$. So finally, passing the limit, we deduce,
$$\forall \ \bm{\varphi} \in \vVsolT{p^\prime}, \quad 2\int\displaylimits_{\Omega}{\DT\vu_0:\DT\bm{\varphi}}=\int\displaylimits_{\Omega}{\bm{f}\cdot\bm{\varphi}}-\int\displaylimits_{\Omega}{\mathbb{F}:\nabla \bm{\varphi}}+\left\langle \bm{h},\bm{\varphi}\right\rangle_\Gamma .$$
Now by taking difference between the system \eqref{S} and the limiting system \eqref{limit system 0}, we get,
\begin{align*}
\begin{cases} 
-\Delta(\vu_\alpha - \vu_0) +\nabla (\pi_\alpha-\pi_0)=\bm{0},\quad \div\;(\vu_\alpha-\vu_0)=0 & \text{ in } \ \Omega \ ,\\
(\vu_\alpha-\vu_0)\cdot\vn=0, \quad 2\left[\DT(\vu_\alpha-\vu_0)\vn\right]_{\vt} + \alpha (\vu_{\alpha}- \vu_{0})_{\vt}=-\alpha \vu_{0\vt} & \text{ on } \ \Gamma \ .
\end{cases}
\end{align*}
Once again using the estimates of Proposition \ref{L^2estimate} and Proposition \ref{estimate} for the above system and also using H\"{o}lder inequality and trace theorem, we obtain
\begin{align*}
& \ \|\vu_\alpha - \vu_0\|_{\vW{1}{p}}+ \|\pi_\alpha - \pi_0\|_{\L{p}}\\
& \leq C(\Omega) \ \|\alpha \vu_{0 \vt}\|_{\vWfracb{-\frac{1}{p}}{p}}\\
& \leq C(\Omega) \ \|\alpha\|_{\Lb{t(p)}} \|\vu_0\| _{\vW{1}{p}} \ .
\end{align*}
Therefore, $\vu_\alpha - \vu_0$ and $\pi_\alpha - \pi_0$ both tend to zero in the same rate as $\alpha$.
\hfill
\end{proof}

\begin{remark}
\rm{We can prove also the above theorem for $\Omega$ axisymmetric and $\alpha$ constant, provided the compatibility condition (\ref{50}), with the help of the estimates \eqref{54} and Remark \ref{S6s2R0}.
Indeed, to expect the limiting system to be \eqref{27}, we must assume the compatibility condition since this is the necessary condition for the existence of a solution of the system \eqref{27}.
}
\end{remark}

\subsection{$\alpha$ tends to $\infty$}
Next we study the behaviour of $\vu_\alpha$ where $\alpha$ is a constant and grows to $\infty$.

\begin{theorem}
\label{inf}
Let $p\in (1,\infty)$ and $(\vu_\alpha, \pi_\alpha)$ be the solution of \eqref{S} where
$$
\bm{f}\in\vL{r(p)},\mathbb{F}\in\mathbb{L}^p(\Omega), \bm{h}\in \vWfracb{-\frac{1}{p}}{p} \text{ and } \alpha \text{ constant}.
$$
{\bf i)} Then as $\alpha \rightarrow \infty$, we have the convergence,
$$ (\vu_\alpha, \pi _\alpha) \rightharpoonup (\vu_\infty, \pi_\infty) \quad \text{ in } \quad \vW{1}{p}\times L^{p}_0(\Omega) $$
where $(\vu_\infty,\pi_\infty)$ is the unique solution of the Stokes problem with Dirichlet boundary condition,
\begin{equation}
\label{limit system inf}
\begin{aligned}
\begin{cases} 
-\Delta\vu_\infty +\nabla \pi_\infty=\bm{f} + \div \ \mathbb{F} &\text{ in } \ \Omega ,\\
\div\;\vu_\infty=0 &\text{ in } \ \Omega ,\\
\vu_\infty =\bm{0} &\text{ on } \ \Gamma .
\end{cases}
\end{aligned}
\end{equation}
{\bf ii)} Moreover, for $p\ge 2$, we obtain the strong convergence
$$
(\vu_{ \alpha},\pi_{\alpha})\rightarrow (\vu_\infty,\pi_\infty) \quad \text{ in } \quad \vW{1}{p}\times L^{p}_0(\Omega).
$$
%{\bf iii)} In fact, if $\alpha$ is a constant and $\bm{f}\in\vL{p}$, $\mathbb{F}=0$, $\bm{h}=\bm{0}$, we have the convergence, for $p\ge 2$,
%\begin{equation*}
%(\vu_{ \alpha},\pi_{\alpha})\rightarrow (\vu_\infty,\pi_\infty) \quad \text{ in } \quad \vW{2}{p}\times \W{1}{p}.
%\end{equation*}
\end{theorem}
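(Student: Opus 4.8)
The plan is to combine the $\alpha$-uniform estimates of Theorem \ref{T2} with weak compactness for part (i), and then to upgrade to strong convergence in part (ii), first for $p=2$ by an energy argument and afterwards for $p>2$. For part (i): since $\alpha$ is a constant tending to $+\infty$ we may assume $\alpha\ge 1$, and then \emph{whether or not} $\Omega$ is axisymmetric Theorem \ref{T2} bounds $\|\vu_\alpha\|_{\vW{1}{p}}+\|\pi_\alpha\|_{\L p}$ by a constant independent of $\alpha$. Reflexivity of $\vW{1}{p}\times L^p_0(\Omega)$ yields a subsequence $(\vu_\alpha,\pi_\alpha)\rightharpoonup(\vu_\infty,\pi_\infty)$. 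Choosing $\bm\varphi\in\vDsol{\Omega}$ in \eqref{varfor_Stokes_Nbc} kills the boundary integral (as $\bm\varphi$ vanishes near $\Gamma$), and passing to the limit gives $-\Delta\vu_\infty+\nabla\pi_\infty=\bm f+\div\,\mathbb F$ with $\div\,\vu_\infty=0$, the pressure being supplied by De Rham.

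The decisive step is the boundary condition. For a \emph{fixed} $\bm\varphi\in\vVsolT{p'}$, using that $\alpha$ is constant and writing $\bm\ell$ for the right-hand side functional of \eqref{varfor_Stokes_Nbc}, that identity gives $\int_\Gamma\vu_{\alpha\vt}\cdot\bm\varphi_{\vt}=\frac1\alpha\big(\langle\bm\ell,\bm\varphi\rangle-2\int_\Omega\DT\vu_\alpha:\DT\bm\varphi\big)=O(1/\alpha)\to0$, while the compact trace embedding gives $\int_\Gamma\vu_{\alpha\vt}\cdot\bm\varphi_{\vt}\to\int_\Gamma\vu_{\infty\vt}\cdot\bm\varphi_{\vt}$. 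Since every tangential field on $\Gamma$ is realised as $\bm\varphi_{\vt}$ for some $\bm\varphi\in\vVsolT{p'}$ (the divergence-free lifting used in Proposition \ref{34}), we deduce $\vu_{\infty\vt}=\bm0$; with $\vu_\infty\cdot\vn=0$ this gives $\vu_\infty=\bm0$ on $\Gamma$, so $(\vu_\infty,\pi_\infty)$ is a weak solution of the Dirichlet--Stokes problem \eqref{limit system inf}. As that problem has a unique solution, the whole family (not just a subsequence) converges, proving (i).

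For (ii) with $p=2$ I would argue by energy. Testing \eqref{varfor_Stokes_Nbc} with $\vu_\alpha$ gives $2\|\DT\vu_\alpha\|_{\mathbb L^2(\Omega)}^2+\int_\Gamma\alpha|\vu_{\alpha\vt}|^2=\langle\bm\ell,\vu_\alpha\rangle$, and since $\vu_\infty=\bm0$ on $\Gamma$ the limit obeys $2\|\DT\vu_\infty\|_{\mathbb L^2(\Omega)}^2=\langle\bm\ell,\vu_\infty\rangle$. Passing to the limit and discarding the nonnegative boundary term yields $\limsup_\alpha\|\DT\vu_\alpha\|_{\mathbb L^2(\Omega)}\le\|\DT\vu_\infty\|_{\mathbb L^2(\Omega)}$; weak lower semicontinuity gives the reverse, so $\|\DT\vu_\alpha\|_{\mathbb L^2(\Omega)}\to\|\DT\vu_\infty\|_{\mathbb L^2(\Omega)}$ and $\int_\Gamma\alpha|\vu_{\alpha\vt}|^2\to0$. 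Weak convergence plus convergence of norms makes $\DT\vu_\alpha\to\DT\vu_\infty$ \emph{strong} in $\mathbb L^2(\Omega)$, and $\int_\Gamma|\vu_{\alpha\vt}|^2\le\frac1\alpha\int_\Gamma\alpha|\vu_{\alpha\vt}|^2\to0$ gives $(\vu_\alpha-\vu_\infty)_{\vt}\to\bm0$ in $\bm L^2(\Gamma)$. The Korn equivalences of Proposition \ref{prop_equivalent_norms_c2}, applied to $\vu_\alpha-\vu_\infty$ (which has vanishing normal trace), then force $\vu_\alpha\to\vu_\infty$ in $\vH1$, and $\|\pi_\alpha-\pi_\infty\|_{\L2}\le C\|\nabla(\pi_\alpha-\pi_\infty)\|_{\vH{-1}}\le C\|\vu_\alpha-\vu_\infty\|_{\vH1}\to0$.

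For (ii) with $p>2$ I would set $\bm W_\alpha=\vu_\alpha-\vu_\infty$, which solves \eqref{S} with $\bm f=\bm0$, $\mathbb F=0$ and the \emph{fixed} datum $\tilde{\bm h}=\bm h-[(2\DT\vu_\infty+\mathbb F)\vn]_{\vt}\in\vWfracb{-\frac{1}{p}}{p}$ (meaningful by Lemma \ref{lem_trace_strain_tensor}). Because $p\ge2$ the data embed in the Hilbert classes, so the previous step gives $\bm W_\alpha\to\bm0$ in $\vH1$; interpolating with the $\alpha$-uniform $\vW{1}{p}$ bound of Theorem \ref{T2} yields $\bm W_\alpha\to\bm0$ in $\vW{1}{q}$ for every $2\le q<p$. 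The main obstacle is the endpoint $q=p$: since $\tilde{\bm h}$ is fixed rather than small, applying the uniform estimate to $\bm W_\alpha$ gives only boundedness, so—unlike the case $\alpha\to0$ of Theorem \ref{goes 0}—strong $\vW{1}{p}$ convergence does not drop out. I would cross it by duality: writing $\|\DT\bm W_\alpha\|_{\mathbb L^p}$ as a supremum over symmetric $\mathbb G\in\mathbb L^{p'}(\Omega)$ and solving the adjoint Navier problem with friction $\alpha$ as in Theorem \ref{P3}, one finds $\int_\Omega\DT\bm W_\alpha:\mathbb G=-\langle\tilde{\bm h},\bm\varphi_{\alpha\vt}\rangle_\Gamma$, where $\bm\varphi_{\alpha\vt}\to\bm0$ because the adjoint solution itself tends to its Dirichlet limit. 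Making this convergence \emph{uniform} in $\mathbb G$—using the $\alpha$-independent bound \eqref{S6s2} and the self-improving reverse-H\"older integrability behind Theorem \ref{T1}, exactly the mechanism of \cite{AGG} for the Laplace--Robin problem—is the delicate point; it gives $\|\DT\bm W_\alpha\|_{\mathbb L^p}\to0$, and together with the $\bm L^p$ convergence from the compact embedding and Korn's inequality, $\bm W_\alpha\to\bm0$ in $\vW{1}{p}$.
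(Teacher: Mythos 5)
Your part (i) and your part (ii) for $p=2$ are correct and differ from the paper only in routing. For (i), the paper recovers the Dirichlet condition by dividing the boundary condition by $\alpha$, rewriting \eqref{S} as \eqref{modified S} and using that $[(2\DT\vu_\alpha+\mathbb{F})\vn]_{\vt}$ is bounded in $\vWfracb{-\frac{1}{p}}{p}$ through the Green formula \eqref{green} and the uniform bound of Theorem \ref{T2}; your variant, dividing the variational identity \eqref{varfor_Stokes_Nbc} by the constant $\alpha$ and using compactness of traces, is the same $O(1/\alpha)$ mechanism expressed variationally, and is fine (as is deducing full-family convergence from uniqueness of the Dirichlet problem). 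For (ii) with $p=2$, your energy argument (weak convergence plus convergence of $\|\DT\vu_\alpha\|_{\mathbb{L}^2(\Omega)}$ gives strong convergence, then Proposition \ref{prop_equivalent_norms_c2}) is correct; the paper instead subtracts the two systems, tests with $\vu_\alpha-\vu_\infty$, and sends the right-hand side $\langle\bm{h}-2[(\DT\vu_\infty)\vn]_{\vt},\,\vu_\alpha-\vu_\infty\rangle_\Gamma$ to zero by weak convergence of traces. Both work; the paper's difference system is what it then reuses for $p>2$.

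The genuine gap is exactly where you flag it: the endpoint $q=p$ for $p>2$, and your duality scheme cannot be completed as proposed for a structural reason. The map $\mathbb{G}\mapsto\bm{\varphi}^{\mathbb{G}}_\alpha$ (the adjoint solution) is linear, so by your own identity $\int_\Omega \DT\bm{W}_\alpha:\mathbb{G}=-\langle\tilde{\bm{h}},\bm{\varphi}^{\mathbb{G}}_{\alpha\vt}\rangle_\Gamma$, the uniform decay you need, $\sup\{|\langle\tilde{\bm{h}},\bm{\varphi}^{\mathbb{G}}_{\alpha\vt}\rangle_\Gamma| : \|\mathbb{G}\|_{\mathbb{L}^{p'}(\Omega)}\le 1\}\rightarrow 0$, is literally equal to the conclusion $\|\DT\bm{W}_\alpha\|_{\mathbb{L}^p(\Omega)}\rightarrow 0$. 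Decay for each fixed $\mathbb{G}$ does hold ($\bm{\varphi}^{\mathbb{G}}_{\alpha\vt}$ is bounded in $\vWfracb{\frac{1}{p}}{p'}$ by Proposition \ref{P1} and tends to zero in weaker norms, hence weakly), but that only re-proves $\DT\bm{W}_\alpha\rightharpoonup 0$ weakly in $\mathbb{L}^p(\Omega)$, which you already have; the argument is circular at the endpoint. Nor can reverse-H\"older self-improvement rescue it, since $\tilde{\bm{h}}$ lies only in $\vWfracb{-\frac{1}{p}}{p}$ and offers no extra integrability. The paper crosses the endpoint by a measure-theoretic device instead: apply Theorem \ref{T2} to the difference system (whose datum $\tilde{\bm{h}}$ is fixed) to get $\|\DT(\vu_\alpha-\vu_\infty)\|_{\mathbb{L}^p(\Omega)}\le C$ uniformly in $\alpha$, use the $\vL{2}$ convergence already established to extract a subsequence with $\DT(\vu_\alpha-\vu_\infty)\rightarrow 0$ a.e.\ in $\Omega$, and conclude $\|\DT(\vu_\alpha-\vu_\infty)\|_{\mathbb{L}^p(\Omega)}\rightarrow 0$ by dominated convergence. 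Be aware that even this step deserves care: a.e.\ convergence plus an $\mathbb{L}^p$ bound yields, via Vitali, convergence in $\mathbb{L}^q$ only for $q<p$ (which is also what your interpolation gives), and reaching $q=p$ requires equi-integrability of the $p$-th powers or a genuine dominating function. So your instinct that the endpoint is delicate is sound; but the intended mechanism is this pointwise-convergence argument, not duality.
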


\begin{proof}
As $\alpha \rightarrow \infty$, we can consider $\alpha\geq 1$.

{\bf i)} From estimates \eqref{Lp1} or \eqref{Lp1.}, we see that $(\vu_\alpha, \pi_\alpha)$ is bounded in $\vW{1}{p}\times L^p_0(\Omega)$, hence there exists $(\vu_\infty, \pi_\infty)\in \vW{1}{p}\times L^p_0(\Omega)$ such that
$$\left( \vu_\alpha,\pi_\alpha\right) \rightharpoonup \left( \vu_\infty,\pi_\infty\right)  \quad \text{ weakly \ in } \ \vW{1}{p}\times \L{p} .$$
But we can also write the system \eqref{S} as follows :
\begin{equation}
\label{modified S}
\begin{aligned}
\begin{cases}
-\Delta\vu_\alpha +\nabla \pi_\alpha=\bm{f}+\div \ \mathbb{F} & \text{ in } \ \Omega \\
\div\;\vu_\alpha = 0 & \text{ in } \ \Omega \\
\vu_\alpha = \frac{1}{\alpha}\left( \bm{h} - [(2\DT\vu_\alpha+\mathbb{F})\vn]_{\vt}\right)  & \text{ on } \ \Gamma .
\end{cases}
\end{aligned}
\end{equation}
Passing limit in the above system as $\alpha \rightarrow \infty$, we obtain that $(\vu_\infty,\pi_\infty)$ is the solution of the Stokes problem with Dirichlet boundary condition (\ref{limit system inf}).

Indeed, passing limit in the first two equations of \eqref{modified S} is easy. And for the boundary condition, since $\left( \vu_\alpha, \pi_\alpha\right) $ is bounded in $\vE{p}$, by the Green formula \eqref{green}, $2[(\DT\vu_\alpha)\vn]_{\vt}$ is bounded in $\vWfracb{-\frac{1}{p}}{p}$. Hence taking limit as $\alpha \rightarrow \infty$ in the boundary condition of (\ref{modified S}), we obtain the boundary condition of \eqref{limit system inf}.

{\bf ii)} Now taking the difference between the systems \eqref{S} and \eqref{limit system inf}, we get
\begin{equation*}
\label{22}
\begin{aligned}
\begin{cases} 
-\Delta(\vu_\alpha - \vu_\infty) +\nabla (\pi_\alpha - \pi_\infty)=\bm{0}, \quad \div\;(\vu_\alpha - \vu_\infty) = 0 & \text{ in } \ \Omega \\
(\vu_\alpha -\vu_\infty)\cdot \vn = 0, \quad 2\left[\DT (\vu_\alpha -\vu_\infty)\vn \right]_{\vt}+ \alpha(\vu_\alpha -\vu_\infty)_{\vt}=\bm{h} - 2[(\DT\vu_\infty)\vn]_{\vt} & \text{ on } \ \Gamma 
\end{cases}
\end{aligned}
\end{equation*}
Multiplying the above system by $ \vu_\alpha - \vu_\infty$ and integrating by parts yields,
$$
2\int\displaylimits_{\Omega}{|\DT(\vu_\alpha - \vu_\infty)|^2} + \alpha \int\displaylimits_{\Gamma}{|(\vu_\alpha - \vu_\infty)_{\vt}|^2} = \left\langle \bm{h} - 2[(\DT\vu_\infty)\vn]_{\vt}, \vu_\alpha - \vu_\infty \right\rangle _{\vHfracbd{1}{2}\times \vHfracb{1}{2}}.
$$
As $\vu_{ \alpha}\rightharpoonup \vu_\infty$ in $\vHfracb{1}{2}$ weakly and $\bm{h}-2 [(\DT\vu_\infty)\vn]_{\vt}\in\vHfracbd{1}{2}$, this shows the strong convergence of $\vu_\alpha$ to $\vu_\infty$ in $\vH{1}$. And the strong convergence for the pressure term follows from the following estimate
\begin{equation*}
\|\pi_\alpha - \pi_\infty\|_{\L{2}} \le \|\nabla (\pi_\alpha - \pi_\infty)\|_{\vH{-1}} \le C \|\Delta (\vu_{ \alpha}- \vu_\infty)\|_{\vH{-1}}.
\end{equation*}

Next, for $p>2$, as $\DT(\vu_{ \alpha}-\vu_\infty)\rightarrow 0$ in $\vL{2}$, $\DT (\vu_{ \alpha}-\vu_\infty)\rightarrow 0$ for almost every $\bm{x}\in\Omega$ up to a subsequence. Also using the uniform estimate in Theorem \ref{T2} for the above system satisfied by $\vu_{ \alpha}-\vu_\infty$, we can write
\begin{equation*}
\|\vu_{ \alpha}-\vu_\infty\|_{\vW{1}{p}} \le C_p(\Omega) \| \bm{h} - 2[(\DT\vu_\infty)\vn]_{\vt}\|_{\vWfracb{-\frac{1}{p}}{p}}
\end{equation*}
which shows that $\|\DT(\vu_{ \alpha}-\vu_\infty)\|_{\mathbb{L}^p(\Omega)}\le C$. Therefore, dominated convergence theorem yields $\|\DT(\vu_{ \alpha}-\vu_\infty)\|_{\mathbb{L}^p(\Omega)}\rightarrow 0$. This completes the proof.

\hfill
\end{proof}

\subsection{$\alpha$ less regular}
\begin{theorem}
\label{less regular}
Let
$$
\bm{f}\in \vL{\frac{6}{5}}, \mathbb{F}\in\mathbb{L}^2(\Omega), \bm{h}\in \vHfracbd{1}{2} \text{ and } \alpha \in \Lb{\frac{4}{3}}.
$$
Then the Stokes problem \eqref{S} has a solution $\left( \vu,\pi \right)$ in $\vH{1}\times \L{2}$.
\end{theorem}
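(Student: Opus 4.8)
The plan is to construct the solution as a limit of solutions with regularized friction coefficients, since $\alpha\in\Lb{\frac43}$ lies strictly below the threshold $\Lb{2}$ demanded by Theorem \ref{thm_weak_sol_Stokes_Nbc}. First I would set $\alpha_k:=\min\{\alpha,k\}$, so that $\alpha_k\ge 0$, $\alpha_k\in\Lb{\infty}\subset\Lb{2}$, and $\alpha_k\to\alpha$ in $\Lb{\frac43}$ by dominated convergence (indeed $|\alpha-\alpha_k|\le\alpha\in\Lb{\frac43}$). For each $k$, Theorem \ref{thm_weak_sol_Stokes_Nbc} yields a unique $(\vu_k,\pi_k)\in\vH{1}\times L^2_0(\Omega)$ solving \eqref{S} with $\alpha_k$ in place of $\alpha$. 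The decisive point is that the $L^2$-estimate \eqref{51} of Theorem \ref{L^2estimate} does not see the friction coefficient: testing the weak formulation with $\vu_k$ and discarding the nonnegative boundary term $\int_\Gamma\alpha_k|\vu_{k\vt}|^2\ge0$ gives $2\|\DT\vu_k\|^2_{\mathbb{L}^2(\Omega)}\le\langle\text{data},\vu_k\rangle$, whence, by the Korn-type equivalence \eqref{eqn_equivalence_norms_c1},
$$\|\vu_k\|_{\vH{1}}+\|\pi_k\|_{\L{2}}\le C(\Omega)\big(\|\bm{f}\|_{\vL{\frac{6}{5}}}+\|\mathbb{F}\|_{\mathbb{L}^2(\Omega)}+\|\bm{h}\|_{\vHfracbd{1}{2}}\big)$$
uniformly in $k$; one also keeps the uniform bound $\int_\Gamma\alpha_k|\vu_{k\vt}|^2\le C$.

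Having uniform bounds, I would extract a subsequence with $\vu_k\rightharpoonup\vu$ in $\vH{1}$ (so $\vu\in\vVsolT{2}$) and $\pi_k\rightharpoonup\pi$ in $\L{2}$. By continuity of the trace and the compact embedding $\vHfracb{1}{2}\hookrightarrow\hookrightarrow\vLb{q}$ for every $q<4$, the tangential traces satisfy $\vu_{k\vt}\to\vu_\vt$ strongly in $\vLb{q}$ while staying bounded in $\vLb{4}$, hence $\vu_{k\vt}\rightharpoonup\vu_\vt$ weakly in $\vLb{4}$. The volume terms pass to the limit by weak convergence; the heart of the matter is the boundary term. For a smooth, divergence-free, tangent test field $\bm{\varphi}$ (so $\bm{\varphi}_\vt\in\vLb{\infty}$) I would split
$$\int_\Gamma\alpha_k\vu_{k\vt}\cdot\bm{\varphi}_\vt-\int_\Gamma\alpha\vu_\vt\cdot\bm{\varphi}_\vt=\int_\Gamma(\alpha_k-\alpha)\vu_{k\vt}\cdot\bm{\varphi}_\vt+\int_\Gamma\alpha(\vu_{k\vt}-\vu_\vt)\cdot\bm{\varphi}_\vt.$$
The first integral is bounded by $\|\bm{\varphi}_\vt\|_{\vLb{\infty}}\|\alpha_k-\alpha\|_{\Lb{\frac43}}\|\vu_{k\vt}\|_{\vLb{4}}\to0$, using the uniform $\vLb{4}$-bound of the traces; the second tends to $0$ because $\alpha\bm{\varphi}_\vt\in\vLb{\frac43}=(\vLb{4})'$ is fixed while $\vu_{k\vt}-\vu_\vt\rightharpoonup0$ in $\vLb{4}$. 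Note the limit integral is legitimate, since $\alpha\vu_\vt\in\Lb{1}$ (Hölder, $\tfrac34+\tfrac14=1$) and $\bm{\varphi}_\vt\in\vLb{\infty}$.

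Passing to the limit then gives, for every smooth divergence-free tangent $\bm{\varphi}$,
$$2\int_\Omega\DT\vu:\DT\bm{\varphi}+\int_\Gamma\alpha\vu_\vt\cdot\bm{\varphi}_\vt=\int_\Omega\bm{f}\cdot\bm{\varphi}-\int_\Omega\mathbb{F}:\nabla\bm{\varphi}+\langle\bm{h},\bm{\varphi}\rangle_\Gamma.$$
Testing with $\bm{\varphi}\in\vDsol{\Omega}$ and invoking De Rham recovers $-\Delta\vu+\nabla\pi=\bm{f}+\div\mathbb{F}$ in $\Omega$, while $\vu\in\vVsolT{2}$ encodes $\div\vu=0$ and $\vu\cdot\vn=0$; the Green formula \eqref{green1} together with the surjectivity of tangential traces (exactly as in Proposition \ref{34}) yields the Navier condition $[(2\DT\vu+\mathbb{F})\vn]_\vt+\alpha\vu_\vt=\bm{h}$ on $\Gamma$, now read in the weaker sense in which $\alpha\vu_\vt\in\Lb{1}$ is paired against Lipschitz tangential fields. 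The main obstacle is precisely that for $\alpha$ merely in $\Lb{\frac43}$ the boundary bilinear form ceases to be continuous on $\vH{1}\times\vH{1}$, so neither Lax--Milgram nor a direct variational formulation is available; the whole scheme hinges on two facts — the $\alpha$-independence of the $L^2$-bound \eqref{51}, which alone supplies compactness, and the matching of the critical exponents $\tfrac34+\tfrac14=1$, which lets the regularized boundary terms converge against bounded tangential test fields. In the axisymmetric case the identical argument applies once \eqref{51} is replaced by its uniform analogue, i.e. under $\alpha\ge\alpha_*>0$ or the compatibility condition \eqref{50}.
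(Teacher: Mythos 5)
Your proof is correct and follows essentially the same route as the paper: approximate $\alpha$ by bounded coefficients, use the $\alpha$-independent estimate \eqref{51} to get uniform $\vH{1}\times\L{2}$ bounds, extract weak limits, and pass to the limit in the product $\alpha_k\vu_{k\vt}$ via the exponent pairing $\tfrac34+\tfrac14=1$. The only cosmetic differences are that the paper approximates $\alpha$ by smooth functions $\alpha_k\in\D{\Gamma}$ rather than truncations $\min\{\alpha,k\}$, and passes to the limit directly in the boundary condition using the Green formula \eqref{green}, whereas you pass to the limit in the variational formulation and then recover the boundary condition as in Proposition \ref{34}.
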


\begin{proof}
Without loss of generality, we assume $\bm{h} = \bm{0}$.
	
{\bf i)} First let us consider $\Omega$ is not axisymmetric. Let $\alpha_k \in \D{\Gamma}$ such that $\alpha_k \rightarrow \alpha$ in $\Lb{\frac{4}{3}}$. Now if $\left( \vu_k, \pi_k\right)\in \vH{1}\times L^{2}_0(\Omega)$ is the solution of the problem \eqref{S} corresponding to $\alpha_k$, from the estimates \eqref{51} satisfied by $\left( \vu_k, \pi_k\right)$, we can see, there exists $\left( \vu,\pi\right) \in \vH{1}\times L^2_0(\Omega)$ such that
\begin{equation*}
\left( \vu_k,\pi_k\right) \rightharpoonup \left( \vu,\pi\right) \quad  \text{ in } \quad \vH{1}\times \L{2} .
\end{equation*}
Also it is easy to show that
\begin{equation*}
-\Delta\vu +\nabla \pi=\bm{f} \text{ in }\Omega, \quad \div \ \vu = 0 \text{ in } \Omega, \quad \vu \cdot \vn = 0 \text{ on } \Gamma.
\end{equation*}
Now from the Green formula \eqref{green}, we obtain
\begin{equation*}
2\left[(\DT\vu_k)\vn\right]_{\vt} \rightharpoonup 2\left[(\DT\vu)\vn\right]_{\vt} \quad \text{ in } \quad \vHfracbd{1}{2} .
\end{equation*}
Moreover, as $\alpha_k \rightarrow \alpha$ in $\Lb{\frac{4}{3}}$ and $\vu_{k \vt} \rightharpoonup \vu_{\vt}$ in $\Lb{4}$, it gives $$\alpha_k \vu_{k \vt} \rightharpoonup \alpha \vu_{\vt} \quad \text{ in } \quad \vLb{1} .$$
Therefore since $\vu_k$ satisfies the boundary condition $$\left[(2\DT\vu_k+\mathbb{F})\vn\right]_{\vt}+\alpha_k\;\vu_{k \vt}=\bm{0} \ \text{ on } \Gamma ,$$
passing the limit, it yields
$$\left[(2\DT\vu+\mathbb{F})\vn\right]_{\vt}+\alpha\;\vu_{\vt}=\bm{0} \ \text{ on } \Gamma .$$
Hence, $(\vu,\pi)$ becomes the solution of the Stokes problem \eqref{S}.

{\bf ii)} Note that, when $\Omega$ is axisymmetric and $\alpha \geq \alpha_*>0$, we can find a sequence $\alpha_k \in \D{\Gamma}$ such that $\alpha_k \geq \alpha_*$ and $\alpha_k \rightarrow \alpha$ in $\Lb{\frac{4}{3}}$. So we can make use of the estimate \eqref{52} and obtain the same result.
\hfill
\end{proof}

\begin{remark}
\rm{For $\alpha \in \Lb{\frac{4}{3}}$ and $\bm{h} = \bm{0}$, the solution $\vu\in\vH{1}$ satisfies the extra property: $\left[(\DT\vu)\vn\right]_{\vt}\in \vLb{1}$.}
\end{remark}

\begin{remark}
\rm{Let $\bm{f} = \bm{0}, \mathbb{F}=0$ and $\bm{h}\in \vLb{\frac{4}{3}}$ with $\bm{h}\cdot \vn = 0$ on $\Gamma$.\\

\noindent i) If $\alpha \in \Lb{2}$, we have from Theorem \ref{thm_weak_sol_Stokes_Nbc}, $\vu\in\vH{1}$ which means $\alpha \vu_{\vt}\in\vLb{\frac{4}{3}}\hookrightarrow \vHfracbd{1}{2}$. So $[(\DT\vu)\vn]_{\vt} \in \vLb{\frac{4}{3}}$ which in turn implies $\vu\in \vW{1+\frac{3}{4}}{\frac{4}{3}}\hookrightarrow \vH{1}$.\\

\noindent ii) If $\alpha \in \Lb{\frac{4}{3}}$, we have from Theorem \ref{less regular}, $\vu\in\vH{1}$ which gives $\alpha \vu_{\vt}\in \vL{1}$. But from here, we cannot improve the regularity any more.
} 
\end{remark}

\section{Navier-Stokes equations}
\label{Navier-Stokes equation}
\setcounter{equation}{0}

Finally we consider the non-linear problem and study the existence of generalized and strong solutions for the Navier-Stokes equation:
\begin{equation}
\begin{cases}
 -\Delta\vu + \vu\cdot \nabla \vu+ \nabla \pi=\bm{f}+\div \ \mathbb{F},\quad \div\;\vu=0 & \text{in $\Omega$,}\\
 \vu\cdot\vn=0, \qquad \left[(2\DT\vu+\mathbb{F})\vn\right]_{\vt}+\alpha\vu_{\vt}=\bm{h}& \text{on $\Gamma$.}
\end{cases}
\end{equation}

\subsection{Existence and regularity}
\begin{theorem}
\label{35}
Let $p\in (1,\infty)$ and
$$
\bm{f}\in \vL{r(p)}, \mathbb{F}\in \mathbb{L}^p(\Omega),\bm{h}\in \vWfracb{-\frac{1}{p}}{p} \text{ and } \alpha\in \Lb{t(p)}
$$
where $r(p)$ and $t(p)$ are defined by \eqref{def_exponent_rp} and \eqref{def_exponent_tp_alpha} respectively. Then the following two problems are equivalent:\\
(i) Find $(\vu, \pi)\in \vW{1}{p}\times \L{p}$ satisfying \eqref{NS} in the sense of distributions.\\
(ii) Find $\vu\in\vVsolT{p}$ such that for all $\bm{\varphi}\in \vVsolT{p^\prime}$,
\begin{equation}
\label{36}
2\int\displaylimits_{\Omega}{\DT\vu:\DT\bm{\varphi}}+b(\vu,\vu,\bm{\varphi})+\int\displaylimits_{\Gamma}{\alpha\vu_{\vt}\cdot\bm{\varphi}_{\vt}}=\int\displaylimits_{\Omega}{\bm{f}\cdot
	\bm{\varphi}}-\int\displaylimits_{\Omega}{\mathbb{F}:\nabla \bm{\varphi}}+\left\langle \bm{h},\bm{\varphi}\right\rangle_\Gamma .
\end{equation}
where $b(\vu, \bm{v}, \bm{w}) = \int\displaylimits_{\Omega}{\left( \vu \cdot \nabla\right)  \bm{v} \cdot \bm{w}}$.
\end{theorem}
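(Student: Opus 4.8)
The strategy is to follow almost verbatim the proof of Proposition \ref{34}, since the only structural difference between \eqref{36} and \eqref{varfor_Stokes_Nbc} is the presence of the convection term $b(\vu,\vu,\bm{\varphi}) = \int\displaylimits_\Omega (\vu\cdot\nabla)\vu\cdot\bm{\varphi}$. Before anything else I would check that this term, and $(\vu\cdot\nabla)\vu$ viewed as a distribution, make sense in the spaces we need. For $\vu\in\vVsolT{p}$ the Sobolev embedding gives $\vu\in\vL{p^*}$ and $\nabla\vu\in\vL{p}$, so $(\vu\cdot\nabla)\vu\in\vL{q}$ with $\frac{1}{q}=\frac{1}{p}+\frac{1}{p^*}$; since $\bm{\varphi}\in\vW{1}{p'}$ embeds into the corresponding Lebesgue space, Hölder's inequality shows that $b(\vu,\vu,\bm{\varphi})$ is finite, exactly as the boundary term was handled in Lemma \ref{23}. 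Moreover $(\vu\cdot\nabla)\vu$ belongs to $\vL{r(p)}$ (or, for small $p$, to the dual space $[\bm{H}^{(r(p))^\prime,p^\prime}_0(\div, \Omega)]^\prime$ of Proposition \ref{40}), which is precisely what is required to apply both De Rham's theorem and the Green formula \eqref{green1}.

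To prove that (ii) implies (i), I would first insert $\bm{\varphi}\in\vDsol{\Omega}$ into \eqref{36}. The boundary integral vanishes, $2\int\displaylimits_\Omega\DT\vu:\DT\bm{\varphi} = \langle -\Delta\vu,\bm{\varphi}\rangle$, and \eqref{36} reduces to
\[
\left\langle -\Delta\vu + (\vu\cdot\nabla)\vu,\bm{\varphi}\right\rangle_{\bm{\mathcal{D}}^\prime(\Omega)\times \vD{\Omega}} = \int\displaylimits_\Omega\bm{f}\cdot\bm{\varphi} - \int\displaylimits_\Omega\mathbb{F}:\nabla\bm{\varphi}
\]
for every divergence-free $\bm{\varphi}$. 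By De Rham's theorem there exists $\pi\in \L{p}$, unique up to an additive constant, with
\[
-\Delta\vu + (\vu\cdot\nabla)\vu + \nabla\pi = \bm{f} + \div\,\mathbb{F}\qquad\text{in }\Omega,
\]
while $\vu\in\vVsolT{p}$ already encodes $\div\,\vu=0$ in $\Omega$ and $\vu\cdot\vn=0$ on $\Gamma$. It then remains to recover the Navier condition: multiplying the equation by an arbitrary $\bm{\varphi}\in\vVsolT{p'}$, applying \eqref{green1} (legitimate by the integrability checked above), and subtracting from \eqref{36}, the two occurrences of $b(\vu,\vu,\bm{\varphi})$ cancel, leaving
\[
\left\langle [(2\DT\vu+\mathbb{F})\vn]_{\vt} + \alpha\vu_{\vt} - \bm{h},\bm{\varphi}\right\rangle_\Gamma = 0\qquad\forall\,\bm{\varphi}\in\vVsolT{p'}.
\]
Exactly as in Proposition \ref{34}, lifting an arbitrary tangential field on $\Gamma$ to a divergence-free $\bm{\varphi}\in\vVsolT{p'}$ then gives $[(2\DT\vu+\mathbb{F})\vn]_{\vt}+\alpha\vu_{\vt}=\bm{h}$ on $\Gamma$.

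The converse implication (i)$\Rightarrow$(ii) is immediate: starting from a distributional solution of \eqref{NS}, testing with $\bm{\varphi}\in\vVsolT{p'}$ and invoking \eqref{green1} directly reproduces \eqref{36}. The only genuinely delicate point, and hence the main obstacle, is the well-definedness and correct integrability of the convection term, i.e.\ ensuring that $(\vu\cdot\nabla)\vu$ lands in the space for which both De Rham's theorem and the trace formula \eqref{green1} apply, uniformly over $p\in(1,\infty)$. For $p\ge \frac{3}{2}$ the computation above places $(\vu\cdot\nabla)\vu$ in $\vL{r(p)}$ directly; for $1<p<\frac{3}{2}$ I would instead verify the membership $(\vu\cdot\nabla)\vu\in[\bm{H}^{(r(p))^\prime,p^\prime}_0(\div, \Omega)]^\prime$ via Proposition \ref{40}, which suffices for the generalized version of Lemma \ref{lem_trace_strain_tensor} recorded in the remark following it. Once this integrability is secured, every remaining step is a verbatim repetition of the linear case.
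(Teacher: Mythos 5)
Your overall strategy is exactly the paper's: the paper omits the proof of this theorem entirely, remarking only that it is ``standard and very similar to that of Proposition \ref{34}'', and your argument (test against $\vDsol{\Omega}$, invoke De Rham, then apply the Green formula \eqref{green1} and the tangential lifting to recover the Navier condition, the two occurrences of $b$ cancelling) is the intended filling-in. For $p>\frac{3}{2}$ your bookkeeping is correct: $(\vu\cdot\nabla)\vu\in\vL{q}$ with $\frac{1}{q}=\frac{2}{p}-\frac{1}{3}\le\frac{1}{p}+\frac{1}{3}=\frac{1}{r(p)}$, so the convection term lies in $\vL{r(p)}$, De Rham gives $\pi\in\L{p}$ up to a constant, and \eqref{green1} applies with right-hand side $\bm{f}-(\vu\cdot\nabla)\vu\in\vL{r(p)}$.

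The gap is at $p\le\frac{3}{2}$, which your proof claims to cover. At $p=\frac{3}{2}$ your H\"older computation fails: $\vu\in\vL{3}$ and $\nabla\vu\in\vL{\frac{3}{2}}$ give only $(\vu\cdot\nabla)\vu\in\vL{1}$, while $\bm{\varphi}\in\vW{1}{3}$ is \emph{not} in $\vL{\infty}$, so the H\"older exponents sum to more than $1$ and the pointwise form of $b$ need not be finite; moreover $\vL{1}\not\subset\vL{r(3/2)}$ because $r(\frac{3}{2})>1$ by definition, so \eqref{green1} does not apply as you invoke it. The repair at $p=\frac{3}{2}$ is to read the nonlinearity in divergence form, $b(\vu,\vu,\bm{\varphi})=-\int_{\Omega}{(\vu\otimes\vu):\nabla\bm{\varphi}}$, which is finite since $\vu\otimes\vu\in\mathbb{L}^{3/2}(\Omega)$ and $\nabla\bm{\varphi}\in\mathbb{L}^{3}(\Omega)$; one then applies \eqref{green1} with $\mathbb{F}$ replaced by $\mathbb{F}+\vu\otimes\vu\in\mathbb{L}^{p}(\Omega)$, and since $(\vu\otimes\vu)\vn=(\vu\cdot\vn)\vu=\bm{0}$ on $\Gamma$, the extra matrix contributes nothing to the boundary term and the Navier condition is recovered unchanged. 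For $1<p<\frac{3}{2}$, however, your proposed fix is only asserted, not proved: membership of $(\vu\cdot\nabla)\vu$ in $[\bm{H}^{(r(p))',p'}_0(\div,\Omega)]'$ would require, by Proposition \ref{40}, a decomposition $\bm{\psi}+\nabla\chi$ with $\bm{\psi}\in\vL{r(p)}$ and $\chi\in\L{p}$, and there is no reason $\div(\vu\otimes\vu)$ with $\vu\otimes\vu\in\mathbb{L}^{p^*/2}(\Omega)$ admits one. In fact even the divergence form of $b$ fails H\"older there, since $\frac{2}{p^*}+\frac{1}{p'}=\frac{1}{p}+\frac{1}{3}>1$ when $p<\frac{3}{2}$, so the variational problem (ii) is not even well defined on $\vVsolT{p}\times\vVsolT{p'}$. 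This defect is arguably inherited from the statement itself (the paper's existence theory for weak solutions of \eqref{NS} is confined to $p>\frac{3}{2}$), but your proof should either restrict to $p\ge\frac{3}{2}$ (with the divergence-form reading covering the endpoint) or explain what meaning the nonlinear term carries below that threshold.
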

\noindent
The proof is standard and very similar to that of Proposition \ref{34}, hence we omit it. To facilitate the work, we give some properties of the operator $b$ but skip the proof (cf. \cite[Lemma 7.2]{AR}).

\begin{lemma}
The trilinear form $b$ is defined and continuous on $\vVsolT{2} \times \vVsolT{2}\times \vVsolT{2}$. Also, we have
\begin{equation}
\label{S8E1}
b(\vu,\bm{v},\bm{v}) =\bm{0}
\end{equation}
and
\begin{equation*}
b(\vu,\bm{v},\bm{w}) = -b(\vu,\bm{w},\bm{v}) \quad \forall \ \vu, \bm{v}, \bm{w} \in\vVsolT{2} . 
\end{equation*}
Moreover
$$b(\vu,\vu,\bm{\beta}) = 0 \quad \text{ and } \quad b(\bm{\beta},\bm{\beta},\vu) = 0 .$$
\end{lemma}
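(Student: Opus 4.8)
The plan is to verify the four assertions in turn, relying only on Hölder's inequality, the Sobolev embedding $\vH{1}\hookrightarrow\vL{6}$, integration by parts, and two facts recorded earlier in the paper: that $\bm{\beta}\in\mathcal{C}^\infty(\R^3)$ with $\DT\bm{\beta}=0$ in $\R^3$, and that every element of $\vVsolT{2}$ satisfies $\div\,\bm{v}=0$ in $\Omega$ and $\bm{v}\cdot\vn=0$ on $\Gamma$. For the continuity, I would estimate, for $\vu,\bm{v},\bm{w}\in\vVsolT{2}$,
$$
|b(\vu,\bm{v},\bm{w})|=\left|\int\displaylimits_{\Omega}{(\vu\cdot\nabla)\bm{v}\cdot\bm{w}}\right|\le \|\vu\|_{\vL{6}}\,\|\nabla\bm{v}\|_{\mathbb{L}^2(\Omega)}\,\|\bm{w}\|_{\vL{3}},
$$
using Hölder with the exponents $(6,2,3)$ since $\tfrac16+\tfrac12+\tfrac13=1$. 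As $\Omega$ is bounded and $\vH{1}\hookrightarrow\vL{6}\hookrightarrow\vL{3}$, the right-hand side is bounded by $C\,\|\vu\|_{\vH{1}}\|\bm{v}\|_{\vH{1}}\|\bm{w}\|_{\vH{1}}$, which gives that $b$ is well defined and continuous on $\vVsolT{2}\times\vVsolT{2}\times\vVsolT{2}$.

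Next I would prove \eqref{S8E1}. For $\vu\in\vDsol{\Omega}$ and smooth $\bm{v}$ one writes $(\vu\cdot\nabla)\bm{v}\cdot\bm{v}=\tfrac12(\vu\cdot\nabla)|\bm{v}|^2$ and integrates by parts:
$$
b(\vu,\bm{v},\bm{v})=\tfrac12\int\displaylimits_{\Omega}{(\vu\cdot\nabla)|\bm{v}|^2}=-\tfrac12\int\displaylimits_{\Omega}{(\div\,\vu)\,|\bm{v}|^2}+\tfrac12\int\displaylimits_{\Gamma}{(\vu\cdot\vn)\,|\bm{v}|^2}=0,
$$
both terms vanishing because $\div\,\vu=0$ and $\vu\cdot\vn=0$. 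To pass to arbitrary $\vu,\bm{v}\in\vVsolT{2}$ I would invoke the density of smooth divergence-free tangent fields in $\vVsolT{2}$ (the density result already used in the paper) together with the continuity just established, the map $(\vu,\bm{v})\mapsto b(\vu,\bm{v},\bm{v})$ being continuous on $\vVsolT{2}\times\vVsolT{2}$. The skew relation $b(\vu,\bm{v},\bm{w})=-b(\vu,\bm{w},\bm{v})$ then follows by polarization: expanding $0=b(\vu,\bm{v}+\bm{w},\bm{v}+\bm{w})$ and cancelling the two diagonal terms, which are zero by \eqref{S8E1}.

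For the two identities involving $\bm{\beta}$ (in the axisymmetric setting, where $\bm{\beta}\in\vVsolT{2}$), I would exploit that $\DT\bm{\beta}=0$ forces $\partial_i\beta_j=-\partial_j\beta_i$, i.e.\ $\nabla\bm{\beta}$ is skew-symmetric. Then, using the skew relation with $\bm{v}=\vu$ and $\bm{w}=\bm{\beta}$,
$$
b(\vu,\vu,\bm{\beta})=-b(\vu,\bm{\beta},\vu)=-\int\displaylimits_{\Omega}{(\partial_i\beta_j)\,u_i u_j}=0,
$$
the integrand being the contraction of the skew matrix $\partial_i\beta_j$ with the symmetric tensor $u_iu_j$. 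For the last identity, the same skew-symmetry gives $[(\bm{\beta}\cdot\nabla)\bm{\beta}]_j=\beta_i\partial_i\beta_j=-\beta_i\partial_j\beta_i=-\tfrac12\partial_j|\bm{\beta}|^2$, so $(\bm{\beta}\cdot\nabla)\bm{\beta}=-\tfrac12\nabla|\bm{\beta}|^2$ is a gradient; integrating against $\vu$ and integrating by parts yields
$$
b(\bm{\beta},\bm{\beta},\vu)=-\tfrac12\int\displaylimits_{\Omega}{\nabla|\bm{\beta}|^2\cdot\vu}=\tfrac12\int\displaylimits_{\Omega}{|\bm{\beta}|^2\,\div\,\vu}-\tfrac12\int\displaylimits_{\Gamma}{|\bm{\beta}|^2\,\vu\cdot\vn}=0.
$$

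I expect the only genuinely delicate point to be the passage from smooth fields to general elements of $\vVsolT{2}$ in the proof of \eqref{S8E1}: one must be sure the integration-by-parts formula is licensed at the $\vH{1}$ level, and this is exactly what the continuity estimate together with the density of smooth solenoidal tangent fields provides. Everything else is algebraic once $\DT\bm{\beta}=0$ and the constraints $\div\,\vu=0$, $\vu\cdot\vn=0$ are used, since the boundary and divergence terms are precisely the mechanism annihilating each integral.
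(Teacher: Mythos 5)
Your proof is correct. Note, however, that the paper itself does not prove this lemma at all: it explicitly skips the proof and cites \cite[Lemma 7.2]{AR}, so there is no in-paper argument to compare against. Your argument is the standard one and is exactly what the cited reference supplies: H\"older with exponents $(6,2,3)$ plus the embedding $\vH{1}\hookrightarrow\vL{6}$ for continuity; the identity $(\vu\cdot\nabla)\bm{v}\cdot\bm{v}=\tfrac12(\vu\cdot\nabla)|\bm{v}|^2$, integration by parts, and the constraints $\div\,\vu=0$, $\vu\cdot\vn=0$ for the vanishing of $b(\vu,\bm{v},\bm{v})$; polarization for skew-symmetry; and the skew-symmetry of $\nabla\bm{\beta}$ (equivalent to $\DT\bm{\beta}=0$) for the two $\bm{\beta}$-identities, with the observation that $\bm{\beta}\in\vVsolT{2}$ precisely when $\Omega$ is axisymmetric. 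The density result you invoke is legitimately available inside the paper (it is stated in the proof of Theorem \ref{31} that smooth divergence-free tangent fields are dense in $\vVsolT{2}$). One cosmetic slip: you first carry out the integration by parts for $\vu\in\vDsol{\Omega}$, but the dense class you then use consists of fields smooth up to $\overline{\Omega}$ that are tangent rather than compactly supported; the computation is unchanged for those (the boundary term dies because $\vu\cdot\vn=0$ rather than by compact support), so the argument stands once stated for the correct dense class.
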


Now we can prove the existence of the generalized solution of the Navier-Stokes problem \eqref{NS}. First we study the Hilbert case.

\begin{theorem}
\label{39}
Let
$$
\bm{f}\in \vL{\frac{6}{5}}, \mathbb{F}\in\mathbb{L}^2(\Omega), \bm{h}\in \vHfracbd{1}{2} \text{ and } \alpha\in \Lb{2}.
$$
Then the problem \eqref{NS} has a solution $(\vu, \pi)\in \vH{1}\times L^2_0(\Omega)$ satisfying the following estimates:\\
{\bf a)} if $\Omega$ is not axisymmetric, then
\begin{equation}
\label{37}
\|\vu\|_{\vH{1}}+\|\pi\|_{\L{2}}\leq{C(\Omega)}\left(\|\bm{f}\|_{\vL{\frac{6}{5}}}+\|\mathbb{F}\|_{\mathbb{L}^2(\Omega)}+\|\bm{h}\|_{\vHfracbd{1}{2}}\right) .
\end{equation}
\medskip
{\bf b)} if $\Omega$ is axisymmetric and\\
\indent $\left(\textit{i} \right)$ $\alpha \geq \alpha_* >0$ on $\Gamma$, then
\begin{equation}
\label{32}
\|\vu\|_{\vH{1}}+\|\pi\|_{L^{2}(\Omega)}\leq \frac{C(\Omega)}{\min\{2,\alpha_*\}}\left(\|\bm{f}\|_{\vL{\frac{6}{5}}}+\|\mathbb{F}\|_{\mathbb{L}^2(\Omega)}+\|\bm{h}\|_{\vHfracbd{1}{2}}\right) .
\end{equation}
\indent $\left(\textit{ii} \right)$ $\bm
{f}, \bm{h}$ satisfy the condition:
\begin{equation*}
\int\displaylimits_{\Omega}{ \bm{f} \cdot \bm{\beta}} - \int\displaylimits_{\Omega}{\mathbb{F}: \nabla\bm{\beta}} + \left\langle \bm{h}, \bm{\beta}\right\rangle_\Gamma = 0
\end{equation*}
\qquad then the solution $\vu$ satisfies $\int\displaylimits_{\Gamma}{\alpha \vu \cdot \bm{\beta}} = 0$ and
\begin{equation}
\label{41}
\|\DT\vu\|_{\mathbb{L}^2(\Omega)}^2+\int\displaylimits_{\Gamma}{\alpha |\vu_{\vt}|^2}+ \|\pi\|^2_{\L{2}}\leq C(\Omega)\left(\|\bm{f}\|_{\vL{\frac{6}{5}}}+\|\mathbb{F}\|_{\mathbb{L}^2(\Omega)}+\|\bm{h}\|_{\vHfracbd{1}{2}}\right)^2 .
\end{equation}
\qquad In particular, if $\alpha$ is a constant, then $\int\displaylimits_{\Gamma}{\vu \cdot \bm{\beta}} = 0$ and
\begin{equation}
\label{42}
\|\vu\|_{\vH{1}}+\|\pi\|_{\L{2}}\leq{C(\Omega)}\left(\|\bm{f}\|_{\vL{\frac{6}{5}}}+\|\mathbb{F}\|_{\mathbb{L}^2(\Omega)}+\|\bm{h}\|_{\vHfracbd{1}{2}}\right) .
\end{equation}
\end{theorem}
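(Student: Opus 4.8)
The plan is to construct a solution by the Galerkin method, exploiting the fact that the energy identity for \eqref{NS} coincides with that of the Stokes problem, so that all a priori bounds — and hence the estimates \eqref{37}--\eqref{42} — follow verbatim from the $L^2$-theory of Theorem \ref{L^2estimate}. By Theorem \ref{35} it suffices to produce $\vu\in\vVsolT{2}$ solving the variational problem \eqref{36} for all $\bm{\varphi}\in\vVsolT{2}$; the pressure $\pi\in L^2_0(\Omega)$ is then recovered by De Rham's theorem exactly as in Proposition \ref{34}. Since $\vVsolT{2}$ is a separable Hilbert space, I would fix a basis $\{\bm{w}_j\}_{j\ge 1}$, set $V_m=\mathrm{span}\{\bm{w}_1,\dots,\bm{w}_m\}$, and seek $\vu_m=\sum_{j=1}^m c_j^m\bm{w}_j\in V_m$ satisfying \eqref{36} tested against each $\bm{w}_j$, $1\le j\le m$.

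First I would solve the finite-dimensional nonlinear system. Writing $P:\R^m\to\R^m$ for the map whose $j$-th component is the difference between the two sides of \eqref{36} with $\bm{\varphi}=\bm{w}_j$, the key computation is $P(c^m)\cdot c^m = 2\|\DT\vu_m\|^2_{\mathbb{L}^2(\Omega)}+\int_{\Gamma}\alpha|(\vu_m)_{\vt}|^2 - \langle\bm{\ell},\vu_m\rangle$, where $\bm{\ell}$ denotes the right-hand side functional of \eqref{36}; the convective term drops out because $b(\vu_m,\vu_m,\vu_m)=0$ by \eqref{S8E1}. Coercivity of the bilinear part on $\vVsolT{2}$ (Proposition \ref{prop_equivalent_norms_c2}, using $\alpha>0$ on $\Gamma_0$) gives $P(c^m)\cdot c^m\ge C\|\vu_m\|^2_{\vH{1}}-\|\bm{\ell}\|\,\|\vu_m\|_{\vH{1}}$, which is strictly positive on a sphere of sufficiently large radius; the standard consequence of Brouwer's fixed point theorem then furnishes a zero $c^m$, i.e.\ an approximate solution $\vu_m$.

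Taking $\bm{\varphi}=\vu_m$ in the Galerkin equations and again using $b(\vu_m,\vu_m,\vu_m)=0$ yields the same energy identity as in the Stokes case,
\begin{equation*}
2\|\DT\vu_m\|^2_{\mathbb{L}^2(\Omega)}+\int_{\Gamma}\alpha|(\vu_m)_{\vt}|^2=\int_{\Omega}\bm{f}\cdot\vu_m-\int_{\Omega}\mathbb{F}:\nabla\vu_m+\langle\bm{h},\vu_m\rangle_\Gamma,
\end{equation*}
so $\{\vu_m\}$ is bounded in $\vH{1}$ uniformly in $m$ (using $\vH{1}\hookrightarrow\vL{6}$ and the trace theorem on the right-hand side). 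I would then extract a subsequence with $\vu_m\rightharpoonup\vu$ in $\vH{1}$ and, crucially, $\vu_m\to\vu$ strongly in $\vL{4}$ by the compact Sobolev embedding of $\vH{1}$ into $\vL{4}$, while $(\vu_m)_{\vt}\rightharpoonup\vu_{\vt}$ weakly in $\vLb{4}$ by continuity of the trace $\vH{1}\to\vHfracb{1}{2}\hookrightarrow\vLb{4}$. The main obstacle is passing to the limit in the nonlinear term: writing $b(\vu_m,\vu_m,\bm{w}_j)=-b(\vu_m,\bm{w}_j,\vu_m)=-\int_{\Omega}(\vu_m\cdot\nabla)\bm{w}_j\cdot\vu_m$ and noting that $\vu_m\otimes\vu_m\to\vu\otimes\vu$ in $\mathbb{L}^2(\Omega)$ while $\nabla\bm{w}_j$ is bounded, this converges to $b(\vu,\vu,\bm{w}_j)$; the boundary term converges because $\alpha\bm{w}_{j\vt}\in\vLb{4/3}$ and $(\vu_m)_{\vt}\rightharpoonup\vu_{\vt}$ in $\vLb{4}$, and the remaining terms pass by weak convergence. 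Hence $\vu$ solves \eqref{36} against every $\bm{w}_j$, and therefore against all of $\vVsolT{2}$ by density.

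Finally, the estimates are immediate. Any solution $\vu$ satisfies the displayed energy identity (with $\vu_m$ replaced by $\vu$, the convective term vanishing by \eqref{S8E1}), whose right-hand side is bounded exactly as in the proof of Theorem \ref{L^2estimate}: cases \textbf{a)} and \textbf{b)(i)} follow from the norm equivalences \eqref{eqn_equivalence_norms_c1} and \eqref{eqn_equivalence_norms_c2}, giving \eqref{37} and \eqref{32}. For \textbf{b)(ii)}, testing \eqref{36} with $\bm{\varphi}=\bm{\beta}$ and using $\DT\bm{\beta}=0$, $b(\vu,\vu,\bm{\beta})=0$, and the compatibility condition yields $\int_{\Gamma}\alpha\,\vu\cdot\bm{\beta}=0$; testing then with $\vu+k\bm{\beta}$ (legitimate since $b(\vu,\vu,\vu+k\bm{\beta})=0$) and invoking \eqref{55} with Korn's inequality reproduces \eqref{53}, that is \eqref{41}. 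When $\alpha$ is a nonzero constant this forces $\int_{\Gamma}\vu\cdot\bm{\beta}=0$, so \eqref{44} upgrades to the full-norm equivalence and gives \eqref{42}.
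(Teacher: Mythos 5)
Your proposal is correct and follows essentially the same route as the paper: a Galerkin approximation in $\vVsolT{2}$, solvability of the finite-dimensional systems via Brouwer's theorem using $b(\vu_m,\vu_m,\vu_m)=0$ and coercivity, a uniform energy bound, passage to the limit by weak $\vH{1}$ convergence plus compactness, and then the estimates repeated verbatim from the Stokes $L^2$-theory of Theorem \ref{L^2estimate} (including the test functions $\bm{\beta}$ and $\vu+k\bm{\beta}$ in the axisymmetric case). In fact you supply more detail than the paper at the two places it is terse, namely the limit in the convective and boundary terms and the derivation of the estimates.
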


\begin{remark}
	\rm{Note that in the case of $\vu\cdot \vn \ne 0$ on $\Gamma$ when $\Omega$ has multiply connected boundary, the existence of solution of Navier-Stokes equation with Dirichlet boundary condition is not yet clear in full generality, e.g. see \cite{Korobkov}. So we do not consider that case here for the Navier boundary condition also. }
\end{remark}

\begin{proof}
{\bf 1. Existence:} The existence of solution of \eqref{36} can be shown using standard arguments i.e. by Galerkin method, we construct an approximate solution and then pass to the limit. Nonetheless, we state it briefly for completeness.

For each fixed integer $m\ge 1$, define an approximate solution $\vu_m$ of (\ref{36}) by
\begin{equation}
\label{S8E0}
\begin{aligned}
&\vu_m = \sum_{i=1}^{m}\xi _{i,m}\bm{v}_i, \,\,\,\,\xi _{i,m}\in\R\\
&2\int\displaylimits_{\Omega}{\DT\vu_m:\DT\bm{v}_k}+b(\vu_m,\vu_m,\bm{v}_k)+\int\displaylimits_{\Gamma}{\alpha\vu_{\vt m}\cdot\bm{v}_{\vt k}}=\int\displaylimits_{\Omega}{\bm{f}\cdot
	\bm{v}_k}-\int\displaylimits_{\Omega}{\mathbb{F}:\nabla \bm{v}_k}+\left\langle \bm{h},\bm{v}_k\right\rangle_\Gamma,\\
& \hspace{11cm} \text{ for } k = 1,..., m
\end{aligned}
\end{equation}
and $V_m := \left\langle \bm{v}_1, ... , \bm{v}_m\right\rangle $ is the space spanned by the vectors $\bm{v}_1, ... , \bm{v}_m$ and $ \{\bm{v}_i\}_i$ is the Hilbert basis of $\vVsolT{2}$. Note that $V_m$ is equipped with the scalar product $ (,)$ induced by $\vVsolT{2}$. Let us also define the mapping $P_m : V_m \rightarrow V_m$ as for all $ \bm{w},\bm{v}\in V_m$,
\begin{equation*}
\left( P_m(\bm{w}), \bm{v} \right) =  2\int\displaylimits_{\Omega}{\DT\bm{w}:\DT\bm{v}}+b(\bm{w},\bm{w},\bm{v})+\int\displaylimits_{\Gamma}{\alpha\bm{w}_{\vt }\cdot\bm{v}_{\vt}}-\int\displaylimits_{\Omega}{\bm{f}\cdot
	\bm{v}}+\int\displaylimits_{\Omega}{\mathbb{F}:\nabla \bm{v}}-\left\langle \bm{h},\bm{v}\right\rangle_\Gamma.
\end{equation*}
The continuity of the mapping is obvious. Also, using (\ref{S8E1}), we get
\begin{equation*}
\begin{aligned}
\left( P_m(\bm{w}), \bm{w} \right) &= 2\|\DT\bm{w}\|^2_{\mathbb{L}^2(\Omega)} + \int\displaylimits_{\Gamma}{\alpha |\bm{w}_{\vt}|^2} - \int\displaylimits_{\Omega}{\bm{f}\cdot
	\bm{w}}+\int\displaylimits_{\Omega}{\mathbb{F}:\nabla \bm{w}}-\left\langle \bm{h},\bm{w}\right\rangle_\Gamma\\
&\ge C(\alpha) \|\bm{w}\|_{\vH{1}}\left\lbrace  \|\bm{w}\|_{\vH{1}} - C(\Omega)\left( \|\bm{f}\|_{\vL{\frac{6}{5}}} + \|\mathbb{F}\|_{\mathbb{L}^2(\Omega)} + \|\bm{h}\|_{\vHfracbd{1}{2}}\right)\right\rbrace . 
\end{aligned}
\end{equation*}
Hence, $ \left( P_m(\bm{w}), \bm{w} \right) > 0$ for all $ \|\bm{w}\|_{V_m} = k$ where $k> C(\Omega)\left( \|\bm{f}\|_{\vL{\frac{6}{5}}} +\|\mathbb{F}\|_{\mathbb{L}^2(\Omega)}+ \|\bm{h}\|_{\vHfracbd{1}{2}}\right)$. Therefore, the hypothesis of Brower's theorem is satisfied and there exists a solution $\vu_m$ of (\ref{S8E0}).

Next as $\bm{u}_m$ is a solution of (\ref{S8E0}), we have
\begin{equation*}
2\|\DT\vu_m\|^2_{\mathbb{L}^2(\Omega)}+\int\displaylimits_{\Gamma}{\alpha|\vu_{\vt m}|^2}=\int\displaylimits_{\Omega}{\bm{f}\cdot
	\bm{u}_m}-\int\displaylimits_{\Omega}{\mathbb{F}:\nabla \bm{u}_m}+\left\langle \bm{h},\bm{u}_m\right\rangle_\Gamma
\end{equation*} 
which yields the a priori estimate
\begin{equation*}
\|\vu_m\|_{\vH{1}} \le C(\alpha ) \left( \|\bm{f}\|_{\vL{\frac{6}{5}}}+\|\mathbb{F}\|_{\mathbb{L}^2(\Omega)}+ \bm{h}\|_{\vHfracbd{1}{2}}\right) .
\end{equation*}
Since the sequence $\vu_m$ remains bounded in $\vVsolT{2}$, there exists some $\vu\in\vVsolT{2}$ and a subsequence which we still call $\vu_m$ such that
$$
\vu_m \rightharpoonup \vu \text{ in } \vVsolT{2}.
$$
Now due to the compact embedding of $\H{1}$ into $\L{2}$, we can pass to the limit in (\ref{S8E0}) and obtain, for any $\bm{v}\in \vVsolT{2}$,
\begin{equation*} 2\int\displaylimits_{\Omega}{\DT\vu:\DT\bm{v}}+b(\vu,\vu,\bm{v})+\int\displaylimits_{\Gamma}{\alpha\vu_{\vt}\cdot\bm{v}_{\vt}}=\int\displaylimits_{\Omega}{\bm{f}\cdot
	\bm{v}} - \int\displaylimits_{\Omega}{\mathbb{F}:\nabla \bm{v}}+\left\langle \bm{h},\bm{v}\right\rangle_\Gamma
\end{equation*}
and thus $\vu$ is a solution of (\ref{36}).

{\bf 2. Estimates:} The estimates can be shown as for the Stokes problem in Theorem \ref{L^2estimate}.
\hfill
\end{proof}

\begin{proposition}
The solution of the problem (\ref{NS}), given by Theorem \ref{39} is unique provided
\begin{equation}
\label{38}
\|\bm{f}\|_{\vL{\frac{6}{5}}}+ \|\mathbb{F}\|_{\mathbb{L}^2(\Omega)}+\|\bm{h}\|_{\vHfracbd{1}{2}} < \frac{1}{C(\Omega)} 
\end{equation}
where the constant $C(\Omega)$ is the continuity constant of the linear form $b$.
\end{proposition}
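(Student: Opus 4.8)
The plan is to use the classical small-data energy method. Let $(\vu_1,\pi_1)$ and $(\vu_2,\pi_2)$ be two solutions in $\vH{1}\times L^2_0(\Omega)$ produced by Theorem \ref{39}, and set $\bm{w}=\vu_1-\vu_2$. Both satisfy the weak formulation \eqref{36}, so subtracting the two identities makes all the data terms cancel, and testing with $\bm{\varphi}=\bm{w}\in\vVsolT{2}$ leaves
\begin{equation*}
2\int\displaylimits_{\Omega}{|\DT\bm{w}|^2}+\int\displaylimits_{\Gamma}{\alpha|\bm{w}_{\vt}|^2}+\big[b(\vu_1,\vu_1,\bm{w})-b(\vu_2,\vu_2,\bm{w})\big]=0.
\end{equation*}

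Next I expand the nonlinear contribution. Writing $\vu_1=\vu_2+\bm{w}$ and using trilinearity gives
\begin{equation*}
b(\vu_1,\vu_1,\bm{w})-b(\vu_2,\vu_2,\bm{w})=b(\vu_2,\bm{w},\bm{w})+b(\bm{w},\vu_2,\bm{w})+b(\bm{w},\bm{w},\bm{w}).
\end{equation*}
By the antisymmetry property $b(\cdot,\bm{v},\bm{v})=0$ recalled above, both $b(\vu_2,\bm{w},\bm{w})$ and $b(\bm{w},\bm{w},\bm{w})$ vanish, so the identity reduces to the clean energy relation
\begin{equation*}
2\|\DT\bm{w}\|_{\mathbb{L}^2(\Omega)}^2+\int\displaylimits_{\Gamma}{\alpha|\bm{w}_{\vt}|^2}=-b(\bm{w},\vu_2,\bm{w}).
\end{equation*}

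The heart of the argument is then to control the right-hand side. Using the continuity of $b$ on $\vVsolT{2}\times\vVsolT{2}\times\vVsolT{2}$ I bound $|b(\bm{w},\vu_2,\bm{w})|\le C(\Omega)\,\|\vu_2\|_{\vH{1}}\,\|\bm{w}\|_{\vH{1}}^2$, and then invoke the a priori estimate \eqref{37} (or \eqref{32}, respectively \eqref{42}, in the axisymmetric cases) so that $\|\vu_2\|_{\vH{1}}$ is dominated by $C(\Omega)\big(\|\bm{f}\|_{\vL{\frac{6}{5}}}+\|\mathbb{F}\|_{\mathbb{L}^2(\Omega)}+\|\bm{h}\|_{\vHfracbd{1}{2}}\big)$. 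On the left-hand side the Korn-type equivalence of norms from Proposition \ref{prop_equivalent_norms_c2} gives a coercive lower bound of the form $c_0\|\bm{w}\|_{\vH{1}}^2$. Combining these produces
\begin{equation*}
c_0\,\|\bm{w}\|_{\vH{1}}^2\le C(\Omega)\big(\|\bm{f}\|_{\vL{\frac{6}{5}}}+\|\mathbb{F}\|_{\mathbb{L}^2(\Omega)}+\|\bm{h}\|_{\vHfracbd{1}{2}}\big)\,\|\bm{w}\|_{\vH{1}}^2,
\end{equation*}
and under the smallness hypothesis \eqref{38} the coefficient on the right is strictly below $c_0$, forcing $\|\bm{w}\|_{\vH{1}}=0$, hence $\vu_1=\vu_2$. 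The uniqueness of the pressure is then immediate: the difference of the two momentum equations gives $\nabla(\pi_1-\pi_2)=\bm{0}$ in $\Omega$, so $\pi_1-\pi_2$ is constant, and since both pressures lie in $L^2_0(\Omega)$ this constant must be zero.

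The step I expect to require the most care is the bookkeeping of constants, so that the final threshold is exactly $1/C(\Omega)$ as stated: one has to track the coercivity constant $c_0$, the continuity constant of the trilinear form $b$, and the constant in the a priori bound for $\vu_2$, and check that in each geometric configuration (non-axisymmetric, axisymmetric with $\alpha\ge\alpha_*>0$, or axisymmetric under the compatibility condition \eqref{50}) the appropriate estimate from Theorem \ref{39} applies with the advertised dependence. Everything else is a routine consequence of the antisymmetry of $b$ and the weak formulation.
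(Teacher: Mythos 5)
Your proof is correct and follows essentially the same route as the paper's: subtract the two weak formulations, test with the difference $\bm{w}$, use the antisymmetry property \eqref{S8E1} to reduce the nonlinear contribution to the single term $-b(\bm{w},\vu_2,\bm{w})$, and then combine the continuity of $b$, an a priori bound on $\vu_2$, and coercivity to force $\bm{w}=\bm{0}$ under the smallness condition \eqref{38}. The only cosmetic differences are that the paper invokes its basic energy estimate with an $\alpha$-dependent constant (letting that dependence cancel against the coercivity constant) instead of the uniform estimates \eqref{37}, \eqref{32}, \eqref{42} that you cite, and that the paper does not spell out the uniqueness of the pressure, which you rightly add.
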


\begin{remark}
\rm{Interestingly, in the case of $\alpha \equiv 0$, there is no uniqueness of the solution of the system \eqref{NS} even for small data. But in our case, when $\alpha\neq 0$ on some $\Gamma_0 \subseteq \Gamma$ with $|\Gamma_0| >0$, there is indeed uniqueness of the solution under the assumption of small data as in the case of Dirichlet boundary condition. The reason of this behaviour is the presence of a non-trivial kernel of the Stokes operator for $\alpha \equiv 0$.}
\end{remark}

\begin{proof}
Taking $\bm{\varphi} = \vu$ in (\ref{36}) and using (\ref{S8E1}), we obtain that any solution of (\ref{36}) satisfies the estimate
\begin{equation}
\label{S8E3}
\|\vu\|_{\vH{1}} \le C(\alpha) \left( \|\bm{f}\|_{\vL{\frac{6}{5}}}+ \|\mathbb{F}\|_{\mathbb{L}^2(\Omega)}+ \bm{h}\|_{\vHfracbd{1}{2}}\right).
\end{equation}
Now if $\vu_1$ and $\vu_2$ are two different solutions of (\ref{36}), let $\vu = \vu_1 - \vu_2$ and subtracting the equations (\ref{36}) corresponding to $\vu_1$ and $\vu_2$, we get
\begin{equation}
\label{S8E2}
\forall \bm{\varphi}\in \vVsolT{2}, \quad 2\int\displaylimits_{\Omega}{\DT\vu:\DT\bm{\varphi}}+b(\vu_1,\vu,\bm{\varphi})+ b(\vu,\vu_2,\bm{\varphi})+\int\displaylimits_{\Gamma}{\alpha\vu_{\vt}\cdot\bm{\varphi}_{\vt}}= 0.
\end{equation}
Taking $\bm{\varphi} = \vu$ in (\ref{S8E2}) and once again using (\ref{S8E1}) implies
\begin{equation*}
2\|\DT\vu\|^2_{\mathbb{L}^2(\Omega)} + \int\displaylimits_{\Gamma}{\alpha |\vu_{ \vt}|^2}= -b(\vu,\vu_2,\vu)
\end{equation*}
which yields, using the continuity of $b$ and the estimate (\ref{S8E3}) for $\vu_2$,
\begin{equation*}
\|\vu\|^2_{\vH{1}} \le \frac{C(\Omega)}{C(\alpha)} \|\vu\|^2_{\vH{1}} \|\vu_2\|_{\vH{1}} \le C(\Omega) \|\vu\|^2_{\vH{1}} \left( \|\bm{f}\|_{\vL{\frac{6}{5}}}+ \|\mathbb{F}\|_{\mathbb{L}^2(\Omega)}+ \bm{h}\|_{\vHfracbd{1}{2}}\right).
\end{equation*}
Thus considering the condition (\ref{38}), the above inequality implies $\|\vu\|_{\vH{1}} = 0$ that is $\vu_1 = \vu_2$.
\hfill
\end{proof}

Next we prove the existence of solution of the system \eqref{NS} in $\vW{1}{p}$ using the Hilbert case and the Stokes regularity result.

\begin{corollary}
\label{reg_NS}
Let $p>\frac{3}{2}$ and $\bm{f},\mathbb{F}, \bm{h}$ and $\alpha$ satisfy the assumptions as in Theorem \ref{35}.\\
{\bf i)} There exists a solution $(\vu,\pi)\in \vW{1}{p}\times L^p_0(\Omega)$  of (\ref{NS}).\\
{\bf ii)} Moreover, for any $p\in( 1,\infty)$, if $ \mathbb{F}=0$ and
$$
\bm{f}\in\vL{p}, \bm{h}\in\vWfracb{1-\frac{1}{p}}{p} \text{ and } \alpha\in\Wfracb{1-\frac{1}{q}}{q}
$$
\quad with $q>\frac{3}{2}$ if $p\le \frac{3}{2}$ and $q=p$ otherwise, then $(\vu,\pi)\in\vW{2}{p}\times\W{1}{p}$.
\end{corollary}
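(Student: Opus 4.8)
The plan is to use the equivalence of Theorem \ref{35} and to treat the convective term as a perturbation of the linear Stokes problem. Since $\div\,\vu=0$ and $\vu\cdot\vn=0$, we may write $\vu\cdot\nabla\vu=\div(\vu\otimes\vu)$, so that a solution of \eqref{NS} is precisely a solution of \eqref{S} with $\mathbb{F}$ replaced by $\mathbb{F}-\vu\otimes\vu$. The entire argument is then a bootstrap fed by the linear results (Corollary \ref{thm_W1p_regularity_Stokes_Nbc}, Theorem \ref{T2}, Theorem \ref{thm_W2p_regularity_Stokes_Nbc}) once $\vu\otimes\vu$ has been placed in the right space. The quantitative fact used throughout is that for $p>\tfrac32$ the quadratic map $\vu\mapsto\vu\otimes\vu$ sends $\vW{1}{p}$ \emph{compactly} into $\mathbb{L}^p(\Omega)$: in $\R^3$ the embedding $\vW{1}{p}\hookrightarrow\vL{2p}$ is compact exactly when $p>\tfrac32$, and then $\|\vu\otimes\vu\|_{\mathbb{L}^p(\Omega)}\le\|\vu\|_{\vL{2p}}^2$.

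For part (i) with $p\ge2$ I would start from the Hilbert solution $\vu\in\vH{1}$ of Theorem \ref{39} (the data satisfy its hypotheses, since for $p\ge2$ one has $\vL{r(p)}\hookrightarrow\vL{\frac{6}{5}}$, $\mathbb{L}^p(\Omega)\hookrightarrow\mathbb{L}^2(\Omega)$, $\alpha\in\Lb{t(p)}\hookrightarrow\Lb{2}$, and the corresponding boundary embedding for $\bm h$). Then $\vu\in\vL6$ gives $\vu\otimes\vu\in\mathbb{L}^3(\Omega)$, and Corollary \ref{thm_W1p_regularity_Stokes_Nbc} applied to \eqref{S} with data $\mathbb{F}-\vu\otimes\vu$ yields $\vu\in\vW{1}{\min\{p,3\}}$; if $p>3$, one further step uses $\vW{1}{3}\hookrightarrow\vL{q}$ for every finite $q$ to obtain $\vu\otimes\vu\in\mathbb{L}^p(\Omega)$ and hence $\vu\in\vW{1}{p}$. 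The pressure is recovered by De Rham exactly as in Proposition \ref{34}.

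The main obstacle is part (i) for $\tfrac32<p<2$. Here the data are genuinely rougher than $L^2$ (e.g. $r(p)<\tfrac65$), so there is no $\vH{1}$ solution to begin with; worse, since $p'>2>p$ the solution $\vu$ is \emph{not} an admissible test function in the weak formulation \eqref{36}, and therefore the energy identity behind \eqref{S8E1}, which usually supplies the a priori bound, is unavailable. I would proceed by regularisation: approximate $(\bm f,\mathbb{F},\bm h)$ by smooth, Hilbert-compatible data, solve by the case $p\ge2$ to get $\vu_k$, establish a \emph{uniform} $\vW{1}{p}$ bound, and pass to the limit, the compactness recalled above being what allows one to identify $\lim_k \vu_k\otimes\vu_k$. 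The uniform bound is the crux. I would extract it from the linear isomorphism of Theorem \ref{P3}/\ref{T2} together with a Leray--Schauder degree argument, the convective term being a compact perturbation precisely because $p>\tfrac32$; the duality device of Proposition \ref{P2} (solving an adjoint Stokes problem at the exponent $p'>2$ and pairing) is the natural tool for transferring the estimate from the accessible range $p'>2$ down to $p<2$. This is exactly where the subcriticality $p>\tfrac32$ is indispensable and where the real difficulty lies.

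For part (ii) I would once more rewrite the equation as \eqref{S} with source $\bm f-\vu\cdot\nabla\vu$ (now $\mathbb{F}=0$) and bootstrap the strong regularity of Theorem \ref{thm_W2p_regularity_Stokes_Nbc}. Starting from a weak solution $\vu\in\vW{1}{\rho}$ with $\rho>\tfrac32$ (given by part (i); for $p\le\tfrac32$ one first reaches such a $\rho$, as $\bm f\in\vL{p}\hookrightarrow\vL{r(\rho)}$ for $\rho$ close to $\tfrac32$), the gained integrability $\vu\in\vL{\rho^*}$, $\nabla\vu\in\vL{\rho}$ makes $\vu\cdot\nabla\vu$ strictly more integrable at each iteration; since all exponents remain strictly subcritical, after finitely many steps $\vu\cdot\nabla\vu\in\vL{p}$, and Theorem \ref{thm_W2p_regularity_Stokes_Nbc} then delivers $(\vu,\pi)\in\vW{2}{p}\times\W{1}{p}$. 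Checking that the integrability strictly increases at each step is routine arithmetic with the Sobolev exponents, and I would not write it out.
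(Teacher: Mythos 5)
Your part (i) for $p\ge 2$ and your part (ii) are sound and essentially coincide with the paper's own argument: bootstrap from the Hilbert solution of Theorem \ref{39}, viewing \eqref{NS} as the Stokes problem \eqref{S} with source $\mathbb{F}-\vu\otimes\vu$ (equivalently $\bm{f}-\vu\cdot\nabla\vu$) and invoking Corollary \ref{thm_W1p_regularity_Stokes_Nbc}, then Theorem \ref{thm_W2p_regularity_Stokes_Nbc}, with the same exponent arithmetic. The genuine gap is exactly where you place "the real difficulty": part (i) for $\frac32<p<2$. Your plan — regularise the data, solve in the energy setting, extract a \emph{uniform} $\vW{1}{p}$ bound from the linear isomorphism (Theorems \ref{T2}, \ref{P3}) plus the duality device of Proposition \ref{P2}, and close with Leray--Schauder degree — does not work as described. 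The duality step fails for the nonlinear problem: pairing with the adjoint Stokes solution $\bm{v}\in\vW{1}{p'}$ produces the convective contribution $\bigl|\int_\Omega (\vu_k\otimes\vu_k):\nabla\bm{v}\bigr|\le C\|\vu_k\|^2_{\vW{1}{p}}\|\bm{v}\|_{\vW{1}{p'}}$, so at best one obtains the quadratic inequality $\|\vu_k\|_{\vW{1}{p}}\le C\bigl(D+\|\vu_k\|^2_{\vW{1}{p}}\bigr)$, which yields no bound for large data. The energy estimate does not substitute for it, because the regularised data are only bounded in $\vL{r(p)}$ with $r(p)<\frac65$, so $\|\bm{f}_k\|_{\vL{\frac65}}$ may blow up and the $\vH{1}$ bounds on $\vu_k$ are not uniform. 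Finally, Leray--Schauder degree cannot rescue the argument: it \emph{requires} an a priori bound on all solutions of the homotopy family as an input; it does not produce one. (A blow-up normalisation does not give a contradiction either, since the quadratic term dominates in the limit.) So the crucial uniform bound is not merely unwritten — no mechanism in your proposal delivers it.

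The paper circumvents this entirely by following the constructive scheme of Serre \cite{serre}: for $\frac32<p<2$ one does not estimate solutions with rough data, one \emph{builds} a solution by successive corrections. The key step (the paper's replacement for \cite[Lemma 1.2]{serre}) is: if $(\bm{v},\tilde{\pi})\in\vW{1}{p}\times\L{p}$ has Navier--Stokes defect $-\Delta\bm{v}+\bm{v}\cdot\nabla\bm{v}+\nabla\tilde{\pi}-\bm{f}$ in $\vL{r(q)}$ for some $p\le q\le 2$ (together with the divergence and boundary conditions), then there exists $(\bm{w},\bar{\pi})\in\vW{1}{p}\times\L{p}$ whose defect lies in $\vL{r(s)}$ with $\frac1s=\frac1q+\frac1p-\frac23$; since $p>\frac32$ one has $s>q$, so finitely many iterations drive the defect into the energy-compatible space $\vL{\frac65}$, where the last correction is obtained by a perturbation argument in $\vH{1}$. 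If you want to repair your proof you must either establish the missing a priori bound (which is not available by the tools you cite) or switch to this iterative construction.
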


\begin{proof}
{\bf i)} First let us consider $p>2$. Then we have existence of a solution $ (\vu,\pi)\in \vH{1}\times L^2_0(\Omega)$ from Theorem \ref{39} using the Hilbert case.
Since $\vu \in \vH{1}$, the non-linear term $(\vu \cdot \nabla) \vu \in \vL{\frac{3}{2}}\hookrightarrow \vL{r(p)}$ if $p\leq 3$. Hence, using the regularity result for Stokes problem in Corollary \ref{thm_W1p_regularity_Stokes_Nbc}, we obtain that $(\vu,\pi)\in \vW{1}{p}\times \L{p}$.
For $p>3$, repeating the same argument with $\vu\in \vW{1}{3}$, we deduce the required regularity.

And to obtain existence result for $p\in (\frac{3}{2},2)$, we follow the exact same construction as in \cite[Theorem 1.1]{serre}. Note that we replaced the space $\vW{-1}{p}$ for the given data in \cite{serre} by $\vL{r(p)}$. For example, we used the following lemma instead of \cite[Lemma 1.2]{serre}:

If there exists $ (\bm{v},\tilde{\pi}) \in \vW{1}{p}\times \L{p}$ such that
\begin{equation*}
\begin{cases}
-\Delta\bm{v} + \bm{v}\cdot \nabla \bm{v}+ \nabla \tilde{\pi}-\bm{f}&\in \vL{r(p)} \\
\div\;\bm{v}=0 &\text{ in } \Omega\\
\bm{v}\cdot\vn=0 &\text{ on } \Gamma\\ 2\left[(\DT\bm{v})\vn\right]_{\vt}+\alpha\;\bm{v}_{\vt}=\bm{0} &\text{ on } \Gamma
\end{cases}
\end{equation*}
for $p\le q\le 2$, then there exists $ (\bm{w},\bar{\pi})\in \vW{1}{p}\times \L{p}$ such that
\begin{equation*}
\begin{cases}
-\Delta\bm{w} + \bm{w}\cdot \nabla \bm{w}+ \nabla \bar{\pi}-\bm{f}&\in \vL{r(s)} \\
\div\;\bm{w}=0 &\text{ in } \Omega\\
\bm{w}\cdot\vn=0 &\text{ on } \Gamma\\ 2\left[(\DT\bm{w})\vn\right]_{\vt}+\alpha\;\bm{w}_{\vt}=\bm{0} &\text{ on } \Gamma
\end{cases}
\end{equation*}
where $\frac{1}{s} = \frac{1}{q} + \frac{1}{p} - \frac{2}{3}$ (thus $s>q$).\\
The rest of the proof follows the same argument as in \cite{serre} without any further changes.

{\bf ii)} Next to prove the strong regularity result, consider the more regular data. For $p\in (1,\frac{3}{2}]$, since the Sobolev exponent $p^*\in (\frac{3}{2},3]$ and thus $r(p^*) = p$, we have
$
\bm{f}\in\vL{r(p^*)}, \bm{h}\in \vWfracb{-\frac{1}{p^*}}{p^*}
$
and hence using the above regularity result for weak solution of \eqref{NS}, we obtain
 $\left( \vu,\pi\right) \in \vW{1}{p^*}\times \L{p^*}$. Now for $p\in (1,\frac{3}{2})$, $(\vu \cdot \nabla) \vu \in \L{s}$ with
$$
\frac{1}{s} = \frac{2}{p}-1
$$
which implies $s>p$ and thus using Theorem \ref{thm_W2p_regularity_Stokes_Nbc} again, we obtain $\left( \vu,\pi\right)  \in \vW{2}{p}\times \W{1}{p}$. For $p=\frac{3}{2}$, since $\vW{1}{3}\hookrightarrow \vL{m}$ for any $ m\in (1,\infty)$, we have $(\vu \cdot \nabla) \vu \in \L{s}$ with $\frac{1}{s} = \frac{1}{3} + \frac{1}{m}$, so choosing $m>3$ gives $ s> \frac{3}{2}$ and thus $\left( \vu,\pi\right)  \in \vW{2}{\frac{3}{2}}\times \W{1}{\frac{3}{2}}$.

Now for $p> \frac{3}{2}$, having $\vu\in\vW{2}{\frac{3}{2}}$ gives $\sum_i u_i \partial _i u \in \vL{3-\epsilon}$ which yields $\vu\in \vW{2}{3-\epsilon}$. Further repeating the argument, we get $\vu\in\vW{2}{p}$.
\hfill 
\end{proof}

Finally we discuss the limiting behaviour of the Navier-Stokes system \eqref{NS} as $\alpha$ goes to $0$ or $\infty$.

\subsection{Limiting cases}
\begin{theorem}
\label{NS goes 0}
Let $p\geq2$, $\Omega$ be not axisymmetric and $(\vu_\alpha, \pi_\alpha)$ be a solution of \eqref{NS} where
$$\bm{f}\in\vL{r(p)}, \mathbb{F}\in\mathbb{L}^p(\Omega), \bm{h}\in \vWfracb{-\frac{1}{p}}{p} \text{ and } \alpha\in \Lb{t(p)}.$$
Then as $\|\alpha\|_{\Lb{t(p)}} \rightarrow 0$, we have the convergence,
$$ (\vu_\alpha, \pi _\alpha) \rightarrow (\vu_0, \pi_0) \quad \text{ in } \quad \vW{1}{p}\times L^{p}_0(\Omega) $$
where $(\vu_0,\pi_0)$ is a solution of the following Navier-Stokes problem
\begin{equation}
\label{45}
\begin{aligned}
\begin{cases} 
	-\Delta\vu_0 + \vu_0 \cdot \nabla \vu_0 + \nabla \pi_0=\bm{f}+\div \ \mathbb{F},\quad \div\;\vu_0=0 & \text{ in } \ \Omega ,\\
	\vu_0\cdot\vn=0, \quad \left[(2\DT\vu_0+\mathbb{F})\vn\right]_{\vt}=\bm{h} & \text{ on } \ \Gamma .
\end{cases}
\end{aligned}
\end{equation}
\end{theorem}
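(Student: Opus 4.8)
The plan is to mirror the proof of Theorem \ref{goes 0} for the linear problem, the extra work being the control of the convective term, which is done by compactness rather than by any smallness of the data. First I would record uniform bounds. Since $b(\vu_\alpha,\vu_\alpha,\vu_\alpha)=\bm{0}$ by \eqref{S8E1}, the energy identity underlying \eqref{37} in Theorem \ref{39} gives $\|\vu_\alpha\|_{\vH{1}}\le C(\Omega)\big(\|\bm{f}\|_{\vL{\frac{6}{5}}}+\|\mathbb{F}\|_{\mathbb{L}^2(\Omega)}+\|\bm{h}\|_{\vHfracbd{1}{2}}\big)$ with $C(\Omega)$ independent of $\alpha$ (recall $\Omega$ is not axisymmetric), so $\{\vu_\alpha\}$ is bounded in $\vH{1}$ uniformly as $\|\alpha\|_{\Lb{t(p)}}\to 0$. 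Treating $(\vu_\alpha\cdot\nabla)\vu_\alpha$ as a source term, one has $\|(\vu_\alpha\cdot\nabla)\vu_\alpha\|_{\vL{3/2}}\le C\|\vu_\alpha\|_{\vH{1}}^2$ and $\vL{3/2}\hookrightarrow\vL{r(p)}$ for $2\le p\le 3$; hence the $\alpha$-independent Stokes estimate \eqref{Lp1} of Theorem \ref{T2} yields a uniform bound for $(\vu_\alpha,\pi_\alpha)$ in $\vW{1}{p}\times L^p_0(\Omega)$, the range $p>3$ following by the usual bootstrap once $\vW{1}{3}$ is reached. Along a subsequence, $(\vu_\alpha,\pi_\alpha)\rightharpoonup(\vu_0,\pi_0)$ weakly.

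Next I would identify the limit. Passing to the limit in the weak formulation \eqref{36}, the convective term passes by writing $b(\vu_\alpha,\vu_\alpha,\bm{\varphi})=-b(\vu_\alpha,\bm{\varphi},\vu_\alpha)$ and using that $\vu_\alpha\to\vu_0$ strongly in $\vL{q}$ for $q<6$ by the compact embedding of $\vH{1}$; the boundary term $\int_\Gamma\alpha(\vu_\alpha)_{\vt}\cdot\bm{\varphi}_{\vt}$ vanishes exactly as in Theorem \ref{goes 0}, since $\|\alpha\|_{\Lb{t(p)}}\to 0$ while $(\vu_\alpha)_{\vt}$ stays bounded in the trace space. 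This shows that $(\vu_0,\pi_0)$ solves \eqref{45}, the pressure being recovered by De Rham's theorem.

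To upgrade to strong convergence I would set $\bm{w}_\alpha:=\vu_\alpha-\vu_0$ and $P_\alpha:=\pi_\alpha-\pi_0$, which solve a Stokes system with right-hand side $-\big[(\bm{w}_\alpha\cdot\nabla)\vu_\alpha+(\vu_0\cdot\nabla)\bm{w}_\alpha\big]$ and Navier datum $-\alpha(\vu_0)_{\vt}$ on $\Gamma$. Testing against $\bm{w}_\alpha$ and using $b(\vu_0,\bm{w}_\alpha,\bm{w}_\alpha)=0$ leaves only $b(\bm{w}_\alpha,\vu_\alpha,\bm{w}_\alpha)$, which I would bound by $\|\bm{w}_\alpha\|_{\vLb{4}}^2\,\|\nabla\vu_\alpha\|_{\vL{2}}$ and which tends to zero by the compact embedding $\vH{1}\hookrightarrow\vL{4}$; the boundary term is controlled by $\|\alpha\|_{\Lb{2}}\,\|(\vu_0)_{\vt}\|_{\vLb{4}}\,\|(\bm{w}_\alpha)_{\vt}\|_{\vLb{4}}\to 0$ (using $t(p)\ge 2$, hence $\Lb{t(p)}\hookrightarrow\Lb{2}$). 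The Korn equivalence \eqref{eqn_equivalence_norms_c1} then gives $\bm{w}_\alpha\to\bm{0}$ in $\vH{1}$, and the pressure follows from $\|P_\alpha\|_{\L{2}}\le\|\nabla P_\alpha\|_{\vH{-1}}$.

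Finally, for the $\vW{1}{p}$ convergence with $p>2$ I would apply the $\alpha$-independent estimate \eqref{Lp1} to the difference system: the boundary datum contributes $C\,\|\alpha\|_{\Lb{t(p)}}\,\|\vu_0\|_{\vW{1}{p}}\to 0$, while the convective right-hand side, using the strong $\vH{1}$ convergence just obtained, is estimated by $\|\bm{w}_\alpha\|_{\vL{6}}\,\|\nabla\vu_\alpha\|_{\vL{p}}+\|\vu_0\|_{\vL{6}}\,\|\nabla\bm{w}_\alpha\|_{\vL{2}}$; both terms vanish and lie in $\vL{s}\hookrightarrow\vL{r(p)}$ for $2\le p\le 3$, and the remaining range $p>3$ is handled by bootstrapping. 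The main obstacle throughout is precisely the convective term: ensuring it neither obstructs the passage to the limit nor the difference estimate, which rests on the cancellation $b(\vu_0,\bm{w}_\alpha,\bm{w}_\alpha)=0$ and on the compactness of the Sobolev embeddings, so that no restriction on the size of the data is needed.
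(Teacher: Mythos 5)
Your proposal is correct, and it matches the paper's proof in its first two stages: the uniform $\vH{1}$ energy bound, the promotion to a uniform $\vW{1}{p}$ bound by treating $(\vu_\alpha\cdot\nabla)\vu_\alpha$ as an $\vL{r(p)}$ source in the $\alpha$-uniform Stokes theory, and the passage to the limit in the weak formulation (the paper passes the convective term to the limit via weak convergence of $\vu_\alpha\cdot\nabla\vu_\alpha$ in $\vL{q}$ rather than via the antisymmetrized form, but this is immaterial). Where you genuinely diverge is the strong-convergence step. The paper does it in a single stroke: it writes the convective difference in divergence form, $\vu_0\cdot\nabla\vu_0-\vu_\alpha\cdot\nabla\vu_\alpha=\div\,(\vu_0\otimes\vu_0-\vu_\alpha\otimes\vu_\alpha)$, so that it enters the uniform Stokes estimate as an $\mathbb{L}^p(\Omega)$ datum $\mathbb{F}$, bounded by $\|\vu_\alpha-\vu_0\|_{\vL{s}}\left(\|\vu_\alpha\|_{\vW{1}{p}}+\|\vu_0\|_{\vW{1}{p}}\right)$ with $s$ as in \eqref{s}; this tends to zero purely because weak $\vW{1}{p}$ convergence gives strong $\vL{s}$ convergence by compactness. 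Hence the paper needs no energy identity, no cancellation $b(\vu_0,\bm{w}_\alpha,\bm{w}_\alpha)=0$, and no bootstrap: all $p\ge 2$ are handled at once. Your two-step route (energy argument for strong $\vH{1}$ convergence, then the $W^{1,p}$ Stokes estimate with the convective difference kept in non-divergence form and measured in $\vL{r(p)}$, plus a bootstrap for $p>3$) is also valid; it is in fact exactly the scheme the paper itself uses for the opposite limit $\alpha\to\infty$ in Theorem \ref{NS inf}, where the divergence-form shortcut is not available because the boundary datum does not vanish. What the divergence-form trick buys here is the elimination of your entire first step and of the bootstrap.

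Two small slips to fix in your write-up. First, the bound on $b(\bm{w}_\alpha,\vu_\alpha,\bm{w}_\alpha)$ should read $\|\bm{w}_\alpha\|_{\vL{4}}^2\|\nabla\vu_\alpha\|_{\vL{2}}$ with interior norms, not the boundary norms $\|\bm{w}_\alpha\|_{\vLb{4}}^2$ you wrote. Second, in the limit identification, pairing $\vu_\alpha\otimes\vu_\alpha$ against $\nabla\bm{\varphi}\in\mathbb{L}^{p'}(\Omega)$ requires strong convergence of $\vu_\alpha$ in $\vL{2p}$, and for $p\ge 3$ one has $2p\ge 6$, which the compact embedding of $\vH{1}$ alone does not provide; you should instead invoke the compact embedding of $\vW{1}{p}$ into $\vL{2p}$ (legitimate, since your uniform $\vW{1}{p}$ bound is already in hand at that point), which is what the paper's choice of exponent $s$ accomplishes.
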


\begin{proof}
{\bf i)} We assume for ease of calculation, $ \mathbb{F}=0$ and $\bm{h} = \bm{0}$. As $\alpha \rightarrow 0$ in $\Lb{t(p)}$, there does not exist any $\alpha_*>0$ such that $\alpha\geq \alpha_*$ on $\Gamma_0\subseteq \Gamma$. Therefore, $(\vu_\alpha, \pi_\alpha)$ satisfies the estimates \eqref{37}. For $2<p\leq 3$, using the Stokes estimate \eqref{12}, we obtain
\begin{equation*}
\begin{aligned}
\|\vu_\alpha\|_{\vW{1}{p}} + \|\pi_\alpha\|_{\L{p}}& \leq C(\Omega) \left( \|\bm{f}\|_{\vL{r(p)}} + \|\vu_\alpha\cdot \nabla\vu_\alpha\|_{\vL{r(p)}}\right)\\
& \leq C(\Omega) \left( \|\bm{f}\|_{\vL{r(p)}} + \|\vu_\alpha\|^2_{\vH{1}}\right)\\
& \leq C(\Omega) \left( 1+ \|\bm{f}\|_{\vL{r(p)}}\right) \|\bm{f}\|_{\vL{r(p)}} 
\end{aligned}
\end{equation*}
and for $p>3$,
\begin{equation*}
\begin{aligned}
\|\vu_\alpha\|_{\vW{1}{p}} + \|\pi_\alpha\|_{\L{p}}
& \leq C(\Omega) \left( \|\bm{f}\|_{\vL{r(p)}} + \|\vu_\alpha\cdot \nabla\vu_\alpha\|_{\vL{r(p)}}\right) \\
& \leq C(\Omega) \left( \|\bm{f}\|_{\vL{r(p)}} + \|\vu_\alpha\|^2_{\vW{1}{3}}\right)\\
& \leq C(\Omega) \left[ 1+ \left( 1+ \|\bm{f}\|_{\vL{r(p)}}\right)^2 \|\bm{f}\|_{\vL{r(p)}} \right] \|\bm{f}\|_{\vL{r(p)}} .
\end{aligned}
\end{equation*}
Then $(\vu_\alpha, \pi_\alpha)$ is bounded in $\vW{1}{p}\times \L{p}$ with respect to $\alpha$. So there exists $(\vu_0, \pi_0) \in \vW{1}{p}\times \L{p}$ such that
$$(\vu_\alpha,\pi_\alpha)\rightharpoonup( \vu_0,\pi_0) \ \text{ weakly \ in } \ \vW{1}{p}\times \L{p} .$$
Now, as in Theorem \ref{goes 0}, passing the limit as $\alpha \rightarrow 0$ in $\Lb{t(p)}$ in the variational formulation satisfied by $(\vu_\alpha,\pi_\alpha)$, we get that $\vu_0$ satisfies the equation
$$
2\int\displaylimits_{\Omega}{\DT\vu_0 : \DT\bm{\varphi}} + b(\vu_0,\vu_0,\bm{\varphi}) = \int\displaylimits_{\Omega}{\bm{f} \cdot \bm{\varphi}}\qquad \forall \ \bm{\varphi} \in \vVsolT{p^\prime} .
$$
Indeed, $\vu_\alpha \rightharpoonup \vu_0 \text{ weakly in } \vW{1}{p}$
implies
$\vu_\alpha \rightarrow \vu_0 \text{ in } \vL{s}$
where
\begin{equation*}
\label{s}
s\in
\begin{cases}(1,p^*) & \text{if\quad}p<3\\
(1,\infty) & \text{if\quad}p =3\\
(1,\infty] & \text{if\quad}p<3 .
\end{cases}
\end{equation*}
Also, $\nabla \vu_\alpha \rightharpoonup \nabla\vu_0 \text{ weakly in } \vL{p}$. Therefore, $\vu_\alpha \cdot \nabla \vu_\alpha \rightharpoonup \vu_0\cdot \nabla \vu_0 \text{ weakly in } \vL{q}$ where
$$\frac{1}{q} = \frac{1}{p}+ \frac{1}{s} $$
and note that, $\bm{\varphi} \in \vW{1}{p^\prime} \hookrightarrow \vL{q^\prime}$. Hence, $b(\vu_\alpha,\vu_\alpha,\bm{\varphi}) \rightarrow b(\vu_0,\vu_0,\bm{\varphi})$ as $\alpha \rightarrow 0$ in $\Lb{t(p)}$.
Therefore, $(\vu_0,\pi_0)$ is a solution of the problem (\ref{45}).

{\bf ii)} Next we show that the convergence that $(\vu_\alpha,\pi_\alpha) \rightharpoonup (\vu_0,\pi_0) \text{ weakly in } \vW{1}{p}\times \L{p}$ occurs in fact in strong sense. Taking the difference between the systems \eqref{NS} and \eqref{45}, we get,
\begin{align*}
\begin{cases} 
-\Delta(\vu_\alpha - \vu_0) +\nabla (\pi_\alpha-\pi_0)=\vu_0 \cdot \nabla \vu_0 - \vu_\alpha \cdot \nabla \vu_\alpha & \text{ in } \Omega\\
\div\;(\vu_\alpha-\vu_0)=0 & \text{ in } \Omega \\
(\vu_\alpha-\vu_0)\cdot\vn=0, \quad 2\left[\DT(\vu_\alpha-\vu_0)\vn\right]_{\vt} + \alpha \vu_{\alpha \vt}=\bm{0} & \text{ on } \Gamma 
\end{cases}
\end{align*}
Note that $\vu_0 \cdot \nabla \vu_0 - \vu_\alpha \cdot \nabla \vu_\alpha = \div (\vu_{ \alpha}\otimes \vu_{ \alpha} - \vu_0\otimes \vu_0) $. Thus using the Stokes estimate (\ref{12}) for the above system gives
\begin{equation*}
\begin{aligned}
&\quad \|\vu_{ \alpha} - \vu_0\|_{\vW{1}{p}}+ \|\pi_\alpha - \pi_0\|_{\L{p}}\\
&\le C\left( \|\vu_{ \alpha}\otimes \vu_{ \alpha} - \vu_0\otimes \vu_0\|_{\vL{p}} + \|\alpha \vu_{0 \vt}\|_{\vWfracb{-\frac{1}{p}}{p}}\right) \\
 &= C \left( \|(\vu_{ \alpha} - \vu_0)\otimes \vu_{ \alpha}+ \vu_0\otimes (\vu_{ \alpha}-\vu_0)\|_{\vL{p}}+ \|\alpha \vu_{0 \vt}\|_{\vWfracb{-\frac{1}{p}}{p}}\right)\\
& \le C \left[ \|\vu_{ \alpha} - \vu_0\|_{\vL{s}} \left( \|\vu_{ \alpha}\|_{\vW{1}{p}} + \|\vu_0\|_{\vW{1}{p}}\right) + \|\alpha\|_{\Lb{t(p)}} \|\vu_0\|_{\vW{1}{p}}\right] 
\end{aligned}
\end{equation*} 
where $s$ is defined as in (\ref{s}). Now since $\vu_{ \alpha}$ is bounded in $\vW{1}{p}$ and by compactness, $\vu_\alpha\rightarrow \vu_0$ in $\vL{s} $, we obtain the strong convergence of $\vu_{ \alpha}$ to $\vu_0$ in $\vW{1}{p}$ as $\alpha\rightarrow 0$.
\hfill	
\end{proof}

\begin{remark}
	\rm{As in the Stokes case, we can prove also the above theorem for $\Omega$ axisymmetric and $\alpha$ constant, provided the compatibility condition (\ref{50}),	with the help of the estimates \eqref{41} and Remark \ref{S6s2R0}. Indeed, to expect the limiting system to be \eqref{45}, we must assume the above compatibility condition since this is the necessary condition for the existence of a solution of the system \eqref{45}.
	}
\end{remark}

\begin{theorem}
\label{NS inf}
Let $p\ge 2$ and $(\vu_\alpha, \pi_\alpha)$ be the solution of \eqref{NS} with $\bm{f}\in\vL{r(p)}$, $\mathbb{F}\in\mathbb{L}^p(\Omega)$, $\bm{h}\in\vWfracb{-\frac{1}{p}}{p}$ and $ \alpha$ a constant.
Then as $\alpha \rightarrow \infty$, we have the convergence,
	$$ (\vu_\alpha, \pi _\alpha) \rightarrow (\vu_\infty, \pi_\infty) \quad \text{ in } \quad \vW{1}{p}\times L^{p}_0(\Omega) $$
where $(\vu_\infty,\pi_\infty)$ is a solution of the Navier-Stokes problem with Dirichlet boundary condition,
	\begin{equation}
	\label{46}
	\begin{aligned}
	\begin{cases}
	-\Delta\vu_\infty +\vu_\infty \cdot \nabla \vu_\infty + \nabla \pi_\infty=\bm{f}+ \div \ \mathbb{F} &\text{ in } \ \Omega \\
	\div\;\vu_\infty=0 &\text{ in } \ \Omega \\
	\vu_\infty =\bm{0} &\text{ on } \ \Gamma .
	\end{cases}
	\end{aligned}
	\end{equation}
\end{theorem}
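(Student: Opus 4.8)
The plan is to adapt the argument of Theorem \ref{inf} for the linear Stokes problem to the non-linear setting, leaning on the $\alpha$-uniform $\vW{1}{p}$-estimates and on the compactness enjoyed by the trilinear form $b$. Since $\alpha$ grows to infinity we may assume $\alpha\geq 1$ throughout, so that all the uniform bounds of Theorem \ref{T2} and Theorem \ref{39} apply (in the axisymmetric case we simply take $\alpha_*=1$). First I would establish a priori bounds on $(\vu_\alpha,\pi_\alpha)$ independent of $\alpha$: testing the variational formulation \eqref{36} with $\bm{\varphi}=\vu_\alpha$ and using $b(\vu_\alpha,\vu_\alpha,\vu_\alpha)=0$ from \eqref{S8E1}, the boundary contribution $\alpha\int_\Gamma|\vu_{\alpha\vt}|^2$ is non-negative, and the equivalence of norms (Proposition \ref{prop_equivalent_norms_c2}) bounds $\|\vu_\alpha\|_{\vH{1}}$ by the data alone, uniformly in $\alpha\geq 1$. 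Feeding this into the bootstrap of Corollary \ref{reg_NS}, where $\vu_\alpha\cdot\nabla\vu_\alpha$ is treated as a source in $\vL{r(p)}$ and the $\alpha$-independent Stokes estimate of Theorem \ref{T2} is invoked at each step, gives a uniform bound $\|\vu_\alpha\|_{\vW{1}{p}}+\|\pi_\alpha\|_{\L{p}}\leq C$ for every $p\geq 2$.

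Next, extracting a subsequence, $(\vu_\alpha,\pi_\alpha)\rightharpoonup(\vu_\infty,\pi_\infty)$ weakly in $\vW{1}{p}\times\L{p}$, and I would identify the limit exactly as in Theorem \ref{inf}. Rewriting \eqref{NS} in the form \eqref{modified S}, the boundary relation reads $\vu_\alpha=\frac{1}{\alpha}\left(\bm{h}-[(2\DT\vu_\alpha+\mathbb{F})\vn]_{\vt}\right)$ on $\Gamma$; since $(\vu_\alpha,\pi_\alpha)$ is bounded in $\vE{p}$, the Green formula \eqref{green} shows $2[(\DT\vu_\alpha)\vn]_{\vt}$ remains bounded in $\vWfracb{-\frac{1}{p}}{p}$, so letting $\alpha\to\infty$ forces $\vu_\infty=\bm{0}$ on $\Gamma$. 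For the interior equations, the compact embedding $\vW{1}{p}\hookrightarrow\vL{s}$ yields $\vu_\alpha\to\vu_\infty$ strongly in $\vL{s}$, whence $\vu_\alpha\cdot\nabla\vu_\alpha\rightharpoonup\vu_\infty\cdot\nabla\vu_\infty$ and $(\vu_\infty,\pi_\infty)$ solves the Dirichlet Navier--Stokes problem \eqref{46}, just as in Theorem \ref{NS goes 0}(i).

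For the strong convergence I would follow Theorem \ref{inf}(ii). Setting $\bm{w}_\alpha=\vu_\alpha-\vu_\infty$ and subtracting \eqref{46} from \eqref{NS}, the difference solves a Stokes-type system with right-hand side $\vu_\infty\cdot\nabla\vu_\infty-\vu_\alpha\cdot\nabla\vu_\alpha$ and boundary datum $\bm{h}-2[(\DT\vu_\infty)\vn]_{\vt}$ (using $\vu_\infty=\bm{0}$, hence $\alpha\vu_{\infty\vt}=\bm{0}$, on $\Gamma$). Testing with $\bm{w}_\alpha$, integrating by parts, and using the algebraic identity
\[
b(\vu_\alpha,\vu_\alpha,\bm{w}_\alpha)-b(\vu_\infty,\vu_\infty,\bm{w}_\alpha)=b(\bm{w}_\alpha,\vu_\infty,\bm{w}_\alpha),
\]
which follows from the property $b(\cdot,\bm{v},\bm{v})=0$, I obtain
\[
2\|\DT\bm{w}_\alpha\|_{\mathbb{L}^2(\Omega)}^2+\alpha\int_\Gamma|\bm{w}_{\alpha\vt}|^2 = \left\langle\bm{h}-2[(\DT\vu_\infty)\vn]_{\vt},\bm{w}_\alpha\right\rangle_\Gamma - b(\bm{w}_\alpha,\vu_\infty,\bm{w}_\alpha).
\]
The duality term vanishes in the limit because $\bm{w}_\alpha\rightharpoonup\bm{0}$ in $\vHfracb{1}{2}$ against the fixed datum in $\vHfracbd{1}{2}$, while $b(\bm{w}_\alpha,\vu_\infty,\bm{w}_\alpha)\to 0$ from the strong $\vL{4}$-convergence of $\bm{w}_\alpha$; hence $\DT\bm{w}_\alpha\to\bm{0}$ in $\mathbb{L}^2(\Omega)$, which gives strong $\vH{1}$-convergence, and the pressure follows from $\|\pi_\alpha-\pi_\infty\|_{\L{2}}\leq C\|\nabla(\pi_\alpha-\pi_\infty)\|_{\vH{-1}}$. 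For $p>2$, the strong $\mathbb{L}^2(\Omega)$-convergence of $\DT\bm{w}_\alpha$ yields a.e. convergence along a subsequence; combined with the $\alpha$-uniform bound $\|\DT\bm{w}_\alpha\|_{\mathbb{L}^p(\Omega)}\leq C$ from the first paragraph, the dominated convergence theorem gives $\DT\bm{w}_\alpha\to\bm{0}$ in $\mathbb{L}^p(\Omega)$, completing the proof.

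The main obstacle is the treatment of the non-linear term in the energy identity for the difference: unlike the purely linear situation of Theorem \ref{inf}, one must absorb $b(\bm{w}_\alpha,\vu_\infty,\bm{w}_\alpha)$, and it is precisely the compactness of the embedding $\vW{1}{p}\hookrightarrow\vL{4}$, together with the $\alpha$-uniform estimates guaranteeing $\vu_\infty\in\vW{1}{p}$, that makes this term negligible in the limit.
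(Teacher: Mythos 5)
Your proposal is correct and follows essentially the same route as the paper's proof: uniform bounds from the energy identity plus the $\alpha$-independent Stokes estimates of Theorem \ref{T2}, identification of the Dirichlet limit by dividing the boundary condition by $\alpha$ and using the Green formula, then the energy identity for the difference (with the term $b(\bm{w}_\alpha,\vu_\infty,\bm{w}_\alpha)$ handled by compactness) to get strong $\vH{1}$ convergence, and finally a.e. convergence plus the uniform $\mathbb{L}^p$ bound and dominated convergence for $p>2$. The only cosmetic difference is that the paper sets $\mathbb{F}=0$, $\bm{h}=\bm{0}$ for notational ease, while you carry them along and state explicitly the trilinear identity that the paper uses implicitly.
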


\begin{proof}
{\bf i)} Without loss of generality, assume $\mathbb{F}=0$ and $\bm{h} = \bm{0}$. As $\alpha \rightarrow \infty$, we can consider $\alpha \geq 1$ and then we have estimates \eqref{37} and \eqref{32}. Also as done in Theorem \ref{NS goes 0}, using the Stokes estimate (\ref{Lp1}) and (\ref{Lp1.}), we can write, for $2<p\le 3$,
\begin{equation*}
\begin{aligned}
\|\vu_\alpha\|_{\vW{1}{p}} + \|\pi_\alpha\|_{\L{p}} &\le C_p(\Omega) \left( \|\bm{f}\|_{\vL{r(p)}} + \|\vu_{ \alpha}\cdot \nabla \vu_{ \alpha}\|_{\vL{r(p)}}\right)\\
&\le C_p(\Omega) \left( \|\bm{f}\|_{\vL{r(p)}} + \|\vu_{ \alpha}\|^2_{\vH{1}}\right)\\
&\le C_p(\Omega) \left( 1+ \|\bm{f}\|_{\vL{r(p)}} \right) \|\bm{f}\|_{\vL{r(p)}}
\end{aligned}
\end{equation*}
and then similar for $p>3$ as well. This shows that $(\vu_\alpha, \pi_\alpha)$ is bounded in $\vW{1}{p}\times \L{p}$ for all $p\ge 2$. Hence there exists $(\vu_\infty, \pi_\infty)\in \vW{1}{p}\times \L{p}$ such that
$$(\vu_\alpha,\pi_\alpha)\rightharpoonup (\vu_\infty,\pi_\infty) \quad \text{ weakly \ in } \ \vW{1}{p}\times \L{p} .$$
Now rewriting the system \eqref{NS} as,
\begin{equation}
\label{modified NS}
\begin{aligned}
\begin{cases}
-\Delta\vu_\alpha +\vu_\alpha \cdot \nabla \vu_\alpha + \nabla \pi_\alpha=\bm{f} & \text{ in } \ \Omega \\
\div\;\vu_\alpha = 0 & \text{ in } \ \Omega \\
\vu_\alpha = -\frac{2}{\alpha} [(\DT\vu_\alpha)\vn]_{\vt}  & \text{ on } \ \Gamma 
\end{cases}
\end{aligned}
\end{equation}
and as in Theorem \ref{inf}, letting $\alpha \rightarrow \infty$ in the above system, we obtain that $(\vu_\infty,\pi_\infty)$ satisfies the Navier-Stokes problem \eqref{46}.\\

{\bf ii)} Therefore, $\left( \vu_\alpha-\vu_\infty\right) $ satisfies the system
\begin{equation*}
\begin{cases}
-\Delta\left( \vu_\alpha - \vu_\infty\right) + \nabla \left( \pi_\alpha - \pi_\infty\right)=\vu_\infty \cdot \nabla \vu_\infty - \vu_\alpha \cdot \nabla \vu_\alpha & \text{in $\Omega$,}\\
\div\;\left( \vu_\alpha - \vu_\infty\right)=0 & \text{in $\Omega$,}\\
\left( \vu_\alpha - \vu_\infty\right)\cdot\vn=0, \quad 2\left[\DT\left( \vu_\alpha - \vu_\infty\right)\vn\right]_{\vt}+\alpha\left( \vu_\alpha - \vu_\infty\right)_{\vt}= - 2 [(\DT\vu_\infty)\vn]_{\vt} & \text{on $\Gamma$}.
\end{cases}
\end{equation*}
Multiplying by $\left( \vu_\alpha-\vu_\infty\right) $ and integrating by parts, we get,
\begin{equation*}
\begin{aligned}
&2\int\displaylimits_{\Omega}{|\DT(\vu_\alpha - \vu_\infty)|^2}+ \alpha \int\displaylimits_{\Gamma}{|(\vu_\alpha - \vu_\infty)_{\vt}|^2}  \\
= & \-b\left( \vu_\alpha - \vu_\infty, \vu_\infty, \vu_\alpha - \vu_\infty \right) - \left\langle 2[(\DT \vu_\infty) \vn]_{\vt}, \left( \vu_\alpha - \vu_\infty\right) \right\rangle _{\vHfracbd{1}{2}\times \vHfracb{1}{2}}.
\end{aligned}
\end{equation*}
But as $\alpha\rightarrow \infty$, by compactness $\vu_\alpha \rightarrow \vu_\infty$ in $\vL{4}$ and thus
\begin{equation*}
\begin{aligned}
b\left( \vu_\alpha - \vu_\infty, \vu_\infty, \vu_\alpha - \vu_\infty \right) \le \|\vu_\alpha - \vu_\infty\|^2_{\vL{4}} \|\nabla \vu_\infty\|_{\vL{2}} \rightarrow 0.
\end{aligned}
\end{equation*}
Also since $ \vu_{ \alpha}\rightharpoonup \vu_{ \infty}$ weakly in $ \vHfracb{1}{2}$ and $[(\DT\vu_\infty)\vn]_{\vt}\in\vHfracbd{1}{2}$, it implies,
\begin{equation*}
\left\langle 2[(\DT \vu_\infty) \vn]_{\vt}, \left( \vu_\alpha - \vu_\infty\right) \right\rangle _{\vHfracbd{1}{2}\times \vHfracb{1}{2}}\rightarrow 0.
\end{equation*}
Therefore along with the fact that $\vu_{ \alpha}\rightarrow \vu_\infty$ in $\vL{2}$, we obtain the strong convergence $\vu_{ \alpha}\rightarrow \vu_\infty$ in $\vH{1}$. For the pressure term, we can write
\begin{equation*}
\begin{aligned}
\|\pi_\alpha - \pi_\infty\|_{\L{2}}
&\le C \|\nabla (\pi_\alpha - \pi_\infty)\|_{\vH{-1}} \\
&\le C \|-\Delta(\vu_\alpha - \vu_\infty) + (\vu_\alpha\cdot \nabla \vu_\alpha - \vu_\infty \cdot \nabla \vu_\infty)\|_{\vH{-1}}\\
&\le C \|-\Delta(\vu_\alpha - \vu_\infty)\|_{\vH{-1}}+ \|\vu_{ \alpha}\otimes \vu_{ \alpha} - \vu_\infty\otimes \vu_\infty\|_{\vL{2}}
\end{aligned}
\end{equation*}
and thus $\pi_\alpha \rightarrow \pi_\infty $ in $ \L{2}$.

Now similar to the Stokes case, for $p>2$, $\DT(\vu_{ \alpha}-\vu_\infty)\rightarrow 0$ in $\vL{2}$ implies $\DT (\vu_{ \alpha}-\vu_\infty)\rightarrow 0$ for almost every $\bm{x}\in\Omega$ up to a subsequence. Also using the uniform estimate of Theorem \ref{T2} for the above system satisfied by $\vu_{ \alpha}-\vu_\infty$, we can write
\begin{equation*}
\begin{aligned}
& \quad \,\,\|\vu_{ \alpha}-\vu_\infty\|_{\vW{1}{p}}\\
&\le C_p(\Omega) \left( \|\vu_{ \alpha}\otimes \vu_{ \alpha} - \vu_\infty\otimes \vu_\infty\|_{\vL{p}}+ \| 2[(\DT\vu_\infty)\vn]_{\vt}\|_{\vWfracb{-\frac{1}{p}}{p}}\right) \\
& \le C_p(\Omega)\left[ \|\vu_{ \alpha} - \vu_\infty\|_{\vL{s}} \left( \|\vu_{ \alpha}\|_{\vW{1}{p}} + \|\vu_\infty\|_{\vW{1}{p}}\right) + \|\vu_\infty\|_{\vW{1}{p}}\right] 
\end{aligned}
\end{equation*}
with $s$ defined as in (\ref{s}) so that $\vW{1}{p}\underset{compact}{\hookrightarrow} \vL{s}$. This shows that $\|\DT(\vu_{ \alpha}-\vu_\infty)\|_{\mathbb{L}^p(\Omega)}\le C$ and therefore, dominated convergence theorem yields $\|\DT(\vu_{ \alpha}-\vu_\infty)\|_{\mathbb{L}^p(\Omega)}\rightarrow 0$. This completes the proof.
\hfill
\end{proof}

\bibliographystyle{plain}
\bibliography{Master}
\end{document}